\theoremstyle{plain}
\newtheorem{theorem}{Theorem}[section]
\newtheorem{lemma}[theorem]{Lemma}
\newtheorem{corollary}[theorem]{Corollary}
\newtheorem{proposition}[theorem]{Proposition}
\theoremstyle{definition}
\newtheorem{definition}[theorem]{Definition}
\newtheorem{example}{Example}
\newtheorem{remark}{Remark}
\newtheorem{assumption}{Assumption}
\newcommand{\SGD}{\text{SGD}}
\newcommand{\Ridge}{\text{Ridge}}
\title{Statistical Inference for Misspecified Contextual Bandits}
\author[1]{Yongyi Guo\thanks{Corresponding author. Email: \texttt{guo98@wisc.edu}.}}
\author[2]{Ziping Xu\thanks{Email: \texttt{zipingxu@unc.edu}. The work was partially done when Ziping was a postdoctoral fellow at Harvard University with Professor Susam A. Murphy.}}
\affil[1]{Department of Statistics, University of Wisconsin--Madison}
\affil[2]{School of Data Science and Society, University of North Carolina at Chapel Hill}
\date{} 
\begin{document}
\maketitle





\begin{abstract}

Contextual bandit algorithms have transformed modern experimentation by enabling real-time adaptation for personalized treatment and efficient use of data. Yet these advantages create challenges for statistical inference due to adaptivity. A fundamental property that supports valid inference is policy convergence, meaning that action-selection probabilities converge in probability given the context. Convergence ensures replicability of adaptive experiments and stability of online algorithms. In this paper, we highlight a previously overlooked issue: widely used algorithms such as LinUCB may fail to converge when the reward model is misspecified, and such non-convergence creates fundamental obstacles for statistical inference. This issue is practically important, as misspecified models---such as linear approximations of complex dynamic system---are often employed in real-world adaptive experiments to balance bias and variance.

Motivated by this insight, we propose and analyze a broad class of algorithms that are guaranteed to converge even under model misspecification. Building on this guarantee, we develop a general inference framework based on an inverse-probability-weighted Z-estimator (IPW-Z) and establish its asymptotic normality with a consistent variance estimator. Simulation studies confirm that the proposed method provides robust and data-efficient confidence intervals, and can outperform existing approaches that exist only in the special case of offline policy evaluation. Taken together, our results underscore the importance of designing adaptive algorithms with built-in convergence guarantees to enable stable experimentation and valid statistical inference in practice.
\end{abstract}




\section{Introduction}
Adaptive experimental designs play an increasingly prominent role across science and industry. In contrast to traditional randomized experiments, they update arm allocation probabilities sequentially in response to observed outcomes, thereby allowing real-time improvement in treatment strategies and more efficient sample usage. For example, in mobile health, adaptive designs personalize digital interventions---such as motivational messages or prompts tailored to users’ states---to promote healthy behaviors \citep{nahum2016just, hardeman2019systematic}. In online advertising and recommender systems, content and ad delivery are dynamically adjusted to maximize engagement or click-through rates \citep{li2010contextual, tang2013automatic}. In education technology, adaptive experimentation has been used to optimize tutoring systems and learning platforms by tailoring exercises to student performance \citep{kizilcec2020scaling, fischer2020mining}. 

A common approach to implementing such adaptive experiments is through contextual bandit algorithms \citep{li2010contextual, offer2021adaptive, coughlin2024mobile, lauffenburger2024impact, nahum2024optimizing}. Contextual bandits provide a principled framework for sequential decision-making by balancing the tradeoff between exploration and exploitation. At each round $t$, the agent observes a context $\bX_t$ (e.g., user features), selects an action $A_t$ according to a behavior policy $\pi_t(\cdot \mid \bX_t)$, and then receives an outcome $Y_t$. The goal is to maximize cumulative reward while learning to improve the decision rule over time. 
This continual updating of the policy based on observed outcomes captures the essence of adaptivity in practice. Popular algorithms such as LinUCB \citep{li2010contextual} and Thompson Sampling \citep{russo2014learning} are widely used in both research and applications, and enjoy strong theoretical guarantees on sample efficiency.

In practice, however, many contextual bandit algorithms operate under a \emph{misspecified reward model}. Misspecification is common in adaptive decision-making, especially in complex environments where data are limited and outcomes are noisy. One source is that the true reward-generating process is rarely known, leaving adaptive policies subject to model errors, particularly when there is rich heterogeneity or when the outcome mechanism is poorly understood \citep{dimakopoulou2017estimation, trella2025deployed}. Misspecification may also arise by design: practitioners often adopt simpler working models to stabilize learning and balance bias against variance \citep{athey2022contextual, tewari2017ads}. It can further occur when covariates are measured with error or when relevant features are unobserved. Because of its prevalence, misspecification poses important challenges for adaptive experimentation, and there has been a line of literature on designing online policies robust to various misspecified environments \citep{ghosh2017misspecified, foster2020adapting, lattimore2020learning, krishnamurthy2021tractable, krishnamurthy2021adapting}.

In this work, we tackle a different problem: \textbf{statistical inference with data collected by contextual bandit algorithms in a misspecified environment}. Post-experiment statistical inference is important because practitioners and scientists often seek more than an effective online policy: they need valid confidence intervals, hypothesis tests, and replicable findings for downstream scientific or policy questions. A substantial literature has studied inference with adaptively collected data under various modeling assumptions—for example, generalized linear and partial linear models \citep{deshpande2018accurate, boruvka2018assessing, qian2021estimating, lin2023semi}, structured nested mean models \citep{syrgkanis2023post}, and general nonlinear regression or M-estimation frameworks \citep{klimko1978conditional, lai1994asymptotic, zhang2021statistical}. However, the majority of these works focus on inferring parameters in a correctly specified outcome model, and only a few consider inference under general data distributions. The latter include studies focused on specific inferential targets \citep{chen2021statistical, zhan2021off, bibaut2021post} or on a different regime with bounded horizons and a diverging number of trajectories \citep{zhang2022statistical, zhang2024replicable}. This paper focuses on the largely unexplored problem of statistical inference under misspecification when the number of time points grows. We assume no well-specified outcome model, either for decision-making or for after-study analysis. This setting is both practically important and theoretically challenging.


One key difficulty we uncover is that statistical inference under misspecification is hindered \emph{from the very start of data collection}: popular bandit algorithms such as LinUCB may fail to converge as a policy, and this lack of convergence can fundamentally compromise subsequent inference. Here, by \emph{policy convergence} we mean that the action-selection probabilities stabilize in probability given the context (see Definition \ref{aspt:policy_convergence}). Convergence (or related notions of policy stability) has been recognized as important in many adaptive experimental settings---not only for ensuring valid statistical inference \citep{zhang2022statistical, khamaru2024inference, halder2025stable}, but also for achieving replicability, i.e., obtaining consistent results when an experiment is repeated under the same conditions \citep{zhang2024replicable}. Replicability, in turn, is critical for validating scientific findings and for building confidence in data-driven decisions. Our results reveal that while policy convergence is typically not hard to establish under a well-specified model, it may fail entirely under misspecification---even for standard bandit algorithms---leading to pathological estimator behavior, breakdowns of asymptotic normality, and ultimately invalid inference (see an example in Section \ref{sec::example-policy-nonconvergence}).

To better understand when convergence is preserved, we establish general conditions for policy convergence without a well-specified reward model, and identify a broad class of adaptive policies that remain stable even in complex environments. At the core is a simple principle: a policy will converge as long as it bases decisions on \emph{summary statistics} that converge to a limit where the policy mapping is continuous. 
This principle applies broadly to most reinforcement learning algorithms used in practice, since they typically rely on summary statistics of past observations for decision-making---for instance, an $\epsilon$-greedy algorithm uses the empirical means of each arm. From this principle, we can identify sufficient conditions for policy convergence and draw practical guidance for policy design.
For example, policies that avoid relying on overly complex reward models for aggressive exploration---such as multi-armed bandit algorithms---tend to be more stable than those that tightly couple exploration to a potentially misspecified model. Likewise, continuous policies that are not overly steep (e.g., Boltzmann sampling with large temperature) are generally more reliable than policies with sharp or discontinuous decision boundaries (e.g., LinUCB). These insights are related to \citep{zhang2022statistical, zhang2024replicable}, which study policy convergence in a different regime with bounded horizons and many trajectories. 

Building on these convergence insights, we then develop a general statistical inference framework based on an inverse-propensity-weighted Z-estimator (IPW-Z). This estimator targets a broad class of parameters (eq. (\ref{eq::theta-a-*})) and remains valid under mild conditions.
It is well known that inference with adaptively collected data violates the usual i.i.d. assumptions, and prior work has relied on reweighting techniques together with martingale central limit theorems to obtain valid inference under correctly specified models \citep{zhang2020inference, zhang2021statistical, chen2021statistical, lin2023semi}. The problem becomes even more challenging without a well-specified model. In this setting, inverse probability weights are crucial, as they ensure that the components of the Z-estimator form a martingale difference sequence, thereby guaranteeing consistency. Policy convergence further stabilizes the conditional variances, enabling asymptotic normality. To complement these theoretical guarantees, we also examine the performance of the IPW-Z estimator through simulations in a range of complex environments. The results, presented in Section~\ref{sec::simulation}, show that the method delivers confidence intervals with reliable coverage and,  is at least as efficient as---often more efficient than---existing approaches that only exists in the off-policy evaluation setting, a special case of our framework.

\subsection{Our Contributions}

Our main contributions are summarized as follows:
\begin{itemize}
    \item \textbf{A new inference framework.} We propose an inverse-propensity-weighted Z-estimator (IPW-Z) for statistical inference with adaptively collected data, without assuming a correctly specified outcome model. To our knowledge, this is the first estimator to cover a broad class of target parameters (eq.~(\ref{eq::theta-a-*})) in this setting, and we establish its consistency and asymptotic normality under mild conditions.
    \item \textbf{Role of policy convergence.} We show that policy convergence---the stabilization of action-selection probabilities given the context---is fundamental for valid inference. A key finding is that widely used bandit algorithms such as LinUCB may fail to converge under misspecification, leading to estimator pathologies and invalid inference. This issue has been largely overlooked in prior work on adaptive inference for contextual bandits, despite its central importance for both inferential validity and replicability.
    \item\textbf{Principles for policy convergence.} We establish general conditions under which adaptive policies converge and derive practical design principles for stability. For example, simpler policies and smoother decision rules are more robust than policies that tightly couple exploration to complex or discontinuous reward models. To our knowledge, this is the first work to comprehensively study the convergence behavior of common policies in contextual bandits and to articulate general rules for when convergence can be expected.
    \item\textbf{Technical contributions.} We develop the theoretical foundations that make inference under misspecification possible. On the inference side, we develop variance-stabilization techniques that allow martingale CLTs to be applied to general inferential targets without requiring a well-specified outcome model. On the policy side, we establish convergence guarantees with several proof strategies that are largely novel in this context. One example is the application of stochastic approximation theory, which is nontrivial since the policies depend on summary statistics whose evolution dynamics may fail to contract; in such cases, we instead identify hidden parameters that yield contractive dynamics. Finally, we present numerical examples where standard contextual bandit algorithms interact with misspecified environments in irregular ways---such as oscillating or converging to multiple limits---that are both interesting in their own right and informative for future systematic research on policy behavior.
\end{itemize}

\subsection{Related work}

Statistical inference with adaptively collected data has been a long-standing but challenging problem. Although such data are common in practice, classical inference procedures designed for i.i.d. samples can break down under adaptive collection \citep{nie2018adaptively, zhang2020inference}. A growing body of work has therefore focused on establishing conditions on the data generating process or developing new methodologies to ensure valid inference. Most existing work focuses on estimating parameters in correctly specified outcome models. For example, \citep{anderson1979strong, christopeit1980strong, lai1982least} analyze adaptive linear regression under various conditions; \citep{klimko1978conditional, lai1994asymptotic} study least squares estimation in adaptive nonlinear regression; and \citep{chen1999strong} establishes asymptotic properties of maximum quasi-likelihood estimators for generalized linear models with adaptive designs.

With the emergence of modern data collection schemes such as reinforcement learning, recent work has developed inference procedures accommodating more general mechanisms such as contextual bandits, which are not covered by earlier results. \citep{deshpande2018accurate, khamaru2021near} propose online debiasing estimators for adaptive linear regression with weaker restrictions on the data collection process. \citep{zhang2020inference} study batched linear regression under general contextual bandit algorithms. \citep{chen2021statistical} develop inference for linear contextual bandits with $\epsilon$-greedy policies, and \citep{chen2021statistical1} extend their results to nonlinear reward models with parameter updates via weighted stochastic gradient descent. \citep{zhang2021statistical} establish inference for $M$-estimators under contextual bandit sampling, and \citep{lin2023semi} consider adaptive inference in generalized partial linear outcome models. \citep{syrgkanis2023post} study inference for structural parameters in structural nested mean models with data collected via reinforcement learning algorithms. 

In the absence of a well-specified reward model, several works have examined statistical inference with adaptively collected data for specific target estimands, primarily various forms of average outcomes. \citep{khamaru2024inference, han2024ucb} study inference on arm means under data collected by UCB-type algorithms, while \citep{halder2025stable} consider the same estimand with Thompson sampling variants. \citep{zhan2021off, hadad2021confidence, bibaut2021post, waudby2024anytime} analyze off-policy evaluation in bandits with general adaptive behavior policies, targeting the average reward of a specified evaluation policy. \citep{liao2021off, liao2022batch} develop inference for the long-run average reward in Markov decision processes with adaptive policies. Additional targets arising in longitudinal and causal panel data settings include mean responses to dynamic treatment regimes and average causal effects; see, for example, \citep{laan2003unified, chakraborty2013statistical}. 

Literature on inference for general target parameters beyond average outcomes without a well-specified reward model is comparatively sparse. A result closely related to ours is from \citep{chen2021statistical} who study inference in linear contextual bandits under both well-specified and misspecified reward models. In the misspecified case, they propose a weighted least squares estimator for the least false parameter, defined as the best linear projection of the true reward, and establish its asymptotic properties under an $\epsilon$-greedy behavior policy constructed from the same estimator. \citep{zhang2022statistical, zhang2024replicable} analyze inference after adaptive sampling in a general longitudinal data setting, but in a different regime where the time horizon is fixed and the number of trajectories diverges. 
Finally, a concurrent work \citep{LeinerDunnRamdas2025}, which generalizes \citep{bibaut2021post} on off-policy evaluation with adaptively collected data, investigates M-estimation with respect to a fixed target policy under model misspecification. Without requiring policy convergence, the authors achieve valid inference by reweighting with consistent estimators for a certain conditional variance sequence. However, their proposed estimators rely on consistent estimation of the true conditional moments of the score function given context and action, which can be difficult to guarantee under model misspecification. Moreover, they assume either access to independent external data or the ability to control the behavior policy so that the dataset $\mathcal{D}$ can be partitioned into two independent sub-trajectories.  
In contrast, we take a different approach by directly considering converged policies, which requires no assumption on consistent moment estimation of the score function, access to  auxiliary data, or control over the data collection process.


This work highlights why \emph{policy convergence} matters for valid statistical inference with adaptively collected data. Stochastic approximation has been the main framework for convergence analyses of value-based RL, including TD and Q-learning in finite state-action spaces, and extensions that cover linear function approximation under additional stability and sampling conditions \citep{borkar2000ode,lee2019unified,carvalho2020new,liu2025ode}. These analyses typically assume correctly specified models. Although works on robustness and misspecification exist, they are less unified. For example, \citep{roy2017reinforcement} propose a robust version of Q-learning under model mismatch. We instead explored a broad family of policies that are stable without a well-specified model.

\paragraph*{Section layout} The remainder of the paper is organized as follows. Section~\ref{sec::problem-setup} introduces notation and the problem setup for adaptive inference in contextual bandits, along with three motivating examples: misspecified linear bandits, bandits with noisy contexts, and off-policy evaluation. Section~\ref{sec::inference-guarantee} presents the proposed inverse-probability-weighted Z-estimator (IPW-Z), establishes its consistency and asymptotic normality, and provides a consistent variance estimator. We also highlight the importance of policy convergence by illustrating, through a numerical example, how policy nonconvergence can induce nonnormality and pathological estimator behavior. Section~\ref{sec::policy-convergence} develops general sufficient conditions for policy convergence and introduces a broad family of policies that satisfy them, including multi-armed bandit algorithms that ignore context, policies based on the IPW-Z estimator, and Boltzmann exploration with ridge or stochastic gradient descent estimators. Section~\ref{sec::simulation} presents simulation studies that validate our asymptotic results across the three inference targets. We also compare with \citep{bibaut2021post} and \citep{zhan2021off} in the off-policy evaluation setting, showing that our method is more robust and often outperforms existing approaches.

\section{Problem Setup}\label{sec::problem-setup}

We consider the problem of statistical inference with an adaptively collected dataset $\cD = \{\bX_t, \pi_t, A_t, Y_t\}_{t=1}^T$ from a contextual bandit environment. The data collection process proceeds at each time $t$ as follows:
\begin{itemize}
    \item \textbf{Context:} The environment reveals a context $\bX_t\in \cX\subseteq\RR^{d_X}$.
    \item \textbf{Action Selection:} Based on the current context $\bX_t$ and past history $\cH_{t-1}:=  \{\bX_\tau, \pi_\tau,\\ A_\tau, Y_\tau\}_{\tau<t}$, the agent selects an action $A_t \in \cA$ according to a stochastic behavior policy $\pi_t(\cdot \mid \bX_t, \cH_{t-1}) \in \Delta(\cA)$, where $\Delta(\cA)$ denotes the set of probability distributions over the action space $\cA$. The realized selection probability is   recorded as $\pi_t := \pi_t(A_t \mid \bX_t, \cH_{t-1})$. 
    \item \textbf{Outcome:} After choosing the action, the agent observes outcome $Y_t\in \RR$. 
\end{itemize}

We consider a finite action space $\cA$ and, without loss of generality, write $\cA = \{1, \ldots, K\}$. Adopting the potential outcomes framework \citep{imbens2015causal}, we let $\{Y_t(a): a \in \cA\}$ denote the potential outcomes for each action, with the observed outcome satisfying $Y_t = Y_t(A_t)$. We assume a stochastic contextual bandit environment in which $\{\bX_t, Y_t(a): a\in\cA\}\overset{\text{i.i.d.}}{\sim}\cP$, for $t = 1, \ldots, T$. In addition, in this adaptive experimental setting, we assume the following unconfoundedness condition.
\begin{assumption}\label{aspt:unconfoundedness}
    $A_t\perp \{Y_t(a)\}_{a\in\cA}|(\cH_{t-1}, \bX_t)$, for $t = 1, \ldots, T$.
\end{assumption}
Note that even though the potential outcomes are i.i.d., the observations in $\cD$ are not. This is because each action $A_t$ is selected based on the evolving history $\cH_{t-1}$, introducing temporal dependence into the observations. This dependence poses additional challenges for valid estimation and inference.

\subsection{Inference Targets}

Our goal is to infer the parameter $\btheta_a^* \in \RR^d$ associated with a treatment arm $a \in \cA$, or jointly the collection $\{\btheta_a^*\}_{a \in \cA}$. Each $\btheta_a^*$ is defined as the solution to the equation
\begin{equation}\label{eq::theta-a-*}
    \EE [\bg(\bX, Y(a); \btheta_a^*)] = \mathbf{0}
\end{equation}
for some known score function $\bg: \cX\times \RR\times \RR^d\rightarrow \RR^d$.%
\footnote{For simplicity, we assume a common score function $\bg$ across actions. Our analysis extends to action-specific score functions; see Appendix \ref{apdx::proof-thm::asymptotic-normality-joint-general} for details.} Here, $(\bX, Y(a))$ denotes a generic observation drawn from the same distribution as $(\bX_t, Y_t(a))$. Unlike many prior works on statistical inference with adaptively collected data \citep{lai1982least, zhang2021statistical, chen2021statistical1, lin2023semi, zhang2024replicable}, we do not assume a well-specified outcome model. In particular, the validity of our inference procedure does not rely on correctly modeling the conditional distribution of $Y_t$ given $A_t$ and $\bX_t$ in $\cP$. This feature makes our approach especially well-suited for adaptive experiments conducted in complex environments, where data may be limited, noise is non-negligible, and model misspecification is common. We illustrate this setup with three examples below.

\begin{example}[Misspecified linear bandits \citep{chen2021statistical}]\label{ex::misspecified-linear-bandits}
Consider a target parameter $\btheta_a^*$ which solves (\ref{eq::theta-a-*}) with 
\begin{equation}\label{eq::target-parameter-misspecified-linear-bandits}
\bg(\bx, y;\btheta) := \bx(y - \bx^\top \btheta).
\end{equation}
This score function corresponds to the best linear approximation of $Y_t(a)$ based on $\bX_t$ for arm $a \in \cA$. When the true dependence of $Y_t(a)$ on $\bX_t$ is linear, (\ref{eq::target-parameter-misspecified-linear-bandits}) yields the true linear parameter. Otherwise, (\ref{eq::target-parameter-misspecified-linear-bandits}) defines the best linear projection of $Y_t(a)$ onto the covariates $\bX_t$ in the least squares sense. 
\end{example}
\begin{example}[Bandits with noisy contexts \citep{guo2024online}]\label{ex::bandits-noisy-contexts}
Suppose the potential outcome $Y_t(a)$ follows a linear model based on the unobserved true covariates $\bS_t$:
$$
Y_t(a) = \bS_t^\top\btheta_a^* + \eta_t,
$$
where $\eta_t$ is a mean-zero noise term. Instead of observing $\bS_t$, the observed context $\bX_t$ is a noisy proxy $\bX_t = \bS_t + \bepsilon_t$, where $\bepsilon_t$ is mean zero, uncorrelated with $\eta_t$, and has covariance matrix $\bSigma_e$. Although the outcome model is linear in $\bS_t$, we do not assume any parametric form for the distribution of the measurement error $\bepsilon_t$, making it difficult to characterize the conditional distribution of $Y_t(a) \mid \bX_t$.

With a non-adaptive data collection process, this model has been well studied in statistics literature and is called the measurement error model \citep{carroll1995measurement, fuller2009measurement}. Assuming $\bSigma_e$ is known, then $\btheta_a^*$ solves (\ref{eq::theta-a-*}) with 
\begin{equation}\label{eq::target-parameter-bandits-noisy-contexts}
    \bg(\bx, y; \btheta) := \bx y - (\bx \bx^\top - \bSigma_e)\btheta.
\end{equation}
\end{example}

The measurement error model is motivated by practice in behavioral psychology. For example, in a mobile health study about reducing negative affect to improve medication adherence \citep{xu2025reinforcement}, the negative affect can only be measured through a short survey, a noisy proxy of the latent negative affect. However, the scientists are truly interested in the relationship between medication adherence and the latent negative affect rather than the noisy proxy.

\begin{example}[Off-policy evaluation in contextual bandits]\label{ex::ope}
Our goal is to estimate the average outcome under a target policy $\pi^e: \cX \to \Delta(\cA)$, defined as $V^* = \EE_{\bX}\EE_{A\sim \pi^e(\cdot|\bX)}Y(A)$. This target can be expressed as $V^* = \sum_{a \in \cA} \btheta_a^*$, where each $\btheta_a^*$ solves (\ref{eq::theta-a-*}) with an arm-specific score function $\bg$:
\begin{equation}\label{eq::target-parameter-ope}
\bg(\bx, y; \btheta) = \bg_a(\bx, y; \btheta) := \pi^e(a|\bx)y - \btheta.
\end{equation}
\end{example}
These examples will be discussed in more detail in Section \ref{seq::examples}.

\subsection{Challenge}
A central challenge in the absence of a well-specified outcome model is that standard Z-estimation approaches \citep{zhang2021statistical, lin2023semi, zhang2024replicable}, which analyze estimators $\hat\btheta_a'$ satisfying
\begin{equation}\label{eq::naive-z-estimator}
\frac1T\sum_{t=1}^T 1_{\{A_t = a\}}\bg\big(\bX_t, Y_t, \hat\btheta_a'\big) = o_p\big(1/\sqrt{T}\big),
\end{equation}
fail to yield valid inference. This failure essentially stems from the interaction between the policy and the complex environment. Specifically, for a general distribution $\cP$, the solution $\btheta$ to the conditional moment equation $\EE[\bg(\bX, Y(a); \btheta) \mid \bX = \bx] = \mathbf{0}$ can vary with $\bx$. As a result, the solution to (\ref{eq::naive-z-estimator}) depends on the behavior policy used to collect the data, and is generally not consistent. This issue, as it arises in Examples \ref{ex::misspecified-linear-bandits} and \ref{ex::bandits-noisy-contexts}, is discussed in detail in \citep{chen2021statistical} and \citep{guo2024online}, respectively.

In this work, we develop statistical inference of the target parameters $\{\btheta_a^*\}_{a\in\cA}$ by studying the asymptotic properties of the inverse probability weighted Z-estimators $\{\hat\btheta_a^{(T)}\}_{a\in\cA}$, where $\hat\btheta_a^{(T)}$ satisfies 
\begin{equation}\label{eq::estimating-equation-general}
    \bG_T(\btheta) := \frac{1}{T} \sum_{t=1}^T \frac{1}{\pi_t(A_t)} 1_{\{A_t = a\}} \bg(\bX_t, Y_t; \btheta) = o_p\big(1/\sqrt{T}\big).
\end{equation}
Here, $\pi_t(a)$ abbreviates the action selection probability $\pi_t(a \mid \cH_{t-1}, \bX_t)$ for $a\in\cA$. The use of inverse probability weights $1/\pi_t(A_t)$ is standard in the off-policy evaluation literature (e.g., \citep{li2011unbiased, uehara2022review}) and has also been employed in specific adaptive settings by \citep{chen2021statistical} and \citep{guo2024online} for various purposes. Our goal is to show that, in our general setting, the inverse probability weights effectively decouple the policy from the underlying environment, enabling valid inference. Specifically, we will establish the joint asymptotic normality of $\{\hat\btheta_a^{(T)}\}_{a \in \cA}$ under mild conditions on the environment and for a broad class of behavior policies.

\section{Statistical Inference Guarantees}
\label{sec::inference-guarantee}
The main results of this section establish the joint asymptotic normality of the proposed estimators $\{\hat\btheta_a^{(T)}\}_{a \in \cA}$, along with consistent estimators of the asymptotic variance, which enable statistical inference for $\{\btheta_a^*\}_{a \in \cA}$. To achieve these guarantees, we highlight the critical role of policy convergence (Definition~\ref{aspt:policy_convergence}), an important condition on the behavior policy used to collect the data. In Section \ref{sec::example-policy-nonconvergence}, we present a concrete example demonstrating how the failure of this condition can lead to the breakdown of asymptotic normality. We begin by formally stating the condition below.

\begin{definition}[Policy convergence] 
\label{aspt:policy_convergence} 
The behavior policy $\pi = \{\pi_t(\cdot)\}_{t\geq 1}$ is said to satisfy policy convergence, or to converge, at action $a \in \cA$ if there exists a stationary policy $\bar\pi: \cX \mapsto \Delta(\cA)$ such that 
\begin{equation}\label{eq::def-policy-convergence}
\pi_t(a|\bX_t, \cH_{t-1}) - \bar\pi(a|\bX_t)\xrightarrow{p} 0\quad\text{ as }t\rightarrow\infty.
\end{equation}
We say that $\pi$ satisfies policy convergence---or simply converges---if (\ref{eq::def-policy-convergence}) holds for every action $a \in \cA$ with respect to a common stationary policy $\bar\pi$.
\end{definition}

The policy convergence condition essentially requires that, in the long run, the action-selection probabilities stabilize in probability across the context. Beyond its practical importance, this condition plays a central theoretical role: it ensures that the conditional variances of each term in the estimating equation (\ref{eq::estimating-equation-general}) given past history stabilize, which guarantees the martingale central limit theorems and thereby yields the desired asymptotic normality of the estimators (see Appendix \ref{apdx::proof-thm::asymptotic-normality-joint-general} for details). In Section~\ref{sec::policy-convergence}, we further investigate sufficient conditions for policy convergence in general environments without a well-specified reward model. These conditions are straightforward to verify, offering concrete guidance for implementing adaptive policies in online settings.

\begin{remark}
Policy convergence, or related notions of policy stability, is a common assumption in the literature on inference with adaptively collected data---particularly in the absence of a well-specified model. For instance, in misspecified linear bandits (Example~\ref{ex::misspecified-linear-bandits}), \citep{chen2021statistical} study inference under an $\epsilon$-greedy algorithm with a weighted online least squares (LS) estimator---a special case that satisfies policy convergence in their setting (see Section~\ref{sec::policy-convergence}). Similar convergence conditions are also assumed in other adaptive inference problems under model misspecification \citep{zhang2022statistical, zhan2021off, zhang2024replicable}. In settings without model misspecification, prior work has shown that various forms of stability conditions lead to valid statistical inference. These conditions are typically weaker than, but often closely related to, policy convergence. For example, \citep{lai1982least} studies stochastic linear regression and derives asymptotic normality of the OLS estimator under a stability condition that requires the design matrix to behave regularly over time. \citep{khamaru2024inference, han2024ucb} analyze the UCB algorithm in multi-armed bandits and show valid inference under stability conditions where the ratio between the number of arm pulls and a diverging sequence converges to one.
\end{remark} 

We next introduce the technical assumptions required to establish the asymptotic properties of the estimator $\hat\btheta_a^{(T)}$ for a single action $a \in \cA$.

\begin{assumption}[Well-separated solution]\label{aspt:identifiability-general}
    $\forall \epsilon>0$, $\inf_{\|\btheta - \btheta_a^*\|_2>\epsilon}\|\EE[\bg(\bX_t, Y_t(a);\btheta)]\|_2>0$.
\end{assumption}

\begin{assumption}[Bounded moments]\label{aspt:boundedness-general}
    There exist constants $R_\theta$, $M_2$ such that \\
    (i) $\|\EE[\bg(\bX_t, Y_t(a);\btheta_a^*)\bg(\bX_t, Y_t(a);\btheta_a^*)^\top|\bX_t]\|_2\leq M_2$, a.e. $\bX_t$; \\
    (ii) $\|\btheta_a^*\|_2<R_\theta$, $\sup_{\|\btheta\|_2\leq R_\theta}\EE[\|\bg(\bX_t, Y_t(a);\btheta)\|_2^2]<\infty$;
    (iii) $\EE [\|\bg(\bX_t, Y_t(a);\btheta_a^*)\|_2^4]<\infty$.
\end{assumption}

\begin{assumption}[Smoothness]\label{aspt:smoothness-general}
(i) The function $\bg(\bx, y; \btheta)$ is twice differentiable with respect to $\btheta$, with $\EE[\nabla\bg(\bX_t, Y_t(a);\btheta_a^*)]$ nonsingular; 
(ii) There exists a function $\phi: \RR^{d_X}\times \RR\mapsto \RR$ such that $\forall \bx, y$, $\sup_{\|\btheta\|_2\leq R_\theta}\|\nabla\bg(\bx, y; \btheta)\|_2\leq \phi(\bx, y)$, and $\EE[\phi(\bX_t, Y_t(a))^2]<\infty$; 
(iii) There exists a constant $\epsilon_0>0$ and a function $\Phi: \RR^{d_X}\times \RR\mapsto \RR$ such that $\sup_{\|\btheta - \btheta_a^*\|_2\leq \epsilon_0, i\in[d]}\|\nabla^2\bg^{(i)}(\bx, y; \btheta)\|_2\leq\Phi(\bx, y)$ and $\EE[\Phi(\bX_t, Y_t(a))]<\infty$. Here $\bg^{(i)}(\bx, y; \btheta)$ denotes the $i$-th entry of $\bg(\bx, y; \btheta)$.
\end{assumption}

\begin{assumption}[Minimum sampling probability] \label{aspt:min-sampling-prob}
$\pi_t(a)\geq \pi_{\min}$ almost surely for some constant $\pi_{\min} \in(0, 1)$.
\end{assumption}

\begin{remark}
We provide a few comments on these assumptions. First, Assumption \ref{aspt:identifiability-general} is a standard condition in Z-estimation that ensures the identifiability of the target parameter $\btheta_a^*$ \citep{van2000asymptotic}. Assumption \ref{aspt:boundedness-general} imposes mild regularity conditions on the boundedness of moments and conditional moments of the score function $\bg(\bX_t, Y_t(a); \btheta_a^*)$ evaluated at the true parameter $\btheta_a^*$. Notably, it does not require the potential outcomes $Y_t(a)$ themselves to be bounded or to satisfy sub-Gaussian tail conditions. Similar assumptions appear in related works, such as \citep{zhan2021off, chen2021statistical, zhang2021statistical, zhang2024replicable}. Assumption \ref{aspt:smoothness-general} imposes smoothness conditions on the score function $\bg$, a standard requirement in classical Z-estimation as well as in recent work on Z- and M-estimation with adaptively collected data \citep{zhang2021statistical, zhang2022statistical}. Finally, Assumption \ref{aspt:min-sampling-prob} imposes a minimum sampling probability, which is frequently assumed in adaptive inference without a well-specified outcome model (e.g., \citep{chen2021statistical, zhang2021statistical, zhang2022statistical}). In practice, in adaptive experiments with highly noisy and complex environment, keeping a minimum exploration rate ensures statistical power for flexible post-hoc analysis \citep{yao2021power, lauffenburger2024impact}, particularly when the analysis objective is not pre-specified at the time of data collection. It also enables policy updates and re-optimization for future users, accommodating potential non-stationarity across trials \citep{liao2020personalized, yang2024targeting}.
\end{remark}

We now state the first main result of this section, which establishes the asymptotic properties of the estimator $\hat\btheta_a^{(T)}$ for a single action $a \in \cA$. The proof is provided in Appendix~\ref{apdx::proof-thm::asymptotic-normality-general}.

\begin{theorem}\label{thm::asymptotic-normality-general}
Suppose Assumptions \ref{aspt:unconfoundedness} and  \ref{aspt:identifiability-general}--\ref{aspt:min-sampling-prob} hold for a given action $a\in\cA$. If the behavior policy $\pi$ converges to a policy $\bar \pi$ at action $a$ in the sense of Definition \ref{aspt:policy_convergence}, then there exists an estimator sequence $\{\hat\btheta_a^{(T)}\}_{T\geq 1}$ satisfying the estimating equation~\eqref{eq::estimating-equation-general} with $\|\hat\btheta_a^{(T)}\|_2 \leq R_\theta$ for all $T$. Moreover, for any such sequence, as $T\rightarrow \infty$, 
\begin{equation}\label{eq::asymptotic-normality-general}
    \sqrt{T}(\hat\btheta_a^{(T)} - \btheta_a^*)\xrightarrow{d} \cN\left(\mathbf{0}, \bSigma_{a}^*\right).
\end{equation}
Here $\bSigma_{a}^*:= \bJ_a^{-1}\bar \bI_a \bJ_a^{-1, \top}$, with
$\bar\bI_a := \EE\big[\frac{1}{\bar\pi(a|\bX_t)}\bg(\bX_t, Y_t(a);\btheta_a^*)\bg(\bX_t, Y_t(a);\btheta_a^*)^\top\big]$, \\
$\bJ_a:= \EE[\nabla\bg(\bX_t, Y_t(a);\btheta_a^*)]$.
\end{theorem}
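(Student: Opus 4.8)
The argument proceeds in three stages: (i) existence and consistency of a solution sequence $\hat\btheta_a^{(T)}$ to \eqref{eq::estimating-equation-general}, (ii) a linearization of the estimating equation around $\btheta_a^*$, and (iii) a martingale central limit theorem for the leading stochastic term. For (i), the key observation is that, thanks to Assumption~\ref{aspt:unconfoundedness} (unconfoundedness) together with the minimum sampling probability (Assumption~\ref{aspt:min-sampling-prob}), each summand $\frac{1}{\pi_t(A_t)}1_{\{A_t=a\}}\bg(\bX_t,Y_t;\btheta)$ has conditional expectation $\EE[\bg(\bX_t,Y_t(a);\btheta)\mid\cH_{t-1},\bX_t]$ given $(\cH_{t-1},\bX_t)$ --- the inverse-probability weight exactly cancels the policy. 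Hence $\bG_T(\btheta) - \EE[\bg(\bX,Y(a);\btheta)]$ is an average of a martingale difference array (after centering by the context-conditional mean and then averaging over contexts). Using Assumption~\ref{aspt:boundedness-general}(ii) for a uniform $L^2$ bound on the compact ball $\|\btheta\|_2\le R_\theta$ and Assumption~\ref{aspt:smoothness-general}(ii) for equicontinuity of $\btheta\mapsto\bG_T(\btheta)$, I would establish a uniform law of large numbers $\sup_{\|\btheta\|_2\le R_\theta}\|\bG_T(\btheta) - \EE[\bg(\bX,Y(a);\btheta)]\|_2\xrightarrow{p}0$ (e.g.\ via a bracketing/covering argument combined with an $L^2$-maximal inequality for martingales). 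Combined with the well-separated-solution condition (Assumption~\ref{aspt:identifiability-general}), this yields a near-minimizer $\hat\btheta_a^{(T)}$ of $\|\bG_T(\cdot)\|_2$ with $\|\hat\btheta_a^{(T)}\|_2\le R_\theta$, $\bG_T(\hat\btheta_a^{(T)})=o_p(1/\sqrt T)$, and $\hat\btheta_a^{(T)}\xrightarrow{p}\btheta_a^*$.

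For (ii), I would Taylor-expand: $0 = \sqrt T\,\bG_T(\hat\btheta_a^{(T)}) + o_p(1) = \sqrt T\,\bG_T(\btheta_a^*) + \bar\bJ_T\cdot\sqrt T(\hat\btheta_a^{(T)}-\btheta_a^*)$, where $\bar\bJ_T := \frac1T\sum_t \frac{1}{\pi_t(A_t)}1_{\{A_t=a\}}\int_0^1\nabla\bg(\bX_t,Y_t;\btheta_a^* + s(\hat\btheta_a^{(T)}-\btheta_a^*))\,ds$. The second-derivative bound (Assumption~\ref{aspt:smoothness-general}(iii)) controls the remainder once $\|\hat\btheta_a^{(T)}-\btheta_a^*\|_2\le\epsilon_0$ (which holds w.h.p.\ by consistency), and another martingale ULLN applied to $\nabla\bg$ --- again the IPW weight makes it a martingale average with context-conditional mean $\EE[\nabla\bg(\bX_t,Y_t(a);\btheta)\mid\bX_t]$ --- gives $\bar\bJ_T\xrightarrow{p}\bJ_a = \EE[\nabla\bg(\bX_t,Y_t(a);\btheta_a^*)]$, which is nonsingular by Assumption~\ref{aspt:smoothness-general}(i). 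Thus $\sqrt T(\hat\btheta_a^{(T)}-\btheta_a^*) = -\bJ_a^{-1}\sqrt T\,\bG_T(\btheta_a^*) + o_p(1)$, reducing everything to the asymptotics of $\sqrt T\,\bG_T(\btheta_a^*)$.

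For (iii), write $\sqrt T\,\bG_T(\btheta_a^*) = \frac{1}{\sqrt T}\sum_{t=1}^T \bxi_t$ with $\bxi_t := \frac{1}{\pi_t(A_t)}1_{\{A_t=a\}}\bg(\bX_t,Y_t;\btheta_a^*)$, which is a martingale difference array with respect to $\{\cH_{t-1}\}$ \emph{only after re-centering}; in fact $\EE[\bxi_t\mid\cH_{t-1}] = \EE[\bg(\bX,Y(a);\btheta_a^*)] = \mathbf 0$ by \eqref{eq::theta-a-*}, so it is already a genuine MDS. I would then verify the two hypotheses of the multivariate martingale CLT. The conditional-variance (stability) condition is $\frac1T\sum_{t=1}^T \EE[\bxi_t\bxi_t^\top\mid\cH_{t-1}]\xrightarrow{p}\bar\bI_a$: here $\EE[\bxi_t\bxi_t^\top\mid\cH_{t-1}] = \EE\big[\frac{1}{\pi_t(a\mid\bX_t,\cH_{t-1})}\,\EE[\bg\bg^\top\mid\bX_t,Y_t(a)=\cdot]\big|\cH_{t-1}\big]$ depends on $\cH_{t-1}$ only through the random policy $\pi_t(a\mid\cdot,\cH_{t-1})$, and \textbf{this is exactly where policy convergence enters} --- replacing $\pi_t(a\mid\bX_t,\cH_{t-1})$ by $\bar\pi(a\mid\bX_t)$ (valid by Definition~\ref{aspt:policy_convergence}, with the error controlled uniformly using $\pi_t(a)\ge\pi_{\min}$, so $1/\pi_t$ is bounded and the difference is $o_p(1)$ by dominated convergence after passing to a subsequence) turns the Cesàro average into $\EE[\bar\bI_a] = \bar\bI_a$ by a ULLN over the i.i.d.\ $\{\bX_t,Y_t(a)\}$. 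The conditional Lindeberg condition $\frac1T\sum_t \EE[\|\bxi_t\|_2^2 1_{\{\|\bxi_t\|_2>\varepsilon\sqrt T\}}\mid\cH_{t-1}]\xrightarrow{p}0$ follows from the uniform $L^2$-integrability afforded by the fourth-moment bound in Assumption~\ref{aspt:boundedness-general}(iii) (again using $1/\pi_t\le 1/\pi_{\min}$). The martingale CLT then gives $\sqrt T\,\bG_T(\btheta_a^*)\xrightarrow{d}\cN(\mathbf 0,\bar\bI_a)$, and Slutsky with the linearization yields \eqref{eq::asymptotic-normality-general} with $\bSigma_a^* = \bJ_a^{-1}\bar\bI_a\bJ_a^{-1,\top}$.

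The main obstacle is the stability step in (iii): showing $\frac1T\sum_t\EE[\bxi_t\bxi_t^\top\mid\cH_{t-1}]\xrightarrow{p}\bar\bI_a$. Policy convergence only gives \emph{pointwise} (in $t$) convergence of $\pi_t(a\mid\bX_t,\cdot)$ to $\bar\pi(a\mid\bX_t)$ in probability, whereas we need a Cesàro-averaged statement that also averages out the randomness in $(\bX_t,Y_t(a))$. The clean way is a triangle-inequality decomposition: $\frac1T\sum_t\EE[\bxi_t\bxi_t^\top\mid\cH_{t-1}] - \bar\bI_a = \big(\frac1T\sum_t\EE[\bxi_t\bxi_t^\top\mid\cH_{t-1}] - \frac1T\sum_t\widetilde\bI_t\big) + \big(\frac1T\sum_t\widetilde\bI_t - \bar\bI_a\big)$, where $\widetilde\bI_t := \EE\big[\frac{1}{\bar\pi(a\mid\bX_t)}\bg\bg^\top\mid\cH_{t-1}\big]$; the second bracket is a martingale-plus-i.i.d. average handled by a standard ULLN/MDS argument, and the first bracket is bounded in norm by $\frac{M_2}{\pi_{\min}^2}\cdot\frac1T\sum_t\EE[|\pi_t(a\mid\bX_t,\cH_{t-1}) - \bar\pi(a\mid\bX_t)|\mid\cH_{t-1}]$, which I would show is $o_p(1)$ by combining policy convergence with uniform integrability (the weights are bounded) --- a Toeplitz/Cesàro lemma for convergence in probability makes this rigorous. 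All remaining steps are routine given the assumptions.
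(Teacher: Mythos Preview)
Your proposal is correct and follows essentially the same route as the paper's proof: uniform LLN via a covering argument for consistency (the paper's Lemma~B.1), Taylor expansion with $\nabla\bG_T(\btheta_a^*)\xrightarrow{p}\bJ_a$ and bounded Hessian (Lemmas~B.2--B.3), and the martingale CLT of Dvoretzky with the conditional-variance stabilization handled exactly by your triangle-inequality decomposition plus a Ces\`aro/$L^1$ argument from policy convergence (Lemma~B.4). One small point you glossed over: to exhibit a sequence with $\bG_T(\hat\btheta_a^{(T)})=o_p(1/\sqrt T)$ (not merely $O_p(1/\sqrt T)$), the paper constructs the one-step Newton point $\tilde\btheta_a^{(T)}=\btheta_a^*-\bJ_a^{-1}\bG_T(\btheta_a^*)$ and shows $\bG_T(\tilde\btheta_a^{(T)})=o_p(1/\sqrt T)$, so that the argmin over the ball inherits this rate---you will need this device (or an equivalent) when filling in stage~(i).
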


\emph{The role of inverse probability weights.} To build intuition for Theorem \ref{thm::asymptotic-normality-general}, we present a heuristic argument illustrating how the inverse probability weights in the estimating equation (\ref{eq::estimating-equation-general}), which defines our estimator $\hat\btheta_a^{(T)}$, contribute to achieving valid inference. Specifically, a key step in the proof of Theorem \ref{thm::asymptotic-normality-general} is to show that 
$
\bG_T(\btheta_a^*) = \frac1T\sum_{t=1}^T \bZ_t
$ 
forms a sum of a martingale difference sequence with respect to the filtration $\{\cH_{t}\}_{t=0}^T$, where 
$$
\bZ_t:= \frac{1}{\pi_t(A_t)} 1_{\{A_t = a\}} \bg(\bX_t, Y_t; \btheta_a^*).
$$ 
Define $w_a(A_t):= \frac{1}{\pi_t(A_t)}1_{\{A_t = a\}}$. Then we have
\begin{align}
\EE[\bZ_t|\cH_{t-1}]
&\stackrel{(a)}{=} \EE_{\bX_t}\!\left[\EE_{A_t\sim \pi_t(\cdot), Y_t(a)}[\bZ_t|\cH_{t-1},\! \bX_t]\Big|\cH_{t-1}\right]\nonumber\\
& \stackrel{(b)}{=} \EE_{\bX_t}\Big[\EE_{A_t\sim \pi_t(\cdot)}\big[w_a(A_t)\big|\cH_{t-1}, \!\bX_t\big]\!\cdot \!\EE_{Y_t(a)}[\bg(\bX_t, Y_t(a);\btheta_a^*)|\cH_{t-1},\! \bX_t]\Big|\cH_{t-1}\Big]\nonumber\\
& \stackrel{(c)}{=} \EE_{\bX_t}\!\left[1\cdot \EE_{Y_t(a)}[\bg(\bX_t, Y_t(a);\btheta_a^*)|\cH_{t-1},\! \bX_t]\Big|\cH_{t-1}\right]\nonumber\\
& \stackrel{(d)}{=} \EE\left[\bg(\bX_t, Y_t(a);\btheta_a^*)\right] = \mathbf{0}.\nonumber
\end{align}
Here, step (a) follows from the law of iterated expectations. Step (b) uses Assumption~\ref{aspt:unconfoundedness}, which ensures that $A_t$ and $Y_t(a)$ are independent condition on $(\cH_{t-1}, \bX_t)$. Step (c) follows from a direct computation:
$$
\EE_{A_t\sim \pi_t(\cdot)}\big[w_a(A_t)\big|\cH_{t-1}, \!\bX_t\big] = \sum_{a'\in\cA}\pi_t(a')\cdot \frac{1_{\{a'=a\}}}{\pi_t(a')} = 1.
$$
Step (d) again applies the law of iterated expectations along with the assumption that $\{\bX_t, Y_t(a): a\in\cA\}$ are i.i.d. over time. In contrast, if the estimator is derived without incorporating inverse probability weights---as in (\ref{eq::naive-z-estimator})---we instead have
$$
\EE[\bZ_t|\cH_{t-1}]=\EE_{\bX_t}\!\left[\pi_t(a|\cH_{t-1},\! \bX_t)\cdot \EE_{Y_t(a)}[\bg(\bX_t, Y_t(a);\btheta_a^*)|\cH_{t-1},\! \bX_t]\Big|\cH_{t-1}\right],
$$
which clearly depends on the behavior policy $\pi_t(\cdot)$ and is generally nonzero. This in turn prevents the unweighted Z-estimator in (\ref{eq::naive-z-estimator}) from being consistent.

\emph{Comparison to prior work.} In the setting of misspecified linear bandits (Example \ref{ex::misspecified-linear-bandits}), Theorem 4.1 of \citep{chen2021statistical} establishes the asymptotic normality of $\hat\btheta_a^{(T)}$ under a specific converging behavior policy: an $\epsilon$-greedy policy paired with a weighted online LS estimator. In contrast, Theorem \ref{thm::asymptotic-normality-general} applies to any online decision-making algorithm that satisfies policy convergence and a minimum sampling probability condition. Section \ref{sec::policy-convergence} demonstrates that a broad class of policies meet these requirements. In practice, statisticians seeking to conduct inference often do not control the algorithm used to collect the data. Therefore, Theorem \ref{thm::asymptotic-normality-general} offers a broader and more flexible generalization of Theorem 4.1 in \citep{chen2021statistical}, both in terms of the allowable data collection mechanisms and the inferential targets. 

In practice, it is often desirable to jointly infer the collection $\{\btheta_a^*\}_{a\in\cA}$, where for each arm $a\in\cA= \{1, \ldots, K\}$, $\btheta_a^*$ denotes the solution to (\ref{eq::theta-a-*}). Such joint inference is particularly relevant for tasks like estimating individual treatment effects or evaluating the value of a general target policy. To achieve this goal, Theorem \ref{thm::asymptotic-normality-joint-general} below establishes the joint asymptotic normality of the estimators $\{\hat\btheta_a^{(T)}\}_{a\in\cA}$, with the proof provided in Appendix \ref{apdx::proof-thm::asymptotic-normality-joint-general}. For simplicity, we assume a common score function $\bg$ across actions, though our analysis extends to action-specific score functions; see Appendix \ref{apdx::proof-thm::asymptotic-normality-joint-general} for details. 

\begin{theorem}\label{thm::asymptotic-normality-joint-general}
Suppose Assumption \ref{aspt:unconfoundedness} holds, and Assumptions \ref{aspt:identifiability-general}--\ref{aspt:min-sampling-prob} hold for every action $a\in\cA$. If the behavior policy $\pi$ satisfies policy convergence to a policy $\bar \pi$ in the sense of Definition \ref{aspt:policy_convergence}, then there exist estimators $\{\hat\btheta_a^{(T)}\}_{a\in\cA, T\geq 1}$ such that (\ref{eq::estimating-equation-general}) holds for each $a\in\cA$, and $\|\hat\btheta_a^{(T)}\|_2\leq R_\theta$ for all $a\in\cA, T\geq 1$. In addition, any such estimators satisfy
\begin{equation}\label{eq::asymptotic-normality-joint-general}
    \sqrt{T}(\hat\btheta^{(T)} - \btheta^*)\xrightarrow{d}\cN
    \left(
    \mathbf{0},\bSigma^*
    \right)
\end{equation}
as $T\rightarrow \infty$. Here $\hat\btheta^{(T)} = \big((\hat\btheta_1^{(T)})^\top, \ldots, (\hat\btheta_K^{(T)})^\top\big)^\top$, $\btheta^* = \big(\btheta_1^{*, \top}, \ldots, \btheta_K^{*, \top}\big)^\top$, and $\bSigma^* = \diag(\bSigma_1^*, \ldots, \bSigma_K^*)$, with each $\bSigma_a^*$ defined as in Theorem \ref{thm::asymptotic-normality-general}.
\end{theorem}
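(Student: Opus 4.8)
The plan is to reduce the joint statement to a stacked version of the single-action argument already used for Theorem~\ref{thm::asymptotic-normality-general}. First I would verify existence of the estimator tuple: for each $a\in\cA$, the hypotheses of Theorem~\ref{thm::asymptotic-normality-general} hold, so there is a sequence $\{\hat\btheta_a^{(T)}\}$ with $\|\hat\btheta_a^{(T)}\|_2\le R_\theta$ solving \eqref{eq::estimating-equation-general} up to $o_p(1/\sqrt T)$; stacking these coordinate blocks gives $\hat\btheta^{(T)}$ with the claimed norm bound. Consistency of each block, $\hat\btheta_a^{(T)}\xrightarrow{p}\btheta_a^*$, follows exactly as in the single-action proof from Assumptions~\ref{aspt:identifiability-general}--\ref{aspt:min-sampling-prob} plus policy convergence (a uniform law of large numbers for the weighted empirical criterion, then the well-separation Assumption~\ref{aspt:identifiability-general}); there is nothing new here because consistency is a per-block statement.

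The substance is the joint CLT. I would linearize each block as in Theorem~\ref{thm::asymptotic-normality-general}: a Taylor expansion of $\bG_T$ around $\btheta_a^*$, using Assumption~\ref{aspt:smoothness-general}(i)--(iii) to control the Hessian remainder and to show the empirical Jacobian $\frac1T\sum_t \frac{1_{\{A_t=a\}}}{\pi_t(A_t)}\nabla\bg(\bX_t,Y_t;\tilde\btheta)$ converges in probability to $\bJ_a$ (the inverse-probability weighting again makes the conditional mean of each term equal to $\EE[\nabla\bg(\bX_t,Y_t(a);\cdot)]$, so a martingale LLN applies, and the $\tilde\btheta\to\btheta_a^*$ perturbation is absorbed via the dominating functions $\phi,\Phi$). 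This yields
\#\sqrt T(\hat\btheta_a^{(T)}-\btheta_a^*) = -\bJ_a^{-1}\sqrt T\,\bG_T^{(a)}(\btheta_a^*) + o_p(1),\nonumber\#
so the whole problem reduces to the joint limit of the stacked vector $\big(\sqrt T\,\bG_T^{(1)}(\btheta_1^*)^\top,\dots,\sqrt T\,\bG_T^{(K)}(\btheta_K^*)^\top\big)^\top = \frac1{\sqrt T}\sum_{t=1}^T \bZ_t$, where now $\bZ_t$ is the $Kd$-vector whose $a$-block is $\frac{1_{\{A_t=a\}}}{\pi_t(A_t)}\bg(\bX_t,Y_t;\btheta_a^*)$. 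By the single-action computation each block of $\bZ_t$ is a martingale difference w.r.t.\ $\{\cH_t\}$, hence so is the stacked $\bZ_t$; I then invoke a multivariate martingale CLT (e.g.\ the Cram\'er--Wold device applied to the scalar MDS $\bv^\top\bZ_t$, or a vector-valued version directly). Two conditions must be checked: (i) the conditional second-moment sum $\frac1T\sum_t \EE[\bZ_t\bZ_t^\top\mid\cH_{t-1}]$ converges in probability to $\bSigma^*$'s middle matrix, i.e.\ to $\diag(\bar\bI_1,\dots,\bar\bI_K)$; and (ii) a conditional Lindeberg / Lyapunov condition. For (i), the cross-block terms ($a\ne a'$) vanish identically because $1_{\{A_t=a\}}1_{\{A_t=a'\}}=0$, which is exactly why the asymptotic covariance is block-diagonal; each diagonal block has conditional expectation $\EE_{\bX_t}\!\big[\frac1{\pi_t(a\mid\cdot)}\EE_{Y_t(a)}[\bg\bg^\top\mid\cdot]\mid\cH_{t-1}\big]$, and here policy convergence $\pi_t(a\mid\bX_t)\to\bar\pi(a\mid\bX_t)$ together with the minimum-probability bound (Assumption~\ref{aspt:min-sampling-prob}, giving a $1/\pi_{\min}$ envelope for dominated convergence) and the conditional-moment bound Assumption~\ref{aspt:boundedness-general}(i) lets me replace $\pi_t$ by $\bar\pi$ and apply a martingale LLN to get $\bar\bI_a$. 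For (ii), the fourth-moment bound Assumption~\ref{aspt:boundedness-general}(iii) plus $\pi_t\ge\pi_{\min}$ bounds $\frac1{T^2}\sum_t\EE[\|\bZ_t\|^4\mid\cH_{t-1}]\to 0$, giving Lyapunov. Finally Slutsky combines the Jacobian limit, the CLT, and $\bSigma_a^*=\bJ_a^{-1}\bar\bI_a\bJ_a^{-1,\top}$ to produce \eqref{eq::asymptotic-normality-joint-general}.

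The main obstacle is really the verification of condition (i) under only policy \emph{convergence in probability} (not almost surely) and without boundedness of $Y_t(a)$: one has to argue that the conditional-variance proxy stabilizes despite the policy weights $1/\pi_t$ being random and only converging in probability. The clean route is to fix a context, use $\pi_t\ge\pi_{\min}$ to get a uniform $L^1$ envelope $M_2/\pi_{\min}$ for the integrand (via Assumption~\ref{aspt:boundedness-general}(i)), apply a dominated-convergence argument in probability to pass $\pi_t\to\bar\pi$ inside, and then invoke the martingale strong/weak LLN for the resulting stationary summand; this is the step I expect to need the most care, and it is presumably where the bulk of Appendix~\ref{apdx::proof-thm::asymptotic-normality-joint-general} goes. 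Everything else --- existence, consistency, linearization, block-diagonality, Lyapunov --- is routine given the single-action theorem and the stated assumptions, so I would present the joint proof as ``apply Theorem~\ref{thm::asymptotic-normality-general}'s machinery blockwise, observe cross-terms vanish, and conclude by Cram\'er--Wold.''
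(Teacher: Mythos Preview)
Your proposal is correct and follows essentially the same route as the paper: existence and linearization are imported blockwise from Theorem~\ref{thm::asymptotic-normality-general}, the joint CLT is obtained via Cram\'er--Wold on the stacked martingale-difference vector, the cross-block terms vanish because $1_{\{A_t=a\}}1_{\{A_t=a'\}}=0$, and Lindeberg is handled by the fourth-moment bound. You have also correctly identified the one delicate point---stabilization of the conditional variance under mere convergence in probability of the policy---which the paper dispatches by an $L^1$/uniform-integrability argument (bounding $|\bc^\top(\bI_{a,t}-\bar\bI_{a,t})\bc|$ by $M_2|\pi_t(a)-\bar\pi(a\mid\bX_t)|/\pi_{\min}^2$ and using that convergence in probability of a bounded sequence implies $L^1$ convergence, then passing to Ces\`aro means), which is exactly the ``envelope $+$ dominated convergence'' step you anticipated.
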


Theorem \ref{thm::asymptotic-normality-joint-general} indicates that the estimators $\{\hat\btheta_a^{(T)}\}_{a\in\cA}$ are asymptotically uncorrelated. Intuitively, this is because each $\hat\btheta_a^{(T)}$ is constructed using data exclusively from time points when action $a$ is selected, and these sets of time points are disjoint across different actions.

The asymptotic variances in Theorems \ref{thm::asymptotic-normality-general} and \ref{thm::asymptotic-normality-joint-general} can be consistently estimated from data, as shown in the proposition below, thereby enabling valid statistical inference. The proof is provided in Appendix \ref{apdx::proof-thm::consistent-var-estimator-general}.

\begin{proposition}\label{thm::consistent-var-estimator-general}
Under the same conditions of Theorem \ref{thm::asymptotic-normality-general}, the asymptotic variance $\bSigma_a^*$ can be consistently estimated by 
\begin{equation}\label{eq::estimator-asympt-var}
    \hat\bSigma_a = \left[\hat{\dot{\bG}}_{a, T}\right]^{-1}\hat \bI_{a, T}\left[\hat{\dot{\bG}}_{a, T}\right]^{-1, \top},
\end{equation}
where 
\begin{align*}
\hat{\dot{\bG}}_{a, T} &:= \frac1T\sum_{t=1}^T\frac1{\pi_t(A_t)}1_{\{A_t = a\}}\nabla\bg(\bX_t, Y_t; \hat \btheta_a^{(T)}),\\
\hat\bI_{a, T} & := \frac1T\sum_{t=1}^T\frac{1}{\pi_t(A_t)^2}1_{\{A_t = a\}}\bg(\bX_t, Y_t; \hat \btheta_a^{(T)})\bg(\bX_t, Y_t; \hat \btheta_a^{(T)})^\top.
\end{align*}
\end{proposition}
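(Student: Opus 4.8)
\textit{Proof plan.} The plan is to establish the two component convergences $\hat{\dot{\bG}}_{a,T}\xrightarrow{p}\bJ_a$ and $\hat\bI_{a,T}\xrightarrow{p}\bar\bI_a$, and then to conclude by the continuous mapping theorem: since $\bJ_a=\EE[\nabla\bg(\bX_t,Y_t(a);\btheta_a^*)]$ is nonsingular by Assumption~\ref{aspt:smoothness-general}(i), with probability tending to one $\hat{\dot{\bG}}_{a,T}$ is invertible and $[\hat{\dot{\bG}}_{a,T}]^{-1}\xrightarrow{p}\bJ_a^{-1}$, whence $\hat\bSigma_a=[\hat{\dot{\bG}}_{a,T}]^{-1}\hat\bI_{a,T}[\hat{\dot{\bG}}_{a,T}]^{-1,\top}\xrightarrow{p}\bJ_a^{-1}\bar\bI_a\bJ_a^{-1,\top}=\bSigma_a^*$. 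Throughout I use, from Theorem~\ref{thm::asymptotic-normality-general}, that an estimator sequence with $\|\hat\btheta_a^{(T)}\|_2\le R_\theta$ exists and is consistent, $\hat\btheta_a^{(T)}\xrightarrow{p}\btheta_a^*$ (a consequence of (\ref{eq::asymptotic-normality-general})). Each component convergence is then handled in two stages: a plug-in stage that replaces $\hat\btheta_a^{(T)}$ by $\btheta_a^*$ inside the summands, and a limit stage that analyzes the resulting averages.

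\emph{Plug-in stage.} For $\hat\bI_{a,T}$, a mean-value expansion of $\bg$ in $\btheta$ over the segment from $\btheta_a^*$ to $\hat\btheta_a^{(T)}$ (which lies in $\{\|\btheta\|_2\le R_\theta\}$), the envelope $\phi$ of Assumption~\ref{aspt:smoothness-general}(ii), the elementary bound $\|\ab\ab^\top-\bbb\bbb^\top\|_2\le\|\ab-\bbb\|_2(\|\ab\|_2+\|\bbb\|_2)$, and $\pi_t(A_t)\ge\pi_{\min}$ bound $\|\hat\bI_{a,T}-\bI'_{a,T}\|_2$ by a constant times $\|\hat\btheta_a^{(T)}-\btheta_a^*\|_2$ times $\frac1T\sum_{t=1}^T 1_{\{A_t=a\}}\phi(\bX_t,Y_t)\big(\|\bg(\bX_t,Y_t;\btheta_a^*)\|_2+\phi(\bX_t,Y_t)\big)$, where $\bI'_{a,T}$ denotes the same average with $\hat\btheta_a^{(T)}$ replaced by $\btheta_a^*$. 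For $\hat{\dot{\bG}}_{a,T}$, expanding $\nabla\bg$ on the event $\{\|\hat\btheta_a^{(T)}-\btheta_a^*\|_2\le\epsilon_0\}$ (of probability $\to1$) with the envelope $\Phi$ of Assumption~\ref{aspt:smoothness-general}(iii) gives $\|\hat{\dot{\bG}}_{a,T}-\dot{\bG}'_{a,T}\|_2$ bounded by (constant)$\times\|\hat\btheta_a^{(T)}-\btheta_a^*\|_2\times\frac1T\sum_{t=1}^T 1_{\{A_t=a\}}\Phi(\bX_t,Y_t)$. In both cases the average on the right has summands with conditional mean given $\cH_{t-1}$ bounded uniformly in $t$ (use $\pi_t(a)\le1$ and $\bX_t\perp\cH_{t-1}$, so the bound is an expectation of an i.i.d.\ quantity), which is finite by Cauchy--Schwarz together with Assumptions~\ref{aspt:boundedness-general}(ii) and~\ref{aspt:smoothness-general}(ii)--(iii); hence these averages are $O_p(1)$ by Markov, and the plug-in error is $o_p(1)$ by consistency of $\hat\btheta_a^{(T)}$.

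\emph{Limit stage.} It remains to show $\dot{\bG}'_{a,T}\xrightarrow{p}\bJ_a$ and $\bI'_{a,T}\xrightarrow{p}\bar\bI_a$. Conditioning on $\cH_{t-1}$ and using Assumption~\ref{aspt:unconfoundedness}, $\pi_t(a)\ge\pi_{\min}$, and that $\{\bX_t,Y_t(a)\}$ are i.i.d.\ over $t$ (exactly the computation in the ``role of inverse probability weights'' display), one gets $\EE[\tfrac{1_{\{A_t=a\}}}{\pi_t(A_t)}\nabla\bg(\bX_t,Y_t;\btheta_a^*)\mid\cH_{t-1}]=\bJ_a$, whereas $\EE[\tfrac{1_{\{A_t=a\}}}{\pi_t(A_t)^2}\bg\bg^\top\mid\cH_{t-1}]=\EE_{\bX_t}[\pi_t(a\mid\bX_t,\cH_{t-1})^{-1}\bar\bM(\bX_t)\mid\cH_{t-1}]$, where $\bar\bM(\bx):=\EE[\bg(\bx,Y(a);\btheta_a^*)\bg(\bx,Y(a);\btheta_a^*)^\top\mid\bX=\bx]$ satisfies $\|\bar\bM(\bx)\|_2\le M_2$ by Assumption~\ref{aspt:boundedness-general}(i). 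Thus $\dot{\bG}'_{a,T}-\bJ_a$ is an average of a martingale difference sequence whose increments have second moments bounded uniformly by $2\pi_{\min}^{-2}\EE[\phi(\bX,Y(a))^2]+2\|\bJ_a\|_2^2<\infty$ (Assumption~\ref{aspt:smoothness-general}(ii)), hence converges to $\mathbf{0}$ in $L^2$; and $\bI'_{a,T}-\bar\bI_a$ decomposes into a martingale-difference average with $L^2$-bounded increments — using $\EE[\|\bg(\bX,Y(a);\btheta_a^*)\|_2^4]<\infty$ (Assumption~\ref{aspt:boundedness-general}(iii)) and $\pi_t\ge\pi_{\min}$, this is $O_p(T^{-1/2})$ in $L^2$ — plus a bias term $\frac1T\sum_{t=1}^T\EE_{\bX_t}[(\pi_t(a\mid\bX_t,\cH_{t-1})^{-1}-\bar\pi(a\mid\bX_t)^{-1})\bar\bM(\bX_t)\mid\cH_{t-1}]$, where the centering uses $\EE_{\bX_t}[\bar\pi(a\mid\bX_t)^{-1}\bar\bM(\bX_t)\mid\cH_{t-1}]=\bar\bI_a$ by $\bX_t\perp\cH_{t-1}$.

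The main obstacle is this bias term, which is exactly the place where policy convergence enters. First, $\bar\pi(a\mid\bX)\ge\pi_{\min}$ almost surely — otherwise $\pi_t(a)-\bar\pi(a\mid\bX_t)$ would be bounded away from $0$ with positive probability for every $t$, contradicting Definition~\ref{aspt:policy_convergence} — so both reciprocals lie in $[1,\pi_{\min}^{-1}]$, $|\pi_t(a)^{-1}-\bar\pi(a\mid\bX_t)^{-1}|\le\pi_{\min}^{-2}|\pi_t(a\mid\bX_t,\cH_{t-1})-\bar\pi(a\mid\bX_t)|$, and the bias term is bounded in operator norm by $\pi_{\min}^{-2}M_2\cdot\frac1T\sum_{t=1}^T\delta_t$ with $\delta_t:=\EE_{\bX_t}[|\pi_t(a\mid\bX_t,\cH_{t-1})-\bar\pi(a\mid\bX_t)|\mid\cH_{t-1}]\in[0,1]$. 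The remaining, and key, step is to pass from $\pi_t(a\mid\bX_t,\cH_{t-1})-\bar\pi(a\mid\bX_t)\xrightarrow{p}0$ (convergence in the index $t$ for the current draw $\bX_t$) to $\frac1T\sum_{t=1}^T\delta_t\xrightarrow{p}0$: since the difference is bounded by $1$, dominated convergence gives $\EE[\delta_t]=\EE|\pi_t(a\mid\bX_t,\cH_{t-1})-\bar\pi(a\mid\bX_t)|\to0$, so $\EE[\frac1T\sum_{t=1}^T\delta_t]=\frac1T\sum_{t=1}^T\EE[\delta_t]\to0$ by Cesàro, and since $\delta_t\ge0$, Markov's inequality yields $\frac1T\sum_{t=1}^T\delta_t\xrightarrow{p}0$. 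Hence the bias term is $o_p(1)$; combining with the martingale and plug-in parts gives $\hat{\dot{\bG}}_{a,T}\xrightarrow{p}\bJ_a$ and $\hat\bI_{a,T}\xrightarrow{p}\bar\bI_a$, and the continuous mapping theorem completes the proof.
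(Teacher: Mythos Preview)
Your proposal is correct and follows essentially the same approach as the paper's proof: the paper likewise splits each component into a plug-in error (controlled via the envelopes $\phi$ and $\Phi$ together with consistency of $\hat\btheta_a^{(T)}$) and a limit at $\btheta_a^*$ handled by a martingale-difference average plus a bias term driven to zero by policy convergence through the same bounded/$L^1$/Ces\`aro argument. The only cosmetic difference is that the paper carries out the bias analysis entrywise via $\bc^\top(\cdot)\bc$ (reusing displays from the proof of Lemma~\ref{lem::asymptotic-normality-true-G-general}), whereas you work directly with operator norms and make the step $\bar\pi(a\mid\bX)\ge\pi_{\min}$ explicit.
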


Importantly, Proposition \ref{thm::consistent-var-estimator-general} shows that consistent estimation of the asymptotic variance does not rely on knowledge of the limit policy $\bar\pi$.

\subsection{Examples of Policy Non-convergence}\label{sec::example-policy-nonconvergence}



Previous work has shown that for off-policy evaluation with adaptively collected data, inverse probability weighting (IPW) estimators can exhibit non-normal asymptotic behavior when no constraints are imposed on the behavior policy \citep{hadad2021confidence, bibaut2021post}---for instance, when the behavior policy lacks a minimum sampling probability. In these works, the authors attribute such non-normality to the presence of unbounded inverse probability weights. In this section, we highlight another fundamental but less emphasized contributing factor to the non-normality of adaptive inference: \emph{policy non-convergence}. Using a concrete numerical example, we show that under a misspecified environment, even standard policies may fail to converge, and such non-convergence may in turn lead to non-normal asymptotic behavior of IPW-Z estimators. Similar phenomena have been observed in the context of estimating arm means with Thompson Sampling behavior policies \citep{halder2025stable}.



To illustrate, we consider a two-armed bandit environment with contexts generated as $\bX_t \sim \text{Uniform}(\{-4, 1\})$ and mean reward $y(\bx, a) = \mathbb{E}[Y_t(a) | \bX_t = \bx]$ given by 
$$
    y(-4, a_0) = y(1, a_0) = 1/2, \quad y(-4, a_1) = y(1, a_1) = 1/12.
$$
The rewards are subject to Gaussian noise. Under this setting, we independently run two algorithms---LinUCB and Random---across $2500$ replications, each for $10^4$ steps. LinUCB employs a working reward model $Y_t(a) = \btheta_{a}^{\top} \bX_t$ with unknown parameter $\btheta_{a} \in \mathbb{R}$, and its action probabilities are clipped at $0.01$ to ensure bounded inverse probability weights. The Random policy selects each action independently with equal probability $1/2$ at every time step, and by construction it naturally satisfies policy convergence as in Definition \ref{aspt:policy_convergence}.

In Figure \ref{fig:non-convergence}, we plot the distributions of key quantities across the $2500$ replications: (a) the last-step ridge regression estimator for $\btheta_a$ (panel a), (b) the sampling probabilities at context $\bx = -4$ (panel b), and (c) the proposed IPW-Z estimator defined in (\ref{eq::estimating-equation-general}) for the target parameter in (\ref{eq::target-parameter-misspecified-linear-bandits}) (panel c). In panel (d), we present a QQ plot comparing the empirical distribution of the IPW-Z estimator with the standard Gaussian distribution. We observe that (i) the ridge regression estimator under LinUCB exhibits two distinct convergence modes, evident from the bimodal histogram in panel a, which does not occur under the Random algorithm; (ii) the LinUCB policy itself fails to converge, as shown by the bimodal distribution of the sampling probability at $x=-4$ in panel b, in contrast to the Random algorithm; and (iii) because the LinUCB policy does not converge, the asymptotic distribution of the IPW-Z estimator substantially deviates from Gaussianity, whereas the estimator based on data collected by the convergent Random algorithm remains asymptotically normal (panels c and d).

\begin{figure}[tb]
    \centering
    \includegraphics[width=\linewidth]{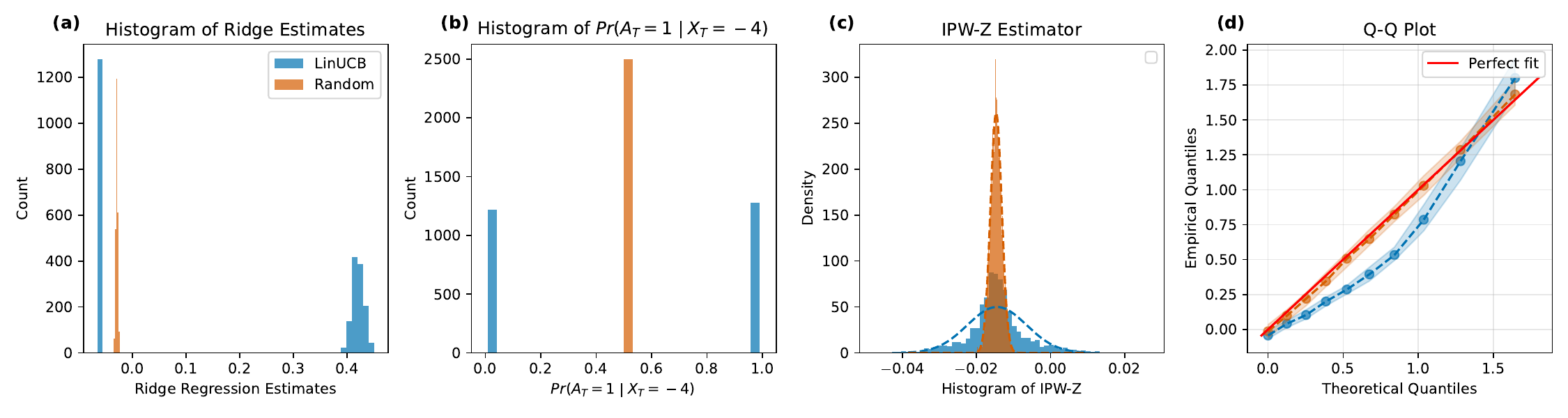}
    \caption{Example of policy non-convergence and non-normality of the IPW-Z estimator (\ref{eq::estimating-equation-general}). We independently run the LinUCB and Random algorithms for $10{,}000$ steps over $2{,}500$ replications. \textbf{(a)} Last-step ridge regression estimator of $\btheta_{1}$. \textbf{(b)} Last-step sampling probability at context $\bx = -4$. \textbf{(c)} The IPW-Z estimator (\ref{eq::estimating-equation-general}) for the inference target in (\ref{ex::misspecified-linear-bandits}). \textbf{(d)} QQ plot of the standardized empirical distribution of the IPW-Z estimator compared with the standard Gaussian distribution. Results shown in blue correspond to LinUCB, and results shown in orange correspond to Random.}
    \label{fig:non-convergence}
\end{figure}

\section{Sufficient Conditions for Policy Convergence}\label{sec::policy-convergence}

In this section, we first study sufficient conditions under which a policy satisfies the policy convergence condition in Definition~\ref{aspt:policy_convergence}. Then, in Sections \ref{sec::MAB-policy-convergence} to \ref{sec::boltzmann-policy-convergence}, we illustrate several classes of convergent policies in general environments without assuming a well-specified reward model. These results offer a principled foundation for constructing stable bandit policies in practice, particularly in noisy and complex settings.

We focus on behavior policies that belong to a parametric class of the form $\pi(\cdot|\bX_t, \bbeta)$, where $\bbeta\in\RR^{d_{\beta}}$. At each time $t$, the behavior policy $\pi_t(a|\bX_t, \cH_{t-1})$ takes the form 
\begin{equation}\label{eq::policy-statistics}
\pi(a|\bX_t, \hat\bbeta_{t-1})
\end{equation}
for a fixed function $\pi: \cX\times \mathbb R^{d_{\beta}}\mapsto \Delta(\cA)$, where $\hat\bbeta_{t-1}\in\mathbb R^{d_{\beta}}$ is a $\cH_{t-1}$-measurable random vector. This vector can be interpreted as a \emph{summary statistic} that aggregates information from the past $t-1$ rounds and, together with the current context $\bX_t$, determines the action selection probability at time $t$. This policy class is broad and encompasses many commonly used bandit algorithms---such as 
$\epsilon$-greedy \citep{sutton1998reinforcement}, UCB \citep{auer2002finite, chu2011contextual}, and Thompson sampling \citep{agrawal2012analysis, russo2018tutorial}---and is widely adopted in adaptive experimental designs in practice \citep{liao2020personalized, athey2022contextual, xu2025reinforcement}.

The following theorem provides a sufficient condition under which policies of the form (\ref{eq::policy-statistics}) satisfy the policy convergence condition in general environments, without requiring a well-specified reward model. The proof is given in Appendix \ref{apdx::proof-lem::statistics-converge-implies-policy-converge}.

\begin{theorem}\label{lem::statistics-converge-implies-policy-converge}
Suppose the behavior policy $\pi_t(a|\bX_t, \cH_{t-1})$ takes the form of (\ref{eq::policy-statistics}). Then for any $a\in\cA$, as long as 
\begin{itemize}
    \item[(i)] $\hat\bbeta_t\xrightarrow{p} \bbeta^*$ for some $\bbeta^*\in \RR^{d_{\beta}}$,
    \item[(ii)] $\pi(a|\bX_t, \cdot)$ is continuous at $\bbeta^*$ a.e. $\bX_t$,
\end{itemize}
the behavior policy converges at action $a$ as in Definition \ref{aspt:policy_convergence}, with the limit policy $\bar\pi(a|\bx) = \pi(a|\bx, \bbeta^*)$.
\end{theorem}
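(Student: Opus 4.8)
The statement is an ``in-probability continuous mapping''-type result, so the plan is to reduce it to a bounded convergence / dominated convergence argument on the random variable $\bX_t$, handled separately from the randomness in $\hat\bbeta_{t-1}$. The key point to exploit is that $\bX_t$ is drawn i.i.d.\ from the fixed marginal of $\cP$ and is independent of $\cH_{t-1}$, whereas $\hat\bbeta_{t-1}$ is $\cH_{t-1}$-measurable; hence, conditionally on $\cH_{t-1}$, the quantity $\pi(a\mid\bX_t,\hat\bbeta_{t-1}) - \pi(a\mid\bX_t,\bbeta^*)$ is just a deterministic function of $\bX_t$ evaluated at a frozen parameter value. First I would fix $\varepsilon>0$ and $\delta>0$ and aim to show $\PP\big(|\pi(a\mid\bX_t,\hat\bbeta_{t-1}) - \pi(a\mid\bX_t,\bbeta^*)| > \varepsilon\big) < \delta$ for all large $t$.

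The core step is a uniform-in-$\bx$ control of the modulus of continuity. For each $\bx$ in the (almost-sure) set where $\pi(a\mid\bx,\cdot)$ is continuous at $\bbeta^*$, there is a radius $r(\bx)>0$ with $\|\bbeta-\bbeta^*\|_2 \le r(\bx) \Rightarrow |\pi(a\mid\bx,\bbeta) - \pi(a\mid\bx,\bbeta^*)| \le \varepsilon$. Define $B_\rho := \{\bx : r(\bx) \ge \rho\}$; these sets increase to a set of full $\bX$-measure as $\rho \downarrow 0$, so by continuity of measure I can pick $\rho>0$ with $\PP(\bX_t \notin B_\rho) < \delta/2$. (Here I should note measurability of $B_\rho$, which follows because $\pi$ is jointly measurable and $B_\rho$ can be written via a countable supremum over rational $\bbeta$ in the ball; alternatively one can replace $r(\bx)$ by its lower-semicontinuous regularization or invoke Egorov/Lusin-type reasoning.) Then, using $\hat\bbeta_{t-1}\xrightarrow{p}\bbeta^*$, choose $T_0$ so that for $t \ge T_0$, $\PP(\|\hat\bbeta_{t-1}-\bbeta^*\|_2 > \rho) < \delta/2$. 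On the complement of both bad events we have $\bX_t \in B_\rho$ and $\|\hat\bbeta_{t-1}-\bbeta^*\|_2 \le \rho \le r(\bX_t)$, hence $|\pi(a\mid\bX_t,\hat\bbeta_{t-1}) - \pi(a\mid\bX_t,\bbeta^*)| \le \varepsilon$. A union bound gives $\PP\big(|\pi(a\mid\bX_t,\hat\bbeta_{t-1}) - \pi(a\mid\bX_t,\bbeta^*)| > \varepsilon\big) < \delta$ for $t \ge T_0$, which is exactly \eqref{eq::def-policy-convergence} with $\bar\pi(a\mid\bx) = \pi(a\mid\bx,\bbeta^*)$.

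The step I expect to be the main obstacle is the measurability / uniformization issue just flagged: turning the pointwise (in $\bx$) continuity hypothesis into a single radius $\rho$ that works for a set of $\bx$ of probability close to one. The cleanest route is to build the sets $B_\rho$ explicitly as $\{\bx : \sup_{\bbeta\in\QQ^{d_\beta},\,\|\bbeta-\bbeta^*\|_2\le\rho}|\pi(a\mid\bx,\bbeta)-\pi(a\mid\bx,\bbeta^*)| \le \varepsilon\}$, using denseness of $\QQ^{d_\beta}$ together with continuity of $\pi(a\mid\bx,\cdot)$ at $\bbeta^*$ to see that $\bx$-a.e.\ these sets fill up $\cX$ as $\rho\downarrow 0$; joint measurability of $\pi$ then makes $B_\rho$ measurable as a countable sup. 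Everything else is a routine $\varepsilon/\delta$ bookkeeping argument, and the final claim about the common limit policy $\bar\pi$ when convergence holds at every $a$ is immediate since $\bar\pi(\cdot\mid\bx) = \pi(\cdot\mid\bx,\bbeta^*)$ is the same stationary policy for all actions.
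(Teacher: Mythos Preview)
Your proposal is correct and follows essentially the same approach as the paper's proof: both decompose the bad event $\{|\pi(a\mid\bX_t,\hat\bbeta_{t-1})-\pi(a\mid\bX_t,\bbeta^*)|>\varepsilon\}$ via a union bound into the event $\{\|\hat\bbeta_{t-1}-\bbeta^*\|>\rho\}$ (controlled by condition (i)) and the event that $\bX_t$ falls in the set where the local modulus of continuity of $\pi(a\mid\bx,\cdot)$ at $\bbeta^*$ exceeds $\varepsilon$ within radius $\rho$, then use monotone/dominated convergence in $\rho\downarrow 0$ on the $\bX_t$-marginal to make the latter small. The paper writes this with sets $\cB_{\varepsilon,\delta}(\bx)=\{\bbeta\text{ a continuity point}:\exists\,\bbeta',\ \|\bbeta'-\bbeta\|<\delta,\ |f_\bx(\bbeta)-f_\bx(\bbeta')|>\varepsilon\}$ and the bound $\PP(\bbeta^*\in\cB_{\varepsilon,\delta}(\bX_t))\to 0$ as $\delta\to 0$, which is exactly your $\PP(\bX_t\notin B_\rho)\to 0$; notably the paper does not address the measurability issue you flag, so your treatment is if anything more careful.
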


We offer some remarks on conditions (i) and (ii) in Theorem \ref{lem::statistics-converge-implies-policy-converge}. Condition (i) is notably general: it only requires $\hat\bbeta_t$ converges to a deterministic limit $\bbeta^*$, which needs not correspond to any parameter from a correctly specified model, nor be optimal in any sense. In Sections \ref{sec::MAB-policy-convergence} to \ref{sec::boltzmann-policy-convergence}, we present several examples where the summary statistic $\hat\bbeta_t$ converges to different limits through a variety of mechanisms. Condition (ii) imposes only local continuity of the policy function $\pi$ at $\bbeta^*$, rather than requiring global continuity. This requirement is important: even if $\hat\bbeta_t \xrightarrow{p} \bbeta^*$, discontinuity of $\pi$ at $\bbeta^*$ may result in irregular behavior of the policy and prevent convergence. 

In the following, we apply Theorem \ref{lem::statistics-converge-implies-policy-converge} and show several classes of behavior policies satisfy the policy convergence condition in the sense of Definition \ref{aspt:policy_convergence}. For each policy, we identify suitable summary statistics and verify the continuity of the policy function at the limiting value. Importantly, all these policies converge in general environments under mild regularity conditions, without requiring a well-specified reward model.



\subsection{Multi-armed bandit ignoring context}\label{sec::MAB-policy-convergence} 
In real-world bandit deployments, simple algorithms like multi-armed bandits (MAB) are often preferred, especially in early trials when prior information is limited. By ignoring context, MAB avoids reward model misspecification, which helps to manage variance and prevent poor decisions in complex systems. In this section, we show that under mild conditions, common MAB algorithms additionally satisfy the policy convergence condition, enabling valid post-study inference.

For any action $a\in\cA$ and time $t$, denote
\begin{equation}\label{eq::count-running-avg-reward-for-each-arm}
\hat\mu_{a, t}: =
\frac
{\sum_{\tau = 1}^t1_{\{A_\tau = a\}}Y_{\tau}}
{N_{a, t}},\quad N_{a, t} : = \sum_{\tau = 1}^t1_{\{A_\tau = a\}}.
\end{equation}
Consider the following MAB algorithms with a minimum sampling probability: 

\begin{itemize}
    \item \textbf{The $\epsilon$-greedy algorithm:}
        \begin{align}\label{eq::policy-eps-greedy-mab}
        \pi_t^{\epsilon\text{-greedy}}(a|\mathcal H_{t-1})= 
        \begin{cases}
            1-\frac{K-1}{K}\epsilon,\quad &\text{if }i = \argmax_i \hat\mu_{i, t-1},\\
            \frac1K\epsilon, \quad &\text{otherwise.}
        \end{cases}
    \end{align}
    \item \textbf{The UCB algorithm:}
    \begin{align}\label{eq::policy-ucb-mab}
        \pi_t^{\text{UCB}}(a|\mathcal H_{t-1})= 
        \begin{cases}
            1-(K-1)\pi_{\min},\quad &\text{if }i \coloneq \argmax_i \left\{\hat\mu_{i, t-1} + \sqrt{\frac{C_t}{N_{i, t-1}}}\right\},\\
            \pi_{\min}, \quad &\text{otherwise.}
        \end{cases}
        \end{align}
        where $\{C_t\}_{t\geq 1}$ is any deterministic sequence such that $\lim_{t\rightarrow\infty}\frac{C_t}{t} = 0$, e.g., $C_t = 2\log t$ \citep{auer2010ucb}.
    \item \textbf{The Thompson Sampling algorithm:}   
    \begin{align}\label{eq::policy-ts-mab}
        \left(\pi_t^{\text{TS}}(a|\mathcal H_{t-1})\right)_{a\in\cA}=\mathrm{Clip}\left(\left(\bar\pi_t^{\text{TS}}(a|\mathcal H_{t-1})\right)_{a\in\cA}\right),
    \end{align}
    where    
    \begin{itemize}
        \item
    $
    \bar\pi_t^{\text{TS}}(a|\mathcal H_{t-1}) \coloneq  \EE_{(\mu_i')_{i\in\cA}\sim \mathcal N(\bmu_{t-1}^{\mathrm{post}}, \bSigma_{t-1}^{\mathrm{post}})}1_{\{\forall i\neq a, \mu_i'<\mu_a'\}}
    $
    is the posterior probability of action $a$ being optimal under a Gaussian prior. Here
    \begin{gather}
    \bmu_{t-1}^{\mathrm{post}} = \big(\mu_{a, t-1}^{\mathrm{post}}\big)_{a\in\cA},\quad \bSigma_{t-1}^{\mathrm{post}} = \mathrm{diag}\big((\sigma_{a, t-1}^{\mathrm{post}})^2\big),\label{eq::ts-mab-posterior-mean-var-joint}\\
    \mu_{a, t}^{\mathrm{post}}:=\bigg(\frac1{\sigma_0^2} +\frac{N_{a, t}}{\sigma^2}\bigg)^{-1}\bigg(\frac{\mu_0}{\sigma_0^2} +\frac{N_{a, t}\hat\mu_{a, t}}{\sigma^2}\bigg), \quad (\sigma_{a, t}^{\mathrm{post}})^2 := \bigg(\frac1{\sigma_0^2} +\frac{N_{a, t}}{\sigma^2}\bigg)^{-1},\label{eq::ts-mab-posterior-mean-var}
    \end{gather}
    where $\mu_0\in\RR$ and $\sigma^2, \sigma_0^2>0$ are 
    fixed algorithm parameters representing the prior mean, prior variance, and observation noise variance, respectively.
    \item The mapping $\mathrm{Clip}: \RR^K\rightarrow \RR^K$ adjusts a probability distribution over $K$ discrete actions to ensure that each coordinate is lower bounded by $\pi_{\min}$ (details in Appendix \ref{apdx::clipping}).
    \end{itemize} 
\end{itemize} 

Define the expected reward of arm $a$ by $\mu_a^* = \EE[Y_t \mid A_t = a]$, and let $a^* = \argmax_{a} \mu_a^*$. The following proposition shows that all three algorithms above satisfy the policy convergence condition under a nonzero suboptimality gap. Its proof is in Appendix \ref{apdx::proof-lem::convergence-mab}.

\begin{proposition}\label{lem::convergence-mab}
Suppose $\sup_{a\in\cA}\mathrm{Var}(Y(a))\leq \sigma_Y^2$ for a universal constant $\sigma_Y^2$. Then as long as the suboptimal gap $\Delta = \mu_{a^*}^* - \max_{a' \neq a^*}\mu_{a'}^* > 0$, the policies $\{\pi_t^{\epsilon\text{-greedy}}\}_{t\geq 1}$, $\{\pi_t^{\text{UCB}}\}_{t\geq 1}$ and $\{\pi_t^{\text{TS}}\}_{t\geq 1}$, defined in (\ref{eq::policy-eps-greedy-mab}), (\ref{eq::policy-ucb-mab}) and (\ref{eq::policy-ts-mab}), respectively, satisfy the policy convergence condition in Definition \ref{aspt:policy_convergence}.
\end{proposition}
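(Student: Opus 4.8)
The strategy is to verify, for each of the three algorithms, the two hypotheses of Theorem~\ref{lem::statistics-converge-implies-policy-converge}: that an appropriate summary statistic $\hat\bbeta_t$ converges in probability to a deterministic limit $\bbeta^*$, and that the policy map $\pi(a\mid\cdot)$ is continuous at $\bbeta^*$. In all three cases the natural summary statistic is the vector of empirical arm means $\hat\bbeta_t = (\hat\mu_{1,t},\dots,\hat\mu_{K,t})$ (for UCB one must additionally carry the bonus terms, see below). The first step is therefore to establish that $\hat\mu_{a,t}\xrightarrow{p}\mu_a^*$ for every arm $a$. This follows from the minimum sampling probability constraint, which guarantees $N_{a,t}\geq \pi_{\min}\cdot t$ in expectation and, via a concentration/Borel--Cantelli argument or an SLLN for martingale-difference sums, $N_{a,t}\to\infty$ almost surely; combined with the bounded-variance assumption $\mathrm{Var}(Y(a))\le\sigma_Y^2$ and a strong law for the weighted reward sums along arm $a$ (the observations $Y_\tau 1_{\{A_\tau=a\}}$ conditionally have the right mean by unconfoundedness), this yields $\hat\mu_{a,t}\xrightarrow{p}\mu_a^*$. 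So $\bbeta^* = (\mu_1^*,\dots,\mu_K^*)$.

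\textbf{Handling each algorithm.} For $\epsilon$-greedy, the policy map sends $(\hat\mu_{i,t-1})_i$ to a point mass (up to the $\epsilon$ floor) on $\argmax_i\hat\mu_{i,t-1}$; this map is continuous at any point where the argmax is unique, and uniqueness at $\bbeta^*$ is exactly the nonzero-gap assumption $\Delta>0$. Hence $\bar\pi^{\epsilon\text{-greedy}}$ puts weight $1-\frac{K-1}{K}\epsilon$ on $a^*$ and $\frac{1}{K}\epsilon$ elsewhere. For UCB, the subtlety is that the decision depends on $\hat\mu_{i,t-1}+\sqrt{C_t/N_{i,t-1}}$, which is not a function of $\hat\bbeta_t$ alone; but since $C_t/t\to0$ and $N_{i,t-1}\geq \pi_{\min}(t-1)$ eventually, the bonus $\sqrt{C_t/N_{i,t-1}}\to 0$ in probability, so the effective statistic $\hat\mu_{i,t-1}+\sqrt{C_t/N_{i,t-1}}$ still converges to $\mu_i^*$; one can fold the bonus into an augmented statistic $\hat\bbeta_t$ whose limit is $\bbeta^*$ and invoke Theorem~\ref{lem::statistics-converge-implies-policy-converge} with the same argmax-continuity argument, again using $\Delta>0$. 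For Thompson sampling, observe from \eqref{eq::ts-mab-posterior-mean-var} that $\mu_{a,t}^{\mathrm{post}} = \mu_a^* + o_p(1)$ (since $N_{a,t}\to\infty$ makes the prior term vanish and $\hat\mu_{a,t}\to\mu_a^*$) and $(\sigma_{a,t}^{\mathrm{post}})^2\to 0$; the posterior-optimality probability $\bar\pi_t^{\text{TS}}(a\mid\cH_{t-1})$ is a continuous function of $(\bmu^{\mathrm{post}},\bSigma^{\mathrm{post}})$, and as $\bSigma^{\mathrm{post}}\to\mathbf 0$ and $\bmu^{\mathrm{post}}\to\bbeta^*$ with a unique maximal coordinate, it converges to $1_{\{a=a^*\}}$; composing with the Lipschitz map $\mathrm{Clip}$ (continuity shown in Appendix~\ref{apdx::clipping}) gives a convergent, hence continuous-at-the-limit, policy. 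In each case the limit policy $\bar\pi$ is the same clipped/$\epsilon$-smoothed greedy policy on $a^*$, so Definition~\ref{aspt:policy_convergence} holds with a common $\bar\pi$.

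\textbf{Main obstacle.} The routine part is the continuity bookkeeping once convergence of the statistic is in hand; the genuinely load-bearing step is the first one---establishing $\hat\mu_{a,t}\xrightarrow{p}\mu_a^*$ (equivalently $N_{a,t}\to\infty$ and a law of large numbers for the adaptively-sampled reward stream). Here one cannot appeal to an i.i.d. SLLN directly because the sampling times of arm $a$ form an adaptive (stopping-time) subsequence; the clean way is to write $\sum_{\tau\le t}1_{\{A_\tau=a\}}(Y_\tau-\mu_a^*)$ as a martingale with respect to $\{\cH_\tau\}$, bound its quadratic variation by $\sigma_Y^2 N_{a,t}$ using unconfoundedness and bounded variance, and apply a martingale SLLN (or the Dvoretzky/Chow criterion) to get $\hat\mu_{a,t}-\mu_a^*\to 0$ on the event $\{N_{a,t}\to\infty\}$, which itself holds since $\EE[N_{a,t}\mid\cH_{t-1}]$ grows linearly by the $\pi_{\min}$ lower bound. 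The UCB case requires slightly more care because the bonus term couples the statistic to $N_{i,t}$, but the $C_t/t\to 0$ condition is precisely what neutralizes it.
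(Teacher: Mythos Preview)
Your proposal is correct and follows essentially the same route as the paper: invoke Theorem~\ref{lem::statistics-converge-implies-policy-converge} for each algorithm by (i) proving $\hat\mu_{a,t}\xrightarrow{p}\mu_a^*$ and $C_t/N_{a,t}\xrightarrow{p}0$ using the $\pi_{\min}$ floor, and (ii) checking continuity of the argmax/posterior map at the limit via $\Delta>0$. The only minor technical difference is that the paper dispatches step (i) with direct Chebyshev bounds (their Lemma~\ref{lem::policy-with-no-contexts}) rather than the martingale SLLN you sketch; both arguments are standard and valid. One small imprecision: the three algorithms converge to \emph{different} limit policies (depending on $\epsilon$, $\pi_{\min}$, and the clipping), not a common $\bar\pi$---but Definition~\ref{aspt:policy_convergence} only requires each policy to have \emph{some} deterministic limit, so this does not affect the argument.
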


\emph{Choice of summary statistics.} Proposition \ref{lem::convergence-mab} follows from Theorem \ref{lem::statistics-converge-implies-policy-converge} by identifying the key summary statistics used by each policy. While the most natural statistics vary across policies, natural choices for the three cases are suitable functions of the arm-specific sample mean  $\hat\mu_{a, t}$ and the inverse number of pulls $1/N_{a, t}$. Intuitively, under a minimum sampling probability, each action is chosen infinitely often, and these context-free statistics converge to their population-level values. This ensures that the policies stabilize over time and satisfy the policy convergence condition required for inference. A more rigorous justification for this argument can be found in Lemma \ref{lem::policy-with-no-contexts} in Appendix \ref{apdx::proof-lem::convergence-mab}.

\emph{Beyond minimum sampling probability.} Technically, a minimum sampling probability is not required for policy convergence in general; some algorithms exhibit stability even without this condition. See, for instance, \citep{khamaru2024inference,halder2025stable}, where the authors analyze the stability properties of the UCB algorithm without requiring a minimum sampling probability. Proposition \ref{lem::convergence-mab} focuses on policies with a minimum sampling probability, primarily because it is desired for inference in a misspecified environment, as in the setting of Theorem~\ref{thm::asymptotic-normality-joint-general}. 


\subsection{Policies based on the IPW-Z estimator} 
In certain settings, an ideal policy takes the form $\pi(\cdot \mid \bX_t, \{\btheta_a^*\}_{a \in \cA})$, where $\{\btheta_a^*\}_{a\in\cA}$ are our inferential target parameters defined in (\ref{eq::theta-a-*}). These parameters often inform optimal decision-making within a parametric class, either with respect to maximizing expected reward or achieving other objectives. For instance, in the case of misspecified linear bandits (Example \ref{ex::misspecified-linear-bandits}), the best linear policy that maximizes expected reward can be parameterized by $\{\btheta_a^*\}_{a \in \cA}$, which encode the best linear approximations of the reward functions \citep{chen2021statistical}. 

Motivated by this setup, it is natural to consider behavior policies of the form (\ref{eq::policy-statistics}), where the statistics $\hat\bbeta_{t-1}$ consist of the IPW-Z estimators defined by (\ref{eq::estimating-equation-general}), together with additional algorithm parameters that converge over time:
\begin{equation}\label{eq::IPWZ-policy-plus-parameter}
    \pi(\cdot \mid \bX_t, \{\hat\btheta_a^{(t-1)}\}_{a \in \cA}, \bgamma_{t-1}).
\end{equation}
Here, $\{\bgamma_t\}_{t \geq 1}$ is a sequence of parameters governing the policy. For example, in the case of an $\epsilon$-greedy policy, $\bgamma_t$ can represent the exploration probability at time $t$. These policies often serve as good approximations to an ideal policy within the specified model class. The following proposition shows that, under mild conditions, such behavior policies converge to a limiting policy parameterized by $\btheta^*=(\btheta_a^*)_{a\in\cA}$. The proof is provided in Appendix \ref{apdx::proof-thm::convergence-of-IPWZ-policy}.

\begin{proposition}\label{thm::convergence-of-IPWZ-policy}
Under the assumptions of Theorem \ref{thm::asymptotic-normality-joint-general}, suppose the behavior policy at each time $t$ takes the form (\ref{eq::IPWZ-policy-plus-parameter}), where $\{\hat\btheta_a^{(t)}\}_{a\in\cA, t\geq 1}$ is the IPW-Z estimator defined by (\ref{eq::estimating-equation-general}), and $\{\bgamma_{t}\}_{t\geq 1}$ is a deterministic parameter sequence such that $\bgamma_{t}\in \RR^{d_{\gamma}}$,  $\lim_{t\rightarrow \infty}\bgamma_{t} = \bgamma^*$ for some $\bgamma^*\in\RR^{d_{\gamma}}$. Assume 
$
\pi:\cX\times \RR^{Kd}\times\RR^{d_{\gamma}}\rightarrow \Delta(\cA)
$
is continuous in its second and third argument at $(\btheta^*, \bgamma^*)$ for almost every $\bx\in\cX$ under the distribution of $\bX_t$. Then the behavior policy satisfies the policy convergence condition in Definition \ref{aspt:policy_convergence}, with the limit policy $\bar\pi(a|\bx) = \pi(a|\bx, \btheta^*, \bgamma^*)$.
\end{proposition}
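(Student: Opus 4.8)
The plan is to derive Proposition~\ref{thm::convergence-of-IPWZ-policy} as a corollary of Theorem~\ref{lem::statistics-converge-implies-policy-converge} by identifying the correct summary statistic and verifying its convergence. The behavior policy at time $t$ is of the form $\pi(\cdot\mid\bX_t,\hat\bbeta_{t-1})$ with the composite summary statistic
\[
\hat\bbeta_{t-1} := \big((\hat\btheta_1^{(t-1)})^\top,\ldots,(\hat\btheta_K^{(t-1)})^\top, \bgamma_{t-1}^\top\big)^\top \in \RR^{Kd+d_\gamma},
\]
so that $d_\beta = Kd + d_\gamma$. To invoke Theorem~\ref{lem::statistics-converge-implies-policy-converge} I need (i) $\hat\bbeta_{t-1}\xrightarrow{p}\bbeta^* := (\btheta_1^{*,\top},\ldots,\btheta_K^{*,\top},\bgamma^{*,\top})^\top$, and (ii) continuity of $\bx\mapsto\pi(a\mid\bx,\cdot)$ at $\bbeta^*$ for a.e.\ $\bX_t$. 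Condition (ii) is immediate from the hypothesis that $\pi$ is continuous in its second and third arguments at $(\btheta^*,\bgamma^*)$ for a.e.\ $\bx$, since continuity in the pair of blocks is exactly continuity in the stacked vector $\bbeta$. Condition (i) splits: $\bgamma_{t-1}\to\bgamma^*$ holds by assumption (deterministically, hence in probability), so the work reduces to showing $\hat\btheta_a^{(t)}\xrightarrow{p}\btheta_a^*$ for each $a\in\cA$.

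\textbf{Establishing consistency of the estimator sequence.} Under the assumptions of Theorem~\ref{thm::asymptotic-normality-joint-general}, the $\sqrt{T}$-asymptotic normality in \eqref{eq::asymptotic-normality-joint-general} already implies $\hat\btheta_a^{(T)}-\btheta_a^* = O_p(1/\sqrt T) = o_p(1)$, so consistency is essentially handed to us by the earlier theorem---provided we are careful that the estimator sequence appearing in \eqref{eq::IPWZ-policy-plus-parameter} is one of the sequences to which Theorem~\ref{thm::asymptotic-normality-joint-general} applies. Here there is a subtle self-referential point worth flagging: the estimator $\hat\btheta_a^{(t-1)}$ is used \emph{to define the behavior policy} at time $t$, so one must check that the data-generating process remains well-posed and that the policy convergence hypothesis invoked inside Theorem~\ref{thm::asymptotic-normality-joint-general} is not what we are trying to prove. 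The clean way to handle this is to observe that Theorems~\ref{thm::asymptotic-normality-general} and \ref{thm::asymptotic-normality-joint-general} require only Assumptions~\ref{aspt:identifiability-general}--\ref{aspt:min-sampling-prob} plus policy convergence, and that the \emph{consistency} half of those results (the part giving $\hat\btheta_a^{(T)}\xrightarrow{p}\btheta_a^*$ via Assumption~\ref{aspt:identifiability-general} and a uniform law of large numbers for the martingale-structured estimating equation $\bG_T$) does not itself require policy convergence---it only needs the minimum-sampling-probability condition (Assumption~\ref{aspt:min-sampling-prob}) so that the inverse-probability weights are bounded and $\bG_T(\btheta)\xrightarrow{p}\EE[\bg(\bX_t,Y_t(a);\btheta)]$ uniformly in $\btheta$ over $\|\btheta\|_2\le R_\theta$. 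So I would first extract, from the proof of Theorem~\ref{thm::asymptotic-normality-general} in Appendix~\ref{apdx::proof-thm::asymptotic-normality-general}, the statement that under Assumptions~\ref{aspt:unconfoundedness},~\ref{aspt:identifiability-general}--\ref{aspt:min-sampling-prob} alone (no policy convergence needed), any sequence satisfying \eqref{eq::estimating-equation-general} with $\|\hat\btheta_a^{(T)}\|_2\le R_\theta$ is consistent for $\btheta_a^*$. Feeding this consistency into Theorem~\ref{lem::statistics-converge-implies-policy-converge} then yields policy convergence with limit policy $\bar\pi(a\mid\bx)=\pi(a\mid\bx,\btheta^*,\bgamma^*)$, and, having established policy convergence, one may (if desired) close the loop and conclude the full $\sqrt T$-normality conclusion of Theorem~\ref{thm::asymptotic-normality-joint-general} now applies as well.

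\textbf{Assembling the argument.} Concretely the steps are: (1) define the composite summary statistic $\hat\bbeta_{t-1}$ and its candidate limit $\bbeta^*$ as above, and note the policy has the form \eqref{eq::policy-statistics}; (2) invoke the (policy-convergence-free) consistency result to get $\hat\btheta_a^{(t)}\xrightarrow{p}\btheta_a^*$ for every $a$; (3) combine with $\bgamma_{t}\to\bgamma^*$ and a union bound over the $K+1$ blocks to conclude $\hat\bbeta_{t}\xrightarrow{p}\bbeta^*$, which is condition (i) of Theorem~\ref{lem::statistics-converge-implies-policy-converge}; (4) observe that the stated joint continuity of $\pi$ in $(\btheta,\bgamma)$ at $(\btheta^*,\bgamma^*)$ for a.e.\ $\bx$ is precisely condition (ii); (5) apply Theorem~\ref{lem::statistics-converge-implies-policy-converge} to each $a\in\cA$ to obtain policy convergence at action $a$ with common limit policy $\bar\pi(a\mid\bx)=\pi(a\mid\bx,\btheta^*,\bgamma^*)$, which is exactly Definition~\ref{aspt:policy_convergence}.

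\textbf{The main obstacle} is the circularity I flagged in the second paragraph: Theorem~\ref{thm::asymptotic-normality-joint-general} is stated with policy convergence as a hypothesis, yet here the policy is built from the very estimators whose behavior that theorem describes, so one cannot naively cite it to get consistency. The resolution requires isolating the portion of the Appendix~\ref{apdx::proof-thm::asymptotic-normality-general} argument that delivers consistency from Assumption~\ref{aspt:identifiability-general} and the bounded-weights uniform LLN alone---this should be routine since the martingale-difference structure of $\bG_T(\btheta_a^*)$ (shown in the heuristic after Theorem~\ref{thm::asymptotic-normality-general}) gives $\bG_T(\btheta)\to\EE[\bg(\bX_t,Y_t(a);\btheta)]$ in probability uniformly over the compact parameter ball, whereupon the standard Z-estimator consistency argument (e.g.\ the argmin/well-separation device of \citep{van2000asymptotic}) closes it---but it must be stated carefully so the proposition does not rest on an implicit assumption of what it sets out to prove. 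Everything downstream (continuity, the union bound, the application of Theorem~\ref{lem::statistics-converge-implies-policy-converge}) is mechanical.
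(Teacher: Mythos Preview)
Your proposal is correct and follows essentially the same approach as the paper: define the composite summary statistic $\hat\bbeta_{t-1}=(\hat\btheta^{(t-1)},\bgamma_{t-1})$, observe that the consistency portion of the Appendix~\ref{apdx::proof-thm::asymptotic-normality-general} argument (Lemma~\ref{lem::consistency-general}) does not require policy convergence, and then invoke Theorem~\ref{lem::statistics-converge-implies-policy-converge}. The paper's proof is terser---it simply cites Lemma~\ref{lem::consistency-general} with the parenthetical remark that its proof does not use policy convergence---whereas you spell out the circularity concern and its resolution explicitly, which is a welcome clarification.
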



\begin{remark}
The behavior policy studied in \citep{chen2021statistical}---an $\epsilon$-greedy policy combined with a weighted online LS estimator in the setting of misspecified linear bandits (Example \ref{ex::misspecified-linear-bandits})---is a special case of the policies considered in Proposition \ref{thm::convergence-of-IPWZ-policy}, under their assumption of an almost surely nonzero margin. In this case, $\bgamma_t$ corresponds to the exploration rate at time $t$.
By Proposition~\ref{thm::convergence-of-IPWZ-policy}, the policy then satisfies the convergence requirement in Definition~\ref{aspt:policy_convergence}, and the asymptotic normality of the parameter estimates follows from Theorem~\ref{thm::asymptotic-normality-joint-general}. Further details are given in Appendix~\ref{apdx::convergence-epsilon-greedy-weighted-LS}.
\end{remark}


\subsection{Boltzmann exploration}\label{sec::boltzmann-policy-convergence} 
Boltzmann exploration---also known as softmax or Gibbs exploration---is a widespread method in bandit and reinforcement learning literature \citep{kaelbling1996reinforcement, sutton1998reinforcement, sutton1999policy, vermorel2005multi, cesa2017boltzmann}. 
Given a temperature parameter $\gamma$, a Boltzmann exploration policy maps the estimated action values into selection probabilities via the softmax transform and samples an action accordingly.
Unlike policies such as UCB and $\epsilon$-greedy, it provides a smooth, differentiable family of policies with a tunable temperature $\gamma$ controlling exploration-exploitation, while remaining simple and numerically stable to implement. 

In this section, we show that, with two common choices of action-value estimators, Boltzmann policy converges when the temperature is sufficiently large. Here, the action value means the conditional mean outcome $\EE[Y(a)|\bX]$. For simplicity, we focus on policies that maintain a \textit{linear working model} for the conditional mean outcomes: At each time $t$, suppose the agent \textit{estimates} that $\EE[Y_t(a)|\bX_t]\approx \bX_t^\top\hat\bbeta_{t, a}$, where $\hat\bbeta_{t, a}$ is an estimator for the linear coefficient vector derived from past history $\cH_{t-1}$. Then, given the estimators $\hat\bbeta_{t} = (\hat\bbeta_{t, a})_{a\in\cA}$ and a temperature parameter $\gamma>0$, Boltzmann policy selects action $a$ with probability 
\begin{align}\label{eq::Boltzmann-policy-unclipped}
\pi^{\gamma}(a \mid \bX_t, \hat\bbeta_{t}) = \frac{\exp\left(\big\langle \hat\bbeta_{t, a}, \bX_t\big\rangle / \gamma\right)}{\sum_{a'\in\cA}\exp\left( \big\langle \hat\bbeta_{t, a'}, \bX_t\big\rangle / \gamma\right)}.
\end{align}
To avoid vanishing probabilities, we further apply a clipping map so that the policy has a minimum sampling probability of $\pi_{\min} \in (0, 1)$ (see Appendix \ref{apdx::clipping} for details). We denote the resulting clipped policy by $\tilde{\pi}^{\gamma}$. 

In what follows, we will show that the policy $\tilde{\pi}^{\gamma}$ converges in the sense of Definition \ref{aspt:policy_convergence} with two common estimator sequences $\{\hat\bbeta_{t}\}_{t\geq 1}$---the ridge estimator and the stochastic gradient descent estimator---provided $\gamma$ is sufficiently large. The key idea is to first show the stochastic convergence of $\{\hat\bbeta_{t}\}_{t\geq 1}$ by writing its evolution as a stochastic approximation process, where a large temperature induces contraction in some underlying mappings that govern these updates. Then, we use Theorem \ref{lem::statistics-converge-implies-policy-converge} to establish policy convergence by viewing $\hat\bbeta_{t}$ as the summary statistics.  Importantly, the result holds in a general environment; the linear working model used by the Boltzmann policy need not be correctly specified.

A broader remark is that these arguments can extend beyond Boltzmann to a general behavior policy with a continuous action selection function: Under mild conditions, such policies converge as long as the policy mapping is not too steep. This offers a simple, practical guideline for designing adaptive experiments in practice.

\subsubsection{The ridge estimator}

The ridge regression estimator at each time $t$ is defined by $\hat \bbeta_{t}^{\Ridge} = (\hat \bbeta_{t, a}^{\Ridge})_{a\in\cA}$, where
\begin{equation}
    \hat \bbeta_{t, a}^{\Ridge} = \left(\lambda \bI + \sum_{\tau=1}^{t-1} 1_{\{A_\tau = a\}} \bX_\tau \bX_\tau^\top\right)^{-1} \left(\sum_{\tau=1}^{t-1} 1_{\{A_\tau = a\}} \bX_\tau Y_\tau\right). \label{eq:ridge-estimator}
\end{equation}



To prove the convergence of $\tilde\pi^\gamma$ under the ridge estimator, we impose the regularity conditions below.
\begin{assumption}
\label{aspt:Ridge_Conv}
There exist positive constants $M$, $\sigma_{\min}$, $R_y$, $\sigma_{\eta}$ such that (i) $\|\bX_t\|_2 \leq M$ a.s.; (ii) $\EE[\bX_t\bX_t^\top] \succeq \sigma_{\min}^2 \bI$; (iii) There exists a function $f(\bx, a)$ such that $Y_t = y(\bX_t, A_t) + \eta_t$ where $\eta_t$ are independent noise with $\EE[\eta_t] = 0$ and $\EE[\eta_t^2] \leq \sigma_{\eta}^2$. Further we assume that $y(\bx, a) \leq R_y < \infty$ for all $\|\bx\|_2 \leq M$ and $a \in \cA$.

\end{assumption}

Under Assumption \ref{aspt:Ridge_Conv}, the next proposition shows that, with sufficiently large temperature $\gamma$, the Boltzmann policy paired with ridge estimator satisfies the policy convergence condition.

\begin{proposition}
    \label{theorem:convergence-of-Ridge}
Suppose Assumption \ref{aspt:Ridge_Conv} holds. Let the data be collected using the behavior policy $\{\tilde{\pi}^{\gamma}(\cdot \mid \bX_t, \hat \bbeta_{t}^{\Ridge})\}_{t\geq 1}$ derived from (\ref{eq::Boltzmann-policy-unclipped}), where $\hat \bbeta_{t}^{\Ridge}$ is the ridge estimator defined in (\ref{eq:ridge-estimator}). If the temperature parameter 
\begin{align}
    \gamma \geq 2M^3 \cdot \max\left\{\frac{2}{\pi_{\min}\sigma_{\min}^2}, \frac{8(M R_{y} + 4M \sigma_{\eta} \sqrt{dK})}{\pi_{\min}^2\sigma_{\min}^4} \right\} K(K+1),
\end{align}
then there exists a deterministic vector $\bar\bbeta\in\RR^{Kd}$ such that $\hat \bbeta_{t}^{\Ridge} \xrightarrow{p} \bar{\bbeta}$. Consequently, the policy $\{\tilde{\pi}^{\gamma}(\cdot \mid \bX_t, \hat \bbeta_{t}^{\Ridge})\}_{t\geq 1}$ satisfies the policy convergence condition in Definition \ref{aspt:policy_convergence}.
\end{proposition}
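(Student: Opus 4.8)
The plan is to prove the stochastic convergence $\hat\bbeta_t^{\Ridge}\xrightarrow{p}\bar\bbeta$ for a deterministic $\bar\bbeta\in\RR^{Kd}$ and then conclude via Theorem~\ref{lem::statistics-converge-implies-policy-converge}, taking $\hat\bbeta_t^{\Ridge}$ as the summary statistic (the clipped softmax is continuous in $\bbeta$ everywhere, so condition (ii) of that theorem holds automatically). The first and conceptually central step is to recast the ridge estimator as a stochastic approximation. Writing $\Vb_{t,a}=\lambda\bI+\sum_{\tau<t}1_{\{A_\tau=a\}}\bX_\tau\bX_\tau^\top$ and $\bu_{t,a}=\sum_{\tau<t}1_{\{A_\tau=a\}}\bX_\tau Y_\tau$, we have $\hat\bbeta_{t,a}^{\Ridge}=\Vb_{t,a}^{-1}\bu_{t,a}$. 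The naive state $(\Vb_{t,a},\bu_{t,a})$ has no contractive dynamics since $\Vb_{t,a}$ grows linearly in $t$; instead I would track the \emph{normalized} state $\bPsi_t:=\big(\tfrac1t\Vb_{t,a},\,\tfrac1t\bu_{t,a}\big)_{a\in\cA}$, so that $\hat\bbeta_{t,a}^{\Ridge}=\hat\bbeta(\bPsi_t)_a$ for the explicit map $\hat\bbeta(\bPsi)_a:=(\bPsi_a^{(1)})^{-1}\bPsi_a^{(2)}$ and, crucially, $\bPsi_t$ obeys the exact Robbins--Monro recursion
\[
\bPsi_{t+1}=\bPsi_t+\tfrac1{t+1}\big(\bH_{t+1}-\bPsi_t\big),\qquad \EE[\bH_{t+1}\mid\cH_{t-1}]=g(\bPsi_t),
\]
where $\bH_{t+1}$ stacks $\big(1_{\{A_t=a\}}\bX_t\bX_t^\top,\,1_{\{A_t=a\}}\bX_t Y_t\big)_a$ and, using Assumption~\ref{aspt:unconfoundedness} and the i.i.d.\ structure, $g(\bPsi)_a=\big(\EE_\bX[\tilde\pi^\gamma(a\mid\bX,\hat\bbeta(\bPsi))\,\bX\bX^\top],\ \EE_\bX[\tilde\pi^\gamma(a\mid\bX,\hat\bbeta(\bPsi))\,\bX\,y(\bX,a)]\big)$. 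This $\bPsi_t$ is the ``hidden parameter'' whose dynamics, unlike those of $\hat\bbeta_t^{\Ridge}$ itself, can be made to contract.

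The crux is to show $g$ is a contraction with Lipschitz constant $<1$ on a compact forward-invariant set $\cK=\{\bPsi:\bPsi_a^{(1)}\succeq c\bI,\ \|\bPsi_a^{(2)}\|\le B\ \forall a\}$ with $c=\tfrac12\pi_{\min}\sigma_{\min}^2$ and $B\asymp MR_y+M\sigma_\eta\sqrt{dK}$. Two Lipschitz estimates combine: (a) on $\cK$, the map $\bPsi\mapsto\hat\bbeta(\bPsi)$ is Lipschitz with constant $L_1=O\big(\tfrac1{\pi_{\min}\sigma_{\min}^2}+\tfrac{B}{\pi_{\min}^2\sigma_{\min}^4}\big)$, obtained by differentiating $(\bPsi_a^{(1)})^{-1}\bPsi_a^{(2)}$ and using $\|(\bPsi_a^{(1)})^{-1}\|\le 1/c$; (b) the clipped softmax $\bbeta\mapsto\tilde\pi^\gamma(\cdot\mid\bx,\bbeta)$ is Lipschitz in $\bbeta$ with constant $O(M/\gamma)$, since each logit equals $\langle\bbeta_a,\bx\rangle/\gamma$ with $\|\bx\|\le M$, the softmax Jacobian contributes a factor $O(K)$, and clipping is $1$-Lipschitz. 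Composing (b) inside (a), then multiplying by the $\bX\bX^\top$ factor (size $M^2$) or the $\bX\,y(\bX,a)$ factor (size $MR_y$) and summing over the $K$ arms, gives $\mathrm{Lip}(g)\le \tfrac{C\,M^3 K(K+1)}{\gamma}\,L_1$, which is $<1$ precisely when $\gamma$ exceeds the stated threshold. Forward-invariance of $\cK$ under $\dot\bPsi=g(\bPsi)-\bPsi$ is immediate from the clipping bound $g(\bPsi)_a^{(1)}\succeq\pi_{\min}\sigma_{\min}^2\bI\succeq c\bI$ together with the analogous norm bound on $g(\bPsi)_a^{(2)}$.

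Next I would argue the iterates $\bPsi_t$ eventually enter $\cK$: the clipping enforcing $\tilde\pi^\gamma(a\mid\bx)\ge\pi_{\min}$ yields $N_{a,t}/t$ bounded below via a martingale law of large numbers, and $\EE[\bX\bX^\top\mid A_\tau=a,\cH_{\tau-1}]\succeq\pi_{\min}\sigma_{\min}^2\bI$ forces $\tfrac1t\Vb_{t,a}\succeq c\bI$ for $t$ large, while $\|\tfrac1t\bu_{t,a}\|\le B$ eventually follows from $\|\bX_t\|\le M$, $y\le R_y$, and a martingale bound on $\tfrac1t\sum_{\tau<t}1_{\{A_\tau=a\}}\bX_\tau\eta_\tau$ (the source of the $M\sigma_\eta\sqrt{dK}$ term). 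With the contraction in hand, $g$ has a unique fixed point $\bPsi^*\in\cK$, and $V(\bPsi)=\|\bPsi-\bPsi^*\|_2^2$ is a strict Lyapunov function for $\dot\bPsi=g(\bPsi)-\bPsi$ (since $\langle\bPsi-\bPsi^*,\,g(\bPsi)-g(\bPsi^*)-(\bPsi-\bPsi^*)\rangle\le(\mathrm{Lip}(g)-1)\|\bPsi-\bPsi^*\|_2^2<0$), so every ODE trajectory started in $\cK$ converges to $\bPsi^*$. The noise $\bH_{t+1}-g(\bPsi_t)$ is a martingale difference with conditionally bounded second moment (only $\EE[\eta_t^2]\le\sigma_\eta^2$ and the boundedness of $\bX_t$ and $y$ are needed---$Y_t$ itself may be unbounded), so standard stochastic-approximation theory (the ODE method, restarting from the a.s.-finite time at which $\bPsi_t$ enters $\cK$) gives $\bPsi_t\xrightarrow{p}\bPsi^*$, hence $\hat\bbeta_t^{\Ridge}=\hat\bbeta(\bPsi_t)\xrightarrow{p}\bar\bbeta:=\hat\bbeta(\bPsi^*)$. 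Theorem~\ref{lem::statistics-converge-implies-policy-converge} then yields policy convergence in the sense of Definition~\ref{aspt:policy_convergence} with limit $\bar\pi(a\mid\bx)=\tilde\pi^\gamma(a\mid\bx,\bar\bbeta)$.

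I expect the main obstacle to be the contraction step: the raw recursion for $\Vb_{t,a}$ (equivalently, for $\hat\bbeta_t^{\Ridge}$ itself) is not contractive, so everything hinges on passing to the $1/t$-normalized state $\bPsi_t$ and localizing to the nondegenerate region $\cK$---which in turn requires the somewhat delicate bookkeeping of showing the iterates reach and stay in $\cK$ and that the composed Lipschitz constants of $\hat\bbeta(\cdot)$ and of the softmax assemble into exactly the $M^3K(K+1)\cdot\max\{\pi_{\min}^{-1}\sigma_{\min}^{-2},\,\pi_{\min}^{-2}\sigma_{\min}^{-4}(MR_y+M\sigma_\eta\sqrt{dK})\}$-type threshold appearing in the statement.
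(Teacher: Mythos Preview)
Your proposal is correct and follows essentially the same route as the paper: both pass to the $1/t$-normalized state $(\tfrac1t\Vb_{t,a},\tfrac1t\bu_{t,a})_a$, write it as a Robbins--Monro recursion with step size $1/t$, show the drift map is a contraction on a region where $\tfrac1t\Vb_{t,a}\succeq c\bI$ and $\|\tfrac1t\bu_{t,a}\|\le B$ provided $\gamma$ exceeds the stated threshold, and then combine ``iterates reach that region with high probability'' with stochastic-approximation convergence. The only cosmetic differences are that the paper phrases the localization via high-probability ``good events'' $\cE_t$ (established through matrix Azuma and a Kolmogorov/peeling argument) rather than a forward-invariant set, and that the $(K{+}1)$ factor you attribute to the softmax Jacobian in fact comes from the clipping operator (Lemma~\ref{lem::clipping-Lipschitz}), though the composed bound is the same.
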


The proof Proposition \ref{theorem:convergence-of-Ridge} is in Appendix \ref{apdx::proof-thm::convergence-of-Ridge}. The key technical challenge is to identify a quantity tied to $\hat \bbeta_{t}^{\Ridge}$ whose evolution becomes a contraction when $\gamma$ is large. We then apply tools from stochastic approximation theory to show the convergence of $\hat \bbeta_{t}^{\Ridge}$ to a fixed point of the underlying dynamics. 

\emph{Convergence considerations for ridge under alternative policies.} Many online contextual bandit algorithms, such as LinUCB \citep{chu2011contextual} and Thompson Sampling \citep{russo2018tutorial}, rely on the ridge estimator as summary statistics for their behavior policy. Proposition \ref{theorem:convergence-of-Ridge} suggests that the ridge estimator converges with data collected under smooth, non-steep policies, such as Boltzmann exploration with large temperature. This conclusion need not hold for other policies. For example, if the action-selection mapping is too steep or noncontinuous, the induced dynamics may not be a contraction, and the sequence $\{\hat \bbeta_{t}^{\Ridge}\}_{t\geq 1}$ may not converge (see the LinUCB example in Section \ref{sec::example-policy-nonconvergence}). Appendix \ref{apdx::ridge-nonconvergence-with-no-fixed-point} provides a rigorous argument for one such mechanism: nonconvergence occurs if the policy mapping induces no fixed point in the dynamics of $\{\hat \bbeta_{t}^{\Ridge}\}_{t\geq 1}$, i.e., there exists an action $\bar a\in\cA$ so that no $\bbeta$ satisfies
\begin{align}
    \bm{\beta}_{\bar a}=\bSigma_{\bar a}^{-1}(\bm{\beta}) \bm{\varphi}_{\bar a}(\bm{\beta}).
\end{align}
Here $\bSigma_{a}(\bm{\beta}) = \EE_{\bX_t, A_t \sim \pi(\cdot \mid \bX_t, \bm{\beta})}[ 1_{\{A_t = a\}} \bX_t \bX_t^\top]$,  $\bm{\varphi}_a(\bm{\beta}) = \EE_{\bX_t, A_t\sim \pi(\cdot \mid \bX_t, \bm{\beta}), Y_t}[ 1_{\{A_t = a\}} \bX_t Y_t ]$. Guided by these considerations, in Section \ref{sec::simulation}, we present a number of challenging environments where common behavior policies that use $\{\hat \bbeta_{t}^{\Ridge}\}_{t\geq 1}$ as summary statistics fail to converge.

\subsubsection{Stochastic gradient descent}\label{sec::SGD}

In addition to the ridge estimator, we show that the convergence also holds for a family of stochastic gradient descent (SGD) estimators under the Boltzmann exploration policy. Specifically, let $\hat \bbeta_{t}^{\SGD} = (\hat \bbeta_{t, a}^{\SGD})_{a\in\cA}$ be the SGD estimator, where for each $a\in\cA$, $\hat \bbeta_{t, a}^{\SGD}$ admits the following update rule:
\begin{align}\label{eq::SGD-update}
    \hat \bbeta_{t, a}^{\SGD} = \hat \bbeta_{t-1, a}^{\SGD} + \eta_t 1_{\{A_t = a\}}\bh\left(\bX_t, Y_t; \hat \bbeta_{t-1, a}^{\SGD}\right),
\end{align}
where $\eta_t$ is the learning rate at time $t$ and $\bh: \RR^d \times \RR \to \RR^d$ is a function parameterized by $\hat \bbeta_{t, a}^{\SGD}$. The role of $\bh$ is to specify the stochastic update direction, and in practice it can take the form of the gradient of a loss function (e.g., squared loss in linear models, logistic loss in classification, or a general neural-network loss function). This SGD estimator is widely used in practice especially when complex models such as neural networks are involved in decision-making \citep{zhou2020neural}. 



In this section, we study the stochastic convergence of the SGD estimator via stochastic approximation theory. Proposition \ref{theorem:convergence-of-SGD} shows that, under mild regularity conditions on $\bh$, the policy converges provided the sequence $\{\hat\bbeta_{t}^{\SGD}\}_{t\geq 1}$ remains bounded with probability one. The proof of Proposition \ref{theorem:convergence-of-SGD} is given in Appendix \ref{apdx::proof-thm::convergence-of-SGD}. Proposition \ref{prop::SGD-boundedness} further demonstrates that the requirements of Proposition \ref{theorem:convergence-of-SGD} are satisfied in a broad class of settings.

\begin{proposition}
    \label{theorem:convergence-of-SGD}
    Suppose the data are collected by the clipped Boltzmann exploration policy $\{\tilde\pi^{\gamma}(a \mid \bX_t, \hat \bbeta_{t}^{\SGD})\}_{t\geq 1}$, where $\{\hat \bbeta_{t}^{\SGD}\}_{t\geq 1}$ are the SGD estimators defined recursively from (\ref{eq::SGD-update}). Assume that
    \begin{itemize}
        \item $\sum_{t=1}^{\infty} \eta_t = \infty$ and $\sum_{t=1}^{\infty} \eta_t^2 < \infty$.
        \item $\hat \bbeta_{t, a}^{\SGD} \leq R_{\bbeta} < \infty$ is a.s. for all $a \in \cA$ and $t \geq 1$.
        \item $\bh$ satisfies that $\EE[\|\bh(\bX_t, Y_t(a); \bbeta)\|_2^2] \leq \rho_{\bh} (1+\|\bbeta\|_2^2) < \infty$ for all $\|\bbeta\|_2 \leq R_{\bbeta}$ and $a \in \cA$.
    \end{itemize}
    Then as long as 
    \begin{align}
        \gamma > 2\sqrt{K}(K - 1)M \sqrt{\rho_{\bh}} R_{\bbeta},
    \end{align}
    we have that 
    \begin{align}
        \hat \bbeta_{t, a}^{\SGD} \xrightarrow{p} \bar{\bbeta}_a \quad \forall a \in \cA,
    \end{align}
    for some $\{\bar{\bbeta}_a\}_{a\in\cA}$. Consequently, the behavior policy satisfies the policy convergence condition in Definition \ref{aspt:policy_convergence}.
\end{proposition}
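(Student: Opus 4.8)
The plan is to apply the same two-step strategy used for the ridge case (Proposition \ref{theorem:convergence-of-Ridge}): first establish $\hat\bbeta_t^{\SGD}\xrightarrow{p}\bar\bbeta$ for a deterministic limit, and then invoke Theorem \ref{lem::statistics-converge-implies-policy-converge} with $\hat\bbeta_t^{\SGD}$ as the summary statistic, noting that $\pi^\gamma$ in (\ref{eq::Boltzmann-policy-unclipped}) and its clipped version are continuous in $\bbeta$ everywhere, so condition (ii) of Theorem \ref{lem::statistics-converge-implies-policy-converge} is immediate once convergence of the statistic is shown. Thus the whole burden is on proving stochastic convergence of $\{\hat\bbeta_t^{\SGD}\}$.

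For that, I would recast the update (\ref{eq::SGD-update}) as a Robbins--Monro stochastic approximation recursion. Write $\hat\bbeta_{t,a}^{\SGD} = \hat\bbeta_{t-1,a}^{\SGD} + \eta_t\big(\bF_a(\hat\bbeta_{t-1}^{\SGD}) + \bM_{t,a}\big)$, where the mean field is $\bF_a(\bbeta) := \EE_{\bX_t, A_t\sim\tilde\pi^\gamma(\cdot\mid\bX_t,\bbeta), Y_t}\big[1_{\{A_t=a\}}\bh(\bX_t, Y_t;\bbeta_a)\big]$ and $\bM_{t,a}$ is the martingale-difference noise relative to $\cH_{t-1}$ (this is where the i.i.d. assumption on $\{\bX_t, Y_t(a)\}$ and Assumption \ref{aspt:unconfoundedness} enter, exactly as in the IPW computation after Theorem \ref{thm::asymptotic-normality-general}). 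The Robbins--Monro conditions $\sum\eta_t=\infty$, $\sum\eta_t^2<\infty$ are assumed; the a.s.\ boundedness of the iterates is assumed; and the growth bound $\EE[\|\bh\|_2^2]\le\rho_{\bh}(1+\|\bbeta\|_2^2)$ together with boundedness controls the noise variance $\EE[\|\bM_{t,a}\|_2^2\mid\cH_{t-1}]$ uniformly. The standard ODE / Lyapunov argument (e.g. Borkar-style, as cited in the related-work discussion) then reduces convergence to showing that the ODE $\dot\bbeta = \bF(\bbeta)$ has a globally attracting equilibrium inside the ball of radius $R_{\bbeta}$.

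The main obstacle — and the reason the temperature threshold $\gamma > 2\sqrt{K}(K-1)M\sqrt{\rho_{\bh}}R_{\bbeta}$ appears — is establishing that $\bF$ induces contractive dynamics, since, as the paper warns, the raw summary-statistic dynamics need not contract. I would show $\bF$ is a contraction (equivalently, that $-\bF$ is strongly monotone, or that some weighted norm is a Lyapunov function) by bounding the Lipschitz constant of $\bbeta\mapsto\bF_a(\bbeta)$. The dependence on $\bbeta$ enters only through the Boltzmann weights $\pi^\gamma(a\mid\bx,\bbeta)$ and through $\bh(\bx,y;\bbeta_a)$; the softmax map $\bbeta\mapsto\pi^\gamma(a\mid\bx,\bbeta)$ has Lipschitz constant of order $\|\bx\|_2/\gamma \le M/\gamma$ in each argument, contributing a factor $\le M/\gamma$ times the $K$-action combinatorics and the magnitude bound $\sqrt{\rho_{\bh}}R_{\bbeta}$ on $\bh$ over the ball — giving a perturbation term bounded by something like $2\sqrt{K}(K-1)M\sqrt{\rho_{\bh}}R_{\bbeta}/\gamma$, which is $<1$ precisely under the stated threshold. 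One must also argue that the unperturbed part (with the policy frozen) is at least non-expansive; since $\bh$ need carry no contraction of its own, the cleanest route is to show the whole map is a $\|\cdot\|_2$-contraction on the invariant ball and apply Banach's fixed point theorem for existence and uniqueness of $\bar\bbeta$, then conclude $\hat\bbeta_t^{\SGD}\xrightarrow{p}\bar\bbeta$ via the stochastic approximation convergence theorem. Care is needed with the clipping map $\tilde\pi^\gamma$: it is Lipschitz (Appendix \ref{apdx::clipping}) and only shrinks deviations, so it does not hurt the contraction estimate, but this should be stated explicitly.
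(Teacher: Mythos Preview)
Your proposal is essentially the paper's proof: cast (\ref{eq::SGD-update}) as a Robbins--Monro recursion with mean field $\tilde{\bh}_a(\bbeta)=\EE[\tilde\pi^\gamma(a\mid\bX_t,\bbeta)\,\bh(\bX_t,Y_t(a);\bbeta_a)]$ and martingale-difference noise, verify the stepsize and noise-moment hypotheses of Borkar's stochastic approximation theorem (Theorem~\ref{thm:sac_converge}), and obtain the contraction by bounding the Lipschitz constant of $\tilde{\bh}$ via the $2M/\gamma$ Lipschitz constant of the softmax together with $\|\bh\|\le\sqrt{\rho_{\bh}}R_{\bbeta}$ on the bounded ball, yielding exactly the threshold $\gamma>2\sqrt{K}(K-1)M\sqrt{\rho_{\bh}}R_{\bbeta}$. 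The final step through Theorem~\ref{lem::statistics-converge-implies-policy-converge} is identical as well.
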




The above conditions hold in many practical settings. A common case arises when the policy statistics are intended to approximate target parameters $\{\btheta_a^*\}_{a\in\cA}$ satisfying (\ref{eq::theta-a-*}) for some score function $\bg$. In this scenario, it is natural to apply the update rule (\ref{eq::SGD-update}) with $\bh(\bx, y;\bbeta_a)=\bg(\bx, y;\bbeta_a)$. Proposition \ref{prop::SGD-boundedness} verifies that the above conditions are met for the score functions $\bg$ in Examples \ref{ex::misspecified-linear-bandits}, \ref{ex::bandits-noisy-contexts}, and \ref{ex::ope}. The central step is to establish almost sure boundedness of the SGD estimator via the standard Lyapunov argument for stability in stochastic approximation. The proof of this proposition, along with a generalization of the idea to a broader class of functions $\bh$, is provided in Appendix \ref{appx::prop::SGD-boundedness}.

\begin{proposition}\label{prop::SGD-boundedness}
    Suppose the stepsize satisfies $\sum_{t=1}^{\infty} \eta_t = \infty$ and $\sum_{t=1}^{\infty} \eta_t^2 < \infty$ and Assumption \ref{aspt:Ridge_Conv} holds. Then $\hat \bbeta_{t, a}^{\SGD}$ is bounded a.s. for the update function $\bh$ being the score functions $\bg$ defined in three examples \ref{ex::misspecified-linear-bandits}, \ref{ex::bandits-noisy-contexts}, and \ref{ex::ope} under the clipped Boltzmann exploration policy with any $\gamma > 0$. Further, the condition in Proposition \ref{theorem:convergence-of-SGD} on the coefficient $\rho_{\bh}$ is satisfied.
\end{proposition}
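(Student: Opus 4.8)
The plan is to prove the a.s.\ boundedness of $\{\hat\bbeta_{t,a}^{\SGD}\}_{t\ge 1}$ by the standard Lyapunov (supermartingale) stability argument for stochastic approximation, carried out separately for each arm $a\in\cA$, and then to read off a finite linear-growth constant $\rho_{\bh}$ by a direct moment computation. Fix $a$ and rewrite the update (\ref{eq::SGD-update}) as $\hat\bbeta_{t,a}^{\SGD}=\hat\bbeta_{t-1,a}^{\SGD}+\eta_t\bu_t$ with $\bu_t:=1_{\{A_t=a\}}\bg(\bX_t,Y_t(a);\hat\bbeta_{t-1,a}^{\SGD})$, and split $\bu_t=\EE[\bu_t\mid\cH_{t-1}]+\bxi_t$ into a conditional drift and a martingale-difference remainder. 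Using Assumption~\ref{aspt:unconfoundedness} to separate $A_t$ from $Y_t(a)$ given $(\cH_{t-1},\bX_t)$, the i.i.d.\ structure of $\{\bX_t,Y_t(a)\}$ over $t$, and the fact that the clipped Boltzmann policy satisfies $\pi_t(a\mid\bX_t)\in[\pi_{\min},1]$ for \emph{every} $\gamma>0$ and every value of the remaining coordinates of $\hat\bbeta_{t-1}^{\SGD}$, the drift takes the affine form $\EE[\bu_t\mid\cH_{t-1}]=\bbf_{a,t}-\bB_{a,t}\,\hat\bbeta_{t-1,a}^{\SGD}$, where for the common affine representation $\bg(\bx,y;\bbeta)=\bbf(\bx,y)-\bM(\bx)\bbeta$ of the three examples one has $\bbf_{a,t}=\EE_{\bX_t}[\pi_t(a\mid\bX_t)\EE[\bbf(\bX_t,Y_t(a))\mid\bX_t]\mid\cH_{t-1}]$ and $\bB_{a,t}=\EE_{\bX_t}[\pi_t(a\mid\bX_t)\bM(\bX_t)\mid\cH_{t-1}]$.

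The crux is to show that there are constants $0<c\le C<\infty$, independent of $t$, $\gamma$, and history, with $\|\bbf_{a,t}\|_2\le C$, $\|\bB_{a,t}\|_{\mathrm{op}}\le C$, and $\bB_{a,t}\succeq c\,\bI$, which yields the inward-drift inequality $\langle\hat\bbeta_{t-1,a}^{\SGD},\EE[\bu_t\mid\cH_{t-1}]\rangle\le C\|\hat\bbeta_{t-1,a}^{\SGD}\|_2-c\|\hat\bbeta_{t-1,a}^{\SGD}\|_2^2$. For Example~\ref{ex::misspecified-linear-bandits}, $\bM(\bx)=\bx\bx^\top$, so $\bB_{a,t}\succeq\pi_{\min}\EE[\bX_t\bX_t^\top]\succeq\pi_{\min}\sigma_{\min}^2\bI$ by Assumption~\ref{aspt:Ridge_Conv}, while $\|\bbf_{a,t}\|_2\le MR_y$; for Example~\ref{ex::ope}, $\bM\equiv\bI$, so $\bB_{a,t}=\EE[\pi_t(a\mid\bX_t)\mid\cH_{t-1}]\bI\succeq\pi_{\min}\bI$ and $\|\bbf_{a,t}\|_2\le R_y$; for Example~\ref{ex::bandits-noisy-contexts}, $\bM(\bx)=\bx\bx^\top-\bSigma_e$, and the lower bound on $\bB_{a,t}$ follows from $\EE[\bX_t\bX_t^\top]-\bSigma_e=\EE[\bS_t\bS_t^\top]$ together with the positive definiteness of this population matrix guaranteed by the regularity conditions attached to that example (the same condition that makes $\btheta_a^*$ identifiable in (\ref{eq::theta-a-*})). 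In every case $\|\bM(\bx)\|_{\mathrm{op}}$ is uniformly bounded (by $M^2$, by $1$, and by $M^2+\|\bSigma_e\|_{\mathrm{op}}$, respectively), so the stated uniform bounds on $\bB_{a,t}$ and $\bbf_{a,t}$ hold.

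With $V_t:=\|\hat\bbeta_{t,a}^{\SGD}\|_2^2$ as Lyapunov function, expanding the square, taking $\EE[\,\cdot\mid\cH_{t-1}]$, inserting the drift inequality, absorbing the cross term via a Young-type bound $C\sqrt{V}\le\tfrac{c}{4}V+\tfrac{C^2}{c}$, and using the linear-growth bound $\EE[\|\bu_t\|_2^2\mid\cH_{t-1}]\le\rho_{\bh}(1+V_{t-1})$ gives a recursion of the shape $\EE[V_t\mid\cH_{t-1}]\le V_{t-1}(1-c'\eta_t+\rho_{\bh}\eta_t^2)+c''\eta_t+\rho_{\bh}\eta_t^2$ for positive constants $c',c''$ depending only on $c,C$. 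Since $\sum_t\eta_t^2<\infty$, once $t$ is large the net multiplier on $V_{t-1}$ is below $1$ and the negative term dominates the $O(\eta_t)$ inflow whenever $V_{t-1}$ exceeds a fixed threshold; a standard localization/stopping-time argument for stochastic-approximation stability then gives $\sup_tV_t<\infty$ a.s., i.e.\ $\|\hat\bbeta_{t,a}^{\SGD}\|_2\le R_{\bbeta}$ a.s.\ with $R_{\bbeta}$ depending only on $c,C,\rho_{\bh}$ and the step sizes, hence not on $\gamma$. The linear-growth bound itself is elementary and holds for all $\bbeta$ (so there is no circularity): since $\|\bX_t\|_2\le M$ and $Y_t(a)=y(\bX_t,a)+\eta_t$ with $|y|\le R_y$, $\EE[\eta_t^2]\le\sigma_\eta^2$, each affine $\bg$ obeys $\|\bg(\bX_t,Y_t(a);\bbeta)\|_2^2\le 2\|\bbf(\bX_t,Y_t(a))\|_2^2+2\|\bM(\bX_t)\|_{\mathrm{op}}^2\|\bbeta\|_2^2$, and taking expectations yields a finite $\rho_{\bh}$ (for instance $\rho_{\bh}=\max\{2M^2(R_y^2+\sigma_\eta^2),\,2M^4\}$ for Examples~\ref{ex::misspecified-linear-bandits}--\ref{ex::bandits-noisy-contexts}, with the analogous explicit constant for Example~\ref{ex::ope}), so the $\rho_{\bh}$-condition of Proposition~\ref{theorem:convergence-of-SGD} is met. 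I expect the main obstacle to be the stability step: rigorously controlling the martingale noise $\bxi_t$, whose conditional second moment grows linearly in $V_{t-1}$, simultaneously with the $O(\eta_t)$ inflow term, so that the inward drift wins despite $\sum_t\eta_t=\infty$; the per-example dissipativity computations are by comparison routine, and the promised abstract version (affine $\bh$ with a uniform one-sided dissipativity bound and $L^2$ linear growth $\Rightarrow$ a.s.\ boundedness) is proved by exactly this scheme and then specialized.
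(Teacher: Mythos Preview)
Your direct conditional-Lyapunov (supermartingale) argument is a different route from the paper's. The paper instead invokes the Borkar--Meyn stability criterion: it computes the scaled drift $\bar\bphi_c(\bbeta)=\bar\bphi(c\bbeta)/c$, lets $c\to\infty$ so that the clipped Boltzmann degenerates to the greedy policy, shows the limiting ODE $\dot\bbeta=-\bar\Sigma(\bbeta)\bbeta$ has the origin as its unique globally asymptotically stable equilibrium via the same quadratic Lyapunov $V=\tfrac12\|\bbeta\|_2^2$, and then appeals to the general boundedness theorem for stochastic approximation. Your finite-$t$, per-arm computation is more elementary and, for Examples~\ref{ex::misspecified-linear-bandits} and~\ref{ex::ope}, works cleanly: there $\bM(\bx)$ equals $\bx\bx^\top$ or $\bI$, which is pointwise PSD, so $\pi_t(a\mid\bX_t)\ge\pi_{\min}$ immediately yields $\bB_{a,t}\succeq\pi_{\min}\EE[\bM(\bX_t)]\succeq c\bI$ and the inward-drift inequality follows. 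Your $\rho_{\bh}$ verification is essentially the same as the paper's.

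The gap is Example~\ref{ex::bandits-noisy-contexts}. The claimed uniform lower bound $\bB_{a,t}\succeq c\bI$ does \emph{not} follow from the population identity $\EE[\bX_t\bX_t^\top]-\bSigma_e=\bSigma_S\succ 0$, because here $\bM(\bx)=\bx\bx^\top-\bSigma_e$ is not pointwise PSD. What you must lower bound is $\bB_{a,t}=\EE[\pi_t(a\mid\bX_t)(\bX_t\bX_t^\top-\bSigma_e)\mid\cH_{t-1}]$, and since $\pi_t(a\mid\cdot)$ varies with $\bX_t$ (through the Boltzmann weights determined by all the other arms' parameters, which your per-arm argument does not control), this weighted expectation can be indefinite: if the policy puts mass near $1$ on the region where $\bX_t\bX_t^\top\prec\bSigma_e$ and near $\pi_{\min}$ elsewhere, $\bB_{a,t}$ becomes negative. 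Concretely, with $d=1$, $\bX_t\in\{-1,2\}$ uniform, $\bSigma_e=2$ (so $\bSigma_S=0.5>0$), a clipped Boltzmann with suitably chosen other-arm parameter gives $\pi_t(a\mid{-1})\approx 1$ and $\pi_t(a\mid 2)=\pi_{\min}$, whence $\bB_{a,t}\approx\tfrac12(-1)+\tfrac12\cdot 2\pi_{\min}<0$. The paper's route avoids this by working with the \emph{joint} parameter $\bbeta=(\bbeta_a)_{a\in\cA}$ and passing to the $c\to\infty$ scaling, so that the policy becomes a deterministic function of $\bbeta$ itself; this coupling between policy and parameter is what drives dissipativity, and it is precisely what the per-arm, uniform-in-$\pi$ argument throws away. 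To repair your approach for Example~\ref{ex::bandits-noisy-contexts} you would need either a joint Lyapunov over all arms exploiting the specific Boltzmann structure, or an extra hypothesis such as $\bX_t\bX_t^\top\succeq\bSigma_e$ a.s., neither of which is assumed.
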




\section{Examples}\label{seq::examples}

In this section, we revisit the three examples from Section \ref{sec::problem-setup} and establish statistical inference guarantees by applying Theorem \ref{thm::asymptotic-normality-general} and \ref{thm::asymptotic-normality-joint-general} or suitably adapting them to relevant practical settings. 

\subsection{Misspecified Linear Bandits}\label{sec::ex1}

We first consider Example \ref{ex::misspecified-linear-bandits}, where the target parameter $\btheta_a^*$ is defined by (\ref{eq::theta-a-*}) with the score function $\bg$ in (\ref{eq::target-parameter-misspecified-linear-bandits}). A natural estimator for $\btheta_a^*$ that satisfies (\ref{eq::estimating-equation-general}) is 
\begin{equation}
\hat\btheta_a^{(T)} = \left(\frac1T\sum_{t=1}^T\frac{1_{\{A_t = a\}}}{\pi_t(A_t)}\bX_t\bX_t^\top\right)^{-1}\left(\frac1T\sum_{t=1}^T\frac{1_{\{A_t = a\}}}{\pi_t(A_t)}\bX_tY_t\right).
\end{equation}

In this setting, Assumptions \ref{aspt:identifiability-general}-\ref{aspt:smoothness-general}, which form part of the conditions for Theorem \ref{thm::asymptotic-normality-joint-general}, are implied by the following regularity assumption.
\begin{assumption}\label{aspt:ex1}
There exist constants $R_\theta, M>0$ such that $\sup_{a\in\cA}\|\btheta_a^*\|_2< R_\theta$, $\|\bX_t\|_2\leq M$. Moreover, $\bSigma_X:= \EE[\bX_t\bX_t^\top]$ is invertible, and $\sup_{a\in\cA}\EE [Y_t(a)^4]<\infty$.
\end{assumption}

With Assumption \ref{aspt:ex1} in place, together with the remaining conditions of Theorem \ref{thm::asymptotic-normality-joint-general}, the joint asymptotic normality of $\hat\btheta^{(T)} = \big((\hat\btheta_1^{(T)})^\top, \ldots, (\hat\btheta_K^{(T)})^\top\big)^\top$ and a consistent estimator for its asymptotic variance follow directly from Theorem \ref{thm::asymptotic-normality-joint-general} and Proposition \ref{thm::consistent-var-estimator-general}, as stated below.

\begin{corollary}\label{cor::ex1}
Suppose Assumptions \ref{aspt:unconfoundedness} and \ref{aspt:ex1} holds. Consider any behavior policy that converges to a limiting policy $\bar\pi$ as in Definition \ref{aspt:policy_convergence} and satisfies Assumption \ref{aspt:min-sampling-prob} for every action $a\in\cA$. Then, as $T\rightarrow\infty$, (\ref{eq::asymptotic-normality-joint-general}) holds with the joint asymptotic variance $\bSigma^* = \diag(\bSigma_1^*, \ldots, \bSigma_K^*)$, where for all $a\in\cA$,
\begin{equation}
\bSigma_a^* = \bSigma_X^{-1}\cdot \EE\left[\frac{(Y_t(a)-\bX_t^\top\btheta_a^*)^2}{\bar \pi(a|\bX_t)}\bX_t\bX_t^\top\right]\cdot \bSigma_X^{-1}.
\end{equation}
A consistent estimator for $\bSigma_a^*$ is given by (\ref{eq::estimator-asympt-var}), with 
\begin{align}\label{eq::estimator-asympt-var-ex1}
\hat{\dot{\bG}}_{a, T} = \frac1T\sum_{t=1}^T\frac{1_{\{A_t = a\}}}{\pi_t(A_t)}\bX_t\bX_t^\top,\quad 
\hat\bI_{a, T} = \frac1T\sum_{t=1}^T\frac{1_{\{A_t = a\}}}{\pi_t(A_t)^2}(Y_t-\bX_t^\top\hat\btheta_a^{(T)})^2\bX_t\bX_t^\top.
\end{align}
\end{corollary}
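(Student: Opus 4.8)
The plan is to obtain Corollary~\ref{cor::ex1} as a direct specialization of Theorem~\ref{thm::asymptotic-normality-joint-general} and Proposition~\ref{thm::consistent-var-estimator-general} to the score function $\bg(\bx,y;\btheta)=\bx(y-\bx^\top\btheta)$ from \eqref{eq::target-parameter-misspecified-linear-bandits}. Since Assumption~\ref{aspt:unconfoundedness}, the minimum-sampling-probability condition (Assumption~\ref{aspt:min-sampling-prob}), and policy convergence (Definition~\ref{aspt:policy_convergence}) are assumed outright, the only hypotheses that require checking are Assumptions~\ref{aspt:identifiability-general}--\ref{aspt:smoothness-general} for each $a\in\cA$, which should all follow from Assumption~\ref{aspt:ex1}. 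It then remains to (a) confirm that the closed-form weighted least-squares estimator displayed in the corollary solves the estimating equation \eqref{eq::estimating-equation-general} and has norm eventually below $R_\theta$, and (b) simplify the generic variance $\bSigma_a^*=\bJ_a^{-1}\bar\bI_a\bJ_a^{-1,\top}$ and the generic variance estimator \eqref{eq::estimator-asympt-var} into the explicit forms stated.

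\emph{Verifying the assumptions.} For this $\bg$ one computes $\EE[\bg(\bX_t,Y_t(a);\btheta)]=\EE[\bX_tY_t(a)]-\bSigma_X\btheta=\bSigma_X(\btheta_a^*-\btheta)$ using the definition of $\btheta_a^*$ and $\bSigma_X=\EE[\bX_t\bX_t^\top]$. Since $\bSigma_X$ is invertible (hence positive definite), $\|\EE[\bg(\bX_t,Y_t(a);\btheta)]\|_2\ge\lambda_{\min}(\bSigma_X)\|\btheta-\btheta_a^*\|_2$, which is bounded away from $0$ on $\{\|\btheta-\btheta_a^*\|_2>\epsilon\}$, giving Assumption~\ref{aspt:identifiability-general}. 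For Assumption~\ref{aspt:boundedness-general}, $\bg(\bX_t,Y_t(a);\btheta_a^*)=\bX_t(Y_t(a)-\bX_t^\top\btheta_a^*)$ with $\|\bX_t\|_2\le M$ and $\|\btheta_a^*\|_2<R_\theta$, so the fourth-moment bound $\EE[Y_t(a)^4]<\infty$ together with boundedness of $\bX_t$ controls $\EE[\|\bg\|_2^2]$ and $\EE[\|\bg\|_2^4]$ by Cauchy--Schwarz, and the conditional second moment equals $\EE[(Y_t(a)-\bX_t^\top\btheta_a^*)^2\mid\bX_t]\,\bX_t\bX_t^\top$. Finally $\nabla_\btheta\bg(\bx,y;\btheta)=-\bx\bx^\top$ is constant in $(\btheta,y)$, with $\EE[\nabla\bg]=-\bSigma_X$ nonsingular, $\|\nabla\bg\|_2\le M^2$ (take $\phi\equiv M^2$), and $\nabla^2\bg\equiv 0$ (take $\Phi\equiv 0$), which gives Assumption~\ref{aspt:smoothness-general}.

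\emph{Identifying the limiting objects.} Applying Theorem~\ref{thm::asymptotic-normality-joint-general} yields \eqref{eq::asymptotic-normality-joint-general} with $\bSigma^*=\diag(\bSigma_1^*,\dots,\bSigma_K^*)$ and $\bSigma_a^*=\bJ_a^{-1}\bar\bI_a\bJ_a^{-1,\top}$, where $\bJ_a=\EE[\nabla\bg(\bX_t,Y_t(a);\btheta_a^*)]=-\bSigma_X$, so $\bJ_a^{-1}=-\bSigma_X^{-1}$, and $\bar\bI_a=\EE[\bar\pi(a|\bX_t)^{-1}(Y_t(a)-\bX_t^\top\btheta_a^*)^2\bX_t\bX_t^\top]$; the two sign factors cancel in the sandwich, producing the stated $\bSigma_a^*$. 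Substituting $\nabla\bg(\bX_t,Y_t;\hat\btheta_a^{(T)})=-\bX_t\bX_t^\top$ and $\bg(\bX_t,Y_t;\hat\btheta_a^{(T)})=\bX_t(Y_t-\bX_t^\top\hat\btheta_a^{(T)})$ into \eqref{eq::estimator-asympt-var} gives $\hat{\dot{\bG}}_{a,T}=-\frac1T\sum_t\frac{1_{\{A_t=a\}}}{\pi_t(A_t)}\bX_t\bX_t^\top$ and the stated $\hat\bI_{a,T}$; again the sign of $\hat{\dot{\bG}}_{a,T}$ is immaterial in the sandwich, so this matches \eqref{eq::estimator-asympt-var-ex1}, and consistency is inherited from Proposition~\ref{thm::consistent-var-estimator-general}. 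Lastly, the weighted Gram matrix $\frac1T\sum_t\frac{1_{\{A_t=a\}}}{\pi_t(A_t)}\bX_t\bX_t^\top$ converges in probability to $\bSigma_X\succ 0$ (the same martingale law of large numbers used in the proof of Theorem~\ref{thm::asymptotic-normality-general}), hence is invertible with probability tending to one, on which event the displayed $\hat\btheta_a^{(T)}$ is the exact root of $\bG_T(\btheta)=\mathbf{0}$ and so satisfies \eqref{eq::estimating-equation-general}; it also satisfies $\|\hat\btheta_a^{(T)}\|_2\le R_\theta$ eventually since $\hat\btheta_a^{(T)}\xrightarrow{p}\btheta_a^*$ with $\|\btheta_a^*\|_2<R_\theta$.

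\emph{Main obstacle.} There is no genuine difficulty here beyond bookkeeping, as all the heavy lifting is done by Theorem~\ref{thm::asymptotic-normality-joint-general} and Proposition~\ref{thm::consistent-var-estimator-general}. The one point that warrants care is the verification of Assumption~\ref{aspt:boundedness-general}(i) from the marginal moment hypothesis $\EE[Y_t(a)^4]<\infty$ in Assumption~\ref{aspt:ex1}: either one reads \ref{aspt:ex1} as supplying a conditional fourth-moment bound, or one checks—by revisiting the martingale-CLT step with $\pi_t(a)\ge\pi_{\min}$ and dominated convergence under policy convergence—that only the integrated quantity $\EE[\bar\pi(a|\bX_t)^{-1}\|\bg\|_2^2]<\infty$ is actually needed, which follows from the stated assumptions.
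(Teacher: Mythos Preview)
Your proposal is correct and follows exactly the approach the paper takes: the text preceding the corollary states that Assumptions~\ref{aspt:identifiability-general}--\ref{aspt:smoothness-general} are implied by Assumption~\ref{aspt:ex1}, after which the result ``follows directly from Theorem~\ref{thm::asymptotic-normality-joint-general} and Proposition~\ref{thm::consistent-var-estimator-general}''; no separate proof is given. Your verification of the technical assumptions and your simplification of the sandwich variance are precisely what is needed, and your flag about Assumption~\ref{aspt:boundedness-general}(i)---that a marginal fourth moment on $Y_t(a)$ does not by itself yield an almost-sure bound on the conditional second moment $\EE[(Y_t(a)-\bX_t^\top\btheta_a^*)^2\mid\bX_t]$---is a genuine observation that the paper glosses over; your suggested patch via uniform integrability (dominating $|U_{t,\bc}|$ by $\pi_{\min}^{-2}\,\bc^\top\EE[\bg\bg^\top\mid\bX_t]\bc$, which is i.i.d.\ in $t$ with finite mean) is the cleanest way to close it.
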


\begin{remark}\label{rmk::ex1-simple-var-est}
In Corollary \ref{cor::ex1}, a simpler consistent estimator for the asymptotic variance is obtained by replacing $\hat{\dot{\bG}}_{a, T}$ in (\ref{eq::estimator-asympt-var}) with $\hat\bSigma_X := \frac1T\sum_{t=1}^T\bX_t\bX_t^\top$. This alternative remains consistent since both matrices consistently estimate $\bSigma_X$. 
\end{remark}

In the special case where the behavior policy is an 
$\epsilon$-greedy policy combined with the weighted online LS estimator, Corollary \ref{cor::ex1} yields the same asymptotic normality result as \citep{chen2021statistical}; see also the discussion following Theorem \ref{thm::asymptotic-normality-general}. The variance estimator used in \citep{chen2021statistical} corresponds to the simplified plug-in form described in Remark \ref{rmk::ex1-simple-var-est}. In addition, Corollary \ref{cor::ex1} holds under generally weaker technical assumptions, such as not requiring the context $\bX_t$ to be a continuous random vector.

\subsection{Bandits with Noisy Contexts}\label{sec::ex2}

We now revisit Example \ref{ex::bandits-noisy-contexts}. Recall that the potential outcome satisfies $Y_t(a) = \bS_t^\top \btheta_a^* + \eta_t$ with unobserved state $\bS_t$, while the observed context is the noisy proxy $\bX_t = \bS_t + \bepsilon_t$. The noise $\bepsilon_t$ satisfies $\EE[\bepsilon_t|\bS_t] = \mathbf{0}$, and $\mathrm{Var}(\bepsilon_t|\bS_t) = \bSigma_e$ for a constant matrix $\bSigma_e\in\RR^{d\times d}$. No parametric assumptions are imposed on the distribution of $\bepsilon_t$. In addition, assume $\mathbb{E}[\eta_t\mid\bS_t,\bX_t]=0$ and define $\sigma_\eta^2=\mathrm{Var}(\eta_t\mid\bS_t,\bX_t)$.

We first consider the case where the contextual error variance $\bSigma_e$ is known. Then the score function reduces to (\ref{eq::target-parameter-bandits-noisy-contexts}). An estimator of $\btheta_a^*$ satisfying (\ref{eq::estimating-equation-general}) is
\begin{equation}
\hat\btheta_a^{(T)} = \left[\frac1T\sum_{t=1}^T\frac{1_{\{A_t = a\}}}{\pi_t(A_t)}(\bX_t\bX_t^\top-\bSigma_e)\right]^{-1}\left(\frac1T\sum_{t=1}^T\frac{1_{\{A_t = a\}}}{\pi_t(A_t)}\bX_tY_t\right).
\end{equation}

To infer the target parameter $\btheta_a^*$ via Theorem \ref{thm::asymptotic-normality-joint-general}, we impose the following regularity condition, which guarantees Assumptions \ref{aspt:identifiability-general}–\ref{aspt:smoothness-general} required for the theorem. 

\begin{assumption}\label{aspt:ex2}
There exist constants $R_\theta, M, M_\eta > 0$ such that $\sup_a\|\btheta_a^*\|_2< R_\theta$, \\$\max\{\|\bX_t\|_2, \|\bS_t\|_2\}\leq M$, $\EE[\eta_t^4|\bS_t, \bX_t]\leq M_\eta$. In addition, $\bSigma_S: = \EE[\bS_t\bS_t^\top]$ is invertible.
\end{assumption}

The following corollary, obtained from Theorem \ref{thm::asymptotic-normality-joint-general} and Proposition \ref{thm::consistent-var-estimator-general}, provides inference for $\{\btheta_a^*\}_{a\in\cA}$ in this setting. The proof establishing the variance expression in the corollary is provided in Appendix \ref{apdx::proof-cor::ex2}.
\begin{corollary}\label{cor::ex2}
Suppose Assumption \ref{aspt:unconfoundedness} and \ref{aspt:ex2} holds. Consider any behavior policy that converges to a limiting policy $\bar\pi$ as in Definition \ref{aspt:policy_convergence} and satisfies Assumption \ref{aspt:min-sampling-prob} for every $a\in\cA$. Then, as $T\rightarrow\infty$, (\ref{eq::asymptotic-normality-joint-general}) holds with $\bSigma^* = \diag(\bSigma_1^*, \ldots, \bSigma_K^*)$, where for all $a\in\cA$,
\begin{equation}\label{eq::var-ex2}
\bSigma_a^* = \bSigma_S^{-1}\bar \bI_a \bSigma_S^{-1},\quad \bar \bI_a:= \EE\frac{1}{\bar \pi(a|\bX_t)} \big[\bh_a(\bX_t, \bS_t)\bh_a(\bX_t, \bS_t)^\top + \sigma_\eta^2\bX_t\bX_t^\top\big],
\end{equation}
and $\bh_a(\bx, \bs):= (\bx \bx^\top - \bSigma_e )\btheta_a^* - \bx\bs^\top\btheta_a^*$. A consistent estimator for $\bSigma_a^*$ is in (\ref{eq::estimator-asympt-var}), with 
\begin{align}
\hat{\dot{\bG}}_{a, T} = \frac1T\sum_{t=1}^T\frac{1_{\{A_t = a\}}}{\pi_t(A_t)}(\bX_t\bX_t^\top\!-\!\bSigma_e),
\hat\bI_{a, T} = \frac1T\sum_{t=1}^T\frac{1_{\{A_t = a\}}}{\pi_t(A_t)^2}\!\left[(\bX_t\bX_t^\top \!-\!\bSigma_e)\hat\btheta_a^{(T)}\!-\!\bX_t Y_t\right]^{\otimes 2}.\label{eq::estimator-asympt-var-ex2}
\end{align}
\end{corollary}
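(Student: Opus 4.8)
The plan is to obtain Corollary \ref{cor::ex2} as a direct specialization of Theorem \ref{thm::asymptotic-normality-joint-general} and Proposition \ref{thm::consistent-var-estimator-general}. This requires two things: (a) checking that Assumption \ref{aspt:ex2} implies the abstract hypotheses Assumptions \ref{aspt:identifiability-general}--\ref{aspt:smoothness-general} for the score $\bg(\bx,y;\btheta) = \bx y - (\bx\bx^\top - \bSigma_e)\btheta$; and (b) evaluating the abstract sandwich variance $\bSigma_a^* = \bJ_a^{-1}\bar\bI_a\bJ_a^{-1,\top}$ and the plug-in variance estimator in the present measurement-error model.

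For (a), the simplifying feature is that $\bg$ is affine in $\btheta$: $\nabla\bg(\bx,y;\btheta) = -(\bx\bx^\top - \bSigma_e)$ and $\nabla^2\bg \equiv 0$. I would first record the moment identities implied by the model: since $\EE[\bepsilon_t\mid\bS_t]=0$ and $\mathrm{Var}(\bepsilon_t\mid\bS_t)=\bSigma_e$, we get $\EE[\bX_t\bX_t^\top] = \bSigma_S + \bSigma_e$; and since $\EE[\eta_t\mid\bS_t,\bX_t]=0$, we get $\EE[\bX_t Y_t(a)] = \bSigma_S\btheta_a^*$. The first identity gives $\EE[\nabla\bg(\bX_t,Y_t(a);\btheta_a^*)] = -\bSigma_S$, nonsingular by Assumption \ref{aspt:ex2}, so Assumption \ref{aspt:smoothness-general}(i) holds; parts (ii)--(iii) are trivial since the first derivative of $\bg$ is bounded by $M^2 + \|\bSigma_e\|_2$ and the second derivative vanishes, so constant dominating functions work. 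The second identity gives $\EE[\bg(\bX_t,Y_t(a);\btheta)] = \bSigma_S(\btheta_a^* - \btheta)$, hence $\|\EE[\bg]\|_2 \geq \sigma_{\min}(\bSigma_S)\,\|\btheta-\btheta_a^*\|_2$, which is Assumption \ref{aspt:identifiability-general}. For Assumption \ref{aspt:boundedness-general}, boundedness of $\bX_t,\bS_t$ and $\EE[\eta_t^4\mid\bS_t,\bX_t]\leq M_\eta$ give $\EE[Y_t(a)^4]<\infty$, and then all the (conditional) moments of $\bg$ near $\btheta_a^*$ are finite because $\bg$ is a polynomial in bounded quantities and $Y_t(a)$. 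Theorem \ref{thm::asymptotic-normality-joint-general} then applies.

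For (b), since $\bJ_a = \EE[\nabla\bg(\bX_t,Y_t(a);\btheta_a^*)] = -\bSigma_S$, we have $\bSigma_a^* = \bSigma_S^{-1}\bar\bI_a\bSigma_S^{-1}$, so it remains to evaluate $\bar\bI_a = \EE[\bar\pi(a\mid\bX_t)^{-1}\,\bg(\bX_t,Y_t(a);\btheta_a^*)\bg(\bX_t,Y_t(a);\btheta_a^*)^\top]$. Plugging $Y_t(a) = \bS_t^\top\btheta_a^* + \eta_t$ into $\bg$ and regrouping terms yields $\bg(\bX_t,Y_t(a);\btheta_a^*) = \bX_t\eta_t - \bh_a(\bX_t,\bS_t)$ with $\bh_a(\bx,\bs) = (\bx\bx^\top - \bSigma_e)\btheta_a^* - \bx\bs^\top\btheta_a^*$. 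Conditioning on $(\bS_t,\bX_t)$ and using $\EE[\eta_t\mid\bS_t,\bX_t]=0$, $\EE[\eta_t^2\mid\bS_t,\bX_t]=\sigma_\eta^2$, together with the $\sigma(\bS_t,\bX_t)$-measurability of both $\bar\pi(a\mid\bX_t)$ and $\bh_a(\bX_t,\bS_t)$, the two cross terms vanish, leaving $\bar\bI_a = \EE[\bar\pi(a\mid\bX_t)^{-1}(\bh_a(\bX_t,\bS_t)\bh_a(\bX_t,\bS_t)^\top + \sigma_\eta^2\bX_t\bX_t^\top)]$, which is (\ref{eq::var-ex2}). Finally, (\ref{eq::estimator-asympt-var-ex2}) is exactly the estimator of Proposition \ref{thm::consistent-var-estimator-general} specialized here: $\nabla\bg(\bX_t,Y_t;\hat\btheta_a^{(T)}) = -(\bX_t\bX_t^\top - \bSigma_e)$ is constant in $\btheta$ (the resulting overall sign on $\hat{\dot{\bG}}_{a,T}$ cancels in the sandwich), and $\bg(\bX_t,Y_t;\hat\btheta_a^{(T)})\bg(\bX_t,Y_t;\hat\btheta_a^{(T)})^\top = [(\bX_t\bX_t^\top - \bSigma_e)\hat\btheta_a^{(T)} - \bX_t Y_t]^{\otimes 2}$, so consistency is immediate from Proposition \ref{thm::consistent-var-estimator-general}.

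The step I expect to require the most care is the cancellation in (b): $\bSigma_e$ must drop out of $\bJ_a$, and the $\bX_t\eta_t$ versus $\bh_a(\bX_t,\bS_t)$ cross terms must vanish in $\bar\bI_a$. Both rely on using the conditional-mean-zero structure at the right level---$\EE[\bepsilon_t\mid\bS_t]=0$ for the former and $\EE[\eta_t\mid\bS_t,\bX_t]=0$ for the latter---and on treating $\sigma_\eta^2 = \mathrm{Var}(\eta_t\mid\bS_t,\bX_t)$ as a constant (otherwise $\sigma_\eta^2$ should be kept inside the expectation multiplying $\bX_t\bX_t^\top$). Everything else is routine: verifying that an affine-in-$\btheta$ score built from bounded covariates satisfies the identifiability, smoothness, and moment conditions of the general theorem.
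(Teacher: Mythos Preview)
Your proposal is correct and follows essentially the same route as the paper's proof: the paper declares the verification of Assumptions \ref{aspt:identifiability-general}--\ref{aspt:smoothness-general} from Assumption \ref{aspt:ex2} to be ``straightforward'' and then computes $\bJ_a$ and $\bar\bI_a$ exactly as you do, by substituting $Y_t(a)=\bS_t^\top\btheta_a^*+\eta_t$ into $\bg$, recognizing $-\bg = \bh_a(\bX_t,\bS_t)-\bX_t\eta_t$, and conditioning on $(\bX_t,\bS_t)$ to kill the cross terms. Your treatment is in fact more careful than the paper's---you track the sign $\bJ_a=-\bSigma_S$ (the paper writes $\bSigma_S$ directly) and explicitly verify identifiability via $\EE[\bg]=\bSigma_S(\btheta_a^*-\btheta)$, both of which are correct and do not change the final sandwich form.
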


In practice, the variance of the contextual error $\bSigma_e$ is often unknown and must be estimated from auxiliary data. In this case, $\hat\btheta_a^{(T)}$ is obtained from (\ref{eq::estimating-equation-general}) by replacing $\bSigma_e$ with an estimator $\hat\bSigma_e$. The derivation and corresponding inference results are provided in Appendix \ref{apdx::ex2-estimated-Sigma-e}.

\subsection{Off-policy Evaluation (OPE) with Adaptively Collected Data} 

We now consider Example \ref{ex::ope}, where the target parameter $V^* = \sum_{a\in\cA}\btheta_a^*$, and $\btheta_a^* = \EE [\pi^e(a|\bX_t) Y_t]$ solves (\ref{eq::theta-a-*}) with an arm-specific score function (\ref{eq::target-parameter-ope}). We estimate $V^*$ by 
$\hat V^{(T)} = \sum_{a\in\cA}\hat\btheta_a^{(T)}$,
where 
\begin{equation}
\hat\btheta_a^{(T)} = \left(\frac1T\sum_{t=1}^T\frac{1_{\{A_t = a\}}}{\pi_t(A_t)}\right)^{-1}\left(\frac1T\sum_{t=1}^T\frac{1_{\{A_t = a\}}}{\pi_t(A_t)}\pi^e(a|\bX_t) Y_t\right)
\end{equation}
satisfies (\ref{eq::estimating-equation-general}). 

The asymptotic distribution of $\hat V^{(T)}$ can be derived from the joint distribution of $\{\hat\btheta_a^{(T)}\}_{a\in\cA}$ via Theorem \ref{thm::asymptotic-normality-joint-general}, enabling statistical inference. Specifically, we impose the following condition, which encompasses Assumptions \ref{aspt:identifiability-general}–\ref{aspt:smoothness-general} required for the theorem. 

\begin{assumption}\label{aspt:ex3}
$\EE[Y_t^4]<\infty$, and there exist a constant $R_\theta > 0$ such that $\sup_a\|\btheta_a^*\|_2< R_\theta$. 
\end{assumption}

The corollary below informs valid inference of the target parameter $V^*$. It follows from Theorem \ref{thm::asymptotic-normality-joint-general} and Proposition \ref{thm::consistent-var-estimator-general}.

\begin{corollary}
Suppose Assumption \ref{aspt:unconfoundedness} and \ref{aspt:ex3} holds. Consider any behavior policy that converges to a limiting policy $\bar\pi$ as in Definition \ref{aspt:policy_convergence} and satisfies Assumption \ref{aspt:min-sampling-prob} for every $a\in\cA$. Then, as $T\rightarrow\infty$,
\begin{equation}
\sqrt{T}\big(\hat V^{T} - V^*\big)\xrightarrow{d}
\cN\left(0, \sum_{a\in\cA}\EE\frac{\big(\pi^e(a|\bX_t)Y_t - \btheta_a^*\big)^2}{\bar \pi(a|\bX_t)}\right).
\end{equation}
A consistent variance estimator is 
\begin{equation}\label{eq::ex3-var-estimator}
\sum_{a\in\cA} \left(\hat{\dot{\bG}}_{a, T}\right)^{-2}\hat \bI_{a, T},
\end{equation}
where for all $a\in\cA$,
\begin{align}
\hat{\dot{\bG}}_{a, T} = \frac1T\sum_{t=1}^T\frac{1_{\{A_t = a\}}}{\pi_t(A_t)},\quad 
\hat\bI_{a, T} = \frac1T\sum_{t=1}^T\frac{1_{\{A_t = a\}}}{\pi_t(A_t)^2}\big(\pi^e(a|\bX_t)Y_t - \hat\btheta_a^{(T)}\big)^2.
\end{align}
\end{corollary}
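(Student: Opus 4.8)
The plan is to obtain this result as a corollary of the joint central limit theorem for the arm-wise IPW-Z estimators. \textbf{First}, since $V^* = \sum_{a\in\cA}\btheta_a^*$ and $\hat V^{(T)} = \sum_{a\in\cA}\hat\btheta_a^{(T)}$ with each $\btheta_a^*\in\RR$, it suffices to establish joint asymptotic normality of $\hat\btheta^{(T)} = \big((\hat\btheta_1^{(T)})^\top,\ldots,(\hat\btheta_K^{(T)})^\top\big)^\top$ and then push it through the linear map $\btheta\mapsto\one_K^\top\btheta$. To get the joint CLT I would invoke Theorem~\ref{thm::asymptotic-normality-joint-general} in its arm-specific-score form (the extension recorded in Appendix~\ref{apdx::proof-thm::asymptotic-normality-joint-general}), applied with $\bg_a(\bx,y;\btheta) = \pi^e(a|\bx)y - \btheta$. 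Its hypotheses are Assumptions~\ref{aspt:unconfoundedness}, \ref{aspt:min-sampling-prob}, and \ref{aspt:identifiability-general}--\ref{aspt:smoothness-general}; the first two are in force, and I would verify the rest from Assumption~\ref{aspt:ex3}: $\bg_a$ is affine in $\btheta$ with $\nabla_\btheta\bg_a\equiv -1$ and $\nabla^2_\btheta\bg_a\equiv 0$, so Assumption~\ref{aspt:smoothness-general} holds with $\phi\equiv 1$, $\Phi\equiv 0$ and $\EE[\nabla\bg_a] = -1\neq 0$; since $\EE[\bg_a(\bX_t,Y_t(a);\btheta)] = \btheta_a^* - \btheta$, the well-separation Assumption~\ref{aspt:identifiability-general} is immediate; and the moment conditions in Assumption~\ref{aspt:boundedness-general} reduce, via $\pi^e(a|\cdot)\in[0,1]$, to finiteness of low-order moments of $Y_t(a)$, which follow from $\EE[Y_t^4]<\infty$ together with the minimum-sampling-probability assumption, which yields $\EE[|Y_t(a)|^4]\le\pi_{\min}^{-1}\EE[Y_t^4]<\infty$ for every $a$.

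\textbf{Second}, I would read off the per-arm limiting variance. Theorem~\ref{thm::asymptotic-normality-joint-general} gives $\sqrt T(\hat\btheta^{(T)}-\btheta^*)\xrightarrow{d}\cN(\mathbf 0,\bSigma^*)$ with $\bSigma^* = \diag(\bSigma_1^*,\ldots,\bSigma_K^*)$ and $\bSigma_a^* = \bJ_a^{-1}\bar\bI_a\bJ_a^{-1,\top}$. Here $\bJ_a = \EE[\nabla\bg_a(\bX_t,Y_t(a);\btheta_a^*)] = -1$, so $\bSigma_a^* = \bar\bI_a = \EE\big[\bar\pi(a|\bX_t)^{-1}\big(\pi^e(a|\bX_t)Y_t(a)-\btheta_a^*\big)^2\big]$, which is finite since $\bar\pi(a|\cdot)\ge\pi_{\min}$ and $\EE[Y_t(a)^2]<\infty$. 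Then, writing $\hat V^{(T)} - V^* = \one_K^\top(\hat\btheta^{(T)}-\btheta^*)$, the continuous mapping theorem yields $\sqrt T(\hat V^{(T)}-V^*)\xrightarrow{d}\cN(0,\ \one_K^\top\bSigma^*\one_K)$, and because $\bSigma^*$ is block-diagonal this limiting variance equals $\sum_{a\in\cA}\bSigma_a^* = \sum_{a\in\cA}\EE\big[(\pi^e(a|\bX_t)Y_t - \btheta_a^*)^2/\bar\pi(a|\bX_t)\big]$, the claimed expression. The vanishing of the cross-arm blocks is exactly the statement of Theorem~\ref{thm::asymptotic-normality-joint-general} and is what makes the sum of variances, rather than a full quadratic form, appear.

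\textbf{Third}, for the variance estimator I would apply Proposition~\ref{thm::consistent-var-estimator-general} arm by arm: $\hat\bSigma_a := [\hat{\dot{\bG}}_{a,T}]^{-1}\hat\bI_{a,T}[\hat{\dot{\bG}}_{a,T}]^{-1,\top}\xrightarrow{p}\bSigma_a^*$. For this score $\nabla\bg_a\equiv -1$, so $\hat{\dot{\bG}}_{a,T} = -\tfrac1T\sum_{t=1}^T \tfrac{1_{\{A_t=a\}}}{\pi_t(A_t)}$ is a scalar, nonzero for all large $T$ since it converges to $-1$, whence $\hat\bSigma_a = (\hat{\dot{\bG}}_{a,T})^{-2}\hat\bI_{a,T}$ with $\hat\bI_{a,T}$ exactly the displayed sum (the sign of $\hat\btheta_a^{(T)}$ inside the square being immaterial). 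Summing the $K$ individually consistent estimators and applying the continuous mapping theorem gives $\sum_{a\in\cA}\hat\bSigma_a\xrightarrow{p}\sum_{a\in\cA}\bSigma_a^*$, i.e.\ the estimator in \eqref{eq::ex3-var-estimator} is consistent; combining this with the CLT through Slutsky's theorem delivers an asymptotically standard normal studentized statistic, hence valid confidence intervals for $V^*$.

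I expect the only step requiring genuine care to be the verification in the first paragraph: confirming that the arm-specific extension of Theorem~\ref{thm::asymptotic-normality-joint-general} is in hand and that Assumption~\ref{aspt:ex3} really does discharge Assumptions~\ref{aspt:identifiability-general}--\ref{aspt:smoothness-general} for the score $\bg_a$, in particular the (conditional) moment requirements of Assumption~\ref{aspt:boundedness-general}, where the minimum-sampling-probability condition is what lets one transfer moment control from the observed rewards $Y_t$ to the potential outcomes $Y_t(a)$. Everything after that---the identification of $\bJ_a$ and $\bar\bI_a$, the block-diagonal/linear-combination algebra, and the consistency of the plug-in variance---is routine.
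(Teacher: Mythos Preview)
Your proposal is correct and follows the same route as the paper: the corollary is stated there as a direct consequence of Theorem~\ref{thm::asymptotic-normality-joint-general} (in its arm-specific-score extension) and Proposition~\ref{thm::consistent-var-estimator-general}, exactly as you outline, with the block-diagonal structure of $\bSigma^*$ collapsing the quadratic form $\one_K^\top\bSigma^*\one_K$ to the sum $\sum_a\bSigma_a^*$. Your computation $\bJ_a=-1$, the resulting identification $\bSigma_a^*=\bar\bI_a$, and the scalar simplification of the variance estimator all match the paper; your use of $\pi_{\min}$ to pass from $\EE[Y_t^4]<\infty$ to $\EE[Y_t(a)^4]<\infty$ is a detail the paper leaves implicit, and your flag on Assumption~\ref{aspt:boundedness-general}(i) is well placed, since that is indeed the only moment condition not immediately discharged by $\EE[Y_t^4]<\infty$ alone.
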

Similar to Example \ref{ex::misspecified-linear-bandits}, the variance estimator can be simplified by replacing $\hat{\dot{\bG}}_{a, T}$ in (\ref{eq::ex3-var-estimator}) with 1, since $\hat{\dot{\bG}}_{a, T}\xrightarrow{p} 1$ in this setting. See Lemma \ref{lem::Convergence-of-weighted-avg-action-selection} for details. 

Finally, we note that \citep{zhan2021off, bibaut2021post} proposed alternative estimators for the same target parameter by adding normalization weights to the classical doubly robust estimator \citep{jiang2016doubly}, where a regression model for the reward is adaptively fitted based on the observed history to reduce variance. Valid inference is possible if this regression function converges, and variance can be further reduced when the regression function is well aligned with the true reward. The regression model may be chosen in a flexible manner; However, ensuring convergence and alignment is substantially more difficult in adaptive data collection regimes than in the standard i.i.d. setting---a subtle challenge that has received relatively little attention in the literature. By contrast, our approach decomposes the target across actions and applies self-normalization weights to a simpler importance-weighted estimator for each action. In Section~\ref{sec::simulations-ope}, we demonstrate across five simulation environments that our method, though simple, is consistently competitive with the approaches in \citep{zhan2021off, bibaut2021post}---whether using a zero function, a linear model, or a flexible tree model as the regression model---and in several cases provides clear gains in both stability and efficiency. These findings highlight both the promise and the difficulty of variance reduction in adaptive settings, and underscore the importance of further methodological work.

\section{Simulation Studies}
\label{sec::simulation}


In this section, we conduct simulation studies to evaluate the proposed inference methods under three previously introduced inference targets: Target \ref{ex::misspecified-linear-bandits} (misspecified linear bandits with score function (\ref{eq::target-parameter-misspecified-linear-bandits})), Target \ref{ex::bandits-noisy-contexts} (linear bandits with noisy contexts with score function (\ref{eq::target-parameter-bandits-noisy-contexts})), and Target \ref{ex::ope} (offline policy valuation with score function (\ref{eq::target-parameter-ope})). 

For Target \ref{ex::bandits-noisy-contexts}, we design three simulation environments: \texttt{NC-hard1}, \texttt{NC-hard2}, \texttt{NC-Gaussian}. In these environments, the reward function w.r.t. the underlying true context is linear, and the model misspecification arises from noisy context. Specifically, \texttt{NC-hard1}, \texttt{NC-hard2} are deliberately designed challenging environments so that the ridge estimator either oscillates or tends to multiple limits under common behavior policies such as LinUCB. In \texttt{NC-Gaussian}, all variables and parameters are jointly Gaussian. For Targets \ref{ex::misspecified-linear-bandits} and \ref{ex::ope}, in addition to the three environments above, we also include two environments \texttt{MS-Polynomial} and \texttt{MS-Neural}, where the potential outcomes depend directly on the contexts through complex non-linear relationships: a polynomial reward function in \texttt{MS-Polynomial} and a neural network reward function in \texttt{MS-Neural}. 

We evaluate the inference methods using datasets generated by four distinct algorithms: pure random selection (\texttt{Random}), Boltzmann exploration with the ridge estimator (\texttt{Boltzmann}), a greedy algorithm with the IPW-Z estimator in (\ref{eq::estimating-equation-general}) (\texttt{IPW-Z}), and Boltzmann exploration with the stochastic gradient descent estimator (\texttt{SGD}). Throughout, we use linear working models in the behavior policies, despite the true mean reward function $y(x,a) \coloneqq \EE[Y_t \mid A_t = a, \bX_t = \bx]$ being non-linear, allowing us to assess the robustness of the inference methods under misspecifications. These algorithms are shown to have policy convergence in Section \ref{sec::policy-convergence}. All the algorithm are clipped with a minimal sampling probability $\pi_{\min}$ of 0.05 to ensure bounded inverse probability weights. 

\subsection{Results for Targets \ref{ex::misspecified-linear-bandits} and \ref{ex::bandits-noisy-contexts}}

We first examine the empirical coverage of nominal confidence intervals (95\%, 90\%, 80\%, 70\%, 60\%, and 50\%) for inference target \ref{ex::misspecified-linear-bandits} across the five environments. Results are based on 2500 Monte Carlo simulations. Figure \ref{fig::results-misspecified-linear-bandits} shows that the proposed inference method consistently achieves the desired coverage across all algorithms and environments, with only a slight undercoverage (around 90\% for nominal 95\%) observed in the non-linear environments (\texttt{MS-Polynomial}, \texttt{MS-Neural}) under some behavior policies.
Similar trends appear for Target \ref{ex::bandits-noisy-contexts} in the three noisy context environments (Figure \ref{fig::results-bandits-noisy-contexts}, panels a and b).

We further examine how the temperature parameter $\gamma$ and the minimum sampling probability $\pi_{\min}$ affect inference performance, using Target \ref{ex::bandits-noisy-contexts} as an example. Figure \ref{fig::results-bandits-noisy-contexts}(c) shows that increasing $\gamma$ reduces the variance of the IPW-Z estimator. This is consistent with Section \ref{sec::boltzmann-policy-convergence}, which establishes that larger $\gamma$ stabilizes the behavior policy, and also prevents action selection probabilities from becoming too small---thereby reducing variance. Similarly, Figure \ref{fig::results-bandits-noisy-contexts}(d) shows that higher minimum sampling probabilities $\pi_{\min}$ substantially improve empirical coverage of the 95\% confidence interval. This provides empirical evidence that avoiding very small sampling probabilities 
makes the inference procedure more robust in complex environments.


\begin{figure}[tb]
    \centering
    \includegraphics[width=1\textwidth]{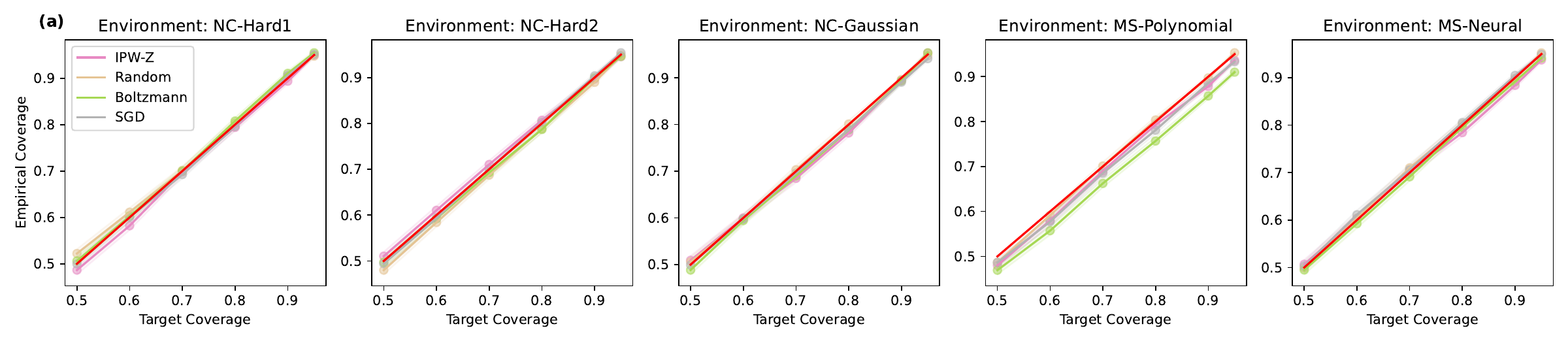}
    \includegraphics[width=1\textwidth]{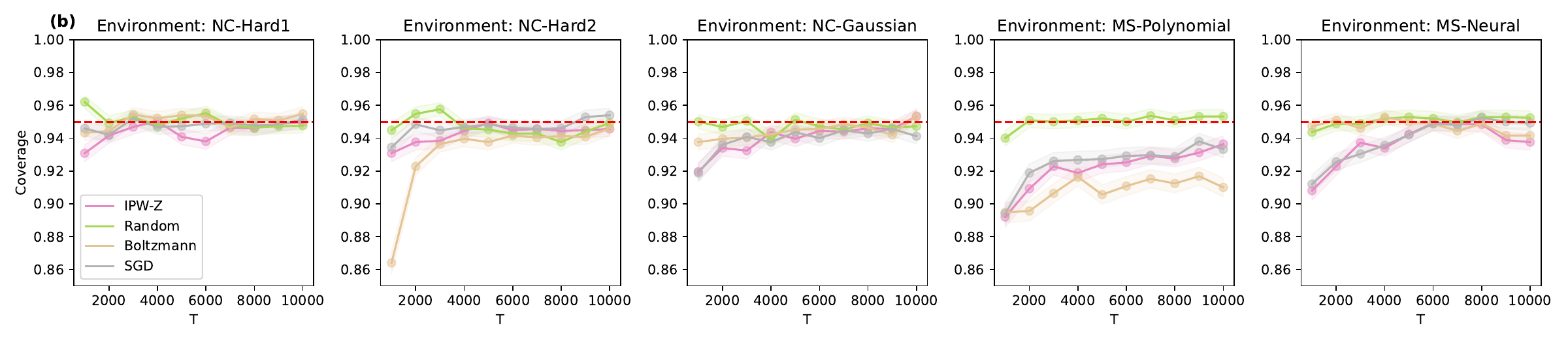}
    \caption{\textbf{(a)} Empirical coverages of 95\%, 90\%, 80\%, 70\%, 60\%, and 50\% confidence intervals vs. the target coverage for Target \ref{ex::misspecified-linear-bandits}. \textbf{(b)} Empirical coverages of 95\% confidence interval over 10,000 steps under three noisy context environments. Results averaged across 2,500 Monte Carlo simulations.}
    \label{fig::results-misspecified-linear-bandits}
\end{figure}

\begin{figure}[tb]
    \centering
    \subfigure{
        \includegraphics[width=0.65\textwidth]{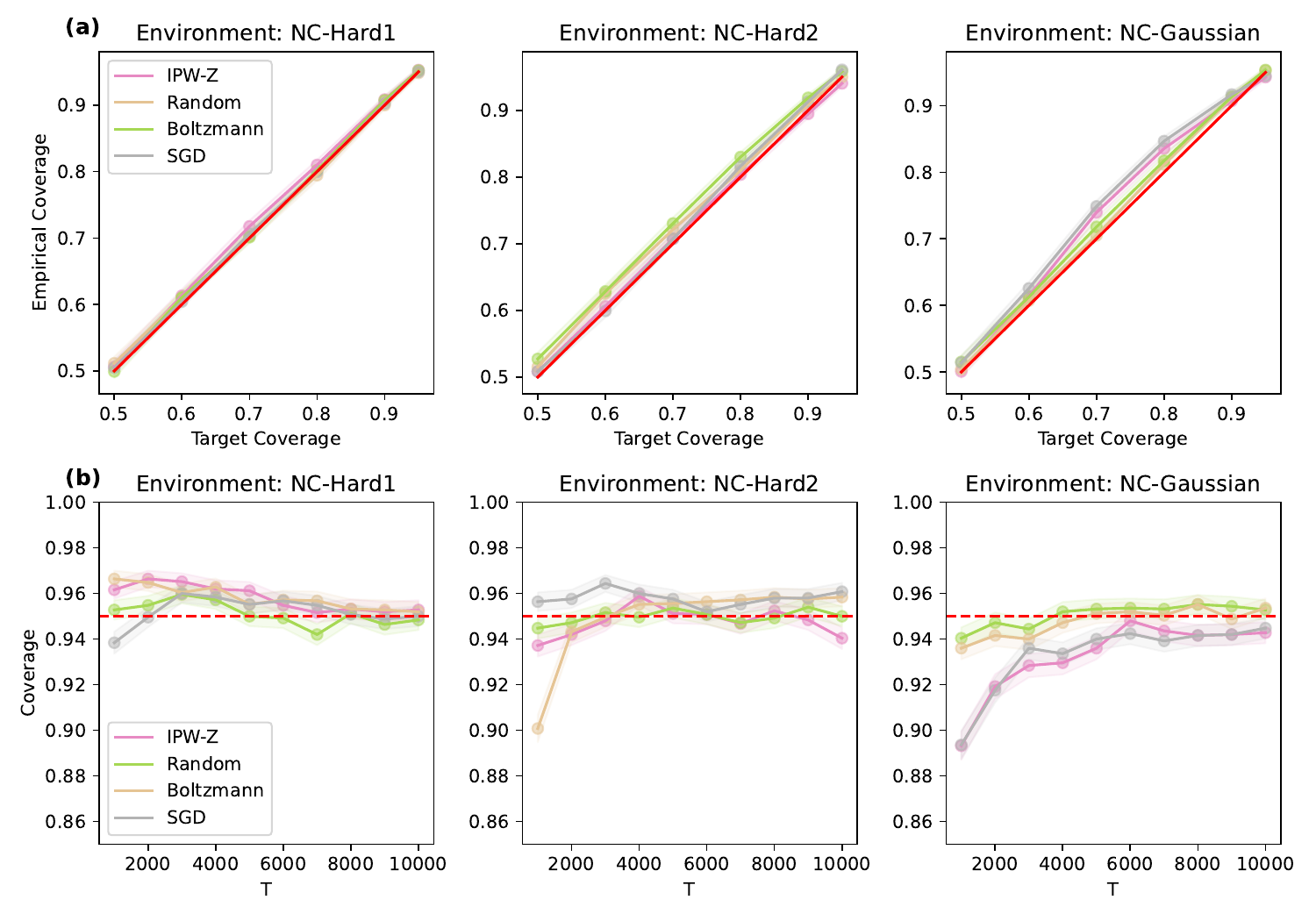}
    }
    \subfigure{
        \includegraphics[width=0.225\textwidth]{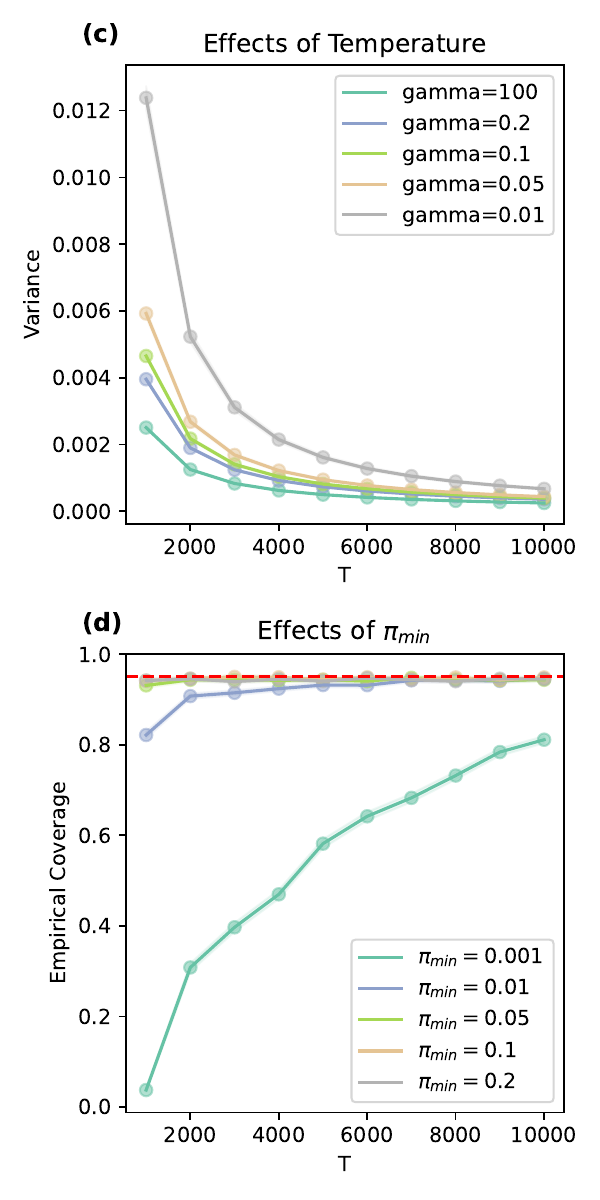}
    }
    \caption{\textbf{(a)} Empirical coverages of 95\%, 90\%, 80\%, 70\%, 60\%, and 50\% confidence intervals vs. the target coverage for Target \ref{ex::bandits-noisy-contexts}. \textbf{(b)} Empirical coverages of 95\% confidence interval over 10,000 steps under three noisy context environments. \textbf{(c)} Variance of the proposed IPW-Z estimator over 10,000 steps under Boltzmann exploration with different temperature $\gamma$. \textbf{(d)} Empirical coverages of 95\% confidence interval over 10,000 steps under greedy algorithm w.r.t. the IPW-Z estimator with different minimum sampling probabilities $\pi_{\min}$.}
    \label{fig::results-bandits-noisy-contexts}
\end{figure}

\subsection{Results for Target \ref{ex::ope}}\label{sec::simulations-ope}

In the OPE setting, we compare our proposed inference method with the CADR (Contextual Adaptive Doubly Robust) method \citep{bibaut2021post} and the Contextual Adaptive Weight (AW) method implemented via StableVar \citep{zhan2021off}. Both are stabilized doubly robust estimators, where we select the prediction model from a linear model, a tree-based model, or a dummy model that always outputs 0. We run all the inference method on the same datasets collected by Boltzmann exploration w.r.t. Ridge regression estimator in five environments introduced above. In Figure \ref{fig::results-ope} (a), we show that CADR, AW and our proposed inference method all achieve empirical coverage close to the target coverage. In Figure \ref{fig::results-ope} (b), we show that our proposed estimator in fact has lower variance under \texttt{NC-Hard2} and \texttt{MS-Polynomial} environments, while maintaining comparable variance in the others.
This gain may stem from the instability of the variance estimators used in CADR and AW for stabilization, together with the instability of regression functions fitted from adaptively collected data in complex settings.

\begin{figure}[tb]
    \centering
    \includegraphics[width=1\textwidth]{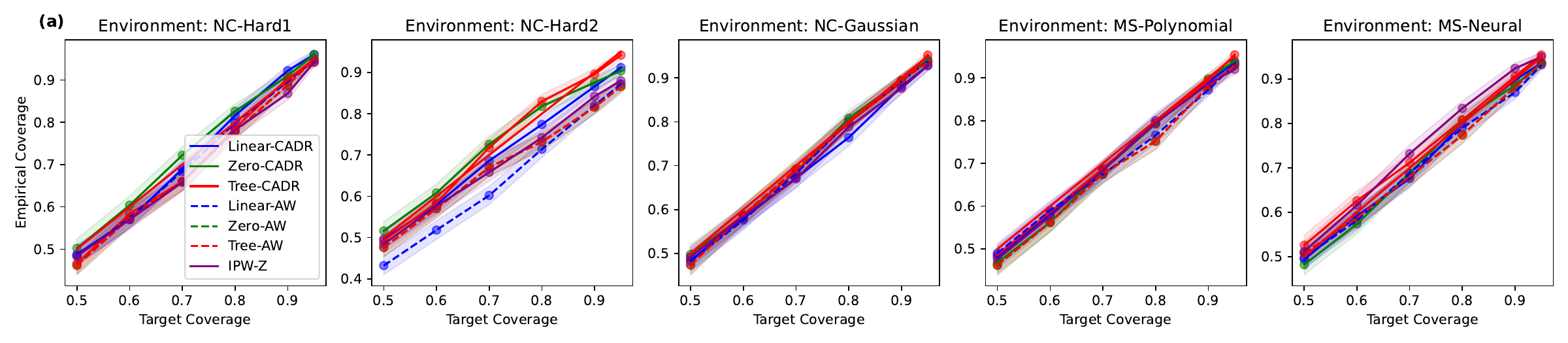}
    \includegraphics[width=1\textwidth]{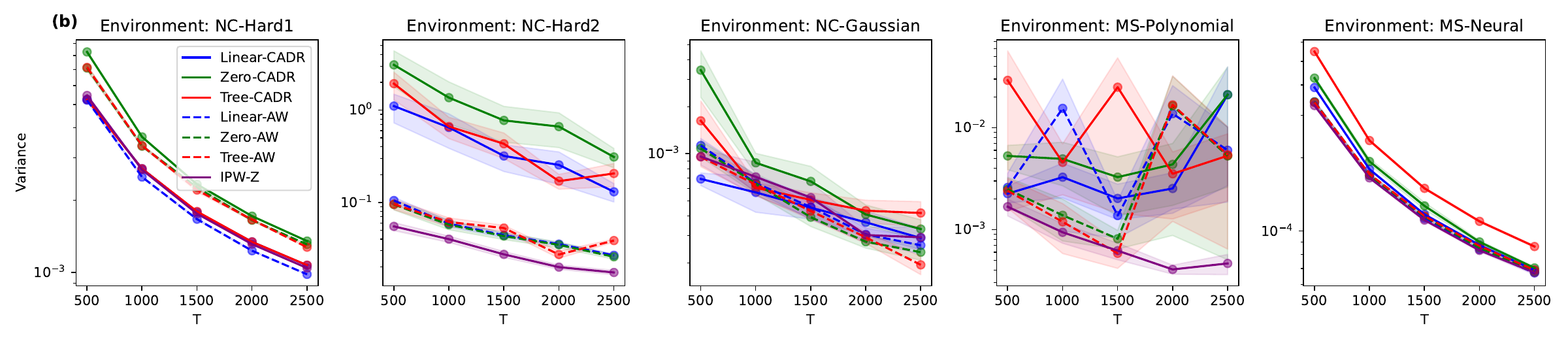}
    \caption{\textbf{(a)} Empirical coverage for confidence intervals based on CADR and AW through StableVar under different prediction models and our proposed inference method across five environments. \textbf{(b)} Monte Carlo estimates (based on 2,500 samples) of the variance of OPE target estimator based on CADR and AW through StableVar under different prediction models and our proposed inference method across five environments over 2,500 steps.}
    \label{fig::results-ope}
\end{figure}

\newpage

\subsection{Acknowledgment} 
Ziping Xu acknowledges support from the following NIH grants during his postdoctoral fellowship at Harvard University: NIH/NIDA P50DA054039, NIH/NIBIB and OD P41EB028242, NIH/NIDCR UH3DE028723, and NIH/NIA 5P30AG073107-03 GY3 Pilots.

\newpage

 \newpage
\begin{appendix}

\section{Additional Technical Details}\label{appAA}

\subsection{The Clipping Operator}\label{apdx::clipping}

In this section, we present in details a useful transformation, 
$$\mathrm{Clip}: \RR^K\rightarrow \RR^K,$$ which maps a probability distribution over $K$ discrete actions to a new distribution to ensure that each coordinate is lower bounded by $\pi_{\min}$. Specifically, for $\bm{\pi}\in [0, 1]^{K}$, define 
\begin{align}\label{eq::clip}
    \operatorname{Clip}(\bm{\pi}) = \max\{\bm{\pi} - \nu^*(\bm{\pi}), \pi_{\min}\},
\end{align}
where $\nu^*(\bm{\pi})$ is defined as the unique value such that 
$$
q(\nu; \bm{\pi}) \coloneqq \sum_{a\in\cA} \max\{\bm{\pi}_a - \nu, \pi_{\min}\} = 1.
$$
This transformation adjusts $\bpi$ to remain a valid probability distribution while ensuring that each component is at least $\pi_{\min}$. The following lemma shows that this operation is in fact the $L_2$ projection onto the constrained simplex, providing a principled justification for its use. The proof is in Appendix \ref{apdx::proof-lem::clip-l2-projection}.
\begin{lemma}\label{lem::clip-l2-projection}
The mapping $\operatorname{Clip}(\bm{\pi})$ defined in (\ref{eq::clip}) is the $L_2$ projection of $\bm{\pi}$ onto the set 
$$\left\{ \bm{\pi} \in [0, 1]^{|\mathcal{A}|} \bigg| \sum_{a\in\cA} \bm{\pi}_a = 1, \bm{\pi}_a \geq \pi_{\min}\right\}.$$
\end{lemma}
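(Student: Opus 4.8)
The plan is to verify the Karush--Kuhn--Tucker (KKT) conditions for the constrained quadratic program
\[
\minimize_{\bv\in\RR^K}\ \tfrac12\|\bv-\bpi\|_2^2\quad\text{subject to}\quad \one^\top\bv = 1,\ \bv_a\ge \pi_{\min}\ \forall a,
\]
and show that $\bv^* := \operatorname{Clip}(\bpi)$ is its unique minimizer. Since the objective is strictly convex and the feasible set is a nonempty (assuming $K\pi_{\min}\le 1$) closed convex polytope, the minimizer exists and is unique, so it suffices to exhibit Lagrange multipliers $\nu\in\RR$ (for the equality) and $\blambda\in\RR^K_{\ge 0}$ (for the inequalities $\pi_{\min}-\bv_a\le 0$) satisfying stationarity $\bv^*-\bpi+\nu\one-\blambda = \zero$, primal feasibility, dual feasibility $\blambda\ge\zero$, and complementary slackness $\blambda_a(\pi_{\min}-\bv^*_a)=0$.

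First I would take $\nu=\nu^*(\bpi)$ as defined in the statement, and set $\bv^*_a = \max\{\bpi_a-\nu^*,\pi_{\min}\}$. By the very definition of $\nu^*$, we have $q(\nu^*;\bpi)=\sum_a \bv^*_a = 1$, so primal feasibility of the equality constraint holds, and $\bv^*_a\ge\pi_{\min}$ holds coordinatewise by construction; hence $\bv^*$ is feasible. Next I would define the candidate multipliers $\blambda_a := \bv^*_a-\bpi_a+\nu^*$. Stationarity then holds by definition. For the remaining two conditions, I would split on coordinates: if $\bpi_a-\nu^*\ge\pi_{\min}$ then $\bv^*_a=\bpi_a-\nu^*$, so $\blambda_a=0$, which gives both dual feasibility and complementary slackness trivially; if $\bpi_a-\nu^*<\pi_{\min}$ then $\bv^*_a=\pi_{\min}$, so $\blambda_a=\pi_{\min}-\bpi_a+\nu^*>0$ (dual feasibility) while $\pi_{\min}-\bv^*_a=0$ gives complementary slackness. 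Thus all KKT conditions hold, and since the problem is convex with a strictly convex objective, $\bv^*$ is the unique global minimizer, i.e., the $L_2$ projection.

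I should also pin down the one place requiring a small argument: the existence and uniqueness of $\nu^*$, since the statement asserts $\nu^*(\bpi)$ is ``the unique value'' with $q(\nu^*;\bpi)=1$. I would note that $\nu\mapsto q(\nu;\bpi)$ is continuous, nonincreasing, and piecewise linear; it equals $\sum_a\bpi_a$ for $\nu$ sufficiently negative (which is $\le 1$ when $\bpi$ is itself a sub-probability vector, but in general we only need that it exceeds $1$ for $\nu\to-\infty$ when $\sum_a\bpi_a\ge1$—if instead $\sum_a\bpi_a<1$ one simply checks $q(\nu;\bpi)\ge K\pi_{\min}$ for $\nu$ small, which still exceeds $1$ under $K\pi_{\min}<1$ only fails, so I would more carefully state: $q(-\infty)=\max\{\sum_a\bpi_a,\ K\pi_{\min}\}$ in the limiting sense)—and tends to $K\pi_{\min}\le 1$ as $\nu\to+\infty$. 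Strict monotonicity fails only on the flat piece where all coordinates are already clipped; away from that, $q$ is strictly decreasing, which yields a well-defined value $\nu^*$ such that $q(\nu^*;\bpi)=1$ (taking the canonical choice if a degenerate flat segment hits level $1$), and crucially the \emph{output} $\operatorname{Clip}(\bpi)=\max\{\bpi-\nu^*,\pi_{\min}\}$ is unambiguous even if $\nu^*$ itself were non-unique, because on any flat segment of $q$ every coordinate is saturated at $\pi_{\min}$. I expect this monotonicity/well-definedness bookkeeping to be the only real obstacle; the KKT verification itself is a routine case check.
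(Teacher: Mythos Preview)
Your proposal is correct and takes essentially the same approach as the paper: both arguments write down the KKT conditions for the convex quadratic program and check that $\max\{\bpi-\nu^*,\pi_{\min}\}$ satisfies them with appropriate multipliers. Your version is arguably tidier in that you take $\nu^*$ as given and verify KKT directly, whereas the paper derives the form $\max\{\bpi_a+\nu,\pi_{\min}\}$ from stationarity and complementary slackness and then matches it to the definition (with a sign flip between the Lagrange multiplier $\nu$ there and the $\nu^*$ in the definition). Your discussion of the existence of $\nu^*$ is a bit muddled---in fact $q(\nu;\bpi)\to+\infty$ as $\nu\to-\infty$, not $\sum_a\bpi_a$---but the conclusion is right and the paper does not treat this point carefully either.
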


\subsection{Convergence of the Policy in \citep{chen2021statistical}}\label{apdx::convergence-epsilon-greedy-weighted-LS}

In this section, we analyze a behavior policy studied in \citep{chen2021statistical}, which combines an $\epsilon$-greedy algorithm with a weighted online LS estimator, in the setting of misspecified linear bandits. The policy can be written as
\begin{equation}\label{eq::policy-epsilon-greedy-weighted-LS}
    \pi_t(a|\bX_t, \cH_{t-1}) = (1-\epsilon_t)1_{\{(\hat\bbeta_a^{(t-1)}-\hat\bbeta_{1-a}^{(t-1)})^\top\bX_t>0\}} + \frac{\epsilon_t}{2}
\end{equation}
for $a\in\cA = \{0, 1\}$. Here, $\widehat{\bbeta}_a^{(t-1)}$ denotes the weighted online LS estimate of the reward parameter for arm $a$ using data up to time $t-1$, which is a special case of the IPW-Z estimator $\hat{\btheta}_a^{(t-1)}$ defined by (\ref{eq::estimating-equation-general}) with the score function $\bg$ given by (\ref{eq::target-parameter-misspecified-linear-bandits}). $\epsilon_t\in (0, 1)$ is a time-varying exploration parameter such that $\lim_{t\rightarrow \infty}\epsilon_t =\epsilon_{\infty}>0$. 

It is straightforward to verify the above policy is of the form (\ref{eq::IPWZ-policy-plus-parameter}) with $\hat{\btheta}_a^{(t-1)} = \hat{\bbeta}_a^{(t-1)}$,  $\bgamma_{t-1} = \epsilon_t$, and 
$$
\pi(a|\bx, \{\btheta_a\}_{a\in\cA}, \bgamma)= (1-\bgamma)1_{\{(\btheta_a-\btheta_{1-a})^\top\bx>0\}} + \frac{\bgamma}{2}.
$$
The convergence of $\{\bgamma_{t}\}_{t\geq 1}$ is guaranteed by the convergence of $\{\epsilon_{t}\}_{t\geq 1}$. In addition, the function $\pi$ is continuous in $(\{\btheta_a\}_{a\in\cA}, \bgamma)$ at the point $(\{\btheta_a^*\}_{a\in\cA}, \epsilon_{\infty})$ for any $\bx\in\cX_0:= \{\bx: (\btheta_1^* - \btheta_0^*)^\top\bx \neq 0\}$. Here for $a\in\cA$, $\btheta_a^*$ is defined in (\ref{eq::theta-a-*}) with the score function $\bg$ given by (\ref{eq::target-parameter-misspecified-linear-bandits}). Note that $\cX_0^c$ is a Lebesgue null set, and from Assumption 1 of \citep{chen2021statistical}, we deduce that
$$
\PP(\cX_0^c) = 0.
$$
Combining the above arguments, we have verified the conditions of Proposition \ref{thm::convergence-of-IPWZ-policy}, which implies that the policy (\ref{eq::policy-epsilon-greedy-weighted-LS}) satisfies the policy convergence condition in Definition \ref{aspt:policy_convergence}. 

\subsection{Nonconvergence of the Ridge Estimator When the Evolution Lacks a Fixed Point}\label{apdx::ridge-nonconvergence-with-no-fixed-point}


The following theorem states that, if the behavior policy is parameterized using the ridge estimator, and the policy mapping induces no fixed point in the evolution dynamics of the ridge statistics, then the ridge estimator will not converge. 

\begin{theorem}\label{theorem:necessary-condition-for-Ridge-convergence}
Assume the conditions of Proposition \ref{theorem:convergence-of-Ridge}. Suppose that the behavior policy takes the form $\pi_t(a|\bX_t, \cH_{t-1}) = \pi(a|\bX_t, \hat \bbeta_{t}^{\Ridge})$, where $\hat \bbeta_{t}^{\Ridge}$ is the ridge estimator defined in (\ref{eq:ridge-estimator}), and the policy mapping $\pi: \cX\times \RR^{Kd}\rightarrow\Delta(\cA)$ satisfies:
\begin{itemize}
\item $\pi(a \mid \bx, \bm{\beta})$ is a continuous function of $\bm{\beta}$ for any $\bx \in \cX$,

\item $\pi(a \mid \bx, \bm{\beta})>0$ for any $\bx \in \cX, \bm{\beta}\in\RR^{Kd}$,

\item There exists $\bar a\in\cA$, such that for any $\bm{\beta} = (\bbeta_1^\top, \ldots, \bbeta_K^\top)^\top\in \mathbb{R}^{Kd}$,  
    \begin{align}
    \bm{\beta}_{\bar a} \neq \bSigma_{\bar a}^{-1}(\bm{\beta}) \bm{\varphi}_{\bar a}(\bm{\beta}). \label{eq:necessary-condition-for-Ridge-convergence}
    \end{align}
Here $\bSigma_{a}(\bm{\beta}) = \EE_{\bX_t, A_t \sim \pi(\cdot \mid \bX_t, \bm{\beta})}[ 1_{\{A_t = a\}} \bX_t \bX_t^\top]$, and \\  $\bm{\varphi}_a(\bm{\beta}) = \EE_{\bX_t, A_t\sim \pi(\cdot \mid \bX_t, \bm{\beta}), Y_t}[ 1_{\{A_t = a\}} \bX_t Y_t ]$.
\end{itemize}
Then the ridge estimator $\hat \bbeta_{t}^{\Ridge}$ does not converge in probability as $t\rightarrow\infty$.
\end{theorem}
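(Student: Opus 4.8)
The plan is to argue by contradiction. Suppose $\hat\bbeta_t^{\Ridge}\xrightarrow{p}\bbeta_\infty$ for some (a priori possibly random) $\bbeta_\infty\in\RR^{Kd}$; I will show this forces $\bbeta_\infty$ to solve $\bbeta_{\bar a}=\bSigma_{\bar a}^{-1}(\bbeta)\bvarphi_{\bar a}(\bbeta)$ at action $\bar a$, contradicting hypothesis \eqref{eq:necessary-condition-for-Ridge-convergence}. As a preliminary I would record two facts used repeatedly: (a) $\bSigma_a(\bbeta)\succ 0$ for every $a$ and every $\bbeta$ — for a unit vector $v$, $v^\top\bSigma_a(\bbeta)v=\EE[\pi(a\mid\bX_t,\bbeta)(v^\top\bX_t)^2]$ can vanish only if $v^\top\bX_t=0$ a.s.\ (since $\pi(a\mid\cdot,\bbeta)>0$), which contradicts $\EE[\bX_t\bX_t^\top]\succeq\sigma_{\min}^2\bI$; and (b) $\bbeta\mapsto\bSigma_a(\bbeta)$ and $\bbeta\mapsto\bvarphi_a(\bbeta)$ are continuous and bounded on $\RR^{Kd}$, by dominated convergence using $\|\bX_t\|_2\le M$, $y(\bx,a)\le R_y$, and the assumed continuity of $\pi(a\mid\bx,\cdot)$.

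The core of the argument is to pass from convergence in probability of the summary statistic to convergence of the empirical moment sums. Write $\hat\bbeta_{t,a}^{\Ridge}=(\tfrac1t\bV_{t,a})^{-1}(\tfrac1t\bbf_{t,a})$ with $\bV_{t,a}=\lambda\bI+\sum_{\tau=1}^{t-1}1_{\{A_\tau=a\}}\bX_\tau\bX_\tau^\top$ and $\bbf_{t,a}=\sum_{\tau=1}^{t-1}1_{\{A_\tau=a\}}\bX_\tau Y_\tau$. Since $\bX_\tau$ is i.i.d.\ and independent of $\cH_{\tau-1}$ while $\hat\bbeta_\tau^{\Ridge}$ is $\cH_{\tau-1}$-measurable, and using Assumption \ref{aspt:unconfoundedness} for the reward, one gets $\EE[1_{\{A_\tau=a\}}\bX_\tau\bX_\tau^\top\mid\cH_{\tau-1}]=\bSigma_a(\hat\bbeta_\tau^{\Ridge})$ and $\EE[1_{\{A_\tau=a\}}\bX_\tau Y_\tau\mid\cH_{\tau-1}]=\bvarphi_a(\hat\bbeta_\tau^{\Ridge})$. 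I would then split each sum into a martingale-difference part and a predictable part: the martingale parts have increments that are a.s.\ bounded (the second-moment matrix) or have uniformly bounded conditional second moments (the cross term, using $\EE[\eta_\tau^2]\le\sigma_\eta^2$ and $\|\bX_\tau\|_2\le M$), so the $L^2$ martingale strong law makes them $o(t)$ a.s.; and the predictable parts $\tfrac1t\sum_{\tau=1}^{t-1}\bSigma_a(\hat\bbeta_\tau^{\Ridge})$ and $\tfrac1t\sum_{\tau=1}^{t-1}\bvarphi_a(\hat\bbeta_\tau^{\Ridge})$ converge in probability to $\bSigma_a(\bbeta_\infty)$ and $\bvarphi_a(\bbeta_\infty)$, because $\hat\bbeta_\tau^{\Ridge}\xrightarrow{p}\bbeta_\infty$ together with continuity and boundedness of $\bSigma_a,\bvarphi_a$ yields convergence in probability of the summands, and bounded convergence turns this into Cesàro ($L^1$) convergence. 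Hence $\tfrac1t\bV_{t,a}\xrightarrow{p}\bSigma_a(\bbeta_\infty)$ and $\tfrac1t\bbf_{t,a}\xrightarrow{p}\bvarphi_a(\bbeta_\infty)$ for every $a$.

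To finish, since $\bSigma_a(\bbeta_\infty)$ is a.s.\ invertible and $\tfrac1t\bV_{t,a}$ is always invertible ($\succeq\tfrac\lambda t\bI$), the continuous mapping theorem for convergence in probability gives $\hat\bbeta_{t,a}^{\Ridge}=(\tfrac1t\bV_{t,a})^{-1}(\tfrac1t\bbf_{t,a})\xrightarrow{p}\bSigma_a(\bbeta_\infty)^{-1}\bvarphi_a(\bbeta_\infty)$; comparing with $\hat\bbeta_{t,a}^{\Ridge}\xrightarrow{p}\bbeta_{\infty,a}$ and invoking a.s.\ uniqueness of limits in probability yields $\bbeta_{\infty,a}=\bSigma_a^{-1}(\bbeta_\infty)\bvarphi_a(\bbeta_\infty)$ a.s.\ for every $a\in\cA$, in particular for $\bar a$, which contradicts \eqref{eq:necessary-condition-for-Ridge-convergence}. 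I expect the main obstacle to be the middle step — showing that the policy-selected empirical sums $\tfrac1t\bV_{t,a}$ and $\tfrac1t\bbf_{t,a}$ concentrate around $\bSigma_a(\bbeta_\infty)$ and $\bvarphi_a(\bbeta_\infty)$ — since it must combine the ``fresh context'' structure ($\bX_\tau\perp\cH_{\tau-1}$) that identifies the conditional means, a martingale strong law for increments that are $L^2$-controlled but not a.s.\ bounded, and a Cesàro/bounded-convergence argument; a minor wrinkle is that $\bbeta_\infty$ need not be deterministic a priori, which is handled by carrying it as a random vector throughout.
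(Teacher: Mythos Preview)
Your proposal is correct and follows essentially the same argument as the paper's proof: contradiction via the martingale-plus-predictable decomposition of $\tfrac1t\bV_{t,a}$ and $\tfrac1t\bbf_{t,a}$, an $L^2$ martingale strong law (the paper cites Chow's LLN for martingale differences) for the fluctuation part, a Ces\`aro argument using continuity and boundedness of $\bSigma_a,\bvarphi_a$ for the predictable part, and the same positive-definiteness check on $\bSigma_{\bar a}(\bbeta)$ to invoke the continuous mapping theorem for matrix inversion. Your treatment is, if anything, slightly more careful about the possibly random limit $\bbeta_\infty$ and the boundedness needed for the Ces\`aro step, but the route is identical.
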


The proof is given in Appendix \ref{apdx::proof-thm::necessary-condition-for-Ridge-convergence}.

\subsection{Asymptotic Stability}\label{apdx::asymptotic-stability}

\begin{definition}[Asymptotic Stability]
    \label{def::asymptotic-stability}
    The equilibrium point $x=0$ of $\dot{x} = f(x)$ is 
    \begin{itemize}
        \item stable if, for each $\varepsilon>0$, there is $\delta=\delta(\varepsilon)>0$ such that
        $$
            \|x(0)\|<\delta \Rightarrow\|x(t)\|<\varepsilon, \quad \forall t \geq 0
        $$
        \item unstable if it is not stable.
        \item asymptotically stable if it is stable and $\delta$ can be chosen such that
        $$
            \|x(0)\|<\delta \Rightarrow \lim _{t \rightarrow \infty} x(t)=0
        $$
    \end{itemize}
\end{definition}

\subsection{Inference for Bandits with Noisy Contexts under Estimated Contextual Error Variance}\label{apdx::ex2-estimated-Sigma-e}

In this section, we discuss the statistical inference task in Example \ref{ex::bandits-noisy-contexts} where the contextual error variance $\bSigma_e$ is unknown. In addition to the adaptively collected dataset $\cD$, suppose we have an auxiliary offline dataset
$$
\tilde \cD = \{\tilde{\bX}_i,  \tilde\bS_i\}_{i=1}^n
$$
containing paired noisy and true contexts. Each $(\tilde{\bX}_i, \tilde\bS_i)$ is independent and equal in distribution to $(\bX_t, \bS_t)$ in $\cD$. Such auxiliary data arise naturally when gold-standard measurements are available for a subset of observations, or when the observed context is generated by a complex prediction algorithm, and its error variance can be estimated using gold-standard labels. In healthcare applications, $\tilde\cD$ is often obtained from prior studies, such as pilot trials. Because $\tilde\cD$ is collected offline without intervention or adaptive sampling, it is typically accessible in practice.

Under this setting, notice that 
$$
\bSigma_e = \mathrm{Var}(\bX_t|\bS_t) = \EE\big[(\bX_t-\bS_t)(\bX_t-\bS_t)^\top\big|\bS_t\big] = \EE\big[(\bX_t-\bS_t)(\bX_t-\bS_t)^\top\big],
$$
we can estimate $\bSigma_e$ using 
$$
\hat\bSigma_e = \frac1n\sum_{i=1}^n\tilde\bV_i,
$$
where $\tilde\bV_i:= (\tilde\bX_i-\tilde\bS_i)(\tilde \bX_i-\tilde\bS_i)^\top$. A natural estimator for $\btheta_a^*$ would be $\tilde \btheta_a^{(T)}$ which solves $\tilde \bG_T(\btheta) = \mathbf{0}$, where 
$$
\tilde \bG_T(\btheta):= \frac1T\sum_{t\in[T]}W_t1_{\{A_t = a\}}\tilde \bg(\bX_t, Y_t;\btheta),
$$
where $W_t = \frac{1}{\pi_t(A_t)}$, $\tilde \bg(\bx, y;\btheta) = (\bx \bx^\top - \hat\bSigma_e)\btheta - \bx y$. 

The following theorem characterize the asymptotic distribution of $\tilde \btheta_a^{(T)}$, which can be used to conduct inference on $\btheta_a^*$. The proof is in Appendix \ref{apdx::proof-thm::inference-with-Sigma_e_hat}. 

\begin{theorem}\label{thm::inference-with-Sigma_e_hat}
    Under Assumptions \ref{aspt:unconfoundedness},  \ref{aspt:min-sampling-prob} and \ref{aspt:ex2}, define $\bSigma_{S}$ and $\bar\bI_a$ the same as in Corollary \ref{cor::ex2}. Then if the behavior policy converges in the sense of Definition \ref{aspt:policy_convergence}, as $n, T\rightarrow \infty$, 
    \begin{itemize}
        \item If $n/T\rightarrow \infty$, then $\sqrt{T}(\tilde \btheta_a^{(T)} - \btheta_a^*)\xrightarrow{d}\cN\left(\mathbf{0}, \bSigma_{S}^{-1}\bar\bI_a\bSigma_{S}^{-1}\right)$.
        \item If $n/T\rightarrow \kappa$ for some positive constant $\kappa$, then $\sqrt{T}(\tilde \btheta_a^{(T)} - \btheta_a^*)\xrightarrow{d}\cN\left(0, \bSigma_{S}^{-1}\big[{\bar\bI_a}+\frac{\bar \bH_a}{\kappa}\big]\bSigma_{S}^{-1}\right)$, where $\bar \bH_a: = \EE(\tilde\bV_i-\bSigma_e)\btheta_a^*\btheta_a^{*, \top}(\tilde\bV_i-\bSigma_e)$.
        \item If $n/T\rightarrow 0$, then $\sqrt{n}(\tilde \btheta_a^{(T)} - \btheta_a^*)\xrightarrow{d}\cN\left(\mathbf{0}, \bSigma_{S}^{-1}\bar\bH_a\bSigma_{S}^{-1}\right)$, where 
        $$\bar \bH_a: = \EE(\tilde\bV_i-\bSigma_e)\btheta_a^*\btheta_a^{*, \top}(\tilde\bV_i-\bSigma_e).$$
    \end{itemize}
\end{theorem}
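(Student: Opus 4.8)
The decisive structural feature is that $\tilde\bg(\bx,y;\btheta)=(\bx\bx^\top-\hat\bSigma_e)\btheta-\bx y$ is affine in $\btheta$, so the estimating equation $\tilde\bG_T(\tilde\btheta_a^{(T)})=\mathbf0$ has the exact closed form $\tilde\btheta_a^{(T)}=\bM_T^{-1}\bb_T$ with
\begin{equation*}
\bM_T:=\frac1T\sum_{t=1}^T W_t\,1_{\{A_t=a\}}\bigl(\bX_t\bX_t^\top-\hat\bSigma_e\bigr),
\qquad
\bb_T:=\frac1T\sum_{t=1}^T W_t\,1_{\{A_t=a\}}\bX_t Y_t,
\qquad W_t:=\frac1{\pi_t(A_t)},
\end{equation*}
on the event that $\bM_T$ is invertible (shown below to have probability tending to one; elsewhere $\tilde\btheta_a^{(T)}$ may be set arbitrarily without affecting the limit). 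Subtracting $\btheta_a^*$, using $(\bX_t\bX_t^\top-\hat\bSigma_e)\btheta_a^*=(\bX_t\bX_t^\top-\bSigma_e)\btheta_a^*+(\hat\bSigma_e-\bSigma_e)\btheta_a^*$ and the definition of $\bg$ in \eqref{eq::target-parameter-bandits-noisy-contexts}, I would obtain the exact decomposition
\begin{equation*}
\bM_T\bigl(\tilde\btheta_a^{(T)}-\btheta_a^*\bigr)
=\underbrace{\frac1T\sum_{t=1}^T W_t\,1_{\{A_t=a\}}\bg(\bX_t,Y_t;\btheta_a^*)}_{=:\,\bu_T}
\;+\;\underbrace{\Bigl(\tfrac1T\sum_{t=1}^T W_t\,1_{\{A_t=a\}}\Bigr)}_{=:\,c_T}\,(\hat\bSigma_e-\bSigma_e)\,\btheta_a^* .
\end{equation*}

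The next step is to identify the limit of each ingredient. Since $\EE[W_t1_{\{A_t=a\}}\mid\cH_{t-1},\bX_t]=1$ under Assumptions~\ref{aspt:unconfoundedness} and \ref{aspt:min-sampling-prob}, and $\bX_t$ is bounded (Assumption~\ref{aspt:ex2}), a martingale law of large numbers (as in Lemma~\ref{lem::Convergence-of-weighted-avg-action-selection}) gives $c_T\xrightarrow{p}1$ and $\frac1T\sum_t W_t1_{\{A_t=a\}}(\bX_t\bX_t^\top-\bSigma_e)\xrightarrow{p}\EE[\bX_t\bX_t^\top]-\bSigma_e=\bSigma_S$; combined with $\hat\bSigma_e\xrightarrow{p}\bSigma_e$ as $n\to\infty$ this yields $\bM_T\xrightarrow{p}\bSigma_S$, which is invertible by Assumption~\ref{aspt:ex2}. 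For $\bu_T$ I would invoke verbatim the martingale CLT already established in the proof of Theorem~\ref{thm::asymptotic-normality-general} and specialized in Corollary~\ref{cor::ex2}: under policy convergence to $\bar\pi$ and Assumptions~\ref{aspt:min-sampling-prob}, \ref{aspt:ex2}, $\sqrt T\,\bu_T\xrightarrow{d}\cN(\mathbf0,\bar\bI_a)$ with $\bar\bI_a$ as in Corollary~\ref{cor::ex2}. For the auxiliary term, $\sqrt n(\hat\bSigma_e-\bSigma_e)=\tfrac1{\sqrt n}\sum_{i=1}^n(\tilde\bV_i-\bSigma_e)$ is a normalized sum of i.i.d. bounded mean-zero symmetric matrices, so the ordinary multivariate CLT gives $\sqrt n\,(\hat\bSigma_e-\bSigma_e)\btheta_a^*\xrightarrow{d}\cN(\mathbf0,\bar\bH_a)$ with $\bar\bH_a=\EE[(\tilde\bV_i-\bSigma_e)\btheta_a^*\btheta_a^{*,\top}(\tilde\bV_i-\bSigma_e)]$. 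Finally, $\bu_T$ is a function of $\cD$ alone and $\sqrt n(\hat\bSigma_e-\bSigma_e)\btheta_a^*$ a function of $\tilde\cD$ alone; since $\cD\perp\tilde\cD$, conditioning on $\tilde\cD$ in the joint characteristic function shows these converge jointly to independent Gaussian limits.

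The last step is to combine these via Slutsky's theorem, rate regime by rate regime, using $\tilde\btheta_a^{(T)}-\btheta_a^*=\bM_T^{-1}\bigl(\bu_T+c_T(\hat\bSigma_e-\bSigma_e)\btheta_a^*\bigr)$. When $n/T\to\infty$: $\sqrt T\,c_T(\hat\bSigma_e-\bSigma_e)\btheta_a^*=\sqrt{T/n}\cdot c_T\cdot\sqrt n(\hat\bSigma_e-\bSigma_e)\btheta_a^*=o_p(1)$, so $\sqrt T(\tilde\btheta_a^{(T)}-\btheta_a^*)\xrightarrow{d}\bSigma_S^{-1}\cN(\mathbf0,\bar\bI_a)=\cN(\mathbf0,\bSigma_S^{-1}\bar\bI_a\bSigma_S^{-1})$. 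When $n/T\to\kappa>0$: $c_T\sqrt{T/n}\xrightarrow{p}\kappa^{-1/2}$, so by independence of the two CLT terms, $\sqrt T\bigl(\bu_T+c_T(\hat\bSigma_e-\bSigma_e)\btheta_a^*\bigr)\xrightarrow{d}\cN(\mathbf0,\bar\bI_a)+\kappa^{-1/2}\cN(\mathbf0,\bar\bH_a)\overset{d}{=}\cN(\mathbf0,\bar\bI_a+\bar\bH_a/\kappa)$, giving the stated variance after left-multiplying by $\bSigma_S^{-1}$. When $n/T\to0$: rescaling by $\sqrt n$ makes the $\cD$-term $\sqrt{n/T}\cdot\sqrt T\,\bu_T=o_p(1)$, leaving $\sqrt n(\tilde\btheta_a^{(T)}-\btheta_a^*)\xrightarrow{d}\bSigma_S^{-1}\cN(\mathbf0,\bar\bH_a)$. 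The main obstacle is the joint/independent weak convergence in step two — reconciling an adaptively generated martingale term with an i.i.d. term and carefully tracking that the $o_p(1)$ and $c_T\xrightarrow{p}1$ factors do not interfere across the two scalings $\sqrt T$ and $\sqrt n$; by contrast, the affinity of $\tilde\bg$ in $\btheta$ removes any need for the usual consistency-then-linearization argument, so the remaining work is essentially bookkeeping around two central limit theorems.
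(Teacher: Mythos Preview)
Your proposal is correct and is essentially the same argument as the paper's proof. The only cosmetic difference is that you write the exact closed form $\tilde\btheta_a^{(T)}=\bM_T^{-1}\bb_T$ and expand $\bM_T(\tilde\btheta_a^{(T)}-\btheta_a^*)$, whereas the paper writes the equivalent exact linear relation $\nabla\tilde\bG_T(\btheta_a^*)\,(\tilde\btheta_a^{(T)}-\btheta_a^*)=-\tilde\bG_T(\btheta_a^*)$; in both cases the two pieces are $\bG_T(\btheta_a^*)$ (handled via Lemma~\ref{lem::asymptotic-normality-true-G-general}) and $c_T(\hat\bSigma_e-\bSigma_e)\btheta_a^*$ (handled by the i.i.d.\ CLT and Lemma~\ref{lem::Convergence-of-weighted-avg-action-selection}), combined regime-by-regime exactly as you describe. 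Your remark about justifying the \emph{joint} independent convergence of the $\cD$-based and $\tilde\cD$-based terms via the characteristic function is in fact more explicit than the paper, which simply asserts independence.
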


By Theorem \ref{thm::inference-with-Sigma_e_hat}, if the sample size of the auxiliary data $\tilde\cD$ is sufficiently larger than that of the adaptively collected data $\cD$, the distribution of $\tilde\btheta_a^{(T)}$ coincides with that of $\hat\btheta_a^{(T)}$. This scenario is common in practice since, unlike $\cD$, the auxiliary dataset $\tilde\cD$ involves no intervention and is typically easier to obtain. In case $\tilde\cD$ has comparable or smaller sample size compared to $\cD$, valid inference remains possible, but the asymptotic variance of $\tilde\btheta_a^{(T)}$ exceeds that of $\hat\btheta_a^{(T)}$ due to the additional uncertainty from estimating $\bSigma_e$. The joint asymptotic distribution of $\{\tilde\btheta_a^{(T)}\}_{a\in\cA}$ can be derived similar to Theorem \ref{thm::asymptotic-normality-joint-general}, and is omitted for brevity.



\section{Proofs of Main Theorems}\label{appA}
We organize the proofs of the main theorems by sections in the main text.


    \subsection{Proof of Theorem \ref{thm::asymptotic-normality-general}}\label{apdx::proof-thm::asymptotic-normality-general}
    
    We first prove the following lemma. Its proof is in Appendix \ref{apdx::proof-lem::consistency-general}.
    
    \begin{lemma}\label{lem::consistency-general}
        Under the assumptions of Theorem \ref{thm::asymptotic-normality-general}, there exists a sequence of estimators $\{\hat\btheta_a^{(T)}\}_{T\geq 1}$ such that (\ref{eq::estimating-equation-general}) holds, and $\|\hat\btheta_a^{(T)}\|_2\leq R_{\btheta}$, $\forall T$. In addition, for any such sequence, as $T\rightarrow \infty$, $\hat\btheta_a^{(T)}\xrightarrow{p}\btheta_a^*$.
    \end{lemma}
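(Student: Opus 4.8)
The plan is to run the classical $Z$-estimation consistency argument (cf.\ \citep{van2000asymptotic}), adapted to the adaptively collected, inverse-probability-weighted average. Write $\bar\bG(\btheta):=\EE[\bg(\bX_t,Y_t(a);\btheta)]$, so that $\bar\bG(\btheta_a^*)=\mathbf{0}$ by \eqref{eq::theta-a-*}, and put $\bZ_t(\btheta):=\tfrac{1_{\{A_t=a\}}}{\pi_t(A_t)}\bg(\bX_t,Y_t;\btheta)$, so $\bG_T(\btheta)=\tfrac1T\sum_{t=1}^T\bZ_t(\btheta)$. The same inverse-probability-weight cancellation used in the heuristic after Theorem~\ref{thm::asymptotic-normality-general} (using Assumption~\ref{aspt:unconfoundedness} and the i.i.d.\ structure of $\{\bX_t,Y_t(a)\}_a$) shows that, for every fixed $\btheta$, $\EE[\bZ_t(\btheta)\mid\cH_{t-1}]=\bar\bG(\btheta)$; hence $\{\bZ_t(\btheta)-\bar\bG(\btheta)\}_t$ is a martingale-difference sequence for each fixed $\btheta$.

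The first main step is a uniform law of large numbers: $\sup_{\|\btheta\|_2\le R_\theta}\|\bG_T(\btheta)-\bar\bG(\btheta)\|_2\xrightarrow{p}0$. Pointwise in $\btheta$ this follows from an $L^2$ martingale bound: by Assumption~\ref{aspt:min-sampling-prob} the weights are bounded by $1/\pi_{\min}$, and by Assumption~\ref{aspt:boundedness-general}(ii) $\sup_{\|\btheta\|_2\le R_\theta}\EE\|\bg(\bX_t,Y_t(a);\btheta)\|_2^2<\infty$, so $\Var(\bG_T(\btheta))=T^{-2}\sum_t\EE[\Var(\bZ_t(\btheta)\mid\cH_{t-1})]=O(1/T)$ uniformly in $\btheta$. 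To upgrade to a uniform statement I would use stochastic equicontinuity: by Assumption~\ref{aspt:smoothness-general}(ii) and convexity of the ball, $\|\bZ_t(\btheta)-\bZ_t(\btheta')\|_2\le \tfrac{1_{\{A_t=a\}}}{\pi_t(A_t)}\phi(\bX_t,Y_t)\|\btheta-\btheta'\|_2$, and the same IPW cancellation gives $\EE[\tfrac{1_{\{A_t=a\}}}{\pi_t(A_t)}\phi(\bX_t,Y_t)\mid\cH_{t-1}]=\EE[\phi(\bX_t,Y_t(a))]<\infty$, so $\bG_T$ is Lipschitz on $\{\|\btheta\|_2\le R_\theta\}$ with a random constant that is $O_p(1)$, while $\bar\bG$ is Lipschitz with constant $\EE[\phi(\bX_t,Y_t(a))]$. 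Combining these Lipschitz bounds with pointwise convergence over a countable dense subset of the compact ball, via a standard covering/chaining argument, yields the uniform convergence.

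For existence, take $\hat\btheta_a^{(T)}\in\argmin_{\|\btheta\|_2\le R_\theta}\|\bG_T(\btheta)\|_2$, which exists since $\bG_T$ is continuous on a compact set by Assumption~\ref{aspt:smoothness-general}(i); then $\|\hat\btheta_a^{(T)}\|_2\le R_\theta$ holds by construction. It remains to show the minimum value is $o_p(1/\sqrt{T})$, indeed exactly $0$ with probability tending to one. Since $\|\btheta_a^*\|_2<R_\theta$ (Assumption~\ref{aspt:boundedness-general}(ii)) and $\bar\bG$ is $C^1$ with nonsingular Jacobian $\bJ_a=\nabla\bar\bG(\btheta_a^*)$ (Assumption~\ref{aspt:smoothness-general}(i)), there is a closed ball $\bar B=\{\|\btheta-\btheta_a^*\|_2\le r\}\subset\{\|\btheta\|_2<R_\theta\}$ on which $\bar\bG$ vanishes only at $\btheta_a^*$ and $\delta:=\inf_{\|\btheta-\btheta_a^*\|_2=r}\|\bar\bG(\btheta)\|_2>0$. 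On the event $\{\sup_{\bar B}\|\bG_T-\bar\bG\|_2<\delta\}$, whose probability tends to $1$ by the uniform LLN, the homotopy $s\mapsto s\bG_T+(1-s)\bar\bG$ never vanishes on $\partial B$, so $\deg(\bG_T,B,\mathbf{0})=\deg(\bar\bG,B,\mathbf{0})=\pm 1\neq 0$ and $\bG_T$ has a zero in $B$; thus $\min_{\|\btheta\|_2\le R_\theta}\|\bG_T(\btheta)\|_2=0$ on that event. Since the complementary event has vanishing probability, $\bG_T(\hat\btheta_a^{(T)})=o_p(1/\sqrt{T})$.

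Finally, consistency of \emph{any} sequence with $\|\hat\btheta_a^{(T)}\|_2\le R_\theta$ and $\bG_T(\hat\btheta_a^{(T)})=o_p(1/\sqrt{T})$ follows by combining the uniform LLN with the well-separation Assumption~\ref{aspt:identifiability-general}: $\|\bar\bG(\hat\btheta_a^{(T)})\|_2\le\|\bG_T(\hat\btheta_a^{(T)})\|_2+\sup_{\|\btheta\|_2\le R_\theta}\|\bG_T(\btheta)-\bar\bG(\btheta)\|_2\xrightarrow{p}0$, and for every $\epsilon>0$ the event $\{\|\hat\btheta_a^{(T)}-\btheta_a^*\|_2>\epsilon\}$ is contained in $\{\|\bar\bG(\hat\btheta_a^{(T)})\|_2\ge\inf_{\|\btheta-\btheta_a^*\|_2>\epsilon}\|\bar\bG(\btheta)\|_2\}$, whose probability tends to $0$. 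I expect the uniform LLN to be the main obstacle, since the summands are neither independent nor identically distributed: classical Glivenko--Cantelli does not apply, and one must carefully combine a pointwise martingale $L^2$ argument with a stochastic-equicontinuity/covering argument, making essential use of the fact that the inverse-probability weights force the conditional mean of each $\bZ_t(\btheta)$ to equal the population quantity $\bar\bG(\btheta)$ simultaneously for all $\btheta$.
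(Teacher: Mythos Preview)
Your proposal is correct and tracks the paper's three-step outline: uniform LLN over $\{\|\btheta\|_2\le R_\theta\}$, existence, then consistency via well-separation. The uniform LLN and consistency steps are essentially the same---the paper uses an explicit $\epsilon$-net with bracketing functions $\bg^{(i)}(\cdot\,;\btheta_j)\pm\epsilon\,\phi$ built from the same Lipschitz envelope $\phi$, while you phrase this as stochastic equicontinuity plus covering; these are equivalent. The one real difference is the existence step: the paper constructs a one-step Newton point $\tilde\btheta_a^{(T)}=\btheta_a^*-[\nabla\bar\bG(\btheta_a^*)]^{-1}\bG_T(\btheta_a^*)$, uses $\bG_T(\btheta_a^*)=O_p(T^{-1/2})$ (cited via Lemma~\ref{lem::asymptotic-normality-true-G-general}, though only a second-moment bound is actually needed) together with $\nabla\bG_T(\btheta_a^*)\xrightarrow{p}\bJ_a$ (Lemma~\ref{lem::convergence-true-derivative-general}) to Taylor-expand and obtain $\bG_T(\tilde\btheta_a^{(T)})=o_p(T^{-1/2})$; the argmin over the ball then inherits this bound. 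Your Brouwer-degree/homotopy argument instead produces an \emph{exact} zero of $\bG_T$ inside the ball with probability tending to one, relying only on the uniform LLN and bypassing the auxiliary rate lemmas. Both are standard $Z$-estimation devices; the Newton construction is more explicit and rate-aware, yours more self-contained.
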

    
    For the remainder of this proof, for notational convenience, we omit the dependence of $\hat\btheta_a^{(T)}$ on $T$ and write it as $\hat\btheta_a$. Let $\bG_T^{(i)}(\btheta)$ denote the $i$-th entry of $\bG_T(\btheta)$. By Taylor expansion, we have that for any $i\in\{1, \ldots, d\}$, there exists some $\tilde\btheta_{a, i}$ on the line segment between $\btheta_a^*$ and $\hat\btheta_a$ such that  
    \begin{align*}
    -\bG_T^{(i)}(\btheta_a^*)&= \bG_T^{(i)}(\hat\btheta_a) - \bG_T^{(i)}(\btheta_a^*)+o_p(1/\sqrt{T})\\ 
    &= \langle\nabla\bG_T^{(i)}(\btheta_a^*), \hat\btheta_a-\btheta_a^*\rangle + \frac12(\hat\btheta_a-\btheta_a^*)^\top\nabla^2\bG_T^{(i)}(\tilde\btheta_{a, i})(\hat\btheta_a-\btheta_a^*)+o_p(1/\sqrt{T}).
    \end{align*}
    Stacking the above expansions over the entries $i=1, \ldots, d$, we have 
    $$
    -\bG_T(\btheta_a^*)= \nabla\bG_T(\btheta_a^*)(\hat\btheta_a-\btheta_a^*) + \frac12\tilde{\bm{\delta}}_a(\hat\btheta_a-\btheta_a^*)+o_p(1/\sqrt{T}),
    $$
    where 
    $$
    \tilde{\bm{\delta}}_a = 
    \begin{pmatrix}
    (\hat\btheta_a-\btheta_a^*)^\top\nabla^2\bG_T^{(1)}(\tilde\btheta_{a, 1})\\
    \vdots\\
    (\hat\btheta_a-\btheta_a^*)^\top\nabla^2\bG_T^{(d)}(\tilde\btheta_{a, d})
    \end{pmatrix}.
    $$
    By rearranging, we obtain 
    \begin{equation}\label{eq::taylor-expansion-general}
        \big[\nabla\bG_T(\btheta_a^*) + \frac12\tilde{\bm{\delta}}_a \big]\cdot \sqrt{T}(\hat\btheta_a - \btheta_a^*) = -\sqrt{T}\bG_T(\btheta_a^*) + o_p(1).
    \end{equation}
    Below we state the following lemmas, the proof of these lemmas are in Appendix \ref{apdx::proof-lem::convergence-true-derivative-general}, \ref{apdx::proof-lem::2nd-derivative-boundness-general}, and \ref{apdx::proof-lem::asymptotic-normality-true-G-general}, respectively.
    
    \begin{lemma}\label{lem::convergence-true-derivative-general}
        Under the assumptions of Theorem \ref{thm::asymptotic-normality-general}, as $T\rightarrow \infty$, $\nabla\bG_T(\btheta_a^*)\xrightarrow{p}\EE\nabla\bg(\bX_t, Y_t(a);\btheta_a^*)$.
    \end{lemma}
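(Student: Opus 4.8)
The plan is to note that $\nabla\bG_T(\btheta_a^*) = \frac{1}{T}\sum_{t=1}^T \bW_t$, where $\bW_t := w_a(A_t)\,\nabla\bg(\bX_t, Y_t;\btheta_a^*)$ and $w_a(A_t) = \frac{1}{\pi_t(A_t)}1_{\{A_t=a\}}$ as in the display following Theorem \ref{thm::asymptotic-normality-general}, and then to show that $\frac{1}{T}\sum_{t=1}^T \bW_t$ converges in probability to $\bJ_a := \EE[\nabla\bg(\bX_t, Y_t(a);\btheta_a^*)]$ by an $L^2$ martingale law of large numbers. The first step is a conditional-expectation computation mirroring the one for $\bZ_t$ after Theorem \ref{thm::asymptotic-normality-general}: on the event $\{A_t = a\}$ one has $Y_t = Y_t(a)$, so $\bW_t = w_a(A_t)\,\nabla\bg(\bX_t, Y_t(a);\btheta_a^*)$; conditioning on $(\cH_{t-1},\bX_t)$ and using Assumption \ref{aspt:unconfoundedness} (so $A_t \perp Y_t(a)$ given $(\cH_{t-1},\bX_t)$) together with $\EE[w_a(A_t)\mid\cH_{t-1},\bX_t]=1$ gives $\EE[\bW_t\mid\cH_{t-1},\bX_t] = \EE[\nabla\bg(\bX_t,Y_t(a);\btheta_a^*)\mid\bX_t]$, and then the i.i.d.\ assumption on $\{\bX_t,Y_t(a):a\in\cA\}$ yields $\EE[\bW_t\mid\cH_{t-1}] = \bJ_a$. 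Hence $\bD_t := \bW_t - \bJ_a$ is a martingale difference sequence relative to $\{\cH_t\}$.

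The second step is to bound $\sup_t \EE\|\bD_t\|_2^2$ by a finite constant. By Assumption \ref{aspt:min-sampling-prob}, $\EE[w_a(A_t)^2\mid\cH_{t-1},\bX_t] = 1/\pi_t(a) \le 1/\pi_{\min}$; combining this with $A_t\perp Y_t(a)$ given $(\cH_{t-1},\bX_t)$ and the envelope bound $\|\nabla\bg(\bX_t,Y_t(a);\btheta_a^*)\|_2 \le \phi(\bX_t,Y_t(a))$ from Assumption \ref{aspt:smoothness-general}(ii) — applicable since $\|\btheta_a^*\|_2 < R_\theta$ by Assumption \ref{aspt:boundedness-general}(ii) — gives $\EE\|\bW_t\|_2^2 \le \pi_{\min}^{-1}\,\EE[\phi(\bX_t,Y_t(a))^2] < \infty$, uniformly in $t$. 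Likewise $\|\bJ_a\|_2 \le \EE[\phi(\bX_t,Y_t(a))] \le \big(\EE[\phi(\bX_t,Y_t(a))^2]\big)^{1/2} < \infty$ by Jensen and Cauchy--Schwarz. Thus $\EE\|\bD_t\|_2^2 \le 2\,\EE\|\bW_t\|_2^2 + 2\|\bJ_a\|_2^2 \le C$ for some finite $C$ not depending on $t$.

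The third step invokes orthogonality of martingale differences: $\EE\big\|\frac{1}{T}\sum_{t=1}^T\bD_t\big\|_2^2 = \frac{1}{T^2}\sum_{t=1}^T \EE\|\bD_t\|_2^2 \le C/T \to 0$, so $\frac{1}{T}\sum_{t=1}^T\bD_t \to \mathbf{0}$ in $L^2$, hence in probability, and therefore $\nabla\bG_T(\btheta_a^*) = \bJ_a + \frac{1}{T}\sum_{t=1}^T\bD_t \xrightarrow{p} \bJ_a$, which is exactly the claim. I do not expect a genuine obstacle here; the only points requiring care are the potential-outcomes bookkeeping $Y_t = Y_t(a)$ on $\{A_t=a\}$ and the correct order of conditioning in the Step-1 identity (so that Assumption \ref{aspt:unconfoundedness} is used legitimately), and checking that the envelope $\phi$ dominates the matrix norm $\|\nabla\bg(\cdot,\cdot;\btheta_a^*)\|_2$ so that the uniform second-moment bound in Step 2 is valid.
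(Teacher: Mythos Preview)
Your proposal is correct and follows essentially the same approach as the paper: both write $\nabla\bG_T(\btheta_a^*)$ as an average of terms whose conditional expectation given $\cH_{t-1}$ equals $\bJ_a$ (via Assumption~\ref{aspt:unconfoundedness} and the identity $\EE[w_a(A_t)\mid\cH_{t-1},\bX_t]=1$), then use the uniform second-moment bound from the envelope $\phi$ in Assumption~\ref{aspt:smoothness-general}(ii) together with $\pi_{\min}$ and orthogonality of martingale differences to get $L^2$ (hence in-probability) convergence. The only cosmetic difference is that the paper argues entrywise via bilinear forms $\bc^\top\bV_t\bc'$ and Chebyshev, whereas you work directly with the matrix Frobenius/operator norm; both are equivalent here.
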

    
    \begin{lemma}\label{lem::2nd-derivative-boundness-general}
        Under the assumptions of Theorem \ref{thm::asymptotic-normality-general}, $\sup_{\|\btheta - \btheta_a^*\|_2\leq \epsilon_0}\|\nabla^2\bG_T(\btheta)\|_1 = \cO_p(1)$. Here, for a tensor $\bB\in\RR^{d_1\times d_2\times d_3}$, we define $\|\bB\|_1 = \sum_{i\in[d_1], j\in[d_2], k\in[d_3]}|\bB_{i, j, k}|$.
    \end{lemma}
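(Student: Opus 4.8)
The plan is a direct domination argument, so this lemma is among the more routine in the development. First I would expand the Hessian tensor explicitly: since $\bG_T(\btheta)=\frac1T\sum_{t=1}^T\frac{1_{\{A_t=a\}}}{\pi_t(A_t)}\bg(\bX_t,Y_t;\btheta)$ is a finite average whose summands are twice differentiable in $\btheta$ by Assumption~\ref{aspt:smoothness-general}, differentiation commutes with the sum, so
\[
\nabla^2\bG_T(\btheta)=\frac1T\sum_{t=1}^T\frac{1_{\{A_t=a\}}}{\pi_t(A_t)}\,\nabla^2\bg(\bX_t,Y_t;\btheta).
\]
By the triangle inequality for $\|\cdot\|_1$ and the fact that the supremum of a finite sum is at most the sum of the suprema,
\[
\sup_{\|\btheta-\btheta_a^*\|_2\leq\epsilon_0}\|\nabla^2\bG_T(\btheta)\|_1\;\leq\;\frac1T\sum_{t=1}^T\frac{1_{\{A_t=a\}}}{\pi_t(A_t)}\;\sup_{\|\btheta-\btheta_a^*\|_2\leq\epsilon_0}\|\nabla^2\bg(\bX_t,Y_t;\btheta)\|_1.
\]

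Next I would control each of the two factors on the right. For the inverse-probability weight, Assumption~\ref{aspt:min-sampling-prob} gives $1_{\{A_t=a\}}/\pi_t(A_t)\leq 1/\pi_{\min}$ almost surely. For the Hessian norm I would pass from the tensor $1$-norm to the slicewise spectral norms: there is a constant $c_d$ depending only on $d$ with $\|\nabla^2\bg(\bx,y;\btheta)\|_1=\sum_{i\in[d]}\sum_{j,k}|\nabla^2\bg^{(i)}_{jk}(\bx,y;\btheta)|\leq c_d\sum_{i\in[d]}\|\nabla^2\bg^{(i)}(\bx,y;\btheta)\|_2$, and then Assumption~\ref{aspt:smoothness-general}(iii) yields $\sup_{\|\btheta-\btheta_a^*\|_2\leq\epsilon_0}\|\nabla^2\bg(\bx,y;\btheta)\|_1\leq c_d\,d\,\Phi(\bx,y)$. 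Since $Y_t=Y_t(a)$ on the event $\{A_t=a\}$, the right-hand side of the second display is bounded by $\frac{c_d d}{\pi_{\min}}\cdot\frac1T\sum_{t=1}^T 1_{\{A_t=a\}}\Phi(\bX_t,Y_t(a))\leq\frac{c_d d}{\pi_{\min}}\cdot\frac1T\sum_{t=1}^T\Phi(\bX_t,Y_t(a))$.

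Finally, because $\{\bX_t,Y_t(a)\}_{t\geq1}$ are i.i.d.\ (stochastic contextual bandit assumption) with $\EE[\Phi(\bX_t,Y_t(a))]<\infty$ by Assumption~\ref{aspt:smoothness-general}(iii), the average $\frac1T\sum_{t=1}^T\Phi(\bX_t,Y_t(a))$ converges almost surely to $\EE[\Phi(\bX_1,Y_1(a))]$ by the strong law of large numbers, hence is $\cO_p(1)$; one may also argue directly via Markov's inequality, since its expectation equals $\EE[\Phi(\bX_1,Y_1(a))]$ for every $T$. Chaining the bounds then gives $\sup_{\|\btheta-\btheta_a^*\|_2\leq\epsilon_0}\|\nabla^2\bG_T(\btheta)\|_1=\cO_p(1)$, as claimed.

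The only two points that need a little care---and these are where I would concentrate the write-up---are the interchange of $\sup_\btheta$ with the finite sum (valid only because it is a \emph{finite} average, not through any uniform-continuity argument) and the substitution $Y_t=Y_t(a)$ under the indicator $1_{\{A_t=a\}}$, which is precisely what allows the i.i.d.\ structure of the potential outcomes to be exploited despite the adaptivity of data collection. No concentration tool beyond the law of large numbers is required, since we only need boundedness in probability rather than a rate.
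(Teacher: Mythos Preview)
Your proposal is correct and follows essentially the same route as the paper: expand $\nabla^2\bG_T$ as the weighted average of $\nabla^2\bg$, bound the inverse-probability weight by $1/\pi_{\min}$, dominate the tensor $1$-norm slicewise by a dimension constant times $\Phi(\bX_t,Y_t(a))$ via Assumption~\ref{aspt:smoothness-general}(iii), and conclude by the law of large numbers for the i.i.d.\ potential outcomes. If anything, your write-up is slightly more explicit than the paper's about the $Y_t=Y_t(a)$ substitution under the indicator and about why the supremum may be pushed inside a finite sum.
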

    
    \begin{lemma}\label{lem::asymptotic-normality-true-G-general}
        Under the assumptions of Theorem \ref{thm::asymptotic-normality-general}, as $T\rightarrow \infty$, $\sqrt{T}\bG_T(\btheta_a^*)\xrightarrow{d} \cN(\mathbf{0}, \bar\bI_a)$, where $\bar\bI_a = \EE\big[\frac{1}{\bar\pi(a|\bX_t)}\bg(\bX_t, Y_t(a);\btheta_a^*)\bg(\bX_t, Y_t(a);\btheta_a^*)^\top\big]$.
    \end{lemma}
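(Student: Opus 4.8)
The plan is to apply a multivariate martingale central limit theorem to $\sqrt{T}\,\bG_T(\btheta_a^*) = \frac{1}{\sqrt{T}}\sum_{t=1}^T \bZ_t$, where $\bZ_t := \frac{1}{\pi_t(A_t)}1_{\{A_t = a\}}\bg(\bX_t, Y_t; \btheta_a^*)$. The computation in the discussion following Theorem~\ref{thm::asymptotic-normality-general} already shows, using Assumption~\ref{aspt:unconfoundedness} and the i.i.d.\ structure of $\{\bX_t, Y_t(a)\}$, that $\EE[\bZ_t \mid \cH_{t-1}] = \mathbf{0}$, so $\{\bZ_t\}_{t\ge 1}$ is a martingale difference sequence with respect to $\{\cH_t\}$. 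It then suffices to verify (a) convergence of the predictable quadratic variation, $\frac{1}{T}\sum_{t=1}^T \EE[\bZ_t\bZ_t^\top \mid \cH_{t-1}] \xrightarrow{p} \bar\bI_a$, and (b) a conditional Lindeberg condition. Step (a) is the crux; step (b) follows easily from a fourth-moment bound.

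For (a), I first compute the conditional second moment. On $\{A_t = a\}$ we have $Y_t = Y_t(a)$, hence $\bZ_t\bZ_t^\top = \frac{1_{\{A_t=a\}}}{\pi_t(A_t)^2}\bg(\bX_t, Y_t(a);\btheta_a^*)\bg(\bX_t, Y_t(a);\btheta_a^*)^\top$. Conditioning on $(\cH_{t-1}, \bX_t)$ and factoring over $A_t$ and $Y_t(a)$ via Assumption~\ref{aspt:unconfoundedness}, the $A_t$-average of $\frac{1_{\{A_t=a\}}}{\pi_t(A_t)^2}$ equals $1/\pi_t(a\mid \bX_t, \cH_{t-1})$, while the $Y_t(a)$-factor equals $\bV(\bX_t) := \EE[\bg(\bX_t, Y_t(a);\btheta_a^*)\bg(\bX_t, Y_t(a);\btheta_a^*)^\top \mid \bX_t]$ since $Y_t(a)$ is independent of $\cH_{t-1}$ given $\bX_t$. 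Thus
$$\EE[\bZ_t\bZ_t^\top \mid \cH_{t-1}] = \EE_{\bX_t}\!\left[\frac{\bV(\bX_t)}{\pi_t(a\mid \bX_t, \cH_{t-1})}\,\Big|\,\cH_{t-1}\right].$$
Because $\bX_t$ is independent of $\cH_{t-1}$, $\EE_{\bX_t}[\bV(\bX_t)/\bar\pi(a\mid\bX_t)\mid\cH_{t-1}] = \bar\bI_a$, so $\frac{1}{T}\sum_{t=1}^T \EE[\bZ_t\bZ_t^\top\mid\cH_{t-1}] - \bar\bI_a = \frac{1}{T}\sum_{t=1}^T \EE_{\bX_t}\!\big[\bV(\bX_t)\big(\tfrac{1}{\pi_t(a\mid\bX_t,\cH_{t-1})} - \tfrac{1}{\bar\pi(a\mid\bX_t)}\big)\mid\cH_{t-1}\big]$. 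Using $\|\bV(\bX_t)\|_2 \le M_2$ (Assumption~\ref{aspt:boundedness-general}(i)) and $\pi_t(a\mid\cdot)\ge\pi_{\min}$ a.s.\ (Assumption~\ref{aspt:min-sampling-prob}), which also forces $\bar\pi(a\mid\bx)\ge\pi_{\min}$ a.e., the map $x\mapsto 1/x$ is $\pi_{\min}^{-2}$-Lipschitz on $[\pi_{\min},1]$, so each summand has norm at most $\frac{M_2}{\pi_{\min}^2}\EE_{\bX_t}[\,|\pi_t(a\mid\bX_t,\cH_{t-1}) - \bar\pi(a\mid\bX_t)|\mid\cH_{t-1}\,]$. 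Taking total expectation and invoking policy convergence (Definition~\ref{aspt:policy_convergence}) with bounded convergence gives $\EE|\pi_t(a\mid\bX_t,\cH_{t-1}) - \bar\pi(a\mid\bX_t)| \to 0$; a Cesàro average of this, together with Markov's inequality applied to the nonnegative averaged bound, then yields $\frac{1}{T}\sum_{t=1}^T \EE[\bZ_t\bZ_t^\top\mid\cH_{t-1}] \xrightarrow{p}\bar\bI_a$.

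For (b), on $\{A_t = a\}$ we get $\EE[\|\bZ_t\|_2^4 \mid \cH_{t-1}] = \EE_{\bX_t}[\pi_t(a\mid\cdot)^{-3}\,\EE[\|\bg(\bX_t,Y_t(a);\btheta_a^*)\|_2^4\mid\bX_t]\mid\cH_{t-1}] \le \pi_{\min}^{-3}\,\EE\|\bg(\bX_t,Y_t(a);\btheta_a^*)\|_2^4 =: C < \infty$ by Assumption~\ref{aspt:boundedness-general}(iii). Hence, for any $\epsilon>0$,
$$\frac{1}{T}\sum_{t=1}^T\EE\big[\|\bZ_t\|_2^2\,1_{\{\|\bZ_t\|_2>\epsilon\sqrt T\}}\mid\cH_{t-1}\big] \;\le\; \frac{1}{\epsilon^2 T^2}\sum_{t=1}^T \EE\big[\|\bZ_t\|_2^4\mid\cH_{t-1}\big] \;\le\; \frac{C}{\epsilon^2 \pi_{\min}^3\,T}\;\to\;0,$$
which is the conditional Lindeberg condition. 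Applying a martingale CLT---e.g.\ via the Cramér--Wold device, applying the standard scalar martingale CLT to $\langle\bm{u},\bZ_t\rangle$ for each fixed $\bm{u}\in\RR^d$ and using the conclusions of (a)--(b)---gives $\sqrt T\,\bG_T(\btheta_a^*)\xrightarrow{d}\cN(\mathbf{0},\bar\bI_a)$.

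The main obstacle is step (a): one must convert the \emph{in-probability} statement of policy convergence---which is over the joint randomness of $\bX_t$ and $\cH_{t-1}$---into convergence of a Cesàro average of conditional expectations of $\bV(\bX_t)/\pi_t(a\mid\bX_t,\cH_{t-1})$. The reciprocal $1/\pi_t$ is tamed by the minimum-sampling-probability assumption (making it Lipschitz on $[\pi_{\min},1]$), and the remaining subtlety---reducing everything to $\EE|\pi_t(a\mid\bX_t,\cH_{t-1})-\bar\pi(a\mid\bX_t)|\to0$ and then to a Cesàro/Markov argument---is the part worth writing out carefully.
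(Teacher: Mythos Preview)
Your proposal is correct and follows essentially the same route as the paper: Cram\'er--Wold reduction plus the scalar martingale CLT (the paper cites Dvoretzky, 1972), verifying the martingale-difference property, the convergence of predictable quadratic variation via the bound $\|\bV(\bX_t)\|_2\le M_2$ together with $L_1$-convergence of $|\pi_t(a\mid\bX_t,\cH_{t-1})-\bar\pi(a\mid\bX_t)|$ (policy convergence plus uniform integrability) and a Ces\`aro argument, and the Lindeberg condition via a fourth-moment/Chebyshev bound. One cosmetic slip: you define $C:=\pi_{\min}^{-3}\EE\|\bg\|_2^4$, so your final display should read $C/(\epsilon^2 T)$ rather than $C/(\epsilon^2\pi_{\min}^3 T)$.
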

    
    We now derive the asymptotic distribution of $\hat\btheta_a$ from (\ref{eq::taylor-expansion-general}). First, since $\tilde\btheta_{a, i}$ is on the line segment between $\btheta_a^*$ and $\hat\btheta_a$, from Lemma \ref{lem::consistency-general} and Lemma \ref{lem::2nd-derivative-boundness-general}, we have $\forall i$,
    \begin{align*}
        \|\nabla^2\bG_T^{(i)}(\tilde\btheta_{a, i})\|_{1, 1} &\leq  \|\nabla^2\bG_T(\tilde\btheta_{a, i})\|_1\\
        &= \|\nabla^2\bG_T(\tilde\btheta_{a, i})\|_1\cdot1_{\{\|\tilde\btheta_{a, i}-\btheta_a^*\|_2\leq \epsilon_0\}} + \|\nabla^2\bG_T(\tilde\btheta_{a, i})\|_1\cdot1_{\{\|\tilde\btheta_{a, i}-\btheta_a^*\|_2> \epsilon_0\}}\\
        &\leq \sup_{\|\btheta - \btheta_a^*\|_2\leq \epsilon_0}\|\nabla^2\bG_T(\btheta)\|_1 + \|\nabla^2\bG_T(\tilde\btheta_{a, i})\|_1\cdot 1_{\{\|\hat\btheta_a-\btheta_a^*\|_2> \epsilon_0\}}= \cO_p(1).
    \end{align*}
    Here for a matrix $\bB\in\RR^{d_1\times d_2}$, we define $\|\bB\|_{1, 1} = \sum_{i\in[d_1], j\in[d_2]}|\bB_{i, j}|$. 
    
    Combine the above with Lemma \ref{lem::consistency-general} which implies $\hat\btheta_a - \btheta_a^* = o_p(1)$ and Lemma \ref{lem::convergence-true-derivative-general} which ensures convergence of $\nabla\bG_T(\btheta_a^*)$, we deduce that 
    \begin{equation}\label{eq::gradient-plus-convergence-general}
        \nabla\bG_T(\btheta_a^*) + \frac12\tilde{\bm{\delta}}_a\xrightarrow{p}\EE\nabla\bg(\bX_t, Y_t(a);\btheta_a^*).
    \end{equation}
    We further combine the above expression with Lemma \ref{lem::asymptotic-normality-true-G-general} and use Slutsky's theorem to obtain (\ref{eq::asymptotic-normality-general}).
    
\subsection{Proof of Theorem \ref{thm::asymptotic-normality-joint-general}}\label{apdx::proof-thm::asymptotic-normality-joint-general}

We prove a more general version of Theorem \ref{thm::asymptotic-normality-joint-general} where the score function $\bg$ in (\ref{eq::theta-a-*}) for each arm can be different. Suppose that for any $a\in\cA$, $\btheta_a^*$ satisfies 
\begin{equation}\label{eq::theta-a-*-a}
    \EE\bg_a(\bX, Y(a);\btheta_a^*) = \mathbf{0}
\end{equation}
for a score function $\bg_a$. 

The following conditions extend Assumptions \ref{aspt:identifiability-general}, \ref{aspt:boundedness-general}, \ref{aspt:smoothness-general}, and \ref{aspt:min-sampling-prob} to the more general setting where the conditions apply not only to a single arm but simultaneously across all arms $a\in\cA$.

\begin{assumption}[Well-separated solution for all arms]\label{aspt:identifiability-general-joint}
    $\forall a\in\cA, \forall \epsilon>0$, \\ $\inf_{\|\btheta - \btheta_a^*\|_2>\epsilon}\|\EE\bg_a(\bX_t, Y_t(a);\btheta)\|_2>0$.
\end{assumption}

\begin{assumption}[Boundedness for all arms]\label{aspt:boundedness-general-joint}
    There exist constants $R_{\btheta}$, $M_2$ such that \\
    (i) $\forall a\in\cA$, $\|\EE[\bg_a(\bX_t, Y_t(a);\btheta_a^*)\bg_a(\bX_t, Y_t(a);\btheta_a^*)^\top|\bX_t]\|_2\leq M_2$, a.e. $\bX_t$; \\
    (ii) $\forall a\in\cA$, $\|\btheta_a^*\|_2<R_{\btheta}$, $\sup_{\|\btheta\|_2\leq R_{\btheta}}\EE\|\bg_a(\bX_t, Y_t(a);\btheta)\|_2^2<\infty$; \\
    (iii) $\forall a\in\cA$, $\EE \|\bg_a(\bX_t, Y_t(a);\btheta_a^*)\|_2^4<\infty$.
\end{assumption}

\begin{assumption}[Smoothness for all arms]\label{aspt:smoothness-general-joint}
(i)$\forall a\in\cA$, the function $\bg_a(\bx, y; \btheta)$ is twice differentiable with respect to $\btheta$, with $\EE\nabla\bg_a(\bX_t, Y_t(a);\btheta_a^*)$ nonsingular; \\
(ii) There exists a function $\phi$ such that $\forall a\in\cA$, $\forall \bx, y$, $\sup_{\|\btheta\|_2\leq R_{\btheta}}\|\nabla\bg_a(\bx, y; \btheta)\|_2\leq \phi(\bx, y)$, and $\EE\phi(\bX_t, Y_t(a))^2<\infty$;\\
(iii)There exists a constant $\epsilon_0>0$ and a function $\Phi$ such that $\forall a\in\cA$, \\
$\sup_{\|\btheta - \btheta_a^*\|_2\leq \epsilon_0, i\in[d]}\|\nabla^2\bg_a^{(i)}(\bx, y; \btheta)\|_2\leq\Phi(\bx, y)$ and $\EE\Phi(\bX_t, Y_t(a))<\infty$. Here $\bg_a^{(i)}(\bx, y; \btheta)$ denotes the $i$-th entry of $\bg_a(\bx, y; \btheta)$.
\end{assumption}

\begin{assumption}[Minimum sampling probability for all arms] \label{aspt:min-sampling-prob-joint}
$\forall a\in\cA$, $\pi_t(a)\geq \pi_{\min}$ almost surely for some constant $\pi_{\min} \in(0, 1)$.
\end{assumption}


Define $\bG_{a, T} := \frac1T\sum_{t=1}^T\frac1{\pi_t(A_t)}1_{\{A_t = a\}}\bg_a(\bX_t, Y_t;\btheta)$, and similar to (\ref{eq::estimating-equation-general}), we look for $\hat\btheta_a^{(T)}$ such that $\forall a\in\cA$,
\begin{equation}\label{eq::estimating-equation-general-A}
    \bG_{a, T}(\hat\btheta_a) = o_p(1/\sqrt{T})
\end{equation}
as $T\rightarrow \infty$. We prove the following theorem, which is a generalization of Theorem \ref{thm::asymptotic-normality-joint-general}.

\begin{theorem}\label{thm::asymptotic-normality-general-joint-A}
    Under Assumptions \ref{aspt:unconfoundedness}, \ref{aspt:identifiability-general-joint}, \ref{aspt:boundedness-general-joint}, \ref{aspt:smoothness-general-joint}, and \ref{aspt:min-sampling-prob-joint}, if the behavior policy $\pi$ satisfies policy convergence to a policy $\bar\pi$ in the sense of Definition \ref{aspt:policy_convergence}, then there exist estimators $\{\hat\btheta_a^{(T)}\}_{a\in\cA, T\geq 1}$ such that (\ref{eq::estimating-equation-general-A}) holds for any $a\in\cA$, and $\|\hat\btheta_a^{(T)}\|_2\leq R_{\btheta}$, $\forall a\in\cA, T\geq 1$. In addition, any such estimators satisfy
\begin{equation}\label{eq::asymptotic-normality-joint-general-A}
    \sqrt{T}(\hat\btheta^{(T)} - \btheta^*)\xrightarrow{d}\cN
    \left(
    \mathbf{0},\bSigma^*
    \right)
\end{equation}
as $T\rightarrow \infty$. Here $\hat\btheta^{(T)} = \big((\hat\btheta_1^{(T)})^\top, \ldots, (\hat\btheta_K^{(T)})^\top\big)^\top$, $\btheta^* = \big(\btheta_1^{*, \top}, \ldots, \btheta_K^{*, \top}\big)^\top$, and $\bSigma^* = \diag(\bSigma_1^*, \ldots, \bSigma_K^*)$, where $\bSigma_{a}^*:= \bJ_a^{-1}\bar \bI_a \bJ_a^{-1, \top}$, and $$\bar\bI_a := \EE\big[\frac{1}{\bar\pi(a|\bX_t)}\bg_a(\bX_t, Y_t(a);\btheta_a^*)\bg_a(\bX_t, Y_t(a);\btheta_a^*)^\top\big], \bJ_a:= \EE\nabla\bg_a(\bX_t, Y_t(a);\btheta_a^*).$$
\end{theorem}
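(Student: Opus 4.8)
The plan is to reduce the joint statement to the single-arm analysis already carried out for Theorem~\ref{thm::asymptotic-normality-general} (in its arm-specific form, using $\bg_a$ throughout, which is a purely notational change) together with a \emph{multivariate} martingale central limit theorem whose limiting covariance turns out to be block-diagonal. First I would apply Lemma~\ref{lem::consistency-general}, arm by arm, to obtain for each $a\in\cA$ a sequence $\{\hat\btheta_a^{(T)}\}_{T\geq 1}$ with $\bG_{a,T}(\hat\btheta_a^{(T)}) = o_p(1/\sqrt T)$, $\|\hat\btheta_a^{(T)}\|_2 \leq R_{\btheta}$, and $\hat\btheta_a^{(T)} \xrightarrow{p} \btheta_a^*$; the lemma only invokes Assumptions~\ref{aspt:identifiability-general-joint}, \ref{aspt:boundedness-general-joint}, \ref{aspt:min-sampling-prob-joint} and policy convergence for that single arm, all of which hold. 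Next, for each $a$ I would repeat verbatim the Taylor-expansion argument from the proof of Theorem~\ref{thm::asymptotic-normality-general}: there are points $\tilde\btheta_{a,i}$ on the segment between $\btheta_a^*$ and $\hat\btheta_a^{(T)}$ with
\begin{equation*}
\Big[\nabla\bG_{a,T}(\btheta_a^*) + \tfrac{1}{2}\tilde{\bm{\delta}}_a\Big]\cdot\sqrt T\big(\hat\btheta_a^{(T)} - \btheta_a^*\big) = -\sqrt T\,\bG_{a,T}(\btheta_a^*) + o_p(1),
\end{equation*}
where $\tilde{\bm{\delta}}_a$ stacks the rows $(\hat\btheta_a^{(T)}-\btheta_a^*)^\top\nabla^2\bG_{a,T}^{(i)}(\tilde\btheta_{a,i})$. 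By Lemmas~\ref{lem::convergence-true-derivative-general} and \ref{lem::2nd-derivative-boundness-general} applied to each arm, together with consistency, the bracketed matrix converges in probability to $\bJ_a$, so stacking the $K$ identities gives $\bD_T\cdot\sqrt T(\hat\btheta^{(T)}-\btheta^*) = -\sqrt T\,\bH_T + o_p(1)$ with $\bD_T \xrightarrow{p} \diag(\bJ_1,\ldots,\bJ_K)$ and $\bH_T := \big(\bG_{1,T}(\btheta_1^*)^\top,\ldots,\bG_{K,T}(\btheta_K^*)^\top\big)^\top$.

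The core step is then to show $\sqrt T\,\bH_T \xrightarrow{d} \cN(\mathbf{0}, \bar\bI)$ with $\bar\bI = \diag(\bar\bI_1,\ldots,\bar\bI_K)$. I would write $\sqrt T\,\bH_T = \frac{1}{\sqrt T}\sum_{t=1}^T \bZ_t$ with $\bZ_t := (\bZ_{1,t}^\top,\ldots,\bZ_{K,t}^\top)^\top$, $\bZ_{a,t} := \frac{1}{\pi_t(A_t)}1_{\{A_t = a\}}\bg_a(\bX_t, Y_t;\btheta_a^*)$, and check that $\{\bZ_t\}$ is a martingale difference sequence with respect to $\{\cH_t\}$ — the computation on each block is exactly the heuristic display following Theorem~\ref{thm::asymptotic-normality-general}. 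I would then invoke the multivariate martingale CLT via the Cramér--Wold device, reducing to the scalar statement used in Lemma~\ref{lem::asymptotic-normality-true-G-general}. The conditional Lindeberg/Lyapunov condition follows from Assumption~\ref{aspt:boundedness-general-joint}(iii) and $\pi_t(a)\geq\pi_{\min}$ exactly as there. For the predictable quadratic variation $\frac1T\sum_{t=1}^T\EE[\bZ_t\bZ_t^\top\mid\cH_{t-1}]$, the crucial point is that the off-diagonal blocks vanish identically: for $a\neq a'$ we have $1_{\{A_t = a\}}1_{\{A_t = a'\}}=0$, hence $\EE[\bZ_{a,t}\bZ_{a',t}^\top\mid\cH_{t-1}]=\mathbf{0}$. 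Each diagonal block converges in probability to $\bar\bI_a$ by the same argument as in Lemma~\ref{lem::asymptotic-normality-true-G-general}: conditioning on $(\cH_{t-1},\bX_t)$ and using Assumption~\ref{aspt:unconfoundedness} gives $\EE[\bZ_{a,t}\bZ_{a,t}^\top\mid\cH_{t-1},\bX_t] = \frac{1}{\pi_t(a\mid\bX_t)}\EE[\bg_a\bg_a^\top\mid\bX_t]$, and policy convergence $\pi_t(a\mid\bX_t)\xrightarrow{p}\bar\pi(a\mid\bX_t)$ lets one replace the weight by its limit (the error is controlled using $\pi_t(a)\geq\pi_{\min}$ and a dominated-convergence/uniform-integrability argument), after which i.i.d.\ averaging over $\{\bX_t,Y_t(a)\}$ yields $\bar\bI_a$. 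Thus $\bar\bI$ is block-diagonal.

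Finally, combining $\bD_T\xrightarrow{p}\diag(\bJ_1,\ldots,\bJ_K)$, nonsingular by Assumption~\ref{aspt:smoothness-general-joint}(i), with $\sqrt T\,\bH_T\xrightarrow{d}\cN(\mathbf{0},\bar\bI)$ through Slutsky's theorem and the continuous mapping theorem gives $\sqrt T(\hat\btheta^{(T)}-\btheta^*)\xrightarrow{d}\cN(\mathbf{0},\bSigma^*)$ with $\bSigma^* = \diag\big(\bJ_1^{-1}\bar\bI_1\bJ_1^{-1,\top},\ldots,\bJ_K^{-1}\bar\bI_K\bJ_K^{-1,\top}\big) = \diag(\bSigma_1^*,\ldots,\bSigma_K^*)$, which is the claim (Theorem~\ref{thm::asymptotic-normality-joint-general} then being the special case $\bg_a\equiv\bg$). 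The main obstacle is the predictable-quadratic-variation step of the joint martingale CLT: not the block-diagonality, which is a one-line consequence of the disjointness of arm-selection events, but the careful passage from $\pi_t(a\mid\bX_t)$ to $\bar\pi(a\mid\bX_t)$ inside the conditional second moment under only convergence in probability. Since this is precisely what Lemma~\ref{lem::asymptotic-normality-true-G-general} already establishes for a single arm, I would reuse that argument on each diagonal block; no genuinely new ideas beyond the single-arm case are required.
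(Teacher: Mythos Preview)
Your proposal is correct and follows essentially the same approach as the paper: existence from the single-arm result applied arm by arm, the same Taylor linearization yielding $\sqrt T(\hat\btheta_a-\btheta_a^*)=\bJ_a^{-1}\sqrt T\,\bG_{a,T}(\btheta_a^*)+o_p(1)$, the Cram\'er--Wold device, the martingale CLT of Dvoretzky with the conditional-variance and Lindeberg checks reused from Lemma~\ref{lem::asymptotic-normality-true-G-general}, and block-diagonality coming from $1_{\{A_t=a\}}1_{\{A_t=a'\}}=0$. The only cosmetic difference is that the paper first linearizes each arm to replace the random bracket by $\bJ_a^{-1}$ and then applies Cram\'er--Wold to the resulting sum $\sum_a\bbeta_a^\top\bJ_a^{-1}\sqrt T\,\bG_{a,T}(\btheta_a^*)$, whereas you first establish the joint CLT for the stacked $\sqrt T\,\bH_T$ and then invoke Slutsky with $\bD_T\xrightarrow{p}\diag(\bJ_a)$; these are equivalent rearrangements of the same argument.
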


\begin{proof}[Proof of Theorem \ref{thm::asymptotic-normality-general-joint-A}]
First, the existence of $\{\hat\btheta_a^{(T)}\}_{a\in\cA, T\geq 1}$ is guaranteed by applying Theorem \ref{thm::asymptotic-normality-general} to each individual arm $a\in\cA$. Now suppose $\{\hat\btheta_a^{(T)}\}_{a\in\cA, T\geq 1}$ satisfies (\ref{eq::estimating-equation-general-A}) $\forall a\in\cA$, and $\|\hat\btheta_a^{(T)}\|_2\leq R_{\btheta}$ $\forall a\in\cA, T\geq 1$. From the Cramer-Wold theorem, in order to prove (\ref{eq::asymptotic-normality-joint-general-A}), we only need to show that for any nonrandom vectors $\bbeta_1, \ldots, \bbeta_K\in\RR^d$, 
\begin{equation}\label{eq::asymptotic-normality-general-linear-combination}
    \sqrt{T}\sum_{a\in\cA}\bbeta_a^\top (\hat\btheta_a - \btheta_a^*)\xrightarrow{d} \cN\left(0, \sum_{a\in\cA}\bbeta_a^\top \bJ_a^{-1}\bar\bI_a\bJ_a^{-1, \top}\bbeta_a\right).
\end{equation}
First, for any fixed arm $a$, we let $\bg = \bg_a$ and $\bG_T = \bG_{a, T}$ in (\ref{eq::taylor-expansion-general}), and considering Lemma \ref{lem::asymptotic-normality-true-G-general} as well as (\ref{eq::gradient-plus-convergence-general}), we obtain that 
\begin{equation*}
    \sqrt{T}(\hat\btheta_a - \btheta_a^*) = \big[\bJ_a + \bm{\delta}_{a, T}\big]^{-1}\sqrt{T}\bG_{a,T}(\btheta_a^*) + o_p(1)
\end{equation*}
where $\bm{\delta}_{a, T} = o_p(1)$. By further analysis, we have 
\begin{align}
\sqrt{T}(\hat\btheta_a - \btheta_a^*)& = \bJ_a^{-1}\cdot \sqrt{T}\bG_{a,T}(\btheta_a^*) + \left(\big[\bJ_a + \bm{\delta}_{a, T}\big]^{-1} - \bJ_a^{-1}\right)\cdot \sqrt{T}\bG_{a,T}(\btheta_a^*) + o_p(1)\nonumber\\
& = \bJ_a^{-1}\cdot \sqrt{T}\bG_{a,T}(\btheta_a^*) + \left([\bI + \bJ_a^{-1}\bm{\delta}_{a, T}]^{-1} - \bI\right)\bJ_a^{-1}\cdot \sqrt{T}\bG_{a,T}(\btheta_a^*) + o_p(1)\nonumber\\
& = \bJ_a^{-1}\cdot \sqrt{T}\bG_{a,T}(\btheta_a^*)+ o_p(1).\label{eq::asymptotic-expansion-a}
\end{align}
Here in the last equation, we have used the fact that $\bJ_a$ is invertible, $\bm{\delta}_{a, T} = o_p(1)$, and $\sqrt{T}\bG_{a,T}(\btheta_a^*) = \cO_p(1)$ from Lemma \ref{lem::asymptotic-normality-true-G-general}.
Applying (\ref{eq::asymptotic-expansion-a}) to all $a\in\cA$, we deduce that
\begin{align}
\sqrt{T}\sum_{a\in\cA}\bbeta_a^\top(\hat\btheta_a - \btheta_a^*)
& = \sum_{a\in\cA}\bbeta_a^\top\bJ_a^{-1}\cdot \sqrt{T}\bG_{a,T}(\btheta_a^*)+ o_p(1)\nonumber\\
& = \sum_{a\in\cA}\bbeta_a^\top\bJ_a^{-1}\cdot \frac1{\sqrt{T}}\sum_{t=1}^T\frac1{\pi_t(A_t)}1_{\{A_t = a\}}\bg_a(\bX_t, Y_t;\btheta_a^*)+ o_p(1)\nonumber\\
& = \frac1{\sqrt{T}}\sum_{t=1}^T\bar Z_t+ o_p(1),
\end{align}
where $\bar Z_t := \sum_{a\in\cA}\frac1{\pi_t(A_t)}1_{\{A_t = a\}}\bbeta_a^\top\bJ_a^{-1}\bg_a(\bX_t, Y_t;\btheta_a^*)$. Given the above expression, from Theorem 2.2 in \citep{dvoretzky1972asymptotic}, (\ref{eq::asymptotic-normality-general-linear-combination}) can be obtained by ensuring 
\begin{gather}
   \EE [\bar Z_t|\cH_{t-1}] = 0\quad  \forall t\in[T],\label{eq::cond-exp-general-linear-combination}\\
   \frac1T\sum_{t\in[T]}\mathrm{Var}(\bar Z_t|\cH_{t-1}) \xrightarrow{p} \sum_{a\in\cA}\bbeta_a^\top \bJ_a^{-1}\bar\bI_a\bJ_a^{-1, \top}\bbeta_a,\label{eq::cond-var-general-linear-combination}\\
   \frac1T\sum_{t\in[T]}\EE\left[\bar Z_t^21_{\{|\bar Z_t|>\sqrt{T}\delta\}}\Big|\cH_{t-1}\right]\xrightarrow{p} 0\quad \forall \delta>0.\label{eq::cond-lindeberg-general-linear-combination}
\end{gather}
Below we check these facts one by one.

\textbf{Check (\ref{eq::cond-exp-general-linear-combination})}: We have 
\begin{align*}
&\EE [\bar Z_t|\cH_{t-1}] \\
&=
\EE_{\bX_t}\left[\EE_{A_t, \{Y_t(a)\}_{a\in\cA}}[\bar Z_t|\cH_{t-1}, \bX_t]\big|\cH_{t-1}\right]\\
& = \EE_{\bX_t}\left[\EE_{A_t, \{Y_t(a)\}_{a\in\cA}}\bigg[\sum_{a\in\cA}\frac1{\pi_t(A_t)}1_{\{A_t = a\}}\bbeta_a^\top\bJ_a^{-1}\bg_a(\bX_t, Y_t(a);\btheta_a^*)\Big|\cH_{t-1}, \bX_t\bigg]\bigg|\cH_{t-1}\right]\\
& = \EE_{\bX_t}\left[\sum_{a\in\cA}\EE_{A_t,Y_t(a)}\bigg[\frac1{\pi_t(A_t)}1_{\{A_t = a\}}\bbeta_a^\top\bJ_a^{-1}\bg_a(\bX_t, Y_t(a);\btheta_a^*)\Big|\cH_{t-1}, \bX_t\bigg]\bigg|\cH_{t-1}\right]\\
& = \EE_{\bX_t}\bigg[\sum_{a\in\cA}\EE_{A_t}\bigg[\frac1{\pi_t(A_t)}1_{\{A_t = a\}}\Big|\cH_{t-1}, \bX_t\bigg]\cdot \\
& \quad\quad\quad\quad\quad\quad
\EE_{Y_t(a)}\bigg[\bbeta_a^\top\bJ_a^{-1}\bg_a(\bX_t, Y_t(a);\btheta_a^*)\Big|\cH_{t-1}, \bX_t\bigg]\bigg|\cH_{t-1}\bigg]\\
& = \EE_{\bX_t}\left[\sum_{a\in\cA} 
\EE_{Y_t(a)}\bigg[\bbeta_a^\top\bJ_a^{-1}\bg_a(\bX_t, Y_t(a);\btheta_a^*)\Big|\cH_{t-1}, \bX_t\bigg]\bigg|\cH_{t-1}\right]\\
& = \sum_{a\in\cA}\bbeta_a^\top\bJ_a^{-1}\EE\bg_a(\bX_t, Y_t(a);\btheta_a^*) = 0.
\end{align*}
Here in the fourth equality we use Assumption \ref{aspt:unconfoundedness}, and the last equality is due to (\ref{eq::theta-a-*-a}).

\textbf{Check (\ref{eq::cond-var-general-linear-combination})}: Due to (\ref{eq::cond-exp-general-linear-combination}), we have 
$\frac1T\sum_{t\in[T]}\mathrm{Var}(\bar Z_t|\cH_{t-1}) = \frac1T\sum_{t\in[T]}\EE[\bar Z_t^2|\cH_{t-1}]$, where
\begin{align}
\EE[\bar Z_t^2|\cH_{t-1}]
& = \EE_{\bX_t}[\EE_{A_t, \{Y_t(a)\}_{a\in\cA}}[\bar Z_t^2|\cH_{t-1}, \bX_t]|\cH_{t-1}]\nonumber\\
& = \EE_{\bX_t}\bigg[\EE_{A_t, \{Y_t(a)\}_{a\in\cA}}\bigg[\Big(\sum_{a\in\cA}\frac1{\pi_t(A_t)}1_{\{A_t = a\}}\cdot\nonumber\\
&\quad\quad\quad\quad\quad\quad\quad\quad\quad\quad\quad\quad\bbeta_a^\top\bJ_a^{-1}\bg_a(\bX_t, Y_t(a);\btheta_a^*)\Big)^2\Big|\cH_{t-1}, \bX_t\bigg]\bigg|\cH_{t-1}\bigg]\nonumber\\
& = \EE_{\bX_t}\bigg[\EE_{A_t, \{Y_t(a)\}_{a\in\cA}}\bigg[\sum_{a, a'\in\cA}\frac{1_{\{A_t = a\}}\cdot 1_{\{A_t = a'\}}}{\pi_t(A_t)^2}\cdot\nonumber\\
&\quad \quad\bbeta_a^\top\bJ_a^{-1}\bg_a(\bX_t, Y_t(a);\btheta_a^*)\bg_{a'}(\bX_t, Y_t(a');\btheta_{a'}^*)^\top\bJ_{a'}^{-1, \top}\bbeta_{a'}\Big|\cH_{t-1}, \bX_t\bigg]\bigg|\cH_{t-1}\bigg]\nonumber\\
& = \EE_{\bX_t}\bigg[\EE_{A_t, \{Y_t(a)\}_{a\in\cA}}\bigg[\sum_{a\in\cA}\frac{1_{\{A_t = a\}}}{\pi_t(A_t)^2}\cdot \nonumber\\
&\quad\quad\quad \bbeta_a^\top\bJ_a^{-1}\bg_a(\bX_t, Y_t(a);\btheta_a^*)\bg_{a}(\bX_t, Y_t(a);\btheta_{a}^*)^\top\bJ_{a}^{-1, \top}\bbeta_{a}\Big|\cH_{t-1}, \bX_t\bigg]\bigg|\cH_{t-1}\bigg]\nonumber\\
& = \EE_{\bX_t}\bigg[\sum_{a\in\cA}\EE_{A_t, Y_t(a)}\bigg[\frac{1_{\{A_t = a\}}}{\pi_t(A_t)^2}\cdot \nonumber\\
&\quad\quad\quad \bbeta_a^\top\bJ_a^{-1}\bg_a(\bX_t, Y_t(a);\btheta_a^*)\bg_{a}(\bX_t, Y_t(a);\btheta_{a}^*)^\top\bJ_{a}^{-1, \top}\bbeta_{a}\Big|\cH_{t-1}, \bX_t\bigg]\bigg|\cH_{t-1}\bigg]\nonumber\\
& = \EE_{\bX_t}\bigg[\sum_{a\in\cA}\EE_{A_t}\bigg[\frac{1_{\{A_t = a\}}}{\pi_t(A_t)^2}\Big|\cH_{t-1}, \bX_t\bigg]\cdot \nonumber\\
& \thickspace
\EE_{Y_t(a)} \bigg[\bbeta_a^\top\bJ_a^{-1}\bg_a(\bX_t, Y_t(a);\btheta_a^*)\bg_{a}(\bX_t, Y_t(a);\btheta_{a}^*)^\top\bJ_{a}^{-1, \top}\bbeta_{a}\Big|\cH_{t-1}, \bX_t\bigg]\bigg|\cH_{t-1}\bigg]\nonumber\\
& = \EE_{\bX_t}\bigg[\sum_{a\in\cA}\frac1{\pi_t(a)}\cdot \bbeta_a^\top\bJ_a^{-1}\cdot \nonumber\\
& \quad\quad\quad\EE_{Y_t(a)} \bigg[\bg_a(\bX_t, Y_t(a);\btheta_a^*)\bg_{a}(\bX_t, Y_t(a);\btheta_{a}^*)^\top\Big|\cH_{t-1}, \bX_t\bigg]\bJ_{a}^{-1, \top}\bbeta_{a}\bigg|\cH_{t-1}\bigg]\nonumber\\
& = \EE_{\bX_t}\bigg[\sum_{a\in\cA}\bbeta_a^\top\bJ_a^{-1}\bI_{a, t}\bJ_{a}^{-1, \top}\bbeta_{a}\bigg|\cH_{t-1}\bigg]\nonumber\\
& = \sum_{a\in\cA}\bbeta_a^\top\bJ_a^{-1}\EE_{\bX_t}\big[\bI_{a, t}\big|\cH_{t-1}\big]\bJ_{a}^{-1, \top}\bbeta_{a}.\label{eq::conditional-var-linear-combination-1}
\end{align}
Here $\bI_{a, t} := \frac{1}{\pi_t(a)}\cdot \EE_{Y_t(a)}[\bg_a(\bX_t, Y_t(a);\btheta_a^*)\bg_a(\bX_t, Y_t(a);\btheta_a^*)^\top|\bX_t]$. This is consistent with the definition in Appendix \ref{apdx::proof-lem::asymptotic-normality-true-G-general}, where $\bI_{a, t}$ is defined for a single fixed arm $a$, by taking $\bg = \bg_a$ in the more general setting considered here. In the above, the sixth equality uses Assumption \ref{aspt:unconfoundedness}. 

From (\ref{eq::l1-convergence-1-general}) and (\ref{eq::l1-convergence-2-general}) with $\bc = \bJ_{a}^{-1, \top}\bbeta_{a}$, we obtain that $\forall a\in\cA$, 
\begin{equation}\label{eq::conditional-var-linear-combination-2}
\frac1T\sum_{t=1}^T\bbeta_a^\top\bJ_a^{-1}\EE_{\bX_t}\big[\bI_{a, t}\big|\cH_{t-1}\big]\bJ_{a}^{-1, \top}\bbeta_{a}\xrightarrow{p}\bbeta_a^\top\bJ_a^{-1}\bar\bI_a\bJ_{a}^{-1, \top}\bbeta_{a},
\end{equation}
where $\bar\bI_a$ is defined in Theorem \ref{thm::asymptotic-normality-general-joint-A}. This is consistent with the definition in Appendix \ref{apdx::proof-lem::asymptotic-normality-true-G-general}, where $\bar\bI_{a}$ is defined for a single fixed arm $a$, by taking $\bg = \bg_a$ in the more general setting considered here.

Finally, combining (\ref{eq::conditional-var-linear-combination-1}) and (\ref{eq::conditional-var-linear-combination-2}), we obtain (\ref{eq::cond-var-general-linear-combination}).

\textbf{Check (\ref{eq::cond-lindeberg-general-linear-combination})}: We have 
\begin{align*}
&\frac1T\!\sum_{t\in[T]}\!\EE\!\left[\bar Z_t^21_{\{|\bar Z_t|>\sqrt{T}\delta\}}\Big|\cH_{t-1}\right]\\
&\leq \frac1T\sum_{t=1}^T\frac1{T\delta^2}\EE[\bar Z_t^4|\cH_{t-1}]\\
& = \frac1{T^2\delta^2}\sum_{t=1}^T\EE\bigg[\Big(\sum_{a\in\cA}\frac1{\pi_t(A_t)}1_{\{A_t = a\}}\bbeta_a^\top\bJ_a^{-1}\bg_a(\bX_t, Y_t;\btheta_a^*)\Big)^4\bigg|\cH_{t-1}\bigg]\\
& = \frac1{T^2\delta^2}\sum_{t=1}^T\EE\bigg[\sum_{a\in\cA}\frac1{\pi_t(A_t)^4}1_{\{A_t = a\}}\Big(\bbeta_a^\top\bJ_a^{-1}\bg_a(\bX_t, Y_t;\btheta_a^*)\Big)^4\bigg|\cH_{t-1}\bigg]\\
& \leq \frac1{T^2\delta^2}\sum_{t=1}^T\frac1{\pi_{\min}^4}\cdot\sum_{a\in\cA}\|\bJ_a^{-1, \top}\bbeta_a\|_2^4\cdot \EE\|\bg_a(\bX_t, Y_t;\btheta_a^*)\|_2^4\\
&\leq \frac1{T\delta^2\pi_{\min}^4}\max_a\|\bJ_a^{-1, \top}\bbeta_a\|_2^4\cdot \sum_{a\in\cA}\EE\|\bg_a(\bX_t, Y_t;\btheta_a^*)\|_2^4\rightarrow 0.
\end{align*}
Here the first inequality uses Chebyshev's Inequality. The second equality holds because all cross-product terms vanish when expanding the fourth power of the sum over $a\in\cA$. The last convergence uses Assumption \ref{aspt:boundedness-general-joint}, and that $\bJ_a$ is invertible for all $a\in\cA$.
\end{proof}

\subsection{Proof of Proposition \ref{thm::consistent-var-estimator-general}}\label{apdx::proof-thm::consistent-var-estimator-general}

We first prove that as $T\rightarrow \infty$,
\begin{equation}\label{eq::var-estimator-component-1-general}
\hat{\dot{\bG}}_{a, T}\xrightarrow{p}\EE\nabla\bg(\bX_t, Y_t(a);\btheta_a^*).
\end{equation}
From Lemma \ref{lem::convergence-true-derivative-general}, we deduce that 
$$
\frac1T\sum_{t=1}^T\frac1{\pi_t(A_t)}1_{\{A_t = a\}}\nabla\bg(\bX_t, Y_t(a);\btheta_a^*)\xrightarrow{p} \EE\nabla \bg(\bX_t, Y_t(a);\btheta_a^*).
$$
In addition, $\forall i$,
\begin{align*}
&\left\|\frac1T\sum_{t=1}^T\frac1{\pi_t(A_t)}1_{\{A_t = a\}}\nabla\bg^{(i)}(\bX_t, Y_t(a);\hat\btheta_a^{(T)}) - \frac1T\sum_{t=1}^T\frac1{\pi_t(A_t)}1_{\{A_t = a\}}\nabla\bg^{(i)}(\bX_t, Y_t(a);\btheta_a^*)\right\|_2\\
=&\frac1T\sum_{t=1}^T\frac1{\pi_t(A_t)}1_{\{A_t = a\}}\|\nabla\bg^{(i)}(\bX_t, Y_t(a);\hat\btheta_a^{(T)})-\nabla\bg^{(i)}(\bX_t, Y_t(a);\btheta_a^*)\|_2\\
\leq & \frac1{\pi_{\min}T}\sum_{t=1}^T \left\|\int_{0}^1\nabla^2\bg^{(i)}(\bX_t, Y_t(a);\btheta_a^* + u(\hat\btheta_a^{(T)} - \btheta_a^*))\mathrm{d}u\cdot (\hat\btheta_a^{(T)} - \btheta_a^*)\right\|_2\\
\leq & \frac1{\pi_{\min}T}\bigg[\sum_{t=1}^T \Phi(\bX_t, Y_t(a))\bigg]\cdot \|\hat\btheta_a^{(T)} - \btheta_a^*\|_2 +\\
&\quad 1_{\{\|\hat\btheta_a^{(T)}-\btheta_a^*\|_2>\epsilon_0\}}\cdot \frac1{\pi_{\min}T}\sum_{t=1}^T \left\|\int_{0}^1\nabla^2\bg^{(i)}(\bX_t, Y_t(a);\btheta_a^* + u(\hat\btheta_a^{(T)} - \btheta_a^*))\mathrm{d}u\cdot (\hat\btheta_a^{(T)} - \btheta_a^*)\right\|_2 \\
=& o_p(1).
\end{align*}
Here we have used Assumption \ref{aspt:min-sampling-prob} and \ref{aspt:smoothness-general}. Thus, 
$$
\frac1T\sum_{t=1}^T\frac1{\pi_t(A_t)}1_{\{A_t = a\}}\nabla\bg(\bX_t, Y_t(a);\hat\btheta_a^{(T)}) - \frac1T\sum_{t=1}^T\frac1{\pi_t(A_t)}1_{\{A_t = a\}}\nabla\bg(\bX_t, Y_t(a);\btheta_a^*) = o_p(1).
$$

Combining the above, we obtain (\ref{eq::var-estimator-component-1-general}).

Next we prove that as $T\rightarrow \infty$, 
\begin{equation}\label{eq::var-estimator-component-2-general}
    \hat\bI_{a, T}\xrightarrow{p}\bar\bI_a.
\end{equation}
Denote $\bZ_t = \frac{1}{\pi_t(A_t)}1_{\{A_t = a\}}\bg(\bX_t, Y_t; \btheta_a^*)$, and $\hat\bZ_t^{(T)} = \frac{1}{\pi_t(A_t)}1_{\{A_t = a\}}\bg(\bX_t, Y_t; \hat \btheta_a^{(T)})$. Then (\ref{eq::var-estimator-component-2-general}) is equivalent to 
$$
\frac1T\sum_{t=1}^T\hat\bZ_t^{(T)}\hat\bZ_t^{(T), \top}\xrightarrow{p} \bar \bI_a,
$$
and in order to show this, it suffices to prove 
\begin{align}
    \frac1T\sum_{t=1}^T \hat\bZ_t^{(T)}\hat\bZ_t^{(T), \top}  - \frac1T\sum_{t=1}^T \bZ_t\bZ_t^{\top} &= o_p(1),\label{eq::var-estimator-component-2-1-general}\\
    \frac1T\sum_{t=1}^T \Big(\bZ_t\bZ_t^{\top} - \EE[\bZ_t\bZ_t^{\top}|\cH_{t-1}]\Big) &= o_p(1),\label{eq::var-estimator-component-2-2-general} \\
    \frac1T\sum_{t=1}^T \EE[\bZ_t\bZ_t^{\top}|\cH_{t-1}]&\xrightarrow{p}\bar\bI_a.\label{eq::var-estimator-component-2-3-general}
\end{align}
Below we check these facts one by one.

\textbf{Check (\ref{eq::var-estimator-component-2-1-general}):} For convenience, we define $\bg_t^* = \bg(\bX_t, Y_t(a);\btheta_a^*)$, and $\hat\bg_t^{(T)} = \bg(\bX_t, Y_t(a);\hat\btheta_a^{(T)})$. Then 
\begin{align}
&\left\|\frac1T\sum_{t=1}^T\big(\hat\bZ_t^{(T)}\hat\bZ_t^{(T), \top} - \bZ_t\bZ_t^{\top}\big)\right\|_2\nonumber\\
=& \left\|\frac1T\sum_{t=1}^T\frac{1}{\pi_t(A_t)^2}1_{\{A_t = a\}}\big(\hat\bg_t^{(T)}\hat\bg_t^{(T), \top} - \bg_t^*\bg_t^{*, \top}\big)\right\|_2\nonumber\\
\leq &\frac1T\cdot\frac1{\pi_{\min}^2}\sum_{t=1}^T\|\hat\bg_t^{(T)}\hat\bg_t^{(T), \top} - \bg_t^*\bg_t^{*, \top}\|_2\nonumber\\
\leq& \frac1{\pi_{\min}^2T}\sum_{t=1}^T \big(\|\hat\bg_t^{(T)}\hat\bg_t^{(T), \top} - \hat\bg_t^{(T)}\bg_t^{*, \top}\|_2 + \|\hat\bg_t^{(T)}\bg_t^{*, \top} - \bg_t^*\bg_t^{*, \top}\|_2\big)\nonumber\\
\leq& \frac1{\pi_{\min}^2T}\sum_{t=1}^T \|\hat\bg_t^{(T)} - \bg_t^*\|_2\cdot \big(\|\hat\bg_t^{(T)}\|_2 + \|\bg_t^*\|_2\big)\nonumber\\
\leq& \frac1{\pi_{\min}^2T}\sum_{t=1}^T \|\hat\bg_t^{(T)} - \bg_t^*\|_2\cdot \big(\|\hat\bg_t^{(T)} - \bg_t^*\|_2 + 2\|\bg_t^*\|_2\big)\nonumber\\
\leq& \frac1{\pi_{\min}^2T}\sum_{t=1}^T \phi(\bX_t, Y_t(a))\|\hat\btheta_a^{(T)} - \btheta_a^*\|_2\Big[\phi(\bX_t, Y_t(a))\|\hat\btheta_a^{(T)} - \btheta_a^*\|_2 + 2\|\bg_t^*\|_2\Big]\nonumber\\
= &\frac1{\pi_{\min}^2T}\left[\sum_{t=1}^T\phi(\bX_t, Y_t(a))^2\right]\cdot\|\hat\btheta_a^{(T)} - \btheta_a^*\|_2^2 +  \nonumber\\
&\quad\quad \frac2{\pi_{\min}^2T}\left[\sum_{t=1}^T\phi(\bX_t, Y_t(a))\|\bg(\bX_t, Y_t(a);\btheta_a^*)\|_2\right]\cdot\|\hat\btheta_a^{(T)} - \btheta_a^*\|_2\nonumber\\
\leq &\frac1{\pi_{\min}^2T}\left[\sum_{t=1}^T\phi(\bX_t, Y_t(a))^2\right]\cdot\|\hat\btheta_a^{(T)} - \btheta_a^*\|_2^2 +  \nonumber\\
&\quad \frac2{\pi_{\min}^2}\sqrt{\frac1T\sum_{t=1}^T\phi(\bX_t, Y_t(a))^2\cdot \frac1T\sum_{t=1}^T\|\bg(\bX_t, Y_t(a);\btheta_a^*)\|_2^2}\cdot \|\hat\btheta_a^{(T)} - \btheta_a^*\|_2\nonumber\\
 =& o_p(1).\label{eq::var-estimator-component-2-1-1-general}
\end{align}
Here, the fifth inequality is due to Assumption \ref{aspt:smoothness-general}. The last inequality uses Cauchy–Schwarz inequality. The final equality is because of $\frac1T\sum_{t=1}^T\phi(\bX_t, Y_t(a))^2 = \cO_p(1)$ and \\
$\frac1T\sum_{t=1}^T\|\bg(\bX_t, Y_t(a);\btheta_a^*)\|_2^2= \cO_p(1)$ (from the law of large numbers), and $\|\hat\btheta_a^{(T)} - \btheta_a^*\|_2 = o_p(1)$ (from Lemma \ref{lem::consistency-general}).

\textbf{Check (\ref{eq::var-estimator-component-2-2-general}):} $\forall \bc\in\RR^d$, $\forall \delta>0$,
\begin{align}
& \PP\left(\bigg|\frac1T\sum_{t=1}^T[\bc^\top\bZ_t\bZ_t^\top\bc - \EE[\bc^\top\bZ_t\bZ_t^\top\bc|\cH_{t-1}]]\bigg|>\delta\right)\nonumber\\
\leq & \frac1{\delta^2T^2}\EE\left(\sum_{t=1}^T[\bc^\top\bZ_t\bZ_t^\top\bc - \EE[\bc^\top\bZ_t\bZ_t^\top\bc|\cH_{t-1}]]\right)^2\nonumber\\
 = & \frac1{\delta^2T^2}\sum_{t=1}^T\EE\left(\bc^\top\bZ_t\bZ_t^\top\bc - \EE[\bc^\top\bZ_t\bZ_t^\top\bc|\cH_{t-1}]\right)^2\nonumber\\
 \leq & \frac1{\delta^2T^2}\sum_{t=1}^T\EE\left(\bc^\top\bZ_t\bZ_t^\top\bc\right)^2\nonumber\\
 \leq & \frac1{\delta^2T^2}\sum_{t=1}^T\frac{1}{\pi_{\min}^2}\EE\left(\bc^\top\bg(\bX_t, Y_t(a);\btheta_a^*) \right)^4\rightarrow 0.\nonumber
\end{align}
Here the first inequality is because of Chebyshev's Inequality. The second equality is due to the following fact: Let $v_{t}' = \bc^\top\bZ_t\bZ_t^\top\bc$. Then for $t_1<t_2$,
\begin{align*}
   & \EE (v_{t_1}' - \EE[v_{t_1}'|\cH_{t_1-1}])(v_{t_2}' - \EE[v_{t_2}'|\cH_{t_2-1}]) \\
   = & \EE\big[ \EE[(v_{t_1}' - \EE[v_{t_1}'|\cH_{t_1-1}])(v_{t_2}' - \EE[v_{t_2}'|\cH_{t_2-1}])\big|\cH_{t_1-1}]\big]\\
   = & \EE\big[(v_{t_1}' - \EE[v_{t_1}'|\cH_{t_1-1}])\cdot \EE[v_{t_2}' - \EE[v_{t_2}'|\cH_{t_2-1}]\big|\cH_{t_1-1}]\big]\\
   = & \EE\big[ (v_{t_1}' - \EE[v_{t_1}'|\cH_{t_1-1}])\cdot 0\big] = 0.
\end{align*}
The last convergence uses Assumption \ref{aspt:boundedness-general}. 

\textbf{Check (\ref{eq::var-estimator-component-2-3-general}):} In fact, (\ref{eq::var-estimator-component-2-3-general}) is a direct consequence of (\ref{eq::cond-var-2-general}), (\ref{eq::l1-convergence-1-general}), and (\ref{eq::l1-convergence-2-general}).

    \subsection{Proof of Theorem \ref{lem::statistics-converge-implies-policy-converge}}\label{apdx::proof-lem::statistics-converge-implies-policy-converge}
    
    Fix any $a\in\cA$. Denote $f_{\bx}(\bbeta):=\pi(a|\bx, \bbeta)$. Then $\forall \epsilon, \delta>0$, 
\begin{align}
    &\PP(|\pi_t(a|\bX_t, \cH_{t-1}) - \bar\pi(a|\bX_t)|>\epsilon)\\
& = \PP(|\pi_t(a|\bX_t, \bbeta_{t-1}) - \pi(a|\bX_t, \bbeta^*)|>\epsilon)\nonumber\\
& = \PP(|f_{\bX_t}(\hat\bbeta_{t-1}) - f_{\bX_t}(\bbeta^*)|>\epsilon)\nonumber\\
& \leq \PP(\|\hat\bbeta_{t-1}-\bbeta^*\|_2\geq \delta) + \PP(\bbeta^*\in \cD(f_{\bX_t})) + \PP(\bbeta^*\in\cB_{\epsilon, \delta}(\bX_t)),\label{eq::policy-converge-decomposition}
\end{align}
Here $\forall \bx$, we define 
\begin{align*}
\cD(f_{\bx})&:= \{\bbeta: f_{\bx}\text{ is discontinuous at }\bbeta\},\\
\cB_{\epsilon, \delta}(\bx)& := \{\bbeta\notin \cD(f_{\bx}): \exists \bbeta',\text{ s.t.} \|\bbeta' - \bbeta\|_2<\delta, |f_{\bx}(\bbeta) - f_{\bx}(\bbeta')|>\epsilon\}.
\end{align*}
Notice that:
\begin{itemize}
    \item From condition (i), for the first term on the RHS of (\ref{eq::policy-converge-decomposition}), we have $\limsup_{t\rightarrow \infty} \PP(\|\hat\bbeta_{t-1}-\bbeta^*\|_2\geq \delta) = 0$,
    \item From condition (ii), for the second term on the RHS of (\ref{eq::policy-converge-decomposition}), we have $\PP(\bbeta^*\in \cD(f_{\bX_t})) = 0$ $\forall t$,
    \item Due to $\bX_t$ are i.i.d., the last two terms on the RHS of (\ref{eq::policy-converge-decomposition}) does not depend on time $t$.
\end{itemize}
Plugging in the above into (\ref{eq::policy-converge-decomposition}), we obtain that $\forall \epsilon, \delta>0$,
\begin{equation}\label{eq::policy-converge-decomposition-limit}
    \limsup_{t\rightarrow \infty}\PP(|\pi_t(a|\bX_t, \cH_{t-1}) - \bar\pi(a|\bX_t)|>\epsilon)\leq \PP(\bbeta^*\in\cB_{\epsilon, \delta}(\bX_t)).
\end{equation}
Finally, by definition of continuity, we have that $\forall \bx$, 
$$
\lim_{\delta\rightarrow 0} 1_{\{\bbeta^*\in\cB_{\epsilon, \delta}(\bx)\}} = 0.
$$
From the dominated convergence theorem, 
$$
\lim_{\delta\rightarrow 0}\PP(\bbeta^*\in \cB_{\epsilon, \delta}(\bX_t)) = \lim_{\delta\rightarrow 0}\EE1_{\{\bbeta^*\in \cB_{\epsilon, \delta}(\bX_t)\}} = 0.
$$
We obtain the desired result by plugging the above into (\ref{eq::policy-converge-decomposition-limit}).

\subsection{Proof of Proposition \ref{lem::convergence-mab}}\label{apdx::proof-lem::convergence-mab}

We first present the following lemma. Its proof is in Appendix \ref{apdx::proof-lem::policy-with-no-contexts}.

\begin{lemma}\label{lem::policy-with-no-contexts}
Consider a setting where the behavior policy does not use the contexts, i.e.,
$$
\pi = \{
\pi_t(\cdot|\cH_{t-1}^0)\}_{t\geq 1},
$$ 
where $\cH_t^0:= \sigma(\{A_\tau, Y_\tau\}_{\tau = 1}^t)$ for $t\geq 1$. Assume that the potential outcomes have uniformly bounded variance, i.e., $\sup_{a\in\cA}\mathrm{Var}(Y(a))\leq \sigma_Y^2$ for some constant $\sigma_Y^2$. Suppose Assumption \ref{aspt:min-sampling-prob} is satisfied for all arm $a\in\cA$, and we assume unconfoundedness: $\forall t\in[T]$,
\begin{equation}\label{eq::unconfoundedness-no-context}
A_t\perp \{Y_t(a)\}_{a\in\cA}|\cH_{t-1}^0.
\end{equation}
Then as $t\rightarrow \infty$,
\begin{itemize}
    \item[(i)] For any deterministic sequence $\{C_t\}_{t\geq 1}$ such that $\lim_{t\rightarrow\infty}\frac{C_t}{t} = 0$,
    $\frac{C_t}{N_{a, t}}\xrightarrow{p} 0$ for any $a\in\cA$;
    \item[(ii)] $\hat \mu_{a, t}\xrightarrow{p} \mu_a^*$  for any $a\in\cA$.
\end{itemize}
\end{lemma}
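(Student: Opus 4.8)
The plan is to exploit the martingale structure of the arm counts $N_{a,t}$ and of the running reward sums, using only second-moment (Chebyshev) bounds, together with the facts that each $\pi_\tau(a)$ is $\cH_{\tau-1}^0$-measurable and $\EE[1_{\{A_\tau=a\}}\mid\cH_{\tau-1}^0]=\pi_\tau(a)$. First I would establish a linear-in-$t$ lower bound on $N_{a,t}$ that holds with probability tending to one. The process $M_t:=\sum_{\tau=1}^t\big(1_{\{A_\tau=a\}}-\pi_\tau(a)\big)$ is a martingale with increments bounded by $1$, so $\Var(M_t)=\sum_{\tau=1}^t\EE\big[(1_{\{A_\tau=a\}}-\pi_\tau(a))^2\big]\le t$; since Assumption~\ref{aspt:min-sampling-prob} gives $\sum_{\tau=1}^t\pi_\tau(a)\ge\pi_{\min}t$ and hence $N_{a,t}\ge\pi_{\min}t+M_t$, Chebyshev yields $\PP\big(N_{a,t}<\tfrac12\pi_{\min}t\big)\le\PP\big(M_t<-\tfrac12\pi_{\min}t\big)\to 0$. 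Part (i) is then immediate: for any deterministic $\{C_t\}$ with $C_t/t\to0$ and any $\epsilon>0$,
\[
\PP\!\Big(\tfrac{C_t}{N_{a,t}}>\epsilon\Big)\le\PP\big(N_{a,t}<\tfrac12\pi_{\min}t\big)+\PP\!\Big(\tfrac{2C_t}{\pi_{\min}t}>\epsilon\Big),
\]
and both terms vanish as $t\to\infty$.

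For part (ii), I would write $\hat\mu_{a,t}-\mu_a^*=\tfrac1{N_{a,t}}\sum_{\tau=1}^t 1_{\{A_\tau=a\}}(Y_\tau-\mu_a^*)$, use $1_{\{A_\tau=a\}}Y_\tau=1_{\{A_\tau=a\}}Y_\tau(a)$, and set $D_\tau:=1_{\{A_\tau=a\}}(Y_\tau(a)-\mu_a^*)$. The crucial step is the conditional-independence bookkeeping: since $\{Y_\tau(a):a\in\cA\}$ are i.i.d.\ over $\tau$, $Y_\tau(a)$ is independent of $\cH_{\tau-1}^0$, and combined with the unconfoundedness condition (\ref{eq::unconfoundedness-no-context}) this makes $1_{\{A_\tau=a\}}$ and $Y_\tau(a)$ conditionally independent given $\cH_{\tau-1}^0$, with $Y_\tau(a)$ carrying its unconditional law; in particular $\mu_a^*=\EE[Y_t\mid A_t=a]=\EE[Y(a)]$. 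Hence $\EE[D_\tau\mid\cH_{\tau-1}^0]=\pi_\tau(a)\,\EE[Y_\tau(a)-\mu_a^*]=0$ and $\EE[D_\tau^2\mid\cH_{\tau-1}^0]=\pi_\tau(a)\,\Var(Y(a))\le\sigma_Y^2$, so $\{D_\tau\}$ is a martingale difference sequence with $\Var\big(\sum_{\tau=1}^t D_\tau\big)\le\sigma_Y^2 t$, and Chebyshev gives $\tfrac1t\sum_{\tau=1}^t D_\tau\xrightarrow{p}0$. Since $|\hat\mu_{a,t}-\mu_a^*|=\big|\tfrac1t\sum_{\tau=1}^t D_\tau\big|\big/(N_{a,t}/t)$, a union bound over the events $\{N_{a,t}<\tfrac12\pi_{\min}t\}$ and $\{\,|\tfrac1t\sum_{\tau=1}^t D_\tau|>\tfrac12\pi_{\min}\epsilon\,\}$, using the bounds above, gives $\PP(|\hat\mu_{a,t}-\mu_a^*|>\epsilon)\to0$, which is (ii).

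I do not expect a genuine obstacle; this is essentially a martingale law of large numbers. The one step that requires care is the conditional-independence bookkeeping for $D_\tau$ described above: one must combine the i.i.d.-over-time property of the potential outcomes (to decouple $Y_\tau(a)$ from the past $\cH_{\tau-1}^0$) with the unconfoundedness assumption (to decouple $A_\tau$ from $Y_\tau(a)$ given $\cH_{\tau-1}^0$) in order to factor $\EE[D_\tau\mid\cH_{\tau-1}^0]$ and $\EE[D_\tau^2\mid\cH_{\tau-1}^0]$ correctly and to identify $\mu_a^*$ with $\EE[Y(a)]$. A secondary point worth flagging is that $\{C_t\}$ is only assumed $o(t)$, so the high-probability lower bound on $N_{a,t}$ in the first step must be genuinely linear in $t$ — which is exactly what the martingale concentration delivers.
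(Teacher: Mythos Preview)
Your proposal is correct and follows essentially the same route as the paper's proof: both establish a linear-in-$t$ lower bound on $N_{a,t}$ via Chebyshev applied to the martingale $\sum_{\tau\le t}(1_{\{A_\tau=a\}}-\pi_\tau(a))$, then handle part (ii) by applying Chebyshev to the martingale-difference sequence $1_{\{A_\tau=a\}}(Y_\tau(a)-\mu_a^*)$ and combining with the arm-count lower bound. The only cosmetic difference is that the paper conditions on $(\cH_{\tau-1}^0,A_\tau)$ rather than $\cH_{\tau-1}^0$ alone when verifying the martingale property of the reward sums, and bounds $N_{a,t}|\hat\mu_{a,t}-\mu_a^*|$ directly rather than splitting into numerator and denominator; neither change affects the argument.
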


Below we analyze the three common multi-arm bandits algorithms that ensure a minimum sampling probability.

\textbf{The $\epsilon$-greedy algorithm.} Consider the $\epsilon$-greedy policy defined by (\ref{eq::policy-eps-greedy-mab}). Define 
\begin{align*}
\pi^{(1)}_t(a|\{\hat\mu_{i, t-1}\}_{i\in\cA}) := \pi_t^{\epsilon\text{-greedy}}(a|\mathcal H_{t-1})=
\begin{cases}
    1-\frac{K-1}{K}\epsilon,\quad &\text{if }i = \argmax_i \hat\mu_{i, t-1},\\
    \frac1K\epsilon, \quad &\text{otherwise.}
\end{cases}
\end{align*}
Then $\pi^{(1)}$ satisfies Assumption \ref{aspt:min-sampling-prob} with $\pi_{\min} = \epsilon/K$. Under the setting of Proposition \ref{lem::convergence-mab}, the bounded variance condition and the unconfoundedness condition in Lemma \ref{lem::policy-with-no-contexts} is also satisfied. Thus, according to part (i) of Lemma \ref{lem::policy-with-no-contexts}, condition (i) of Theorem \ref{lem::statistics-converge-implies-policy-converge} is satisfied with $\bbeta_t = (\hat\mu_{a, t})_{a\in\cA}$, $\bbeta^* = (\mu^*_{a})_{a\in\cA}$. In addition, condition (ii) of Theorem \ref{lem::statistics-converge-implies-policy-converge} holds for $\pi^{(1)}$ as long as $\Delta>0$. From Theorem \ref{lem::statistics-converge-implies-policy-converge}, we deduce that the policy convergence condition as in Definition \ref{aspt:policy_convergence} holds.

\textbf{The UCB algorithm.} Consider the clipped UCB algorithm defined by (\ref{eq::policy-ucb-mab}). Let 
\begin{align*}
\pi^{(2)}(a|\{\hat\mu_{i, t-1}, C_t/N_{i, t-1}\}_{i\in\cA}) &:= \pi_t^{\text{UCB}}(a|\mathcal H_{t-1})\\
&=
\begin{cases}
    1-(K-1)\pi_{\min},\quad &\text{if }i = \argmax_i \left\{\hat\mu_{i, t-1} + \sqrt{\frac{C_t}{N_{i, t-1}}}\right\},\\
    \pi_{\min}, \quad &\text{otherwise.}
\end{cases}
\end{align*}
Then $\pi^{(2)}$ satisfies Assumption \ref{aspt:min-sampling-prob}, and similar to the previous case, the bounded variance condition and the unconfoundedness condition in Lemma \ref{lem::policy-with-no-contexts} also hold. According to part (i) and (ii) of Lemma \ref{lem::policy-with-no-contexts}, condition (i) of Theorem \ref{lem::statistics-converge-implies-policy-converge} is satisfied with $\bbeta_t = (\hat\mu_{a, t}, C_{t+1}/N_{a, t})_{a\in\cA}$, $\bbeta^* = (\mu^*_{a}, 0)_{a\in\cA}$. Also, condition (ii) of Theorem \ref{lem::statistics-converge-implies-policy-converge} holds for $\pi^{(2)}$ as long as $\Delta>0$. From Theorem \ref{lem::statistics-converge-implies-policy-converge}, we deduce that the policy convergence condition as in Definition \ref{aspt:policy_convergence} holds.

\textbf{The TS algorithm.} Define 
\begin{align*}
\bar \pi^{(3)}(a|\bmu_{t-1}^{\mathrm{post}}, \bSigma_{t-1}^{\mathrm{post}}) =\bar\pi_t^{\text{TS}}(a|\mathcal H_{t-1}),
\end{align*}
where the dependence of $\bar \pi^{(3)}$ on $\bmu_{t-1}^{\mathrm{post}}$ and $\bSigma_{t-1}^{\mathrm{post}}$ is shown in Section \ref{sec::MAB-policy-convergence}. In addition, define 
$$
\pi^{(3)}(a|\bmu_{t-1}^{\mathrm{post}}, \bSigma_{t-1}^{\mathrm{post}})  = \left[\mathrm{Clip} \big(\bar \pi^{(3)}(i|\bmu_{t-1}^{\mathrm{post}}, \bSigma_{t-1}^{\mathrm{post}})_{i\in\cA}\big)\right]^{(a)},
$$
Here for a vector $\bv\in\RR^K$, $[\bv]^{(a)}$ denotes its $a$-th entry. From (\ref{eq::policy-ts-mab}), we deduce that 
$$
\pi^{(3)}(a|\bmu_{t-1}^{\mathrm{post}}, \bSigma_{t-1}^{\mathrm{post}}) = \pi_t^{\mathrm{TS}}(a|\cH_{t-1}).
$$
We now show that conditions (i) and (ii) of Theorem \ref{lem::statistics-converge-implies-policy-converge} holds with $\pi^{(3)}$ as the behavior policy and with statistics $\bbeta_t = (\bmu_{t}^{\mathrm{post}}, \bSigma_{t}^{\mathrm{post}})$. First, $\pi^{(3)}$ satisfies Assumption \ref{aspt:min-sampling-prob}, the bounded variance condition and the unconfoundedness condition in Lemma \ref{lem::policy-with-no-contexts}. From Lemma \ref{lem::policy-with-no-contexts}, we obtain that $\forall a\in\cA$, $1/N_{a, t}\xrightarrow{p} 0$ and $\hat\mu_{a, t}\xrightarrow{p} \mu_a^*$. Combining these results together with (\ref{eq::ts-mab-posterior-mean-var}) and the continuous mapping theorem, we deduce that $\forall a\in\cA$, 
$$
\mu_{a, t}^{\mathrm{post}}\xrightarrow{p} \mu_a^*,\quad \sigma_{a, t}^{\mathrm{post}}\xrightarrow{p} 0.
$$
Note that entrywise convergence in probability implies joint convergence in probability in a finite dimensional Euclidean space, and thus from (\ref{eq::ts-mab-posterior-mean-var-joint}) we have 
$$
\bmu_{t}^{\mathrm{post}}\xrightarrow{p} \bmu^*,\quad \bSigma_{ t}^{\mathrm{post}}\xrightarrow{p} \mathbf{0}_{K\times K},
$$
where $\bmu^* = (\mu_a^*)_{a\in\cA}$. This implies condition (i) of Theorem \ref{lem::statistics-converge-implies-policy-converge} holds with the statistics $\bbeta_t$ and limit $\bbeta^* = (\bmu^*, \mathbf{0}_{K\times K})$.

In addition, it is not difficult to verify that $\bar\pi^{(3)}$ is continuous at $\bbeta^*$ as long as the suboptimality gap $\Delta>0$. Also,  $\mathrm{Clip}$ is a continuous mapping (see Lemma \ref{lem::clipping-Lipschitz}). Therefore, the composite mapping $\pi^{(3)}$ is continuous at $\bbeta^*$, and condition (ii) of Theorem \ref{lem::statistics-converge-implies-policy-converge} holds.

In summary, we have verified both conditions (i) and (ii) of Theorem \ref{lem::statistics-converge-implies-policy-converge}. We deduce from the theorem that the policy convergence condition in Definition \ref{aspt:policy_convergence} holds.

\subsection{Proof of Proposition \ref{thm::convergence-of-IPWZ-policy}}\label{apdx::proof-thm::convergence-of-IPWZ-policy}
Let $\hat\bbeta_t = (\hat\btheta^{(t)}, \bgamma_t)$, where $\hat\btheta^{(t)} = (\hat\btheta_a^{(t)})_{a\in\cA}$. Using the same arguments in the proof of Lemma \ref{lem::consistency-general}, we deduce that $\hat\bbeta_{t}\xrightarrow{p} \bbeta^*:= (\btheta^*, \bgamma^*)$ (note that the proof of Lemma \ref{lem::consistency-general} does not require the behavior policy to converge), which implies that condition (i) of Theorem \ref{lem::statistics-converge-implies-policy-converge} holds in this setting. Under an additional condition of policy continuity, Theorem \ref{lem::statistics-converge-implies-policy-converge} can then be invoked to establish policy convergence.

\subsection{Proof of Proposition \ref{theorem:convergence-of-Ridge}}\label{apdx::proof-thm::convergence-of-Ridge}

We adopt the convergence analysis of the recursive least square fitting based on the stochastic approximation theory. We first rewrite the ridge regression estimator in (\ref{eq:ridge-estimator}) in a recursive form. Define $\bX_{t, a} = \bX_t 1_{\{A_t = a\}}$ and $Y_{t, a} = Y_t 1_{\{A_t = a\}}$. Let the sample covariance matrix and the sample cross product matrix be 
\begin{equation}
    \bPhi_{t, a} = \frac{1}{t-1} \left(\lambda I + \sum_{i=1}^{t-1} \bX_{i, a} \bX_{i, a}^\top\right), \quad \text{and} \quad \bvarphi_{t, a} = \frac{1}{t-1} \left(\sum_{i=1}^{t-1} \bX_{i, a} Y_{i, a}\right). \label{eq:sample-cov}
\end{equation}


Based on the definition of ridge regression estimator, we directly have that 
\begin{align}
    \hat \bbeta_{t, a}^{\Ridge} 
    &= \left(\lambda I + \sum_{i=1}^{t-1} \bX_{i, a} \bX_{i, a}^\top\right)^{-1} \left(\sum_{i=1}^{t-1} \bX_{i, a} Y_i\right) = \bPhi_{t, a}^{-1} \bvarphi_{t, a}.
\end{align}
Thus, it is sufficient to show that both $\bPhi_{t, a}$ and $\bvarphi_{t, a}$ converge in probability as $t\to\infty$. We show this by writing the recursive form for $\bPhi_{t, a}$ and $\bvarphi_{t, a}$.
Denote by $\Vec(M)$ the vectorized form of a matrix $M$. We have that 
\begin{align}
    \Vec(\bPhi_{t+1,a}) &= \Vec(\bPhi_{t,a}) + \frac{1}{t} \Vec\left(\bX_{t,a}\bX_{t,a}^\top - \bPhi_{t,a}\right)\\
    \bvarphi_{t+1,a} &= \bvarphi_{t,a} + \frac{1}{t} (\bX_{t,a} Y_{t,a} - \bvarphi_{t,a}).
\end{align}

We may write the recursive form for the joint vector of $\Vec(\Phi_{t,a})$ and $\varphi_{t,a}$ as 
\begin{align}
    \begin{pmatrix}
        \bvarphi_{t+1,a} \\ \Vec(\bPhi_{t+1,a})
    \end{pmatrix} = \begin{pmatrix}
        \bvarphi_{t,a} \\ \Vec(\bPhi_{t,a})
    \end{pmatrix} + &\frac{1}{t} 
    \begin{pmatrix}
        \bX_{t, a} Y_{t,a} - \bvarphi_{t,a}\\
        \Vec\left(\bX_{t, a}\bX_{t, a}^\top - \bPhi_{t,a}\right)
    \end{pmatrix}  
\end{align}

We denote by $\bm{\psi}_{t, a} = \begin{pmatrix}
    \bvarphi_{t, a} \\ \Vec(\bPhi_{t, a})
\end{pmatrix}$ and $\bm{\psi}_{t} = \begin{pmatrix}
    \bm{\psi}_{t, 1}^\top, \bm{\psi}_{t, 2}^\top, \ldots, \bm{\psi}_{t, |\cA|}^\top
\end{pmatrix}^\top$. Concatenating the above recursive form for all $a \in \cA$ into a single expression, we have
\begin{align}
    \bm{\psi}_{t+1} = \bm{\psi}_{t} + w_t \left(\bm{g}(\bm{\psi}_{t}) - \bm{\psi}_{t} + \bm{M}_t \right),
\end{align}
where for any $\bm{\psi} = (\bvarphi_{1}^{\top}, \Vec(\bPhi_{1})^{\top}, \ldots, \bvarphi_{|\cA|}^{\top}, \Vec(\bPhi_{|\cA|})^{\top})^\top \in \mathbb{R}^{d+d^2|\cA|}$, we have
\begin{align}
    w_t &= \frac{1}{t}, \\
    \bm{g}\left(\bm{\psi}\right) &= \begin{pmatrix}
        g_1(\bm{\psi})^\top, g_2(\bm{\psi})^\top, \ldots, g_{|\cA|}(\bm{\psi})^\top
    \end{pmatrix}^\top, \\
    \bg_a(\bm{\psi}) &= \begin{pmatrix}
        \EE\left[ \bX_{t, a} Y_{t,a} \mid \bvarphi_{t,a} = \bvarphi_{a}, \Vec(\bPhi_{t,a}) = \Vec(\bPhi_{a}), \forall a \in \cA\right] \\ 
        \EE\left[ \bX_{t, a} \bX_{t, a}^\top \mid \bvarphi_{t,a} = \bvarphi_{a}, \Vec(\bPhi_{t,a}) = \Vec(\bPhi_{a}), \forall a \in \cA\right]
    \end{pmatrix}, \\
    \bm{M}_t &= \begin{pmatrix}
        \bM_{t, 1}^\top, \bM_{t, 2}^\top, \ldots, \bM_{t, |\cA|}^\top
    \end{pmatrix}^\top, \\
    \bM_{t, a} &= 
    \begin{pmatrix}
        \bX_{t, a} Y_{t,a} \\ 
        \Vec(\bX_{t, a}\bX_{t, a}^\top)
    \end{pmatrix} - \bg_a(\bm{\psi}_{t})
\end{align}

Typical convergence analysis requires the following three conditions:
\begin{itemize}
    \item (C.1) $\sup_t \EE[\|\bm{\psi}_t\|^2_2] < \infty$
    \item (C.2) Contraction mapping condition: $\|\bg(\bm{\psi}_1) - \bg(\bm{\psi}_2) \| \leq \kappa \|\bm{\psi}_1 - \bm{\psi}_2 \|$ for some $\kappa < 1$
    \item (C.3) $\{\bm{M}_t\}$ is a martingale difference sequence with respect to the increasing family of the $\sigma$-field
    $$
        \mathcal{F}_t = \sigma(A_1, \bX_1, Y_1, \ldots, A_{t}, \bX_t, Y_{t}).
    $$
    That is $\EE[\bm{M}_t \mid \mathcal{F}_{t-1}] = \mathbf{0}$. Furthermore, $\{\bm{M}_t\}$ are square integrable, i.e., $\EE[\|\bm{M}_t\|^2_2 \mid \mathcal{F}_{t-1}] < K (1+ \|\bm{\psi}_t\|^2_2)$ a.s., $t \geq 1$ for some constant $K$
\end{itemize}

For sufficiently large $t$, we show that the above three conditions hold for all $\tau = t, t + 1, \ldots$ under some good event $\cE_{t}$ with a high probability $1-2/(t-1)$. The probability is chosen so when $t$ goes infinity, the failure probability of $\cE_{t}$ goes to 0.


\subsubsection{Good events}

The convergence analysis is based on the following good event $\mathcal{E}_t$ being satisfied with a high probability $1-2/(t-1)$. We define $\mathcal{E}_t = \mathcal{E}_{1, t} \cap \mathcal{E}_{2, t}$ where
\begin{align}
    \mathcal{E}_{1, t} = &\left\{\|\bPhi_{\tau, a}\|_2 \geq \pi_{\min} \sigma_{\min}^2 - M \sqrt{\frac{16\log(\tau Ad)}{\tau-1}}, \quad \text{ for all } \tau \geq t \text{ and } a \in \cA \right\} \label{eq:good-event-1}
\end{align}

\begin{align}
    \mathcal{E}_{2, t} = &\left\{\|\bm{\varphi}_{\tau, a}\|_2 \leq M R_{f} + 4M\sigma_{\eta}\sqrt{dA}, \quad \text{ for all } \tau \geq t \text{ and } a \in \cA\right\} \label{eq:good-event-2}
\end{align}

Here we let $1/0 = \infty$.

\begin{lemma}
    \label{lem::good-event}
    For any $t = 2, \ldots$, we have
    \begin{align}
        \PP\left(\{\mathcal{E}_{t} \text{ holds}\}\right) \geq 1 - 2/(t-1).
    \end{align}
\end{lemma}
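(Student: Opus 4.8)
The plan is to prove the two inclusions $\cE_{1,t}$ and $\cE_{2,t}$ separately, each on an event of probability at least $1-1/(t-1)$, and then conclude by a union bound, $\PP(\cE_t^c)\le\PP(\cE_{1,t}^c)+\PP(\cE_{2,t}^c)\le 2/(t-1)$. Both halves follow the same template: express the matrix/vector of interest as a conditional mean plus a martingale, control the martingale part \emph{uniformly over all $\tau\ge t$} through a concentration inequality together with a union or maximal argument, and take a further union bound over the $a\in\cA$ (writing $A=|\cA|$).

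Starting with $\cE_{1,t}$, I would first note that $\bPhi_{\tau,a}\succeq\tfrac1{\tau-1}\sum_{i=1}^{\tau-1}\bX_{i,a}\bX_{i,a}^\top$, so it suffices to lower bound the smallest eigenvalue of the empirical average on the right. Writing $\bX_{i,a}\bX_{i,a}^\top=\bB_{i,a}+\bD_{i,a}$ with $\bB_{i,a}:=\EE[\bX_{i,a}\bX_{i,a}^\top\mid\cF_{i-1}]$, the partial sums $\{\sum_{i\le n}\bD_{i,a}\}_n$ form a matrix martingale. The minimum sampling probability $\pi_{\min}$ of the clipped Boltzmann policy together with $\EE[\bX_i\bX_i^\top]\succeq\sigma_{\min}^2\bI$ (Assumption~\ref{aspt:Ridge_Conv}(ii)) gives $\bB_{i,a}=\EE_{\bX_i}[\pi_i(a\mid\bX_i)\,\bX_i\bX_i^\top]\succeq\pi_{\min}\sigma_{\min}^2\bI$, hence $\lambda_{\min}\big(\tfrac1{\tau-1}\sum_i\bB_{i,a}\big)\ge\pi_{\min}\sigma_{\min}^2$, while $\|\bD_{i,a}\|_2\le 2M^2$ by Assumption~\ref{aspt:Ridge_Conv}(i). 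I would then apply a matrix Azuma--Hoeffding (equivalently, bounded-difference matrix Bernstein) inequality to $\|\tfrac1{\tau-1}\sum_{i=1}^{\tau-1}\bD_{i,a}\|_2$ at each fixed $\tau$ and $a$; taking the deviation level to be the one displayed in (\ref{eq:good-event-1}) makes the tail bound summable over $\tau\ge t$ and over $a\in\cA$ with total mass at most $1/(t-1)$ — this is precisely why $\log(\tau A d)$ appears, with $\log d$ from the matrix dimension, $\log A$ from the union over arms, and $\log\tau$ from the union over times. On the complementary event, $\|\bPhi_{\tau,a}\|_2\ge\lambda_{\min}(\bPhi_{\tau,a})\ge\pi_{\min}\sigma_{\min}^2-\|\tfrac1{\tau-1}\sum_i\bD_{i,a}\|_2$ for all $\tau\ge t$ and $a\in\cA$, which is exactly $\cE_{1,t}$.

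For $\cE_{2,t}$ I would use $Y_{i,a}=y(\bX_i,a)1_{\{A_i=a\}}+\eta_i 1_{\{A_i=a\}}$ to split $\bvarphi_{\tau,a}=\tfrac1{\tau-1}\sum_{i=1}^{\tau-1}\bX_i\,y(\bX_i,a)1_{\{A_i=a\}}+\bN_{\tau,a}$, where $\bN_{\tau,a}:=\tfrac1{\tau-1}\sum_{i=1}^{\tau-1}\bX_i\eta_i 1_{\{A_i=a\}}$. The first term has Euclidean norm at most $M R_y$ for \emph{every} realization, by Assumption~\ref{aspt:Ridge_Conv}. For the noise term, $\{\sum_{i\le n}\bX_i\eta_i 1_{\{A_i=a\}}\}_n$ is a martingale — one checks $\EE[\bX_i\eta_i 1_{\{A_i=a\}}\mid\cF_{i-1}]=0$ using that $\eta_i$ is mean zero and independent of $\cF_{i-1}$, $\bX_i$ and $A_i$ (Assumption~\ref{aspt:Ridge_Conv}(iii)) — with per-step second moment at most $M^2\sigma_\eta^2$. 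Since these increments are only square-integrable and not almost surely bounded, a one-shot Chebyshev bound at each $\tau$ fails to be summable; instead I would invoke Doob's $L^2$ maximal inequality on dyadic time blocks $\tau\in[2^k,2^{k+1})$, which controls $\PP\big(\sup_{\tau\in[2^k,2^{k+1})}\|\bN_{\tau,a}\|_2\ge c\big)$ by a quantity of order $M^2\sigma_\eta^2/(2^k c^2)$, then sum over blocks with $2^k\gtrsim t$ and over $a\in\cA$; taking $c$ equal to the constant in (\ref{eq:good-event-2}) yields total failure probability at most $1/(t-1)$. On the complement, $\|\bvarphi_{\tau,a}\|_2\le M R_y+\|\bN_{\tau,a}\|_2$ is bounded by the level in (\ref{eq:good-event-2}) for all $\tau\ge t$ and $a\in\cA$, i.e.\ $\cE_{2,t}$ holds.

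The routine parts — the additive decompositions, the verification that $\{\bD_{i,a}\}$ and $\{\bX_i\eta_i 1_{\{A_i=a\}}\}$ are martingale-difference sequences in the adaptive filtration $\{\cF_i\}$, and the single-time concentration bounds — I expect to be straightforward. The main obstacle is upgrading those pointwise-in-$\tau$ estimates to statements holding \emph{simultaneously for all $\tau\ge t$} while keeping the total failure probability $O(1/(t-1))$: for $\cE_{1,t}$ this costs only an extra $\log\tau$ in the deviation level (hence the $\log(\tau A d)$ in (\ref{eq:good-event-1})), but for $\cE_{2,t}$, where the noise has only a second moment, bounded-difference concentration is unavailable and a naive per-$\tau$ tail bound is not summable, so a dyadic-block maximal inequality (or an equivalent time-uniform martingale bound) is essential. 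This is also where the minimum-exploration and independent-noise hypotheses of Assumption~\ref{aspt:Ridge_Conv} do their real work: the former keeps $\bB_{i,a}$ uniformly positive definite, and the latter makes the noise a genuine martingale-difference sequence after conditioning on the past, the context, and the action.
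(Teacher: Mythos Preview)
Your proposal is correct and follows essentially the same approach as the paper: matrix Azuma with a union over $\tau\ge t$ for $\cE_{1,t}$, and a dyadic peeling together with an $L^2$ maximal inequality for the noise martingale in $\cE_{2,t}$. The only cosmetic difference is that the paper applies Kolmogorov's inequality coordinate-wise and then unions over the $d$ coordinates (which is where the factor $\sqrt{dA}$ in (\ref{eq:good-event-2}) arises), whereas you phrase it as Doob's $L^2$ inequality on the vector norm directly; both routes work and yield the same conclusion up to constants.
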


\begin{proof}

    Consider the martingale difference sequence $\{\bX_{t, a}\bX_{t, a}^{\top} - \EE[\bX_{t, a}\bX_{t, a}^{\top} \mid \cF_{t-1}]\}_{t}$, where $\cF_{t-1}$ is the $\sigma$-field generated by $\left\{\left\{\bX_{\tau, a}, A_{\tau, a}, Y_{\tau, a}\right\}_{\tau = 1}^{t-1}\right\}_{a \in \cA}$. Based on Assumption \ref{aspt:Ridge_Conv}, the martingale difference sequence satisfies
    \begin{align}
        \bX_{t, a}\bX_{t, a}^{\top} - \EE[\bX_{t, a}\bX_{t, a}^{\top} \mid \cF_{t-1}] \preceq M^2 \bI.
    \end{align}
    
    We first apply Corollary \ref{cor::matrix-azuma} for all $a \in \cA$ with a probability of at least $1 - 1/(A(t-1))$ to get the following concentration bound simultaneously for all $a \in \cA$ and $\tau \geq t$:
    \begin{align}
        \frac{1}{\tau-1} \left\|\sum_{k = 1}^{\tau-1} \left(\bX_{k,a}\bX_{k,a}^{\top} - \EE[\bX_{k,a}\bX_{k,a}^{\top} \mid \cF_{k-1}]\right)\right\|_2 \leq M\sqrt{\frac{16\log(A\tau d)}{\tau-1}}.
    \end{align}
    Because the sampling probability is clipped below by $\pi_{\min}$, we have that for all $k \in \mathbb{N}$ and $a \in \cA$,
    \begin{align}
        \pi_{\min} \sigma_{\min}^2 \bI \preceq \EE[\bX_{k,a}\bX_{k,a}^{\top} \mid \cF_{k-1}].
    \end{align}

    Therefore, with a probability of at least $1 - 1/(t-1)$, for all $a \in \cA$ and $\tau \geq t$,
    \begin{align}
        \pi_{\min} \sigma_{\min}^2 - M\sqrt{\frac{16\log(A\tau d)}{\tau-1}} \leq \frac{1}{\tau-1} \left\|\sum_{k = 1}^{\tau-1} \bX_{k,a}\bX_{k,a}^{\top}\right\|_2.
    \end{align}

    Thus, we have with a probability of at least $1 - 1/(t-1)$, for all $a \in \cA$ and $\tau \geq t$,
    \begin{align}
        \pi_{\min} \sigma_{\min}^2 - M\sqrt{\frac{16\log(A\tau d)}{\tau-1}} \leq \|\bPhi_{\tau,a}\|_2. 
    \end{align}


    This is our definition of the event $\mathcal{E}_{1, t}$.
    
    We further show that $\mathcal{E}_{2, t}$ holds with a probability of at least $1 - 1/(t-1)$. For any $a \in \cA$, and $\tau \geq t$, we have that 

    \begin{align}
        \left\|\varphi_{\tau, a}\right\|_2 
        &= \frac{1}{\tau-1} \left\|\sum_{k = 1}^{\tau-1} \bX_{k,a} Y_{k,a}\right\|_2 \\
        &= \frac{1}{\tau-1} \left\|\sum_{k = 1}^{\tau-1} \bX_{k,a} (f(\bX_k, a) + \eta_k)\right\|_2 \\
        &= \frac{1}{\tau-1} \left\|\sum_{k = 1}^{\tau-1} \bX_{k,a} f(\bX_k, a) + \sum_{k = 1}^{\tau-1} \bX_{k,a} \eta_k\right\|_2 \\
        &\leq \frac{1}{\tau-1} \left\|\sum_{k = 1}^{\tau-1} \bX_{k,a}f(\bX_k, a)\right\|_2 + \frac{1}{\tau-1} \left\|\sum_{k = 1}^{\tau-1} \bX_{k,a} \eta_k\right\|_2 \\
        &\leq M R_{f} + \frac{1}{\tau-1} \left\|\sum_{k = 1}^{\tau-1} \bX_{k,a} \eta_k\right\|_2, \label{eq::varphi-bound}
    \end{align}
    where $\eta_k \coloneqq Y_{t, a} - f(\bX_t, a)$ and $f(\bX_t, a) \coloneqq \EE[Y_{t, a} \mid \bX_t, A_t = a]$.

    By applying Kolmogorov's inequality to each coordinate of $X_{t, a} \eta_t$, we have that 
    \begin{align}
        \PP\left(\max_{1 \leq m \leq n} \left|\sum_{k = 1}^m (\bX_{t, a} \eta_t)_i\right| \geq z \right) \leq \Var\left(\sum_{k = 1}^n (\bX_{t, a} \eta_t)_i \right) / z^2 \leq n M^2 \sigma_{\eta}^2 / z^2.
    \end{align}

    Now we use a peeling argument to obtain a uniform bound for all $\tau \geq t$. For a block index $m = 0, 1, 2, \ldots$, and consider the block of indices:
    \begin{align}
        \sB_m = \{n: 2^m t-1 \leq n \leq 2^{m+1} t-1\}.
    \end{align}
    Define the bad event on this block for coordinate $i$ as:
    \begin{align}
        A_{m, i}(\varepsilon) \coloneqq \left\{\exists n \in \sB_m: \left|\frac{1}{n-1} \sum_{k = 1}^{n-1} (\bX_{k, a} \eta_k)_i\right|_2 > \varepsilon\right\}.
    \end{align}
    If $A_{m, i}(\varepsilon)$ occurs, then there exists $n^* \in \sB_m$ such that
    \begin{align}
        \left|\sum_{k = 1}^{n^*-1} (\bX_{k, a} \eta_{k})_i\right| > (n^*-1)\varepsilon \geq (2^m t - 1) \varepsilon.
    \end{align}
    Therefore, we have that
    \begin{align}
        A_{m, i}(\varepsilon) \subseteq \left\{ \max_{1 \leq n \leq 2^{m+1} t-1} \left|\sum_{k = 1}^{n-1} (\bX_{k, a} \eta_{k})_i\right| > (2^m t - 1) \varepsilon\right\}.
    \end{align}

    By applying Kolmogorov's inequality on coordinate $i$, we have that
    \begin{align}
        \PP\left( \max_{1 \leq n \leq N} \left|\sum_{k = 1}^{n} (\bX_{k, a} \eta_k)_i \right| \geq z\right) \leq \Var\left(\sum_{k = 1}^{N} (\bX_{k, a} \eta_k)_i \right) / z^2 \leq N M^2 \sigma_{\eta}^2 / z^2.
    \end{align}

    With $N = 2^{m+1} t - 1$, and $a = (2^m t - 1) \varepsilon$, we have that
    \begin{align}
        \PP\left(A_{m, i}(\varepsilon)\right) \leq \frac{(2^{m+1} t-1) M^2 \sigma_{\eta}^2}{((2^m t - 1) \varepsilon)^2} \leq   \frac{2^{m+1} t M^2 \sigma_{\eta}^2}{(2^m t / 2) \varepsilon^2} = \frac{8M^2 \sigma_{\eta}^2}{2^m \varepsilon^2}.
    \end{align}

    By union bound over all blocks $m = 0, 1, 2, \ldots$, we have that 
    \begin{align}
        \PP\left( \cup_{m \geq 0} A_{m, i}(\varepsilon)\right) \leq \sum_{m \geq 0} \PP\left(A_{m, i}(\varepsilon)\right) \leq \sum_{m \geq 0} \frac{8M^2 \sigma_{\eta}^2}{2^m \varepsilon^2} = 16M^2 \sigma_{\eta}^2.
    \end{align}

    Thus, for a single coordinate $i$, we have that
    \begin{align}
        \PP\left( \sup_{\tau \geq t} \left|\sum_{k = 1}^{\tau-1} (\bX_{k, a} \eta_k)_i\right| > \varepsilon\right) \leq 16M^2 \sigma_{\eta}^2 / (t\varepsilon^2).
    \end{align}

    To bound a union bound over $i = 1, 2, \ldots, d$, we notice that if 
    \begin{align}
        \sup_{\tau \geq t} \left\|\sum_{k = 1}^{\tau-1} \bX_{k, a} \eta_k\right\|_2 > \varepsilon,
    \end{align}
    then for some $i$, it holds that 
    \begin{align}
        \sup_{\tau \geq t} \left|\sum_{k = 1}^{\tau-1} (\bX_{k, a} \eta_k)\right| > \varepsilon.
    \end{align}

    Therefore, by a union bound over $i = 1, 2, \ldots, d$, we have that
    \begin{align}
        \PP\left( \sup_{\tau \geq t} \left\|\sum_{k = 1}^{\tau-1} \bX_{k, a} \eta_k\right\|_2 > \varepsilon\right) \leq d \PP\left( \sup_{\tau \geq t} \left|\sum_{k = 1}^{\tau-1} (\bX_{k, a} \eta_k)_i\right| > \varepsilon\right) \leq 16dM^2 \sigma_{\eta}^2 / (t\varepsilon^2).
    \end{align}

    Choose $16dM^2 \sigma_{\eta}^2 / (t\varepsilon^2) = 1/(t-1)$, then with probability at least $1 - 1/(t-1)$, we have that
    \begin{align}
        \sup_{\tau \geq t} \left\|\sum_{k = 1}^{\tau-1} \bX_{k, a} \eta_k\right\|_2 \leq \sqrt{16dM^2\sigma_{\eta}^2}.
    \end{align}

    A further union bound over $a \in \cA$ gives that $\mathcal{E}_{2, t}$ holds with probability at least $1 - 1/(t-1)$.

\end{proof}

\subsubsection{Contraction property of g}

The following contraction property is shown under the good event $\cE_{t}$.


\begin{lemma} On event $\cE_{t}$, let $\bm{\psi}, \bm{\psi}'$ be two elements in $\{\bm{\psi}_{\tau}\}_{\tau \geq t} \subseteq \mathbb{R}^{(d+d^2)|\cA|}$. Consider sufficiently large $t$ such that for all $\tau \geq t$,
\begin{align}
    \pi_{\min}\sigma_{\min}^2 - M \sqrt{16\log(Ad\tau) / (\tau-1)} &\geq \frac{1}{2}\pi_{\min}\sigma_{\min}^2.
\end{align}
    
Then, for sufficiently large $\gamma$, for any $\bm{\psi}, \bm{\psi}' \in \{\bm{\psi}_{\tau}\}_{\tau \geq t}$, we have
    \begin{equation}
        \|\bm{g}(\bm{\psi}) - \bm{g}(\bm{\psi}') \|_2 \leq \kappa \|\bm{\psi} - \bm{\psi}' \|_2 \text{ for some } \kappa < 1.
    \end{equation}
    
\end{lemma}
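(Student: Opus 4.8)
The plan is to exploit that $\bg$ depends on $\bm\psi$ \emph{only through the behavior policy it induces}, and then to bound, along the trajectory $\{\bm\psi_\tau\}_{\tau\ge t}$, the Lipschitz constants of the three maps whose composition is $\bg$. Concretely, write a generic argument as $\bm\psi=(\bvarphi_{a'},\,\mathrm{vec}(\bPhi_{a'}))_{a'\in\cA}$, set $\bbeta_{a'}(\bm\psi)=\bPhi_{a'}^{-1}\bvarphi_{a'}$ (the ridge coefficients implied by $\bm\psi$) and $\rho_{\bm\psi}(a\mid\bx)=\tilde\pi^{\gamma}\big(a\mid\bx,(\bbeta_{a'}(\bm\psi))_{a'\in\cA}\big)$ (the clipped Boltzmann policy they induce). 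Since $\bX_t$ is i.i.d.\ and independent of $\cH_{t-1}$, $\EE[\eta_t]=0$, and $A_t\independent\{Y_t(a)\}_a\mid(\bX_t,\cH_{t-1})$, the conditional expectations defining $\bg_a$ collapse to $\bg_a(\bm\psi)=\big(\EE_{\bX}[\rho_{\bm\psi}(a\mid\bX)\,\bX\,y(\bX,a)];\ \mathrm{vec}(\EE_{\bX}[\rho_{\bm\psi}(a\mid\bX)\,\bX\bX^\top])\big)$. Thus $\bg$ factors as $\bm\psi\mapsto(\bbeta_{a'}(\bm\psi))_{a'}\mapsto\rho_{\bm\psi}\mapsto\bg(\bm\psi)$, and it suffices to multiply three Lipschitz constants and sum the contributions of the $K$ arms.

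First I would control the outer and inner maps using the good event $\cE_t$ of Lemma~\ref{lem::good-event} and the standing largeness of $t$ (which forces $\pi_{\min}\sigma_{\min}^2-M\sqrt{16\log(Ad\tau)/(\tau-1)}\ge\tfrac12\pi_{\min}\sigma_{\min}^2$ for all $\tau\ge t$). On $\cE_{1,t}$ every $\bPhi_{\tau,a}$ obeys $\bPhi_{\tau,a}\succeq\tfrac12\pi_{\min}\sigma_{\min}^2\bI$, so $\|\bPhi_{\tau,a}^{-1}\|_2\le 2/(\pi_{\min}\sigma_{\min}^2)$; on $\cE_{2,t}$, $\|\bvarphi_{\tau,a}\|_2\le MR_y+4M\sigma_\eta\sqrt{dK}$. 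Feeding these into the resolvent identity $\bPhi^{-1}-\bPhi'^{-1}=-\bPhi^{-1}(\bPhi-\bPhi')\bPhi'^{-1}$ gives, for any two trajectory points, $\|\bbeta_a(\bm\psi)-\bbeta_a(\bm\psi')\|_2\le L_1\|\bm\psi-\bm\psi'\|_2$ with $L_1$ of order $\max\{2/(\pi_{\min}\sigma_{\min}^2),\,8(MR_y+4M\sigma_\eta\sqrt{dK})/(\pi_{\min}^2\sigma_{\min}^4)\}$ and, crucially, \emph{independent of $\gamma$}. For the outer map, $\|\bX\|_2\le M$ and $y(\bx,a)\le R_y$ give $\|\bg_a(\bm\psi)-\bg_a(\bm\psi')\|_2\le(MR_y+M^2)\sup_{\bx}\|\rho_{\bm\psi}(\cdot\mid\bx)-\rho_{\bm\psi'}(\cdot\mid\bx)\|_2$.

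Next I would bound the middle map, the only place $\gamma$ enters favorably. Differentiating the softmax, $\nabla_{\bbeta_{a'}}\pi^{\gamma}(a\mid\bx,\bbeta)=\gamma^{-1}\bx\,\pi^{\gamma}(a\mid\bx,\bbeta)\big(1_{\{a=a'\}}-\pi^{\gamma}(a'\mid\bx,\bbeta)\big)$, so integrating along the segment from $\bbeta$ to $\bbeta'$ and using $\|\bx\|_2\le M$ yields $\sup_{\bx}\|\pi^{\gamma}(\cdot\mid\bx,\bbeta)-\pi^{\gamma}(\cdot\mid\bx,\bbeta')\|_2\le c(K)(M/\gamma)\|\bbeta-\bbeta'\|_2$ for an explicit $c(K)=O(\sqrt{K})$; since $\mathrm{Clip}$ is the $L_2$ projection onto a convex set it is $1$-Lipschitz (Lemma~\ref{lem::clip-l2-projection}), so $\rho_{\bm\psi}$ satisfies the same bound. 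Composing the three stages and summing over the $K$ blocks of $\bg$ produces $\|\bg(\bm\psi)-\bg(\bm\psi')\|_2\le\kappa\|\bm\psi-\bm\psi'\|_2$ with $\kappa$ equal to $M^3K(K+1)L_1/\gamma$ up to a universal constant; this is precisely the quantity appearing in the temperature threshold of Proposition~\ref{theorem:convergence-of-Ridge}, so $\kappa<1$ once $\gamma$ exceeds that threshold.

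The hard part will be the domain restriction rather than any single estimate. The map $\bg$ is genuinely \emph{not} globally Lipschitz, since $\bbeta_a(\bm\psi)=\bPhi_a^{-1}\bvarphi_a$ has unbounded derivative as $\bPhi_a$ approaches singularity, so one cannot contract on all of $\RR^{(d+d^2)K}$; the lemma is therefore stated, correctly, only on the random set $\{\bm\psi_\tau\}_{\tau\ge t}$, and the whole argument hinges on $\cE_t$ delivering a spectral lower bound on $\bPhi_{\tau,a}$ and an upper bound on $\bvarphi_{\tau,a}$ holding \emph{simultaneously for all} $\tau\ge t$, hence a single Lipschitz constant $L_1$. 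One must also keep the bookkeeping consistent between operator/Frobenius norms of the $\bX\bX^\top$ blocks and the Euclidean norm on $\bm\psi$, and check that the downstream stochastic-approximation convergence argument only ever evaluates $\bg$ at trajectory points where $\cE_t$ is in force. Once these structural points are settled, the remaining content is the routine chaining of three explicit Lipschitz bounds, with smallness bought by the $1/\gamma$ factor from differentiating the softmax.
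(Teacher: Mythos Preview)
Your proposal is correct and follows essentially the same three-stage factorization $\bm\psi\mapsto\bbeta\mapsto\tilde\pi^\gamma\mapsto\bg$ as the paper, with the good event $\cE_t$ controlling the first map, the $1/\gamma$ softmax derivative controlling the second, and the boundedness of $\bX$ and $y(\bx,a)$ controlling the third. One small difference worth noting: you invoke nonexpansiveness of the Euclidean projection onto a convex set to get that $\mathrm{Clip}$ is $1$-Lipschitz, which is sharper than the $(K{+}1)$-Lipschitz bound the paper actually proves and uses (Lemma~\ref{lem::clipping-Lipschitz}); your observation is correct and yields a slightly better constant, but the overall architecture of the argument is the same.
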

\begin{proof}
    Define $m_a(\bm{\psi}) = \EE[  \bX_{t, a} Y_{t, a}\mid \bm{\psi}_t = \bm{\psi}]$, and $h_a(\bm{\psi}) = \EE[ \bX_{t, a} \bX_{t, a}^\top \mid \bm{\psi}_t = \bm{\psi}']$.

    We have that 
    \begin{align}
        \|m_{a}(\bm{\psi}) - m_{a}(\bm{\psi}')\|_2 &= \left\|\EE[ (\tilde{\pi}^{\gamma}(a \mid \bX_t, \bm{\psi}) -\tilde{\pi}^{\gamma}(a \mid \bX_t, \bm{\psi}')) \bX_t f(\bX_t, a) ] \right\|_2 \\
        &\leq \left|\tilde{\pi}^{\gamma}(a \mid \bX_t, \bm{\psi}) -\tilde{\pi}^{\gamma}(a \mid \bX_t, \bm{\psi}') \right| \left\|\bX_t f(\bX_t, a)\right\|_2 \\
        &\leq \left|\tilde{\pi}^{\gamma}(a \mid \bX_t, \bm{\psi}) -\tilde{\pi}^{\gamma}(a \mid \bX_t, \bm{\psi}') \right| (M R_{f} + 4M \sigma_{\eta} \sqrt{dA}). \label{eq::m-bound}
    \end{align}
    Similarly, we have that 
    \begin{align}
        \|h_{a}(\bm{\psi}) - h_{a}(\bm{\psi}')\|_2 &= \left\|\EE[ (\tilde{\pi}^{\gamma}(a \mid \bX_t, \bm{\psi}) -\tilde{\pi}^{\gamma}(a \mid \bX_t, \bm{\psi}')) \bX_t \bX_t^{\top} ] \right\| \\
        &\leq \left|\tilde{\pi}^{\gamma}(a \mid \bX_t, \bm{\psi}) -\tilde{\pi}^{\gamma}(a \mid \bX_t, \bm{\psi}') \right| M^2. \label{eq::h-bound}
    \end{align}

    It suffices to upper bound the Lipschitz constant of $\tilde{{\pi}}^{\gamma}(\cdot \mid \bX_t, \bm{\psi})$. We denote by $\tilde{\bm{\pi}} = \tilde{\bm{\pi}}^{\gamma}(\cdot \mid \bX_t, \bm{\psi}) \in [0, 1]^{|\mathcal{A}|}$ and $\tilde{\bm{\pi}'} = \tilde{\bm{\pi}}^{\gamma}(\cdot \mid \bX_t, \bm{\psi}')$, and similarly define $\bm{\pi} = \bm{\pi}^{\gamma}(\cdot \mid \bX_t, \bm{\psi})$ and $\bm{\pi}' = \bm{\pi}^{\gamma}(\cdot \mid \bX_t, \bm{\psi}')$.

\begin{lemma}[L2 projection is Lipschitz continuous]
    \label{lem::clipping-Lipschitz} 
    Let $\operatorname{Clip}(\bm{\pi})$ be defined as in (\ref{eq::clip}). 
    Then, $\operatorname{Clip}(\bm{\pi})$ is $(|\mathcal{A}|+1)$-Lipschitz continuous in $\bm{\pi}$.
\end{lemma}

    Lemma \ref{lem::clipping-Lipschitz} implies that the clipping operator (defined in \ref{eq::clip}) is $(|\mathcal{A}|+1)$-Lipschitz, i.e., $\|\tilde{\bm{\pi}} - \tilde{\bm{\pi}}'\|_2 \leq (|\mathcal{A}|+1) \|\bm{\pi} - \bm{\pi}'\|_2$. 
    By the fact that the softmax function is $2/\gamma$-Lipschitz, we have that 
    \begin{align}
        \|\tilde{\bm{\pi}} - \tilde{\bm{\pi}}'\|_2 \leq (|\mathcal{A}|+1) \|\bm{\pi} - \bm{\pi}'\|_2 \leq \frac{2(|\mathcal{A}|+1)M}{\gamma} \|\bm{\beta} - \bm{\beta}'\|_2, \label{eq::first-Lipschitz}
    \end{align} 
    where $\bm{\beta} = (\bbeta_1, \ldots, \bbeta_{|\cA|})$ and each $\bbeta_a = \bPhi_{a}^{-1} \bvarphi_a$.

    To bound $\|\bm{\beta} - \bm{\beta}'\|_2$, we have that 
    \begin{align}
        \|\bbeta_a - \bbeta'_a\|_2 
        &= \|\bPhi_{a}^{-1} \bvarphi_a - \bPhi_{a}^{'-1} \bvarphi_a'\|_2 \\
        &\leq \|\bPhi_{a}^{-1} \bvarphi_a - \bPhi_{a}^{-1} \bvarphi_a'\|_2 + \|\bPhi_{a}^{-1} \bvarphi'_a - \bPhi_{a}^{'-1} \bvarphi_a'\|_2\\
        &\leq \|\bPhi_{a}^{-1}\|_2 \|\bvarphi_a - \bvarphi_a'\|_2 + \|\bPhi_{a}^{-1} - \bPhi_{a}^{'-1}\|_2 \|\bvarphi_a'\|_2 \\
        &\leq \|\bPhi_{a}^{-1}\|_2 \|\bvarphi_a - \bvarphi_a'\|_2 + \|\bPhi_{a}^{-1}\|_2\| \bPhi_a - \bPhi'_{a}\|_2 \|\bPhi_{a}^{'-1}\| \|\bvarphi_a'\|_2
    \end{align}
    Under the good event $\cE_{t}$, we have that both $\|\bPhi_a^{'-1}\|_2, \|\bPhi_a^{-1}\|_2 \leq 2/(\pi_{\min}\sigma_{\min}^2)$, and $\|\bvarphi_a\|_2 \leq 2(M R_{f}+4M\sigma_{\eta} \sqrt{dA})$.

    Thus, we have 
    \begin{align}
        \|\bm{\beta} - \bm{\beta}'\|_2 \leq |\cA|\left(\frac{2}{\pi_{\min}\sigma_{\min}^2} \|\bvarphi_a - \bvarphi_a'\|_2 + \frac{8(M R_{f} + 4M \sigma_{\eta} \sqrt{dA})}{\pi^2_{\min}\sigma_{\min}^4}  \|\bPhi_a - \bPhi'_{a}\|_2 \right)
    \end{align}
    
    Combined with (\ref{eq::first-Lipschitz}), we have
    \begin{align}
        \|\tilde{\bm{\pi}} - \tilde{\bm{\pi}}'\|_2
        \leq& \frac{2(|\mathcal{A}|+1)M}{\gamma} \|\bm{\beta} - \bm{\beta}'\|_2 \\
        \leq& \frac{2(|\mathcal{A}|+1)M}{\gamma} |\cA|\left(\frac{2}{\pi_{\min}\sigma_{\min}^2} \|\bvarphi_a - \bvarphi_a'\|_2 + \frac{8(M R_{f} + 4M \sigma_{\eta} \sqrt{dA})}{\pi^2_{\min}\sigma_{\min}^4}  \|\bPhi_a - \bPhi'_{a}\|_2 \right)
    \end{align}

    The Lipschitz constant of $\tilde{\pi}^{\gamma}(a \mid \bX_t, \bm{\psi})$ w.r.t. the parameter $\bm{\psi}$ is at most
    \begin{align}
        \frac{2|\cA|(|\mathcal{A}|+1)M}{\gamma} \max\left\{\frac{2}{\pi_{\min}\sigma_{\min}^2}, \frac{8(M R_{f} + 4M \sigma_{\eta} \sqrt{dA})}{\pi^2_{\min}\sigma_{\min}^4} \right\}.
    \end{align}

    Combined with (\ref{eq::m-bound}) and (\ref{eq::h-bound}), the Lipschitz constant of function $\bm{g}$ is 
    \begin{align}
        \frac{2|\cA|(|\mathcal{A}|+1)M^3}{\gamma} \max\left\{\frac{2}{\pi_{\min}\sigma_{\min}^2}, \frac{8(M R_{f} + 4M \sigma_{\eta} \sqrt{dA})}{\pi^2_{\min}\sigma_{\min}^4} \right\}
    \end{align}
    For sufficiently large $\gamma$, this constant is less than $1$.
\end{proof}

\subsubsection{Boundedness of moments of $\psi_t$ and $M_t$}

Under the good event $\cE_{t}$, for sufficiently large $t$, we have that for all $a \in \cA$, and $\tau \geq t$,
\begin{align}
    \|\bPhi_{\tau, a}\|_2 \leq M^2, \text{ and } \|\bvarphi_a\|_2 \leq 2(M R_{f} + 4M \sigma_{\eta} \sqrt{dA}).
\end{align}

We also have that for all $a \in \cA$, and $\tau \geq t$,
\begin{align}
    \EE[\|\bm{M}_{\tau, a}\|_2 \mid \cF_{t-1}] s
    &\leq \EE[\|\bX_{t, a} Y_{t,a}\|_2 + \|\Vec(\bX_{t, a}\bX_{t, a}^\top)\|_2]\\
    &= \EE[\|\bX_{t, a} (f(\bX_t, a) + \eta_t)\|_2 + \|\Vec(\bX_{t, a}\bX_{t, a}^\top)\|_2]\\
    &\leq (M R_{f} + 4M \sigma_{\eta} \sqrt{dA}) + M\sigma_{\eta} + M^2.
\end{align}

Thus conditions (C.1) and (C.3) are satisfied.

\subsubsection{Finishing the proof}

Now we are ready to show the convergence of $\bm{\psi}_t$. The above contraction property guarantees that on event $\cE_{t}$, the sequence $\{\bm{\psi}_{\tau}\}_{\tau \geq t}$ converges due to the Robbins-Monro theorem \citep{borkar2008stochastic}. To show the general convergence, we follow the following argument. 

Our goal is to show that for any $\epsilon > 0, \delta > 0$, there exists $T_0$ such that for all $t \geq T_0$, 
\begin{align}
    \PP( \|\bm{\psi}_t - \bm{\psi}^*\|_2 \geq \epsilon ) \leq \delta.
\end{align}
Now for any $\delta > 0$, let $t_0 = \lceil 4/\delta +1 \rceil$. Then we have $\PP(\cE_{t_0}^c) \leq \delta/2$ due to Lemma \ref{lem::good-event}. Therefore, we have that 
\begin{align}
    \PP( \|\bm{\psi}_t - \bm{\psi}^*\|_2 \geq \epsilon ) \leq \PP(\{ \|\bm{\psi}_t - \bm{\psi}^*\|_2 \geq \epsilon \} \cap \cE_{t_0}) + \PP(\cE_{t_0}^c).
\end{align}
The convergence of $\{\bm{\psi}_{\tau}\}_{\tau \geq t_0}$ to $\bm{\psi}^*$ on event $\cE_{t_0}$ implies that there exists a $T_0 \geq t_0$ such that for all $t \geq T_0$, the first term on the right-hand side is less than $\delta/2$. Therefore, we have that 
\begin{align}
    \PP( \|\bm{\psi}_t - \bm{\psi}^*\|_2 \geq \epsilon ) \leq \delta \text{ for all } t \geq T_0.
\end{align}
This shows that $\bm{\psi}_t$ converges in probability to $\bm{\psi}^*$ as $t \rightarrow \infty$.

\subsection{Proof of Proposition \ref{theorem:convergence-of-SGD}}\label{apdx::proof-thm::convergence-of-SGD}

The proof of Proposition \ref{theorem:convergence-of-SGD} follows a direct application of Theorem 2 about the convergence of stochastic approximation process \citep{borkar2008stochastic}. The Theorem is restated in Theorem \ref{thm:sac_converge}.

To check the assumptions are satisfied, we first rewrite the stochastic approximation process in Equation (\ref{eq::SGD-update}) as
\begin{align}
    \hat \bbeta_{t, a}^{\SGD} = \hat \bbeta_{t-1, a}^{\SGD} - \eta_t \EE[1_{\{A_t = a\}}\bh(\bX_t, Y_t(a); \hat \bbeta_{t-1, a}^{\SGD}) \mid \cF_{t-1}] + \bM_{t,a},
\end{align}
We stack the above equation for all $a \in \cA$ into a single expression, we have
\begin{align}
    \hat \bbeta_{t}^{\SGD} = \hat \bbeta_{t-1}^{\SGD} - \eta_t \left(\begin{array}{c}
        \EE[1_{\{A_t = 1\}}\bh(\bX_t, Y_t(1); \hat \bbeta_{t-1, 1}^{\SGD}) \mid \cF_{t-1}] + \bM_{t,1}\\
        \ldots\\
        \EE[1_{\{A_t = |\cA|\}}\bh(\bX_t, Y_t(|\cA|); \hat \bbeta_{t-1, |\cA|}^{\SGD}) \mid \cF_{t-1}] + \bM_{t,|\cA|}
    \end{array}\right),
\end{align}

We define 
\begin{align}
\tilde{\bh}(\bbeta_t) \coloneqq \left(\begin{array}{c}
    \tilde{\bh}_1(\bbeta_t) \\
    \ldots\\
    \tilde{\bh}_{|\cA|}(\bbeta_t)
\end{array}\right),
\text{ and } \bM_t \coloneqq \left(\begin{array}{c}
    \bM_{t,1}\\
    \ldots\\
    \bM_{t,|\cA|}
\end{array}\right),
\end{align}
where $\tilde{\bh}_a(\bbeta_t) \coloneqq \EE[1_{\{A_t = a\}}\bh(\bX_t, Y_t(a); \bbeta_t) \mid \cF_{t-1}]$.

Now we check the assumptions in Theorem \ref{thm:sac_converge}. Firstly, we show that the map $\tilde{\bh}$ is a contraction mapping. Based on Assumption in Proposition \ref{theorem:convergence-of-SGD} that $\bh$ is Lipschitz continuous and the action selection is based on the clipped Boltzmann exploration policy. We have that 
\begin{align}
    \|\tilde{\bh}(\bbeta_1) - \tilde{\bh}(\bbeta_2)\|_2 \leq \sqrt{|\cA|} \max_{a \in \cA} \|\tilde{\bh}_a(\bbeta_1) - \tilde{\bh}_a(\bbeta_2)\|_2.
\end{align}
For each $a \in \cA$, we have
\begin{align}
    &\|\tilde{\bh}_a(\bbeta_1) - \tilde{\bh}_a(\bbeta_2)\|_2 \\
    \leq& \| \EE[\tilde{\pi}^{\gamma}(a \mid \bX_t, \bbeta_1)\bh(\bX_t, Y_t(a); \bbeta_1) - \tilde{\pi}^{\gamma}(a \mid \bX_t, \bbeta_2)\bh(\bX_t, Y_t(a); \bbeta_2)]\|_2\\
    \leq& \|\EE[\tilde{\pi}^{\gamma}(a \mid \bX_t, \bbeta_1) - \tilde{\pi}^{\gamma}(a \mid \bX_t, \bbeta_2)]\|_2 \sqrt{\rho_{\bh}} R_{\bbeta}.
\end{align}

Following the similar argument in the proof of Proposition \ref{theorem:convergence-of-Ridge}, we have that 
\begin{align}
    |\EE[\tilde{\pi}^{\gamma}(a \mid \bX_t, \bbeta_1) - \tilde{\pi}^{\gamma}(a \mid \bX_t, \bbeta_2)]| \leq \frac{2(|\cA| - 1)M}{\gamma} \|\bbeta_1 - \bbeta_2\|_2.
\end{align}
Therefore, we have that 
\begin{align}
    \|\tilde{\bh}(\bbeta_1) - \tilde{\bh}(\bbeta_2)\|_2 \leq \sqrt{|\cA|} \frac{2(|\cA| - 1)M}{\gamma} \sqrt{\rho_{\bh}} R_{\bbeta} \|\bbeta_1 - \bbeta_2\|_2.
\end{align}

To ensure contraction, we need to have that 
\begin{align}
    \sqrt{|\cA|} \frac{2(|\cA| - 1)M}{\gamma} \sqrt{\rho_{\bh}} R_{\bbeta} < 1, \text{ i.e., } \gamma > 2\sqrt{|\cA|}(|\cA| - 1)M \sqrt{\rho_{\bh}} R_{\bbeta}.
\end{align}

Secondly, we show that the sequence $\bM_t$ is a martingale difference sequence based on the definition of $\bM_t$. To show that $\bM_t$ is square integrable, we have that 
\begin{align}
    \EE[\|\bM_t\|_2^2 \mid \cF_{t-1}] &\leq |\cA| \max_{a \in \cA} \EE[\|\bM_{t,a}\|_2^2 \mid \cF_{t-1}]\\
    &\leq |\cA| \max_{a \in \cA} \EE[\|\bh(\bX_t, Y_t(a); \bbeta_t)\|_2^2 \mid \cF_{t-1}] \\
    &\leq |\cA| \rho_{\bh} \|\bbeta_t\|_2^2.
\end{align}

\subsection{Proof of Proposition \ref{prop::SGD-boundedness}}
    \label{appx::prop::SGD-boundedness}

The key for stability is to show that $\bar\bbeta_a$ is a globally asymptotically stable equilibrium.

\begin{assumption}[Globally Asymptotically Stable Equilibrium]
    \label{aspt:equilibrium} We first define some notation. Let $\overline{\boldsymbol{\phi}}(\boldsymbol{\beta})=\mathbb{E}\left[\boldsymbol{\phi}\left(\boldsymbol{X}_t, Y_t ; \boldsymbol{\beta}\right) \mid\right. \left.\widehat{\boldsymbol{\beta}}_t^{\mathrm{SGD}}=\boldsymbol{\beta}\right]$.
    We assume that the function $\overline{\boldsymbol{\phi}}_c(\boldsymbol{\beta}):=\boldsymbol{\phi}(c \boldsymbol{\beta}) / c, c \geq 1, \boldsymbol{\beta} \in \mathbb{R}^d$, satisfies that $\overline{\boldsymbol{\phi}}_c(\boldsymbol{\beta}) \rightarrow \boldsymbol{\phi}_{\infty}(\boldsymbol{\beta})$ as $c \rightarrow \infty$, uniformly on compacts for some $\boldsymbol{\phi}_{\infty} \in C\left(\mathbb{R}^d\right)$. Furthermore, the o.d.e.
    $$
    \dot{\boldsymbol{\beta}}(t)=\boldsymbol{\phi}_{\infty}(\boldsymbol{\beta}(t))
    $$
    has the origin as its unique globally asymptotically stable equilibrium. $\boldsymbol{h}(x, y ; \boldsymbol{\beta})$ has Lipschitz constant $L_{\boldsymbol{h}}<\infty$ w.r.t. $\boldsymbol{\beta}$ for all $(\bx, y) \in \mathcal{X} \times \mathbb{R}$.
    \end{assumption} 

    \begin{proof}
    The key is to show that our $\hat \bbeta_t^{\SGD}$ satisfies Assumption \ref{aspt:equilibrium}.
        In example one, we have 
        \begin{align}
            \bh(\bx, y; \bbeta_a) = (y - \bbeta_a^\top \bx)\bx.
        \end{align}
        Thus, we have 
        \begin{align}
            \bar \bphi(\bbeta) 
            &= \EE[(Y_t - \bbeta_{A_t}^\top \bX_t)\bX_t \mid \hat \bbeta_{t}^{\SGD} = \bbeta]\\
            &= \sum_{a} \EE[\pi^{\gamma}(A_t = a \mid \bX_t, \bbeta) (Y_t - \btheta_{A_t}^\top \bX_t)\bX_t]\\
            &= \sum_{a} \EE[\pi^{\gamma}(A_t = a \mid \bX_t, \bbeta) (f(\bX_t, a) - \btheta_{A_t}^\top \bX_t)\bX_t]
        \end{align}
    
        For any $c \geq 1$, we have 
        \begin{align}
            \bar \bphi_c(\bbeta) /c 
            &= \sum_{a} \EE[\pi^{\gamma}(A_t = a \mid \bX_t, c\bbeta) (f(\bX_t, a) - c\bbeta_{a}^\top \bX_t)\bX_t]/c \\
            &\rightarrow -\sum_{a} \EE[1\{a = \argmax_{a'} \bX_t^{\top} \bbeta_{a'}\} \bX_t^{\top}\bX_t \bbeta_{a}] \text{ as } c \rightarrow \infty.
        \end{align}
        
        Define $\bar\Sigma_a(\bbeta) \coloneqq \EE[\bX_t \bX_t^\top 1\{a = \argmax_{a'} \bX_t^{\top} \bbeta_{a'}\}]$ for all $a \in \cA$, and let $\bar\Sigma(\bbeta) \coloneqq \operatorname{diag}(\bar\Sigma_1(\bbeta), \ldots, \bar\Sigma_{|\cA|}(\bbeta))$. 
        
        \begin{lemma}
            \label{lemma:convergence-of-SGD}
            If $\bSigma_X \coloneqq \EE[\bX_t \bX_t^\top] \succ 0$, then $\bar\Sigma_a(\bbeta) \succ 0$ for all $a \in \cA$, and therefore, $\bar\Sigma(\bbeta) \succ 0$.
        \end{lemma}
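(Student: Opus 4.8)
The plan is to exhibit $\bar\Sigma_a(\bbeta)$ as a nonnegatively-weighted second-moment matrix of $\bX_t$ whose weight is bounded below by $\pi_{\min}$, and then dominate it by a positive multiple of $\bSigma_X$. Recall that in the scaling-limit computation preceding the lemma, the coefficient of $\bX_t\bX_t^\top$ inside $\bar\Sigma_a(\bbeta)$ is $w_a(\bX_t;\bbeta):=\lim_{c\to\infty}\tilde{\pi}^{\gamma}(a\mid\bX_t,c\bbeta)$, the scaling limit of the clipped Boltzmann weight. Since $\operatorname{Clip}$ maps into the set $\{\bpi:\sum_{a'}\bpi_{a'}=1,\ \bpi_{a'}\geq\pi_{\min}\}$ (Lemma \ref{lem::clip-l2-projection}), every prelimit value obeys $\tilde{\pi}^{\gamma}(a\mid\bx,c\bbeta)\geq\pi_{\min}$, and this floor survives the limit, so $w_a(\bx;\bbeta)\geq\pi_{\min}$ for a.e.\ $\bx$ and every $\bbeta$. (This is exactly where it matters that $w_a$ is the clipped limiting weight rather than the bare indicator $\mathbf{1}\{a=\argmax_{a'}\langle\bbeta_{a'},\bX_t\rangle\}$, for which the strict conclusion could fail whenever action $a$ is never a maximizer on the support of $\bX_t$.)

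Given this lower bound, I would conclude by a pointwise comparison in the positive-semidefinite order. For any $\bu\in\RR^d$ we have $\bu^\top\bX_t\bX_t^\top\bu=\langle\bX_t,\bu\rangle^2\geq0$ a.s., hence $w_a(\bX_t;\bbeta)\langle\bX_t,\bu\rangle^2\geq\pi_{\min}\langle\bX_t,\bu\rangle^2$ a.s.; taking expectations yields $\bu^\top\bar\Sigma_a(\bbeta)\bu\geq\pi_{\min}\,\bu^\top\bSigma_X\bu$. As $\bu$ is arbitrary, $\bar\Sigma_a(\bbeta)\succeq\pi_{\min}\bSigma_X$, and since $\bSigma_X\succ0$ by hypothesis (indeed $\bSigma_X\succeq\sigma_{\min}^2\bI$ under Assumption \ref{aspt:Ridge_Conv}(ii) if an explicit constant is wanted), we get $\bar\Sigma_a(\bbeta)\succ0$ for every $a\in\cA$. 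The block-diagonal claim then follows immediately, because a symmetric block-diagonal matrix is positive definite iff each of its diagonal blocks is; hence $\bar\Sigma(\bbeta)=\operatorname{diag}(\bar\Sigma_1(\bbeta),\ldots,\bar\Sigma_{|\cA|}(\bbeta))\succ0$.

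The only step that is not completely routine is justifying the limit defining $w_a$: for a.e.\ $\bx$ the maximizer $\argmax_{a'}\langle\bbeta_{a'},\bx\rangle$ is unique, so the un-clipped softmax $\pi^{\gamma}(\cdot\mid\bx,c\bbeta)$ converges to the corresponding point mass as $c\to\infty$, and then continuity of $\operatorname{Clip}$ (Lemma \ref{lem::clipping-Lipschitz}) gives $w_a(\bx;\bbeta)=[\operatorname{Clip}(\text{that point mass})]_a\geq\pi_{\min}$ by Lemma \ref{lem::clip-l2-projection}. This is the part I would write out carefully; everything downstream is the one-line semidefinite comparison. Alternatively, if the upstream stability argument is reorganized so that the clipped weight at finite $\gamma$ is used directly, the limit is avoided entirely: $\bar\Sigma_a(\bbeta)=\EE[\tilde{\pi}^{\gamma}(a\mid\bX_t,\bbeta)\,\bX_t\bX_t^\top]\succeq\pi_{\min}\bSigma_X\succ0$ by the same comparison.
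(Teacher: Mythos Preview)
The paper gives no explicit proof of this lemma; it is simply asserted inside the proof of Proposition~\ref{prop::SGD-boundedness} and then used. So there is no line-by-line comparison to make.

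There is, however, a mismatch between the object you analyze and the object the paper actually defines. The paper sets $\bar\Sigma_a(\bbeta)=\EE\big[\bX_t\bX_t^\top\,1\{a=\argmax_{a'}\bX_t^\top\bbeta_{a'}\}\big]$ with the \emph{bare} argmax indicator, not the clipped limiting weight you work with. You already flag in your parenthetical that the strict conclusion can fail for the bare indicator, and you are right: in one dimension with three actions and $\bbeta_1=0$, $\bbeta_2=1$, $\bbeta_3=-1$, action $1$ is never the unique maximizer, so $\bar\Sigma_1(\bbeta)=0$ even though $\bSigma_X>0$. Hence the lemma as literally written is false, and no proof of it can succeed.

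Your argument is correct for the clipped weight $w_a(\bx;\bbeta)=\lim_{c\to\infty}\tilde\pi^\gamma(a\mid\bx,c\bbeta)\geq\pi_{\min}$, and this is almost certainly what the paper intends: the SGD update in (\ref{eq::SGD-update}) runs under the clipped policy $\tilde\pi^\gamma$ (Proposition~\ref{prop::SGD-boundedness} explicitly says ``under the clipped Boltzmann exploration policy''), so the $\pi^\gamma$ appearing in the computation of $\bar\bphi_c$ should really carry a tilde. With that correction your semidefinite comparison $\bar\Sigma_a(\bbeta)\succeq\pi_{\min}\bSigma_X\succ0$ is exactly the right argument, and the downstream Lyapunov step goes through unchanged. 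In effect your proposal is both a valid proof of the intended statement and an implicit erratum for the paper's scaling-limit derivation.
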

    
        Then, we have 
        \begin{align}
            \bar \bphi_c(\bbeta) /c \rightarrow -\bar\Sigma(\bbeta) \bbeta \text{ as } c \rightarrow \infty.
        \end{align}
        
        It suffices to show that the o.d.e.
        $$
            \dot{\bbeta}(t) = -\bar\Sigma(\bbeta(t)) \bbeta(t)
        $$
        has the origin as its unique globally asymptotically stable equilibrium. 
    
        We first show that the o.d.e. has a unique equilibrium solution. Setting $\dot{\bbeta}(t) = 0$, we have 
        \begin{align}
            -\bar\Sigma(\bbeta) \bbeta = 0.
        \end{align}
        Since $\bar\Sigma(\bbeta) \succ 0$, we have that $\bbeta = 0$ is the unique equilibrium solution.
    
        Next, we show that the stability of the equilibrium solution. Consider the Lyapunov function:
        \begin{align}
            V(\bbeta) = \frac{1}{2}\|\bbeta\|_2^2.
        \end{align}
        We have 
        \begin{align}
            \dot{V}(\bbeta) = \bbeta^\top \dot{\bbeta}(t) = -\bbeta^\top \bar\Sigma(\bbeta) \bbeta \leq 0.
        \end{align}
        Since $\dot{V}(\bbeta) \leq 0$, the Lyapunov function strictly decreases along trajectories except at the origin. Because $V(\bbeta)$ is positive and radially unbounded, and $\dot{V}(\bbeta)$ is non-positive, by the standard Lyapunov stability theory (Theorem 4.1 \citep{khalil2002nonlinear}), the origin $\bbeta = 0$ is globally asymptotically stable.

        The third condition in Assumption \ref{prop::SGD-boundedness} is
        \begin{align}
            \EE[\|\bh(\bX_t, Y_t(a); \bbeta)\|_2^2] \leq \rho_{\bh}(1+\|\bbeta\|_2^2).
        \end{align}
        This assumption is immediately satisfied from the almost surely boundedness of $\|\bX_t\|_2$ and the boundedness of $\EE[Y_t^2]$ given by Assumption \ref{aspt:Ridge_Conv} for all three cases.

        \textbf{Example \ref{ex::bandits-noisy-contexts}.} Write 
        \begin{align}
            \|\bh(\bX_t, Y_t(a); \bbeta)\| \leq |Y_t(a)| \|\bX_t\| + (\|\bX_t\|^2 + \|\Sigma_e\|) \|\bbeta\|.
        \end{align}
        Squaring and taking expectation gives
        \begin{align}
&\EE\|\bh(\bX_t,Y_t(a);\bbeta)\|^2\\
\le& 2\EE\big[Y_t(a)^2\|\bX_t\|^2\big]+2\EE\big[(\|\bX_t\|^2+\|\Sigma_e\|)^2\big]\|\bbeta\|^2\\
\le& 2M^2(R_y^2 + \sigma_{\eta}^2)+2(M^2 + \|\Sigma_e\|^2)\|\bbeta\|^2 \\
\le& \max\{2M^2(R_y^2 + \sigma_{\eta}^2) + 2(M^2 + \|\Sigma_e\|^2\} (1+\|\bbeta\|^2).
        \end{align}
Example \ref{ex::misspecified-linear-bandits} is a special case of \ref{ex::bandits-noisy-contexts} with $\Sigma_e = 0$.

\textbf{Example \ref{ex::ope}: $\bh(\bx, y; \bbeta) = \pi^e(a \mid \bx)y-\bbeta$.} Here $\|\bh(\bX_t, Y_t(a); \bbeta)\| \leq |\pi^e(a \mid \bX_t)| |Y_t(a)| + \|\bbeta\|$. Since $0\leq\pi^e(\cdot \mid \bx) \leq 1$ for all $\bx$, 
\begin{align}
    \EE \|\bh(\bX_t, Y_t(a);\bbeta)\| \leq 2 \EE[Y_t(a)^2] + 2\|\bbeta\|^2 \leq 2\max\{R_y^2 + \sigma_{\eta}^2, 1\} (1+\|\bbeta\|_2^2).
\end{align}

        Further we apply Theorem 7 \citep{borkar2008stochastic} gives that $\sup_{t\geq 0} \|\hat \bbeta_{t, a}^{\SGD}\|_2$ is bounded almost surely for all $a \in \cA$.

\end{proof}

\subsection{Proof of Corollary \ref{cor::ex2}}\label{apdx::proof-cor::ex2}

It is straightforward to verify the Assumptions of Theorem \ref{thm::asymptotic-normality-joint-general} given Assumption  \ref{aspt:unconfoundedness}, \ref{aspt:min-sampling-prob} and \ref{aspt:ex2}. We only need to verify the form of the asymptotic variance in (\ref{eq::var-ex2}).

First, we have
\begin{equation}\label{eq::var-estimator-noisy-context-1}
\EE\nabla \bg(\bX_t, Y_t(a);\btheta_a^*) = \EE [\bX_t \bX_t^\top-\bSigma_e] = \EE\bS_t\bS_t^\top = \bSigma_{S}.
\end{equation}
Second, by plugging in $\bg$ to the asymptotic variance in (\ref{eq::asymptotic-normality-general}), we have 
\begin{align}
\bar\bI_a& = 
\EE\left[\frac1{\bar \pi(a|\bX_t)} \bg(\bX_t, Y_t(a);\btheta_a^*)\bg(\bX_t, Y_t(a);\btheta_a^*)^\top\right]\nonumber\\
& = \EE\left[\frac1{\bar \pi(a|\bX_t)}\bigg\{(\bX_t \bX_t^\top - \bSigma_e)\btheta_a^* - \bX_t Y_t(a)\bigg\}^{\otimes 2}\right]\nonumber\\
& = \EE\left[\frac1{\bar \pi(a|\bX_t)}\bigg\{\bh_a(\bX_t, \bS_t) - \bX_t\eta_t\bigg\}^{\otimes 2}\right]\nonumber\\
& = \EE_{\bX_t, \bS_t}\left[\EE_{\eta_t}\bigg[\frac1{\bar \pi(a|\bX_t)}\bigg\{\bh_a(\bX_t, \bS_t) - \bX_t\eta_t\bigg\}^{\otimes 2}|\bX_t, \bS_t\bigg]\right]\nonumber\\
& = \EE_{\bX_t, \bS_t}\left[\frac1{\bar \pi(a|\bX_t)}\Big(\bh_a(\bX_t, \bS_t)\bh_a(\bX_t, \bS_t)^\top + \sigma_\eta^2 \bX_t \bX_t^\top\Big)\right].\label{eq::Ibar-a-noisy-context}
\end{align}
Combining the above with (\ref{eq::var-estimator-noisy-context-1}), we verify that the asymptotic variance shown in Corollary \ref{cor::ex2} matches that in Theorem \ref{thm::asymptotic-normality-joint-general}.

\subsection{Proof of Theorem \ref{theorem:necessary-condition-for-Ridge-convergence}}\label{apdx::proof-thm::necessary-condition-for-Ridge-convergence}

We prove by contradiction. Suppose $\hat{\bm{\beta}}_{t}^{\Ridge}$ converge in probability, then there exists a random vector $\bar{\bbeta}$ such that $\hat{\bm{\beta}}_{t}^{\Ridge} \xrightarrow{p} \bar{\bbeta}$. Recall the definition of $\hat{\bm{\beta}}_{t}^{\Ridge}$ in (\ref{eq:ridge-estimator}):
\begin{align}
    \hat{\bm{\beta}}_{t, a}^{\Ridge} = \left(\lambda 
    \bI + \sum_{i=1}^{t-1} \bX_{i, a} \bX_{i, a}^\top\right)^{-1} \sum_{i=1}^{t-1} \bX_{i, a} Y_{i, a}.
\end{align}
Here we define $\bX_{t, a} = \bX_t 1_{\{A_t = a\}}$, and $Y_{t, a} = Y_t 1_{\{A_t = a\}}$.


We first note that $\bSigma_{a}(\bm{\beta}) = \EE_{\bX_t, A_t \sim \pi(\cdot \mid \bX_t, \bm{\beta})}[ 1_{\{A_t = a\}} \bX_t \bX_t^\top]$ is a continuous function of $\bm{\beta}$ given any $a \in \mathcal{A}$, because we assume that $\pi(a \mid \bx, \bm{\beta})$ is a continuous function of $\bm{\beta}$ given any $\bx \in \mathcal{X}$. From the continuous mapping theorem, we have $\bSigma_{a}(\hat{\bm{\beta}}_{t-1}^{\Ridge}) \xrightarrow{p} \bSigma_a(\bar{\bm{\beta}})$, and
\begin{align}
    \frac{1}{t} \left(\lambda \bI + \sum_{\tau=1}^{t-1} \bSigma_{a}(\hat{\bm{\beta}}_{\tau, a}^{\Ridge}) \right) \xrightarrow{p} \bSigma_a(\bar{\bm{\beta}}).
\end{align}
Further, since $\bX_{t, a}\bX_{t, a}^\top - \bSigma_{a}(\hat{\bm{\beta}}_{t}^{\Ridge})$ is a zero-mean martingale difference sequence with bounded second moment, by the law of large numbers (Lemma \ref{lem::law-of-large-numbers-for-martingale-difference-sequence})\citep{chow1967strong}, we have that 
\begin{align}
    \frac{1}{t} \sum_{\tau=1}^{t-1} \left(\bX_{\tau, a}\bX_{\tau, a}^\top - \bSigma_{a}(\hat{\bm{\beta}}_{\tau, a}^{\Ridge})\right) \xrightarrow{p} 0.
\end{align}
Combining the above, we deduce that 
\begin{align}
    \frac{1}{t} \left(\lambda \bI + \sum_{\tau=1}^{t-1} \bX_{\tau, a}\bX_{\tau, a}^\top \right) \xrightarrow{p} \bSigma_a(\bar{\bm{\beta}}).
\end{align}

Similar argument gives that 
\begin{align}
    \frac{1}{t} \sum_{\tau=1}^{t-1} \left(\bX_{\tau, a}Y_{\tau, a}\right) \xrightarrow{p} \bm{\varphi}_a(\bar{\bm{\beta}}).
\end{align}

Now we are ready to show the contradiction. First, from the assumptions, we have $\pi(a \mid \bx, \bbeta)>0$ for all $a$, $\bx$, $\bbeta$. Thus $\forall \bbeta$, 
\begin{align*}
\bSigma_{\bar a}(\bbeta) & = \EE_{\bX_t, A\sim \pi(\cdot|\bX_t, \bbeta)}[1_{\{A = \bar a\}}\bX_t\bX_t^\top]\\
& = \EE_{\bX_t}\left[\EE_{A\sim \pi(\cdot|\bX_t, \bbeta)}[1_{\{A = \bar a\}}\bX_t\bX_t^\top]\Big|\bX_t\right]\\
& = \EE_{\bX_t}\left[\bX_t\bX_t^\top\cdot \pi(\bar a \mid \bX_t, \bbeta)\right]\succ \mathbf{0}.
\end{align*}
The last expression above is true because otherwise, there exists a constant vector $\bv\in\RR^d$ s.t. $\bv^\top \EE_{\bX_t}[\bX_t\bX_t^\top\cdot \pi(\bar a \mid \bX_t, \bbeta)]\bv = 0$, which is equivalent to $\EE (\bv^\top \bX_t)^2\cdot \pi(\bar a \mid \bX_t, \bbeta) = 0$, and implies that $(\bv^\top \bX_t)^2\cdot \pi(\bar a \mid \bX_t, \bbeta) = 0$ almost surely. Because $\pi$ is always positive, it has to be that $\bv^\top \bX_t = 0$ almost surely, which contradicts Assumption \ref{aspt:Ridge_Conv}.

Since the mapping $\bM \mapsto \bM^{-1}$ is continuous on the set of positive definite matrices, by the continuous mapping theorem, we deduce that 
\begin{align}
    \left(\frac{1}{t} \left(\lambda \bI + \sum_{\tau=1}^{t-1} \bX_{\tau, {\bar a}}\bX_{\tau, {\bar a}}^\top \right) \right)^{-1} \xrightarrow{p} \left(\bSigma_{{\bar a}}(\bar{\bm{\beta}})\right)^{-1}.
\end{align}
Further, we have 
\begin{align}
    \hat{{\bbeta}}_{t, \bar a}^{\Ridge} \xrightarrow{p} \left(\bSigma_{\bar a}(\bar{\bm{\beta}})\right)^{-1} \bm{\varphi}_{\bar a}(\bar{\bm{\beta}}).
\end{align}
At the same time, because we also assume that   $\hat{\bm{\beta}}_{t}^{\Ridge} \xrightarrow{p} \bar{\bbeta}$, it has to be true that 
$$
\left(\bSigma_{\bar a}(\bar{\bm{\beta}})\right)^{-1} \bm{\varphi}_{\bar a}(\bar{\bm{\beta}}) = \bar{\bbeta}_{\bar a},\quad a.s.
$$
which contradicts (\ref{eq:necessary-condition-for-Ridge-convergence}).




\subsection{Proof of Theorem \ref{thm::inference-with-Sigma_e_hat}}\label{apdx::proof-thm::inference-with-Sigma_e_hat}

We first present the following lemma, the proof is in Appendix \ref{apdx::proof-lem::Convergence-of-weighted-avg-action-selection}.
\begin{lemma}\label{lem::Convergence-of-weighted-avg-action-selection}
    Under the same conditions of Theorem \ref{thm::inference-with-Sigma_e_hat}, as $T\rightarrow \infty$, $\frac1T\sum_{t=1}^TW_t1_{\{A_t= a\}}\xrightarrow{p}1$.
\end{lemma}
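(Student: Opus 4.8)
\subsection*{Proof proposal for Lemma~\ref{lem::Convergence-of-weighted-avg-action-selection}}

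The plan is to recognize the average $\frac1T\sum_{t=1}^T W_t 1_{\{A_t=a\}}$ as a Ces\`aro mean whose centered version is a bounded martingale difference sequence, and then invoke a standard martingale law of large numbers. Set $Z_t := W_t 1_{\{A_t=a\}} = \frac{1}{\pi_t(A_t)}1_{\{A_t=a\}}$, where $\pi_t(A_t)$ abbreviates $\pi_t(A_t\mid \bX_t,\cH_{t-1})$. Conditioning first on $(\cH_{t-1},\bX_t)$ and using that $\pi_t(\cdot\mid \bX_t,\cH_{t-1})$ is a probability distribution on $\cA$ with $\pi_t(a\mid \bX_t,\cH_{t-1})\geq \pi_{\min}>0$ (Assumption~\ref{aspt:min-sampling-prob}), one has
$$
\EE[Z_t\mid \cH_{t-1},\bX_t] \;=\; \sum_{a'\in\cA}\pi_t(a'\mid \bX_t,\cH_{t-1})\cdot \frac{1_{\{a'=a\}}}{\pi_t(a'\mid \bX_t,\cH_{t-1})} \;=\; 1,
$$
and hence, by the tower property, $\EE[Z_t\mid\cH_{t-1}]=1$. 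Therefore $\{Z_t-1\}_{t\geq 1}$ is a martingale difference sequence with respect to the filtration $\{\cH_t\}_{t\geq 0}$.

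Next I would exploit boundedness: Assumption~\ref{aspt:min-sampling-prob} gives $0\leq Z_t\leq 1/\pi_{\min}$ almost surely, so $\EE[(Z_t-1)^2]\leq 1/\pi_{\min}^2<\infty$ uniformly in $t$. By orthogonality of martingale differences,
$$
\EE\!\left[\left(\tfrac1T\sum_{t=1}^T (Z_t-1)\right)^{\!2}\right] \;=\; \frac{1}{T^2}\sum_{t=1}^T \EE[(Z_t-1)^2] \;\leq\; \frac{1}{T\,\pi_{\min}^2}\;\longrightarrow\;0,
$$
and Chebyshev's inequality yields $\frac1T\sum_{t=1}^T (Z_t-1)\xrightarrow{p}0$, i.e.\ $\frac1T\sum_{t=1}^T W_t 1_{\{A_t=a\}}\xrightarrow{p}1$. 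Alternatively one may apply the martingale LLN of \citep{chow1967strong} (cf.\ Lemma~\ref{lem::law-of-large-numbers-for-martingale-difference-sequence}) directly to $\{Z_t-1\}$.

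There is no substantive obstacle here; the argument is routine. The only point requiring a little care is the conditioning bookkeeping---that $\pi_t(A_t)$ is $\sigma(\cH_{t-1},\bX_t,A_t)$-measurable and that the displayed computation of $\EE[Z_t\mid\cH_{t-1},\bX_t]$ correctly isolates the realized action. It is worth noting that, unlike the analogous martingale computations elsewhere in the paper (e.g.\ the heuristic following Theorem~\ref{thm::asymptotic-normality-general}), this lemma uses only Assumption~\ref{aspt:min-sampling-prob}: neither the unconfoundedness Assumption~\ref{aspt:unconfoundedness} nor the i.i.d.\ structure of the potential outcomes is needed, since $Z_t$ depends on the data only through $A_t$ and its selection probability, and in particular policy convergence plays no role.
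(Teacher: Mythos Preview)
Your proof is correct and follows essentially the same route as the paper's own argument: both recognize $Z_t - 1$ as a martingale difference sequence with respect to $\{\cH_t\}$, bound its second moment by $1/\pi_{\min}^2$ via Assumption~\ref{aspt:min-sampling-prob}, and conclude with Chebyshev's inequality using the orthogonality of martingale increments. Your additional remark that neither unconfoundedness nor policy convergence is needed here is accurate and worth noting.
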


Returning to the main proof, we have 
$$
\nabla \tilde \bG_T(\btheta_a^*) \cdot\sqrt{T}(\tilde \btheta_a - \btheta_a^*) = -\sqrt{T}\tilde \bG_T(\btheta_a^*).
$$
We analyze $\nabla \tilde \bG_T(\btheta_a^*)$ and $\sqrt{T}\tilde \bG_T(\btheta_a^*)$ separately. First, 
$$
\nabla \tilde \bG_T(\btheta_a^*) = \nabla \bG_T(\btheta_a^*) - \left(\frac1T\sum_{t=1}^TW_t1_{\{A_t= a\}}\right)\cdot (\hat\bSigma_e - \bSigma_e)
$$
From Lemma \ref{lem::convergence-true-derivative-general}, we have $\nabla \bG_T(\btheta_a^*)\xrightarrow{p}\bSigma_{S}$. From Lemma \ref{lem::Convergence-of-weighted-avg-action-selection}, we have $\frac1T\sum_{t=1}^TW_t1_{\{A_t= a\}}\xrightarrow{p}1$. In addition, $\hat\bSigma_e - \bSigma_e\xrightarrow{p} 0$. Combining these facts, we obtain that $\nabla \tilde \bG_T(\btheta_a^*)\xrightarrow{p}\bSigma_{S}$.

We now analyze $\sqrt{T}\tilde \bG_T(\btheta_a^*)$. We have 
$$
\sqrt{T}\tilde \bG_T(\btheta_a^*) = \sqrt{T} \bG_T(\btheta_a^*) - \left(\frac1{T}\sum_{t=1}^TW_t1_{\{A_t= a\}}\right)\cdot \sqrt{T}(\hat\bSigma_e - \bSigma_e)\btheta_a^*,
$$
where from Lemma \ref{lem::asymptotic-normality-true-G-general} and (\ref{eq::Ibar-a-noisy-context}) we have
$\sqrt{T} \bG_T(\btheta_a^*)\xrightarrow{d} \cN(\mathbf{0}, \bar\bI_a)$. Also, $\frac1T\sum_{t=1}^TW_t1_{\{A_t= a\}}\xrightarrow{p}1$.

\begin{itemize}
    \item If $n\gg T$, $\sqrt{T}(\hat\bSigma_e - \bSigma_e) = O_P(\sqrt{T/n}) = o_P(1)$, then $\sqrt{T} \tilde \bG_T(\btheta_a^*)\xrightarrow{d} \cN(\mathbf{0}, \bar\bI_a)$.
    
    \item If $n\ll T$, we instead write 
    $$\sqrt{n}\tilde \bG_T(\btheta_a^*) = \sqrt{n} \bG_T(\btheta_a^*) - \left(\frac1{T}\sum_{t=1}^TW_t1_{\{A_t= a\}}\right)\cdot \sqrt{n}(\hat\bSigma_e - \bSigma_e)\btheta_a^*.$$
    Notice that $\sqrt{n} \bG_T(\btheta_a^*) = \sqrt{n/T}\cdot \sqrt{T} G_T(\btheta_a^*)=o_p(1)$, and from the Central Limit Theorem, we have 
    $$
    \sqrt{n}(\hat\bSigma_e - \bSigma_e)\btheta_a^*\xrightarrow{d} \cN(\mathbf{0}, \bar\bH_a),
    $$
    where $\bar \bH_a: = \EE(\tilde\bV_i-\bSigma_e)\btheta_a^*\btheta_a^{*, \top}(\tilde\bV_i-\bSigma_e)$. Thus, $$\sqrt{n}\bG_T(\btheta_a^*)\xrightarrow{d} \bar \bH_a.$$ 
    Combining the limit of $\nabla\tilde \bG_T(\btheta_a^*)$, we use Slutsky's Theorem to obtain the desired result.
    
    \item If $n = \kappa T$ for some constant $\kappa$, we write
    $$\sqrt{T}\tilde \bG_T(\btheta_a^*) = \sqrt{T} \bG_T(\btheta_a^*) - \left(\frac1{T}\sum_{t=1}^TW_t1_{\{A_t= a\}}\right)\cdot\sqrt{\frac{T}{n}} \cdot\sqrt{n}(\hat\bSigma_e - \bSigma_e)\btheta_a^*.$$
    Note that $\sqrt{T} G_T(\btheta_a^*)\xrightarrow{d} \cN(\mathbf{0}, \bar\bI_a)$ and $
    \sqrt{n}(\hat\bSigma_e - \bSigma_e)\btheta_a^*\xrightarrow{d} \cN(\mathbf{0}, \bar\bH_a)
    $ are independent, the latter obtained from the Central Limit Theorem. Here $\bar \bH_a: = \EE(\tilde\bV_i-\bSigma_e)\btheta_a^*\btheta_a^{*, \top}(\tilde\bV_i-\bSigma_e)$.   
    Combining Lemma \ref{lem::Convergence-of-weighted-avg-action-selection}, we obtain that 
    $$
    \sqrt{T}\tilde \bG_T(\btheta_a^*)\xrightarrow{d} \cN\left(\mathbf{0}, \bar \bI_a + \frac{1}{\kappa}\bar \bH_a\right).
    $$
    Combining the limit of $\nabla\tilde \bG_T(\btheta_a^*)$, we use Slutsky's Theorem to obtain the desired result.
\end{itemize}

\section{Proofs of Auxiliary Lemmas}\label{appB}

    \subsection{Proof of Lemma \ref{lem::clip-l2-projection}}\label{apdx::proof-lem::clip-l2-projection}

{
    We first show that $\operatorname{Clip}(\bm{\pi})$ is the $L_2$ projection of $\bm{\pi}$ onto the set $\{ \bm{\pi} \in [0, 1]^{|\mathcal{A}|} \mid \sum_{a} \bm{\pi}_a = 1, \bm{\pi}_a \geq \pi_{\min}\}$.

    The optimization problem is given by
    \begin{align}
        \min_{\bm{\pi}' \in [0, 1]^{|\mathcal{A}|}} \frac{1}{2}\|\bm{\pi} - \bm{\pi}'\|_2^2 \quad \text{s.t.} \quad \sum_{a} \bm{\pi}'_a = 1, \quad \bm{\pi}'_a \geq \pi_{\min}.
    \end{align}
    The Lagrangian is given by
    \begin{align}
        \mathcal{L}(\bm{\pi}', \nu, \mu) = \frac{1}{2}\|\bm{\pi} - \bm{\pi}'\|_2^2 + \nu \left(1 - \sum_{a} \bm{\pi}'_a\right) + \bm{\mu}^\top (\bm{\pi}' - \pi_{\min}).
    \end{align}
    The KKT conditions are given by
    \begin{align}
        \nabla_{\bm{\pi}'} \mathcal{L}(\bm{\pi}', \nu, \bm{\mu}) = \bm{\pi} - \bm{\pi}' - \nu \mathbf{1} - \bm{\mu} = 0, \\
        \nu \left(1 - \sum_{a} \bm{\pi}'_a\right) = 0, \\
        \bm{\mu} \left(\bm{\pi}' - \pi_{\min}\right) = 0, \\
        \bm{\mu} \geq 0, \quad \bm{\pi}' \geq \pi_{\min}, \quad
        \nu \mathbf{1} = 1.
    \end{align}

    From the optimality condition, we have:
    \begin{align}
        \bm{\pi}'_a=\bm{\pi}_a+\nu+\mu_a
    \end{align}

    Complementary slackness indicates that for each $a$ :
    \begin{itemize}
        \item if $\bm{\pi}'_a>\pi_{\min}$, then $\mu_a=0$, thus:
        \begin{align}
            \bm{\pi}'_a=\bm{\pi}_a+\nu
        \end{align}
        \item if $\bm{\pi}'_a=\pi_{\min}$, then:
        \begin{align}
            \pi_{\min}=\bm{\pi}_a+\nu+\mu_a, \quad \mu_a \geq 0 \Rightarrow \bm{\pi}_a+\nu \leq \pi_{\min}
        \end{align}
    \end{itemize}

    Hence, the solution takes the form:
    \begin{align}
        \bm{\pi}'_a=\max \left(\bm{\pi}_a+\nu, \pi_{\min}\right)
    \end{align}
    with the constraint that $\sum_{a=1}^{|\mathcal{A}|} \bm{\pi}'_a=1$.

    By taking the derivative of $\mathcal{L}(\bm{\pi}', \nu, \bm{\mu})$ w.r.t. $\nu$, we have that $\nu$ is the minimum value that satisfies the KKT conditions.

}

    \subsection{Proof of Lemma \ref{lem::consistency-general}}\label{apdx::proof-lem::consistency-general}
    
    We first prove that for $R_{\btheta}$ stated in Assumption \ref{aspt:boundedness-general}, 
    \begin{equation}\label{eq::uniform-convergence-general}
        \sup_{\|\btheta\|_2\leq R_{\btheta}}\|\bG_T(\btheta) - \EE\bG_T(\btheta)\|_2\xrightarrow{p} 0
    \end{equation}
    as $T\rightarrow\infty$. In fact, we only need to prove 
    \begin{equation}\label{eq::uniform-convergence-by-entry-general}
        \sup_{\|\btheta\|_2\leq R_{\btheta}}\|\bG_T^{(i)}(\btheta) - \EE\bG_T^{(i)}(\btheta)\|_2\xrightarrow{p} 0
    \end{equation}
    for all $i = 1, \ldots, d$. Here $\bG_T^{(i)}(\btheta)$ denotes the $i$-th entry of $\bG_T(\btheta)$.
    
    Fix any $\epsilon>0$. Let $\bTheta_\epsilon = \{\btheta_j: j = 1, \ldots, N_\epsilon\}$ be an $\epsilon$-net of the set $\bTheta := \overline{\cB(\mathbf{0}, R_{\btheta})}$ with finite cardinality $N_\epsilon$. This means that $\forall \btheta\in\bTheta_\epsilon$, $\exists j\in[N_\epsilon]$ such that $\|\btheta - \btheta_j\|_2\leq \epsilon$. Then from Assumption \ref{aspt:smoothness-general}, we have 
    \begin{align*}
    |\bg^{(i)}(\bX_t, Y_t(a); \btheta) - \bg^{(i)}(\bX_t, Y_t(a); \btheta_j)| 
    &\leq \|\bg(\bX_t, Y_t(a); \btheta) - \bg(\bX_t, Y_t(a); \btheta_j)\|_2\\
    &\leq \left\|\int_{0}^1\nabla \bg(\bX_t, Y_t(a);\btheta_j + u(\btheta - \btheta_j))\mathrm{d}u\cdot (\btheta - \btheta_j)\right\|_2\\
    &\leq \phi(\bX_t, Y_t(a))\cdot \|\btheta - \btheta_j\|_2\\
    &\leq \epsilon\phi(\bX_t, Y_t(a)).
    \end{align*}
    Here $\bg^{(i)}(\bX_t, Y_t(a); \btheta)$ denotes the $i$-th entry of $\bg(\bX_t, Y_t(a); \btheta)$, for any $\btheta$. Thus, 
    \begin{equation}\label{eq::gi-nested-bounds}
        l_j(\bX_t, Y_t(a))\leq \bg^{(i)}(\bX_t, Y_t(a); \btheta)\leq u_j(\bX_t, Y_t(a)),
    \end{equation}
    where we define $l_j(\bX_t, Y_t(a)) = \bg^{(i)}(\bX_t, Y_t(a); \btheta_j) - \epsilon\phi(\bX_t, Y_t(a))$, $u_j(\bX_t, Y_t(a)) = \bg^{(i)}(\bX_t, Y_t(a); \btheta_j) + \epsilon\phi(\bX_t, Y_t(a))$.
    
    Now, notice that $\bG_T(\btheta) = \frac1T\sum_{t=1}^T\bZ_t(\btheta)$, where $\bZ_t(\btheta) = \frac{1}{\pi_t(A_t)}1_{\{A_t = a\}}\bg(\bX_t, Y_t; \btheta)$. We have 
    \begin{align*}
        \EE[\bZ_t(\btheta)|\cH_{t-1}]
        & = \EE_{\bX_t}\big[\EE[\bZ_t(\btheta)|\cH_{t-1}, \bX_t]\big|\cH_{t-1}\big]\\
        & = \EE_{\bX_t}\bigg[\EE_{A_t\sim \pi_t}\Big[\frac{1}{\pi_t(A_t)}1_{\{A_t = a\}}\Big|\cH_{t-1}, \bX_t\Big]\cdot \EE_{Y_t(a)}[\bg(\bX_t, Y_t(a); \btheta)|\cH_{t-1}, \bX_t]\bigg|\cH_{t-1}\bigg]\\
        & = \EE_{\bX_t}\big[\EE_{Y_t(a)}[\bg(\bX_t, Y_t(a); \btheta)|\cH_{t-1}, \bX_t]\big|\cH_{t-1}\big]\\
        & = \EE\bg(\bX_t, Y_t(a); \btheta).
    \end{align*}
    Here in the second equation, we use Assumption \ref{aspt:unconfoundedness}. This implies that 
    \begin{equation}\label{eq::GT-expectation}
        \EE\bG_T(\btheta) =  \EE\bg(\bX_t, Y_t(a); \btheta) = \EE[\bZ_t(\btheta)|\cH_{t-1}],
    \end{equation}
    $\forall t\in[T]$. Combining (\ref{eq::gi-nested-bounds}), we deduce that 
    \begin{align}
    \sup_{\btheta\in\bTheta}\{\bG_T^{(i)}(\btheta) - \EE\bG_T^{(i)}(\btheta)\}
    & = \sup_{\btheta\in\bTheta}\frac1T\sum_{t=1}^T
    \bigg\{\frac{1}{\pi_t(A_t)}1_{\{A_t = a\}}\bg^{(i)}(\bX_t, Y_t; \btheta)\nonumber \\
    &\quad\quad\quad\quad\quad\quad - \EE\bigg[\frac{1}{\pi_t(A_t)}1_{\{A_t = a\}}\bg^{(i)}(\bX_t, Y_t; \btheta)\bigg|\cH_{t-1}\bigg]\bigg\}\nonumber\\
    & \leq \max_{j\in[N_\epsilon]} \frac1T\sum_{t=1}^T
    \bigg\{\frac{1}{\pi_t(A_t)}1_{\{A_t = a\}}u_j(\bX_t, Y_t(a))\nonumber\\
    &\quad\quad\quad\quad\quad\quad\quad - \EE\bigg[\frac{1}{\pi_t(A_t)}1_{\{A_t = a\}}l_j(\bX_t, Y_t(a))\bigg|\cH_{t-1}\bigg]\bigg\}\nonumber\\
    & \leq \Delta_1 + \Delta_2,\label{eq::maxima-mtg-decomposition}
    \end{align}
    where 
    $$
    \Delta_1 := \max_{j\in[N_\epsilon]} \frac1T\sum_{t=1}^T
    \left\{\frac{1}{\pi_t(A_t)}1_{\{A_t = a\}}u_j(\bX_t, Y_t(a)) - \EE\bigg[\frac{1}{\pi_t(A_t)}1_{\{A_t = a\}}u_j(\bX_t, Y_t(a))\bigg|\cH_{t-1}\bigg]\right\},
    $$
    $$
    \Delta_2:= \max_{j\in[N_\epsilon]} \frac1T\sum_{t=1}^T
    \EE\bigg[\frac{1}{\pi_t(A_t)}1_{\{A_t = a\}}\big(u_j(\bX_t, Y_t(a))-l_j(\bX_t, Y_t(a))\big)\bigg|\cH_{t-1}\bigg].
    $$
    We first analyze $\Delta_1$. In fact we have 
    \begin{equation}\label{eq::Delta1_bound}
        \Delta_1\leq \sum_{j\in[N_\epsilon]} \frac1T\sum_{t=1}^T
    \left\{\frac{1}{\pi_t(A_t)}1_{\{A_t = a\}}u_j(\bX_t, Y_t(a)) - \EE\bigg[\frac{1}{\pi_t(A_t)}1_{\{A_t = a\}}u_j(\bX_t, Y_t(a))\bigg|\cH_{t-1}\bigg]\right\}.
    \end{equation}
    For any $\epsilon'>0$,
    \begin{align}
    & \PP\left(\bigg|\frac1T\sum_{t=1}^T
    \left\{\frac{1_{\{A_t = a\}}}{\pi_t(A_t)}u_j(\bX_t, Y_t(a))\!-\!\EE\bigg[\frac{1_{\{A_t = a\}}}{\pi_t(A_t)}u_j(\bX_t, Y_t(a))\bigg|\cH_{t-1}\bigg]\right\}\bigg|>\epsilon'\right)\nonumber\\
    \leq & \frac1{T^2\epsilon'^2}\EE\left(\sum_{t=1}^T
    \left\{\frac{1_{\{A_t = a\}}}{\pi_t(A_t)}u_j(\bX_t, Y_t(a)) - \EE\bigg[\frac{1_{\{A_t = a\}}}{\pi_t(A_t)}u_j(\bX_t, Y_t(a))\bigg|\cH_{t-1}\bigg]\right\}\right)^2\nonumber\\
    = & \frac1{T^2\epsilon'^2}\sum_{t=1}^T\EE\left(
    \frac{1_{\{A_t = a\}}}{\pi_t(A_t)}u_j(\bX_t, Y_t(a)) - \EE\bigg[\frac{1_{\{A_t = a\}}}{\pi_t(A_t)}u_j(\bX_t, Y_t(a))\bigg|\cH_{t-1}\bigg]\right)^2\nonumber\\
    \leq & \frac1{T^2\epsilon'^2}\sum_{t=1}^T\EE\left(
    \frac{1_{\{A_t = a\}}}{\pi_t(A_t)}u_j(\bX_t, Y_t(a))\right)^2\nonumber\\
    \leq & \frac1{T^2\epsilon'^2}\sum_{t=1}^T \frac1{\pi_{\min}^2}\EE u_j^2(\bX_t, Y_t(a))\nonumber\\
    = & \frac1{T^2\epsilon'^2}\sum_{t=1}^T \frac1{\pi_{\min}^2}\EE \big(\bg^{(i)}(\bX_t, Y_t(a); \btheta_j) + \epsilon\phi(\bX_t, Y_t(a))\big)^2\nonumber\\
    \leq & \frac1{\pi_{\min}^2T^2\epsilon'^2}\cdot T \EE \big[2(\bg^{(i)}(\bX_1, Y_1(a); \btheta_j))^2 + 2\epsilon^2\phi^2(\bX_1, Y_1(a))\big]\nonumber\\
    \leq & \frac{2M_2' + \epsilon^2M_\phi}{\pi_{\min}^2T\epsilon'^2}\rightarrow 0.\label{eq::single-theta-u-convergence}
    \end{align}
    Here $M_2':= \sup_{\|\btheta\|_2\leq R_{\btheta}}\EE\|\bg(\bX_t, Y_t(a);\btheta)\|_2^2$, $M_\phi:= \EE\phi(\bX_t, Y_t(a))^2$. Above, the first inequality is due to Chebyshev's inequality. The first equality is due to the following fact: Denote $u_{j, t} = \frac{1}{\pi_t(A_t)}1_{\{A_t = a\}}u_j(\bX_t, Y_t(a))$. Then for $t_1<t_2$,
    \begin{align*}
       & \EE (u_{j, t_1} - \EE[u_{j, t_1}|\cH_{t_1-1}])(u_{j, t_2} - \EE[u_{j, t_2}|\cH_{t_2-1}]) \\
       = & \EE\big[ \EE[(u_{j, t_1} - \EE[u_{j, t_1}|\cH_{t_1-1}])(u_{j, t_2} - \EE[u_{j, t_2}|\cH_{t_2-1}])\big|\cH_{t_1-1}]\big]\\
       = & \EE\big[ (u_{j, t_1} - \EE[u_{j, t_1}|\cH_{t_1-1}])\cdot \EE[u_{j, t_2} - \EE[u_{j, t_2}|\cH_{t_2-1}]\big|\cH_{t_1-1}]\big]\\
       = & \EE\big[ (u_{j, t_1} - \EE[u_{j, t_1}|\cH_{t_1-1}])\cdot 0\big] = 0.
    \end{align*}
    The second to last inequality is due to the fact that $(a+b)^2\leq 2(a^2+b^2)$ for any $a, b\in\RR$. The final inequality uses Assumption \ref{aspt:boundedness-general} and Assumption \ref{aspt:smoothness-general}, which implies $M_2'<\infty$ and $M_\phi<\infty$.
    
    Combining (\ref{eq::Delta1_bound}) and (\ref{eq::single-theta-u-convergence}), we obtain that $\Delta_1\xrightarrow{p}0$.
    
    Next, we analyze $\Delta_2$. We have 
    \begin{align*}
    \Delta_2
    &\leq \max_{j\in[N_\epsilon]} \frac1T\sum_{t=1}^T\frac1{\pi_{\min}}
    \EE\bigg[u_j(\bX_t, Y_t(a))-l_j(\bX_t, Y_t(a))\bigg|\cH_{t-1}\bigg]\\
    &\leq \max_{j\in[N_\epsilon]} \frac1T\sum_{t=1}^T\frac1{\pi_{\min}}
    \EE\bigg[2\epsilon\phi(\bX_t, Y_t(a))\bigg|\cH_{t-1}\bigg]\\
    & = \frac{2\epsilon}{\pi_{\min}}
    \EE\phi(\bX_1, Y_1(a))\\
    & \leq \frac{2\epsilon}{\pi_{\min}}\cdot \sqrt{\EE\phi(\bX_1, Y_1(a))^2} \\
    & \leq \frac{2\epsilon\sqrt{M_\phi}}{\pi_{\min}}.
    \end{align*}
    Plugging in the analysis of $\Delta_1$ and $\Delta_2$ into (\ref{eq::maxima-mtg-decomposition}), we obtain that $\forall \epsilon>0$, 
    $$
    \PP\left(\sup_{\btheta\in\bTheta}\{\bG_T^{(i)}(\btheta) - \EE\bG_T^{(i)}(\btheta)\}>\frac{2\epsilon\sqrt{M_\phi}}{\pi_{\min}}\right)\rightarrow 0.
    $$
    This implies that $\forall \epsilon>0$, 
    $$
    \PP\left(\sup_{\btheta\in\bTheta}\{\bG_T^{(i)}(\btheta) - \EE\bG_T^{(i)}(\btheta)\}>\epsilon\right)\rightarrow 0.
    $$
    Similarly, we have 
    $$
    \PP\left(\sup_{\btheta\in\bTheta}\{-\bG_T^{(i)}(\btheta) + \EE\bG_T^{(i)}(\btheta)\}>\epsilon\right)\rightarrow 0.
    $$
    Therefore, we have proved (\ref{eq::uniform-convergence-by-entry-general}). Further, (\ref{eq::uniform-convergence-general}) is proved.
    
    Now we return to prove the main results. Define 
    $$
    \hat\btheta_a^{(T)} = \argmin_{\|\btheta\|_2\leq R_{\btheta}}\|\bG_T(\btheta)\|_2,
    $$
    and 
    $$\tilde\btheta_a^{(T)} = \btheta_a^* - [\nabla\bar \bG(\btheta_a^*)]^{-1}\bG_T(\btheta_a^*).$$ 
    Here $\bar\bG(\btheta) := \EE \bG_T(\btheta) = \EE \bg(\bX_t, Y_t(a);\btheta)$ due to (\ref{eq::GT-expectation}). From Assumption \ref{aspt:smoothness-general}, $\nabla\bar \bG(\btheta_a^*)$ is invertible. Combining Lemma \ref{lem::asymptotic-normality-true-G-general}, we have 
    \begin{equation}\label{eq::exist-z-estimator-consistency}
        \tilde\btheta_a^{(T)} - \btheta_a^* = O_p(1/\sqrt{T}).
    \end{equation}
    In addition, from Taylor expansion, we have 
    \begin{align}
    \bG_T(\tilde\btheta_a^{(T)})
    & = \bG_T(\btheta_a^*) + \nabla\bG_T(\btheta_a^*)(\tilde\btheta_a^{(T)} - \btheta_a^*) + o_p(\|\tilde\btheta_a^{(T)} - \btheta_a^*\|_2)\nonumber\\
    & = \bG_T(\btheta_a^*) - \nabla\bG_T(\btheta_a^*)[\nabla\bar \bG(\btheta_a^*)]^{-1}\bG_T(\btheta_a^*) + o_p(1/\sqrt{T})\nonumber\\
    & = \bG_T(\btheta_a^*) - (1 + o_p(1))\bG_T(\btheta_a^*) + o_p(1/\sqrt{T})\nonumber\\
    & = o_p(1/\sqrt{T}).\label{eq::exist-z-estimator}
    \end{align}
    Here the second equality is from the definition of $\tilde\btheta_a^{(T)}$ and due to (\ref{eq::exist-z-estimator-consistency}). The third equality is obtained from the following two facts: From Lemma \ref{lem::convergence-true-derivative-general}, 
    $\nabla\bG_T(\btheta_a^*)\xrightarrow{p}\EE\nabla\bg(\bX_t, Y_t(a);\btheta_a^*)$; From (\ref{eq::GT-expectation}) and Assumption \ref{aspt:smoothness-general}, $\nabla \bar \bG(\btheta_a^*)  = \frac{\partial}{\partial \btheta}\EE\bG_T(\btheta)|_{\btheta =\btheta_a^*}= \frac{\partial}{\partial \btheta}\EE \bg(\bX_t, Y_t(a);\btheta)|_{\btheta =\btheta_a^*} = \EE \frac{\partial}{\partial \btheta}\bg(\bX_t, Y_t(a);\btheta)|_{\btheta =\btheta_a^*}$ nonsingular. Here the exchangeability between expectation and differentiation can be obtained by standard arguments using dominated convergence theorem, see e.g. Theorem 16.8 in \citep{billingsley1995probability}. The last equality is obtained from Lemma \ref{lem::asymptotic-normality-true-G-general}.
    
    Combining (\ref{eq::exist-z-estimator-consistency}) (which implies $\PP(\|\tilde\btheta_a^{(T)}\|_2>R_{\btheta})\rightarrow 0$) and (\ref{eq::exist-z-estimator}), we deduce that 
    \begin{align*}
        \|\bG_T(\hat\btheta_a^{(T)})\|_2 &= \|\bG_T(\hat\btheta_a^{(T)})\|_21_{\{\|\tilde\btheta_a^{(T)}\|_2\leq R_{\btheta}\}} + \|\bG_T(\hat\btheta_a^{(T)})\|_21_{\{\|\tilde\btheta_a^{(T)}\|_2> R_{\btheta}\}}\\
        & \leq \|\bG_T(\tilde\btheta_a^{(T)})\|_2 + o_p(1/\sqrt{T}) = o_p(1/\sqrt{T}).
    \end{align*}
    Thus we have proved the first part of Lemma \ref{lem::consistency-general}. 
    
    Next, for any sequence $\hat\btheta_a^{(T)}$ such that $\hat\btheta_a^{(T)}\leq R_{\btheta}$ and (\ref{eq::estimating-equation-general}) holds, we proceed to prove $\hat\btheta_a^{(T)}\xrightarrow{p}\btheta_a^*$. According to Assumption \ref{aspt:identifiability-general}, for any $\epsilon>0$, we have 
    $$
    \delta_\epsilon:=\inf_{\|\btheta - \btheta_a^*\|_2>\epsilon}\|\EE\bg(\bX_t, Y_t(a); \btheta)\|_2>0.
    $$
    From (\ref{eq::GT-expectation}), we deduce that 
    $$
    \inf_{\|\btheta - \btheta_a^*\|_2>\epsilon}\|\EE\bG_T( \btheta)\|_2 = \inf_{\|\btheta - \btheta_a^*\|_2>\epsilon}\|\EE\bg(\bX_t, Y_t(a);\btheta)\|_2\geq \delta_{\epsilon}.
    $$
    Combine (\ref{eq::uniform-convergence-general}), we have 
    \begin{align*}
    \inf_{\btheta\in\bTheta, \|\btheta - \btheta_a^*\|_2>\epsilon} \|\bG_T(\btheta)\|_2\geq \inf_{\|\btheta - \btheta_a^*\|_2>\epsilon}\|\EE\bG_T( \btheta)\|_2 - \sup_{\btheta\in\bTheta}\|\bG_T(\btheta) - \EE\bG_T(\btheta)\|_2\geq \delta_{\epsilon} - o_p(1).
    \end{align*}
    Here $\bTheta = \overline{\cB(\mathbf{0}, R_{\btheta})}$. This implies 
    \begin{equation}\label{eq::consistency1}
    \lim_{T\rightarrow\infty}\PP\left(\inf_{\btheta\in\bTheta, \|\btheta - \btheta_a^*\|_2>\epsilon} \|\bG_T(\btheta)\|_2\leq \frac12\delta_{\epsilon}\right)=0.
    \end{equation}
    At the same time, from the property of $\hat\btheta_a^{(T)}$, we have for any $\epsilon'>0$,
    $$
    \lim_{T\rightarrow\infty}\PP\left(\|\bG_{T}(\hat\btheta_a^{(T)})\|_2>\epsilon'/\sqrt{T}\right)=0.
    $$
    Thus, 
    \begin{equation}\label{eq::consistency2}
    \lim_{T\rightarrow\infty}\PP\left(\|\bG_{T}(\hat\btheta_a^{(T)})\|_2>\frac12\delta_{\epsilon}\right)= 0.
    \end{equation}
    We combine (\ref{eq::consistency1}) and (\ref{eq::consistency2}) and get
    \begin{align*}
    \PP\big(\|\hat\btheta_a^{(T)} - \btheta_a^*\|_2>\epsilon\big)
    & =  \PP\left(\|\hat\btheta_a^{(T)} -\btheta_a^*\|_2>\epsilon, \|\bG_{T}(\hat\btheta_a^{(T)})\|_2\leq \frac12\delta_{\epsilon}\right)\\
    &\quad + \PP\left(\|\hat\btheta_a^{(T)} - \btheta_a^*\|_2>\epsilon, \|\bG_{T}(\hat\btheta_a^{(T)})\|_2>\frac12\delta_{\epsilon}\right)\\
    &\leq \PP\left(\inf_{\btheta\in\bTheta, \|\btheta - \btheta_a^*\|_2>\epsilon} \|\bG_T(\btheta)\|_2\leq \frac12\delta_{\epsilon}\right) + \PP\left(\|\bG_{T}(\hat\btheta_a^{(T)})\|_2>\frac12\delta_{\epsilon}\right)\\
    &\rightarrow 0
    \end{align*}
    as $T\rightarrow \infty$. As the above holds for any $\epsilon>0$, the consistency of $\hat\btheta_a^{(T)}$ is proved.
    
    \subsection{Proof of Lemma \ref{lem::convergence-true-derivative-general}}\label{apdx::proof-lem::convergence-true-derivative-general}
    
    Recall that $\nabla\bG_T(\btheta_a^*) = \frac1T\sum_{t=1}^T\bV_t$, where\\ $\bV_t:=\frac{1}{\pi_t(A_t)}1_{\{A_t = a\}}\nabla\bg(\bX_t, Y_t(a);\btheta_a^*)$. We have for any nonrandom vectors $\bc, \bc'\in\RR^d$, 
    \begin{align*}
    \EE[c^\top\bV_t c'|\cH_{t-1}]
    & = \EE_{\bX_t}\left[\EE_{A_t\sim \pi_t, Y_t(a)}[\bc^\top\bV_t\bc'|\cH_{t-1}, \bX_t]\bigg|\cH_{t-1}\right]\\
    & = \EE_{\bX_t}\bigg[\EE_{A_t\sim \pi_t}\bigg[\frac{1}{\pi_t(A_t)}1_{\{A_t = a\}}\bigg|\cH_{t-1}, \bX_t\bigg]\cdot\\
    &\quad \quad\quad \quad\EE_{Y_t(a)}[\bc^\top\nabla\bg(\bX_t, Y_t(a);\btheta_a^*)\bc'|\cH_{t-1}, \bX_t]\bigg|\cH_{t-1}\bigg]\\
    & = \EE_{\bX_t}\left[1\cdot \EE_{Y_t(a)}[\bc^\top\nabla\bg(\bX_t, Y_t(a);\btheta_a^*)\bc'|\cH_{t-1}, \bX_t]\bigg|\cH_{t-1}\right]\\
    & = \bc^\top\EE[\nabla\bg(\bX_t, Y_t(a);\btheta_a^*)]\bc'.
    \end{align*}
    Here the second inequality uses Assumption \ref{aspt:unconfoundedness}. From the above we deduce that $\forall \delta>0$,
    \begin{align}
    &\PP\left(|\bc^\top[\nabla\bG_T(\btheta_a^*) - \EE\nabla\bg(\bX_t, Y_t(a);\btheta_a^*)]\bc'|>\delta\right)\nonumber\\
    = & \PP\left(\bigg|\frac1T\sum_{t=1}^T[\bc^\top\bV_t\bc' - \EE[\bc^\top\bV_t\bc'|\cH_{t-1}]]\bigg|>\delta\right)\nonumber\\
    \leq & \frac1{\delta^2T^2}\EE\left(\sum_{t=1}^T[\bc^\top\bV_t\bc' - \EE[\bc^\top\bV_t\bc'|\cH_{t-1}]]\right)^2\nonumber\\
     = & \frac1{\delta^2T^2}\sum_{t=1}^T\EE\left(\bc^\top\bV_t\bc' - \EE[\bc^\top\bV_t\bc'|\cH_{t-1}]\right)^2\nonumber\\
     \leq & \frac1{\delta^2T^2}\sum_{t=1}^T\EE\left(\bc^\top\bV_t\bc' \right)^2\nonumber\\
     \leq & \frac1{\delta^2T^2}\sum_{t=1}^T\frac{1}{\pi_{\min}^2}\EE\left(\bc^\top\nabla\bg(\bX_t, Y_t(a); \btheta_a^*)\bc' \right)^2\rightarrow 0.\label{eq::convergence-gradient-general-cc'}
    \end{align}
    Here the first inequality is because of Chebyshev's inequality. The second equality is due to the following fact: Let $v_{t} = \bc^\top\bV_t\bc'$. Then for $t_1<t_2$,
    \begin{align*}
       & \EE (v_{t_1} - \EE[v_{t_1}|\cH_{t_1-1}])(v_{t_2} - \EE[v_{t_2}|\cH_{t_2-1}]) \\
       = & \EE\big[ \EE[(v_{t_1} - \EE[v_{t_1}|\cH_{t_1-1}])(v_{t_2} - \EE[v_{t_2}|\cH_{t_2-1}])\big|\cH_{t_1-1}]\big]\\
       = & \EE\big[(v_{t_1} - \EE[v_{t_1}|\cH_{t_1-1}])\cdot \EE[v_{t_2} - \EE[v_{t_2}|\cH_{t_2-1}]\big|\cH_{t_1-1}]\big]\\
       = & \EE\big[ (v_{t_1} - \EE[v_{t_1}|\cH_{t_1-1}])\cdot 0\big] = 0.
    \end{align*}
    The last convergence uses Assumption \ref{aspt:smoothness-general}. 
    
    Finally, because (\ref{eq::convergence-gradient-general-cc'}) holds for any $\bc, \bc'$, we conclude our proof.
    
    \subsection{Proof of Lemma \ref{lem::2nd-derivative-boundness-general}}\label{apdx::proof-lem::2nd-derivative-boundness-general}
    
    Note that $\nabla^2\bG_T(\btheta) = \frac1T\sum_{t=1}^T\frac{1_{\{A_t = a\}}}{\pi_t(A_t)}\nabla^2\bg(\bX_t, Y_t(a); \btheta)$. According to Assumption \ref{aspt:smoothness-general}, $\forall \btheta\in\overline{\cB(\btheta_a^*, \epsilon_0)}$, 
    \begin{align*}
    \|\nabla^2\bG_T(\btheta)\|_1&\leq \frac1{\pi_{\min}}\cdot \frac1T\sum_{t=1}^T\|\nabla^2\bg(\bX_t, Y_t(a);\btheta)\|_1\\
    &\leq \frac1{\pi_{\min}T}\sum_{t=1}^Td^2\sup_{i\in[d]}\|\nabla^2\bg^{(i)}(\bX_t, Y_t(a);\btheta)\|_2\\
    &\leq \frac{d^2}{\pi_{\min}T}\sum_{t=1}^T\Phi(\bX_t, Y_t(a)).
    \end{align*}
    Thus,
    $$
    \sup_{\|\btheta - \btheta_a^*\|_2\leq \epsilon_0}\|\nabla^2\bG_T(\btheta)\|_1\leq \frac{d^2}{\pi_{\min}}\cdot \frac1T\sum_{t=1}^T\Phi(\bX_t, Y_t(a)) = \cO_p(1)
    $$
    as $T\rightarrow \infty$.
    
    \subsection{Proof of Lemma \ref{lem::asymptotic-normality-true-G-general}}\label{apdx::proof-lem::asymptotic-normality-true-G-general}
    
    We have $\bG_T(\btheta_a^*) = \frac1T\sum_{t=1}^T\bZ_t$, where we define \\
    $\bZ_t:= \frac{1}{\pi_t(A_t)}1_{\{A_t = a\}}\bg(\bX_t, Y_t(a);\btheta_a^*)$. From the Cramer-Wold theorem, in order to show the desired asymptotic normality, it suffices to show that for any $\bc\in\RR^d$, $\bc^\top\cdot \sqrt{T}\bG_T(\btheta_a^*) = \bc^\top\cdot\frac1{\sqrt{T}}\sum_{t=1}^T\bZ_t\xrightarrow{d} \cN(\mathbf{0}, \bc^\top\bar\bI_a\bc)$.
    
    From  \citep{dvoretzky1972asymptotic}, Theorem 2.2, the above asymptotic result can be obtained by ensuring
    \begin{gather}
       \EE [\bZ_t|\cH_{t-1}] = \mathbf{0}\quad  \forall t\in[T],\label{eq::cond-exp-general}\\
       \frac1T\sum_{t\in[T]}\mathrm{Var}(\bc^\top\bZ_t|\cH_{t-1}) \xrightarrow{p} \bc^\top \bar\bI_a \bc,\label{eq::cond-var-general}\\
       \frac1T\sum_{t\in[T]}\EE\left[(\bc^\top \bZ_t)^21_{\{|\bc^\top\bZ_t|>\sqrt{T}\delta\}}\Big|\cH_{t-1}\right]\xrightarrow{p} 0\quad \forall \delta>0.\label{eq::cond-lindeberg-general}
    \end{gather}
    Below we check these facts one by one.
    
    \textbf{Check (\ref{eq::cond-exp-general})}: We have
    \begin{align}
    \EE [\bZ_t|\cH_{t-1}] 
    &= \EE_{\bX_t}\left[\EE_{A_t\sim \pi_t, Y_t(a)}[\bZ_t|\cH_{t-1}, \bX_t]|\cH_{t-1}\right]\nonumber\\
    & = \EE_{\bX_t}\bigg[\EE_{A_t\sim \pi_t}\bigg[\frac{1}{\pi_t(A_t)}1_{\{A_t = a\}}\Big|\cH_{t-1}, \bX_t\bigg]\nonumber\\
    &\quad\quad\quad\quad\cdot \EE_{Y_t(a)}[\bg(\bX_t, Y_t(a);\btheta_a^*)|\cH_{t-1}, \bX_t]\bigg|\cH_{t-1}\bigg]\nonumber\\
    & = \EE_{\bX_t}\left[1\cdot \EE_{Y_t(a)}[\bg(\bX_t, Y_t(a);\btheta_a^*)|\cH_{t-1}, \bX_t]\bigg|\cH_{t-1}\right]\nonumber\\
    & = \EE[\bg(\bX_t, Y_t(a);\btheta_a^*)] = \mathbf{0}.\nonumber
    \end{align}
    Here, the second equality is because of Assumption \ref{aspt:unconfoundedness}. 
    
    \textbf{Check (\ref{eq::cond-var-general})}: Based on (\ref{eq::cond-exp-general}), 
    \begin{align}
    \frac1T\sum_{t\in[T]}\mathrm{Var}(\bc^\top\bZ_t|\cH_{t-1}) 
    &= \frac1T\sum_{t\in[T]}\EE [\bc^\top \bZ_t\bZ_t^\top \bc|\cH_{t-1}]\nonumber\\
    & = \frac1T\sum_{t\in[T]}\bc^\top \EE [ \bZ_t\bZ_t^\top|\cH_{t-1}]\bc\label{eq::cond-var-1-general}
    \end{align}
    and 
    \begin{align}
    \bc^\top\EE [ \bZ_t\bZ_t^\top|\cH_{t-1}] \bc
    & = \bc^\top\EE_{\bX_t}\big[\EE_{A_t\sim \pi_t, Y_t(a)}[\bZ_t\bZ_t^\top|\cH_{t-1}, \bX_t]\big|\cH_{t-1}\big]\bc\nonumber\\
    & = \bc^\top\EE_{\bX_t}\bigg[\EE_{A_t\sim \pi_t}\bigg[\frac{1}{\pi_t^2(A_t)}1_{\{A_t = a\}}\Big|\cH_{t-1}, \bX_t\bigg]\cdot\nonumber\\
    & \quad\quad\EE_{Y_t(a)}[\bg(\bX_t, Y_t(a);\btheta_a^*)\bg(\bX_t, Y_t(a);\btheta_a^*)^\top|\cH_{t-1}, \bX_t]\bigg|\cH_{t-1}\bigg]\bc\nonumber\\
    & = \bc^\top\EE_{\bX_t}\left[\frac{1}{\pi_t(a)}\cdot \EE_{Y_t(a)}[\bg(\bX_t, Y_t(a);\btheta_a^*)\bg(\bX_t, Y_t(a);\btheta_a^*)^\top|\bX_t]\bigg|\cH_{t-1}\right]\bc\nonumber\\
    & = \bc^\top\EE_{\bX_t}[\bI_{a, t}|\cH_{t-1}]\bc.\label{eq::cond-var-2-general}
    \end{align}
    Here we define $\bI_{a, t} := \frac{1}{\pi_t(a)}\cdot \EE_{Y_t(a)}[\bg(\bX_t, Y_t(a);\btheta_a^*)\bg(\bX_t, Y_t(a);\btheta_a^*)^\top|\bX_t]$. We also define\\ $\bar \bI_{a, t} := \frac{1}{\bar\pi(a)}\cdot \EE_{Y_t(a)}[\bg(\bX_t, Y_t(a);\btheta_a^*)\bg(\bX_t, Y_t(a);\btheta_a^*)^\top|\bX_t]$.
    Then 
    \begin{align*}
      &|\bc^\top\bI_{a, t}\bc - \bc^\top\bar\bI_{a, t}\bc|\\
      &= |\bar \pi(a|\bX_t) - \pi_t(a)|\cdot \frac{1}{\bar \pi(a|\bX_t)\pi_t(a)} \bc^\top\EE[\bg(\bX_t, Y_t(a);\btheta_a^*)\bg(\bX_t, Y_t(a);\btheta_a^*)^\top|\bX_t]\bc \\
      &\leq M_2|\bar \pi(a|\bX_t) - \pi_t(a)|,
    \end{align*}
    where we have used Assumption \ref{aspt:boundedness-general}.
    
    Note that the random variables $\{\pi_t(a) - \bar \pi(a|\bX_t)\}_{t\geq 1}$ are uniformly integrable. Thus, $\pi_t(a) - \bar \pi(a|\bX_t)\xrightarrow{p} 0$ implies 
    $$
    \lim_{t\rightarrow \infty}\EE|\bar \pi(a|\bX_t) - \pi_t(a)| = 0.
    $$
    Combining the above facts, if we denote $U_{t, \bc}: = \bc^\top\bI_{a, t}\bc - \bc^\top\bar\bI_{a, t}\bc$, then
    $$
    \lim_{t\rightarrow \infty}\EE|U_{t, \bc}| = 0.
    $$
    Also, noticing that 
    $$
    \EE \big| \EE[U_{t, \bc}|\cH_{t-1}]\big|\leq 
    \EE \EE\big[|U_{t, \bc}|\big|\cH_{t-1}\big]=\EE |U_{t, \bc}|,
    $$
    We deduce that 
    $$
    \lim_{t\rightarrow \infty} \EE \big| \EE[U_{t, \bc}|\cH_{t-1}]\big| = 0.
    $$
    
    Note that the following property about $L_1$ convergence is true: For a sequence of random variables $U_t'$, if $U_t'\xrightarrow{L_1}0$, then its running average sequence $\bar U_t' = \frac1t\sum_{\tau\in[t]}U_\tau'$ satisfies $\bar U_t'\xrightarrow{L_1}0$. Let $U_t' = U_{t, \bc}$, then we have 
    $$
    \frac1t\sum_{\tau\leq t} \EE[U_{\tau, \bc}|\cH_{\tau-1}] \xrightarrow{L_1} 0.
    $$
    Plugging in the expression of $U_{\tau, \bc}$, we have 
    \begin{equation}\label{eq::l1-convergence-1-general}
    \frac1T\sum_{t\leq T}\EE\big[\bc^\top\bI_{a, t}\bc|\cH_{t-1}\big]
    - \frac1T\sum_{t\leq T}\EE\big[\bc^\top\bar\bI_{a, t}\bc|\cH_{t-1}\big]\xrightarrow{L_1} 0,
    \end{equation}
    and
    \begin{equation}\label{eq::l1-convergence-2-general}
        \frac1T\sum_{t\leq T}\EE\big[\bc^\top\bar\bI_{a, t}\bc|\cH_{t-1}\big] = \frac1T\sum_{t\leq T}\bc^\top\EE\big[\bar\bI_{a, t}\big] \bc = \bc^\top\bar \bI_{a} \bc,
    \end{equation}
    where
    \begin{equation}\label{eq::Ibar-general}
    \bar\bI_{a}:= \EE \bar \bI_{a, t}
     = \EE\frac{1}{\bar \pi(a|\bX_t)} \bg(\bX_t, Y_t(a);\btheta_a^*)\bg(\bX_t, Y_t(a);\btheta_a^*)^\top.
    \end{equation}
    Combining \eqref{eq::cond-var-1-general}, \eqref{eq::cond-var-2-general}, \eqref{eq::l1-convergence-1-general} and \eqref{eq::l1-convergence-2-general}, we obtain that 
    \begin{equation}\label{eq::Iat-l1-convergence-general}
    \frac1T\sum_{t\in[T]}\mathrm{Var}(\bc^\top\bZ_t|\cH_{t-1})\xrightarrow{L_1} \bc^\top\bar\bI_{a}\bc.
    \end{equation}
    Because $L_1$ convergence implies convergence in probability, we have verified \eqref{eq::cond-var-general}.
    
    \textbf{Check (\ref{eq::cond-lindeberg-general})}: Using Chebyshev's inequality,
    \begin{align*}
    &\frac1T\sum_{t\in[T]}\EE\left[(\bc^\top \bZ_t)^21_{\{|\bc^\top\bZ_t|>\sqrt{T}\delta\}}\Big|\cH_{t-1}\right]\\
    \leq  & \frac1T\cdot \frac1{T\delta^2}\sum_{t\in[T]}\EE\left[(\bc^\top \bZ_t)^4\Big|\cH_{t-1}\right]\\
    = &  \frac1{T^{2}\delta^2}\sum_{t\in[T]}\EE\left[\Big(\frac{1}{\pi_t(A_t)}1_{\{A_t = a\}}\bc^\top \bg(\bX_t, Y_t(a); \btheta_a^*)\Big)^4\Big|\cH_{t-1}\right]\\
    \leq & \frac1{T^{2}\delta^2}\sum_{t\in[T]}\frac1{\pi_{\min}^4}\EE[\bc^\top \bg(\bX_t, Y_t(a); \btheta_a^*)]^4\\
     = &\frac1{T\delta^2}\cdot\frac1{\pi_{\min}^4}\EE[\bc^\top \bg(\bX_1, Y_1(a); \btheta_a^*)]^4\rightarrow 0
    \end{align*}
    Here we have used Assumptions \ref{aspt:boundedness-general} and \ref{aspt:min-sampling-prob}.
    
\subsection{Proof of Lemma \ref{lem::policy-with-no-contexts}}\label{apdx::proof-lem::policy-with-no-contexts}

{
By Chebyshev's inequality, $\forall \delta>0$, 
\begin{align}
    &\PP\bigg(\bigg|N_{i, t} - \sum_{i=1}^t\EE[1_{\{A_\tau = i\}}|\cH_{\tau-1}^0]\bigg|\geq \delta\bigg)\nonumber\\
    \leq & \frac1{\delta^2}\EE\bigg(\sum_{\tau=1}^t\big(1_{\{A_\tau = i\}} - \EE[1_{\{A_\tau = i\}}|\cH_{\tau-1}^0]\big)\bigg)^2\nonumber\\
    =& \frac1{\delta^2}\sum_{\tau=1}^t\EE\big(1_{\{A_\tau = i\}} - \EE[1_{\{A_\tau = i\}}|\cH_{\tau-1}^0]\big)^2\leq \frac{t}{\delta^2}.\label{eq::concentration-Nit}
\end{align}
In addition, given the minimum sampling probability $\pi_{\min}$, we have 
\begin{equation}\label{eq::policy-no-context-sum-e-1}
\sum_{\tau = 1}^t\EE[1_{\{A_\tau = i\}}|\cH_{\tau-1}^0]\geq \pi_{\min} t.
\end{equation}
Thus, by setting $\delta = \pi_{\min}t / 2$ in (\ref{eq::concentration-Nit}), we combine with (\ref{eq::policy-no-context-sum-e-1}) and deduce that 
\begin{equation}\label{eq::Nit-lower-bound}
    \PP\bigg(N_{i, t}\leq \frac{\pi_{\min}t}{2}\bigg)\leq \frac{4}{\pi_{\min}^2t}.
\end{equation}
This implies that 
$$
\PP\bigg(\frac{C_t}{N_{i, t}}\geq \frac{2C_t}{\pi_{\min}t}\bigg)\leq \frac{4}{\pi_{\min}^2t},
$$
which proves statement (i). 

At the same time, $\forall \delta>0$,
\begin{align}
&\PP\bigg(N_{i, t}\bigg|\hat\mu_{i, t} - \mu_i^*\bigg|\geq \delta\bigg)\nonumber\\
    =&\PP\bigg(\bigg|\sum_{i=1}^t 1_{\{A_\tau = i\}}Y_{\tau} - \sum_{i=1}^t\EE[1_{\{A_\tau = i\}}Y_{\tau}|\cH_{\tau-1}^0, A_\tau]\bigg|\geq \delta\bigg)\nonumber\\
    \leq & \frac1{\delta^2}\EE\bigg(\sum_{i=1}^t \big(1_{\{A_\tau = i\}}Y_{\tau} - \EE[1_{\{A_\tau = i\}}Y_{\tau}|\cH_{\tau-1}^0, A_\tau]\big)\bigg)^2\nonumber\\
    =& \frac1{\delta^2}\sum_{\tau=1}^t\EE1_{\{A_\tau = i\}}\big(Y_\tau(i) - \mu_i^*\big)^2\leq \frac{\sigma_Y^2t}{\delta^2}.\label{eq::concentration-hatmuit}
\end{align}
Here we have used the definition of $N_{i, t}$ and $\hat\mu_{i, t}$, as well as the fact that due to the unconfoundedness assumption,
$$
\EE[1_{\{A_\tau = i\}}Y_{\tau}|\cH_{\tau-1}^0, A_\tau] = 1_{{\{A_\tau = i\}}}\mu_i^*.
$$
Combining (\ref{eq::Nit-lower-bound}) and (\ref{eq::concentration-hatmuit}), we obtain that $\forall \delta>0$,
$$
\PP\bigg(|\hat\mu_{i, t} - \mu_i^*|\geq \frac{2\delta}{\pi_{\min} t}\bigg)\leq \frac{\sigma_Y^2t}{\delta^2} + \frac{4}{\pi_{\min}^2 t}.
$$
Let $\delta' = \frac{2\delta}{\pi_{\min} t}$, and we obtain that 
$$
\PP\bigg(|\hat\mu_{i, t} - \mu_i^*|\geq \delta'\bigg)\leq \frac{4\sigma_Y^2}{\delta'^2t} + \frac{4}{\pi_{\min}^2 t}\rightarrow 0
$$
as $t\rightarrow \infty$. Thus, statement (ii) is proved.
}

\subsection{Proof of Lemma \ref{lem::clipping-Lipschitz}}
\label{apdx::proof-lem::clipping-Lipschitz}

\begin{proof}

    We show the Lipschitz continuity of $\operatorname{Clip}(\bm{\pi})$. Altough $q$ function is not smooth, it is continuous and piecewise differentiable in both $\nu$ and $\bm{\pi}$, so we may analyze the slope of each piece. 

    We first note that for any $\nu$, the function $q(\nu; \cdot)$ is 1-Lipschitz continuous in $\bm{\pi}$. To see this, for any two vectors $\bm{\pi}, \bm{\pi}' \in [0, 1]^{|\mathcal{A}|}$, we have 
    \begin{align}
        |q(\nu; \bm{\pi}) - q(\nu; \bm{\pi}')| &= \left|\sum_{a} \max\{\bm{\pi}_a - \nu, \pi_{\min}\} - \sum_{a} \max\{\bm{\pi}_a' - \nu, \pi_{\min}\}\right| \leq \|\bm{\pi} - \bm{\pi}'\|_1.
    \end{align}

    Since $q(\nu^*(\bm{\pi}); \bm{\pi}) = 1 = q(\nu^*(\bm{\pi}'); \bm{\pi}')$, we have that 
    \begin{align}
        |q(\nu(\bm{\pi}); \bm{\pi}) - q(\nu(\bm{\pi}); \bm{\pi}')| = |q(\nu(\bm{\pi}'); \bm{\pi}') - q(\nu(\bm{\pi}); \bm{\pi}')| \leq \|\bm{\pi} - \bm{\pi}'\|_1.
    \end{align}

    Since $q(\nu; \bm{\pi}')$ is piecewise linear and decreasing in $\nu$, we can lower bound its slope. In particular, over intervals where some entries of $\bm{\pi}'_a - \nu > \pi_{\min}$, the derivative of $q$ w.r.t. $\nu$ is:
    \begin{align}
        \frac{\partial q(\nu; \bm{\pi}')}{\partial \nu} = -\left|\left\{a: \bm{\pi}_a' - \nu > \pi_{\min}\right\}\right| \leq -1.
    \end{align}
    Because at least one action must be unclipped, so the slope is at most $-1$. Thus:
    \begin{align}
        |q(\nu(\bm{\pi}); \bm{\pi}') - q(\nu(\bm{\pi}'); \bm{\pi}')| \geq |\nu(\bm{\pi}) - \nu(\bm{\pi}')| \cdot 1.
    \end{align}
    Therefore, we have that 
    \begin{align}
        |\nu(\bm{\pi}) - \nu(\bm{\pi}')| \leq \|\bm{\pi} - \bm{\pi}'\|_1 \leq |\mathcal{A}| \|\bm{\pi} - \bm{\pi}'\|_2.
    \end{align}
    The second inequality follows from the Cauchy-Schwarz inequality.

    To complete the proof, we see that 
    \begin{align}
        \|\operatorname{Clip}(\bm{\pi}) - \operatorname{Clip}(\bm{\pi}')\|_2 
        &\leq \|\bm{\pi} - \bm{\pi}'\|_2 + |\nu^*(\bm{\pi}) - \nu^*(\bm{\pi}')|\\
        &\leq \|\bm{\pi} - \bm{\pi}'\|_2 + |\mathcal{A}| \|\bm{\pi} - \bm{\pi}'\|_2 \\
        &= (|\mathcal{A}|+1) \|\bm{\pi} - \bm{\pi}'\|_2. \label{eq::clipping-Lipschitz-bound}
    \end{align}
\end{proof}

\subsection{Proof of Lemma \ref{lem::Convergence-of-weighted-avg-action-selection}}\label{apdx::proof-lem::Convergence-of-weighted-avg-action-selection}

Note that 
\begin{align*}
\EE[W_t1_{\{A_t = a\}} - 1|\cH_{t-1}]
& = \EE_{\bS_t, \bX_t}\bigg[\EE_{A_t\sim \pi_t}\big[\frac{1}{\pi_t(a)}1_{\{A_t = a\}}\big|\cH_{t-1}, \bS_t, \bX_t\big]\bigg|\cH_{t-1}\bigg] - 1\\
& = 1-1 = 0.
\end{align*}
Thus, for any constant $\delta>0$,
\begin{align*}
&\PP\left(\bigg|\frac1T\sum_{t=1}^TW_t1_{\{A_t= a\}}-1\bigg|>\delta\right)\\
\leq&\frac1{\delta^2T^2}\EE\bigg[\sum_{t=1}^T(W_t1_{\{A_t= a\}}-1)\bigg]^2\\
=&\frac1{\delta^2T^2}\EE\bigg[\sum_{t=1}^T(W_t1_{\{A_t= a\}}-\EE[W_t1_{\{A_t = a\}}|\cH_{t-1}])\bigg]^2\\
=&\frac1{\delta^2T^2}\sum_{t=1}^T\EE\bigg(W_t1_{\{A_t= a\}}-\EE[W_t1_{\{A_t = a\}}|\cH_{t-1}]\bigg)^2\\
\leq &\frac1{\delta^2T^2}\EE W_t^2\leq \frac1{\delta^2T^2}\cdot\frac{T}{\pi_{\min}^2}\rightarrow 0.
\end{align*}
Here the first inequality is due to Chebyshev's Inequality, the second equality is because the cross terms has zero expectation after expansion due to martingale properties. The lemma follows.

\section{Additional Technical Lemmas}
\begin{lemma}[Matrix Azuma \citep{tropp2012user}]
    \label{lem::matrix-azuma}

    Consider a finite adapted sequence $\{X_k\}_{k = 1}^t$ of self-adjoint $d\times d$ matrices with respect to the filtration $\{\mathcal{F}_k\}_{k = 1}^t$, and a fixed sequence $\{A_k\}_{k = 1}^t$ of self-adjoint matrices that satisfy
    \begin{align}
        \EE[A_k \mid \mathcal{F}_{k-1}] = 0, \quad \text{and} \quad X_k^2 \preceq A_k^2 \text{ a.s.}
    \end{align}
    Compute the variance parameter 
    \begin{align}
        \sigma_t^2 = \left\|\sum_{i = 1}^t A_i^2 \right\|_2.
    \end{align}
    Then, for all $\epsilon \geq 0$, 
    \begin{align}
        \PP\left( \left\|\sum_{k = 1}^t X_k \right\|_2 \geq \epsilon \right) \leq d \exp\left(-\frac{\epsilon^2}{8\sigma_t^2}\right).
    \end{align}
\end{lemma}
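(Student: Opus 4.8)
This is a verbatim restatement of the Matrix Azuma inequality, Theorem~7.1 of \citep{tropp2012user}, so in the paper the proof amounts to a pointer to that reference; I record here the structure one would follow for a self-contained argument. The backbone is the matrix Laplace-transform method: for every $\theta>0$,
\begin{equation*}
\PP\!\left(\lambda_{\max}\!\Big(\sum_{k=1}^{t} X_k\Big)\ge \epsilon\right)\;\le\; e^{-\theta\epsilon}\,\EE\,\tr\exp\!\Big(\theta\sum_{k=1}^{t} X_k\Big),
\end{equation*}
which follows from Markov's inequality applied to $\lambda_{\max}$, the elementary bound $e^{\theta\lambda_{\max}(M)}\le\tr e^{\theta M}$ for self-adjoint $M$, and the spectral mapping theorem. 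The problem thus reduces to controlling the matrix moment generating function $\EE\,\tr\exp(\theta\sum_{k} X_k)$ and then optimizing $\theta$.

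The step I expect to be the main obstacle is precisely this mgf bound, because the $X_k$ need not commute and there is therefore no naive ``tower property'': one cannot simply factor $e^{\theta(X_1+\cdots+X_k)}$ and slide $\EE_{k-1}$ through it. The standard remedy is Lieb's concavity theorem --- the map $H\mapsto\tr\exp\big(H+\log(\cdot)\big)$ is concave on the positive-definite cone --- which, together with Jensen's inequality and the tower property, lets one peel the terms off one at a time, replacing $e^{\theta X_k}$ inside the trace-exponential by its conditional expectation $\EE_{k-1}e^{\theta X_k}$. To make this effective one needs the conditional matrix mgf estimate: for self-adjoint $X$ with $\EE X=\mathbf 0$ and $X^2\preceq A^2$ one has $\EE e^{\theta X}\preceq e^{g(\theta)A^2}$ for a suitable scalar function $g$, which follows from a scalar bound on $e^{\theta x}$ (ultimately $\cosh x\le e^{x^2/2}$), the matrix transfer principle, and the hypothesis $X^2\preceq A^2$.

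Iterating Lieb's theorem from $k=t$ down to $k=1$ and inserting the deterministic bound $\EE_{k-1}e^{\theta X_k}\preceq e^{g(\theta)A_k^2}$ then yields
\begin{equation*}
\EE\,\tr\exp\!\Big(\theta\sum_{k=1}^{t} X_k\Big)\;\le\;\tr\exp\!\Big(g(\theta)\sum_{k=1}^{t} A_k^2\Big)\;\le\; d\,e^{\,g(\theta)\,\big\|\sum_{k=1}^{t} A_k^2\big\|_2}\;=\;d\,e^{\,g(\theta)\,\sigma_t^2},
\end{equation*}
using $\tr e^{M}\le d\,e^{\lambda_{\max}(M)}$ and monotonicity of $\tr\exp$. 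Combining with the Laplace bound gives $\PP(\lambda_{\max}(\sum_k X_k)\ge\epsilon)\le d\,e^{g(\theta)\sigma_t^2-\theta\epsilon}$ for all $\theta>0$, and minimizing the exponent over $\theta>0$ with the $g$ furnished by the conditional mgf estimate produces the claimed tail $d\,e^{-\epsilon^2/(8\sigma_t^2)}$. The only non-routine ingredients are Lieb's theorem and the conditional mgf bound; both are imported directly from \citep{tropp2012user}, which is what we do in the paper.
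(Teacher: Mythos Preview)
Your proposal is correct and matches the paper's treatment exactly: the lemma is stated without proof and attributed directly to \citep{tropp2012user}, so the ``proof'' in the paper is indeed just the citation. Your self-contained sketch of the matrix Laplace-transform argument via Lieb's concavity is accurate and goes beyond what the paper provides.
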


Applying the union bound on all $t' \in [t]$, we have the following corollary.

\begin{corollary}
    \label{cor::matrix-azuma}
    Under the same conditions as in Lemma \ref{lem::matrix-azuma}, we have that with a probability at least $1-\delta/(t-1)$, for all $\tau \in t, t+1, \ldots$,
    \begin{align}
        \frac{1}{\tau}\left\|\sum_{k = 1}^{\tau} X_k \right\|_2 \leq \sqrt{\frac{16 \sigma_{\tau}^2}{\tau^2} \log\left(\frac{\tau d}{\delta} \right)}.
    \end{align}
\end{corollary}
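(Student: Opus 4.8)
The plan is to derive this uniform-in-$\tau$ bound by applying Lemma~\ref{lem::matrix-azuma} at each fixed horizon $\tau \ge t$ and then taking a union bound over $\tau \in \{t, t+1, \dots\}$; the radius appearing in the statement is exactly the calibration that makes this work. \textbf{Step 1 (fixed horizon).} For a fixed $\tau \ge t$, I would apply Lemma~\ref{lem::matrix-azuma} to the truncated adapted sequence $\{X_k\}_{k=1}^{\tau}$ with the deviation level $\epsilon_\tau := \sqrt{16\,\sigma_\tau^2\log(\tau d/\delta)}$. Since then $\epsilon_\tau^2/(8\sigma_\tau^2) = 2\log(\tau d/\delta)$,
\begin{align*}
\PP\!\left(\left\|\sum_{k=1}^{\tau} X_k\right\|_2 \ge \epsilon_\tau\right)
&\;\le\; d\exp\!\left(-\frac{\epsilon_\tau^2}{8\sigma_\tau^2}\right)
= d\exp\!\big(-2\log(\tau d/\delta)\big) \\
&\;=\; d\,(\tau d/\delta)^{-2}
\;=\; \frac{\delta^2}{\tau^2 d}.
\end{align*}

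\textbf{Step 2 (union bound).} Summing over $\tau \ge t$ and using $d \ge 1$ together with the elementary tail estimate $\sum_{\tau \ge t}\tau^{-2} \le \sum_{\tau \ge t}\frac{1}{\tau(\tau-1)} = \frac{1}{t-1}$,
\begin{align*}
\sum_{\tau=t}^{\infty}\frac{\delta^2}{\tau^2 d}
\;\le\; \delta^2\sum_{\tau=t}^{\infty}\frac{1}{\tau^2}
&\;\le\; \delta^2\sum_{\tau=t}^{\infty}\frac{1}{\tau(\tau-1)} \\
&\;=\; \frac{\delta^2}{t-1}
\;\le\; \frac{\delta}{t-1},
\end{align*}
where the last step uses $\delta \in (0,1)$. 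Hence, except on an event of probability at most $\delta/(t-1)$, the bound $\big\|\sum_{k=1}^{\tau} X_k\big\|_2 < \epsilon_\tau$ holds for every $\tau \ge t$ simultaneously; dividing by $\tau$ gives $\tfrac1\tau\big\|\sum_{k=1}^{\tau} X_k\big\|_2 < \sqrt{\tfrac{16\sigma_\tau^2}{\tau^2}\log(\tau d/\delta)}$, which is the claim.

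\textbf{Main obstacle.} There is essentially no obstacle: the corollary is a standard peeling/union-bound wrapper around a fixed-horizon concentration inequality. The only points requiring care are the calibration of the per-$\tau$ confidence level — the factor $\log(\tau d/\delta)$ inside the radius is chosen precisely so that each per-step failure probability decays like $\tau^{-2}$ and the tail sum is finite — and the harmless absorption of the leftover factor $\delta/d \le 1$ into the final bound. One should also check that the hypotheses of Lemma~\ref{lem::matrix-azuma} (adaptedness of $\{X_k\}$ and the conditional-mean-zero and domination conditions on $\{A_k\}$) are inherited verbatim by each truncation $\{X_k\}_{k=1}^{\tau}$, which is immediate, so the lemma legitimately applies at every $\tau$.
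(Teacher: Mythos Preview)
Your proof is correct and takes essentially the same approach as the paper: apply Lemma~\ref{lem::matrix-azuma} at each fixed $\tau$ with the calibrated radius, then union-bound over $\tau\ge t$ using $\sum_{\tau\ge t}\tau^{-2}\le 1/(t-1)$. Your write-up is in fact more careful than the paper's one-line sketch, tracking the exact per-step failure probability $\delta^2/(\tau^2 d)$ before relaxing it, whereas the paper directly states the per-step failure as $\delta/\tau^2$.
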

\begin{proof}
    The proof is to apply the union bound on all $\tau \in t, t+1, \ldots$ with each event having a probability of at least $1-\delta/\tau^2$. The total failure probability is at most $\sum_{\tau = t}^{\infty}\frac{1}{\tau^2} \leq \frac{\delta}{t-1}$.
\end{proof}

\begin{lemma}[Law of large numbers for martingale difference sequence \citep{chow1967strong}]
    \label{lem::law-of-large-numbers-for-martingale-difference-sequence}
    Let $Y_n = \sum_{t=1}^n \bX_t$ be a martingale difference sequence, such that 
    \begin{align}
        \sum_{t=1}^{\infty} \EE\left[|\bX_t|^{2\alpha}\right] / k^{1+\alpha} < \infty.
    \end{align}
    Then,
    \begin{align}
        \frac{1}{n} \sum_{t=1}^n Y_t \xrightarrow{a.s.} 0.
    \end{align}
\end{lemma}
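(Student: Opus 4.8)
The plan is the standard two-step route to a strong law. I read the statement as follows: $\{\bX_t\}_{t\ge1}$ is a martingale difference sequence for a filtration $\{\cF_t\}$, the partial sums $Y_n=\sum_{t=1}^n\bX_t$ form a martingale, and — since averaging the partial sums themselves would not be meaningful — the conclusion to prove is $Y_n/n=\tfrac1n\sum_{t=1}^n\bX_t\xrightarrow{a.s.}0$ under $\sum_{t\ge1}\EE[|\bX_t|^{2\alpha}]/t^{1+\alpha}<\infty$ for some $\alpha\ge1$. The first step, common to all cases, is to pass from the strong law to the a.s.\ convergence of the weighted series $M_n:=\sum_{t=1}^n\bX_t/t$: if $M_n$ converges a.s.\ to a finite limit, then Kronecker's lemma (applied with increasing weights $b_t=t$ and terms $\bX_t$, so that $\sum_t\bX_t/t$ converges) yields $\tfrac1n\sum_{t\le n}\bX_t\to0$ a.s. Note $M_n$ is itself a martingale, since $1/t$ is deterministic and hence $\cF_{t-1}$-measurable.

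The case $\alpha=1$ — which is all the paper actually needs, since in Theorem~\ref{theorem:necessary-condition-for-Ridge-convergence} the increments $\bX_{\tau,a}\bX_{\tau,a}^\top-\EE[\bX_{\tau,a}\bX_{\tau,a}^\top\mid\cF_{\tau-1}]$ are a.s.\ bounded — is immediate. Orthogonality of martingale increments gives $\EE[M_n^2]=\sum_{t=1}^n\EE[\bX_t^2]/t^2\le\sum_{t\ge1}\EE[\bX_t^2]/t^2<\infty$, so $\{M_n\}$ is an $L^2$-bounded martingale and therefore converges a.s.\ (and in $L^2$) by Doob's martingale convergence theorem; Kronecker's lemma then finishes.

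For general $\alpha\ge1$ I would argue by truncation at level $t$: write $\bX_t=\bX_t'+\bX_t''$ with $\bX_t':=\bX_t1_{\{|\bX_t|\le t\}}$ and $\bX_t'':=\bX_t1_{\{|\bX_t|>t\}}$. Since $\alpha\ge1$ forces $2\alpha\ge1+\alpha$, Markov's inequality gives $\sum_t\PP(|\bX_t|>t)\le\sum_t\EE[|\bX_t|^{2\alpha}]/t^{2\alpha}\le\sum_t\EE[|\bX_t|^{2\alpha}]/t^{1+\alpha}<\infty$, so by Borel--Cantelli $\bX_t''=0$ eventually a.s., whence $\tfrac1n\sum_{t\le n}\bX_t''\to0$ a.s. Centering, $\bX_t'=\widehat{\bX}_t+m_t$ with $m_t:=\EE[\bX_t'\mid\cF_{t-1}]=-\EE[\bX_t''\mid\cF_{t-1}]$ (using $\EE[\bX_t\mid\cF_{t-1}]=0$) and $\widehat{\bX}_t$ a martingale difference; since $|m_t|\le\EE[|\bX_t|1_{\{|\bX_t|>t\}}\mid\cF_{t-1}]\le t^{1-2\alpha}\EE[|\bX_t|^{2\alpha}\mid\cF_{t-1}]$, one gets $\EE\big[\sum_t|m_t|/t\big]\le\sum_t\EE[|\bX_t|^{2\alpha}]/t^{2\alpha}<\infty$, hence $\sum_t m_t/t$ converges a.s.\ and, by Kronecker, $\tfrac1n\sum_{t\le n}m_t\to0$ a.s. The remaining martingale part $\sum_t\widehat{\bX}_t/t$ is the delicate piece: I would control it in $L^{2\alpha}$ by the Burkholder--Davis--Gundy inequality, $\EE\big[\big|\sum_{t\le n}\widehat{\bX}_t/t\big|^{2\alpha}\big]\le C_\alpha\,\EE\big[\big(\sum_{t\le n}\widehat{\bX}_t^2/t^2\big)^\alpha\big]$, and then bound $\EE[(\sum_t\widehat{\bX}_t^2/t^2)^\alpha]$ using the hypothesis — via Minkowski's inequality in $L^\alpha$ when $1\le\alpha\le2$, and via a Rosenthal-type split into a variance-of-quadratic-variation term plus a $\sum_t\EE[|\bX_t|^{2\alpha}]/t^{2\alpha}$ term when $\alpha>2$ — to get $\sup_n\EE\big[\big|\sum_{t\le n}\widehat{\bX}_t/t\big|^{2\alpha}\big]<\infty$; an $L^{2\alpha}$-bounded martingale ($2\alpha>1$) converges a.s., and Kronecker concludes. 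Summing the three contributions gives $Y_n/n\to0$ a.s.

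I expect the main obstacle to be precisely this last step for $\alpha>1$: truncation followed by reduction to an $L^2$-bounded martingale fails there, because the bulk of $\EE[\bX_t^2]$ comes from small values of $\bX_t$ where the $2\alpha$-th moment gives no leverage, so one genuinely needs higher-moment martingale inequalities together with a careful accounting of how $\sum_t\EE[|\bX_t|^{2\alpha}]/t^{1+\alpha}<\infty$ propagates through the quadratic-variation estimate. This is the content of \citep{chow1967strong}, so in the paper the lemma is quoted rather than reproved; for its only use here — bounded martingale differences, i.e.\ the trivial subcase of $\alpha=1$ — the elementary $L^2$ argument of the second paragraph is all that is required.
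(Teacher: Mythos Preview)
Your reading of the statement and your proof are both correct. The paper does not prove this lemma at all: it is listed under ``Additional Technical Lemmas'' and simply quoted from \citep{chow1967strong}, exactly as you anticipated in your closing paragraph. There is therefore no ``paper's own proof'' to compare against.

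Your $\alpha=1$ argument (the series $\sum_t\bX_t/t$ is an $L^2$-bounded martingale, hence converges a.s., and Kronecker's lemma finishes) is the standard one and is entirely sufficient for the paper's single use of the lemma in the proof of Theorem~\ref{theorem:necessary-condition-for-Ridge-convergence}, where the increments $\bX_{\tau,a}\bX_{\tau,a}^\top-\bSigma_a(\hat\bbeta_\tau^{\Ridge})$ are a.s.\ bounded under Assumption~\ref{aspt:Ridge_Conv}. You also correctly flagged the typos in the displayed statement: the denominator should be $t^{1+\alpha}$, not $k^{1+\alpha}$, and the conclusion should read $Y_n/n\to0$ rather than $\tfrac1n\sum_{t\le n}Y_t\to0$. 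Your caveats about the general-$\alpha$ case are appropriate --- that is precisely the content of Chow's paper --- but for present purposes the elementary $L^2$ argument is all that is required.
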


\begin{theorem}[Stochastic approximation convergence (Theorem 2 \citep{borkar2008stochastic})]
\label{thm:sac_converge}
    Let $\{x_n\}_{n \geq 0}$ be the stochastic approximation process in $\mathbb{R}^d$ given by 
    \begin{align}
    \bx_{n+1}=\bx_n+a(n)\left[\bh\left(x_n\right)+\bM_{n+1}\right], n \geq 0
    \end{align}
    with prescribed $\bx_0$ with the following assumptions holding:
    \begin{itemize}
        \item The map $\bh: \mathbb{R}^d \mapsto \mathbb{R}^d$ is Lipschitz: $\|\bh(\bx) - \bh(y)\| \leq L \|\bx-\by\|$ for some $0 < L < \infty$.
        \item Stepsizes $\{a(n)\}$ are positive scalars satisfying
        $$
            \sum_n a(n) = \infty, \sum_n a(n)^2 < \infty
        $$
        \item $\{\bM_n\}$ is a martingale difference sequence with respect to the increasing family of $\sigma$-fields 
        $$
            \mathcal{F}_n \stackrel{\text { def }}{=} \sigma\left(\bx_m, \bM_m, m \leq n\right)=\sigma\left(\bx_0, \bM_1, \ldots, \bM_n\right), n \geq 0.
        $$
        That is 
        $$
            \EE[\bM_{n+1} \mid \cF_n] = 0 \text{ a.s. }, n \geq 0.
        $$
        \item Furthermore, $\{\bM_n\}$ are square-integrable with
        $$
            \EE[\|\bM_{n+1}\|^2 \mid \cF_{n}] \leq K (1+ \|\bx_n\|^2) \text{ a.s.,} n \geq 0.
        $$
        for some constant $K$.
        Then the sequence $\{\bx_n\}$ converges to a compact connected internally chain transitive invariant set of $\dot{\bx}(t)=h(\bx(t)), t \geq 0$.
        Additionally, there is a unique solution $\bx^*$, then $\{\bx_n\}$ converges to $\bx^*$.
    \end{itemize}
    
\end{theorem}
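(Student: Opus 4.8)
The plan is to prove this by the classical ODE method for stochastic approximation: build a continuous-time interpolation of the iterates, show it asymptotically tracks the flow of $\dot{\bx} = \bh(\bx)$, and then read off the limiting behavior from the theory of asymptotic pseudotrajectories. Throughout I will use the standing fact that $\sup_n \|\bx_n\|_2 < \infty$ almost surely --- in every application of this theorem in the paper that holds by the clipping construction and the explicit boundedness hypotheses on $\hat\bbeta^{\SGD}$ --- since this stability is what makes both the martingale noise and the Euler discretization error summable. Concretely, set $t_0 = 0$ and $t_n = \sum_{m=0}^{n-1} a(m)$ (so $t_n \uparrow \infty$ because $\sum_n a(n) = \infty$), and let $\bar{\bx}(\cdot)$ be the path with $\bar{\bx}(t_n) = \bx_n$, interpolated linearly on each $[t_n, t_{n+1}]$. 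Since $\bh$ is globally Lipschitz, standard ODE theory gives a well-defined flow $\Phi = \{\Phi_s\}_{s \ge 0}$ of $\dot{\bx} = \bh(\bx)$, with each $\Phi_s$ Lipschitz in the initial condition.

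The first step is to neutralize the noise. Let $\bzeta_n := \sum_{m=0}^{n-1} a(m)\bM_{m+1}$. Combining $\EE[\|\bM_{m+1}\|_2^2 \mid \cF_m] \le K(1 + \|\bx_m\|_2^2)$ with $\sup_m \|\bx_m\|_2 < \infty$ a.s.\ and $\sum_m a(m)^2 < \infty$, the martingale $\{\bzeta_n\}$ is $L^2$-bounded (after the customary localization to the boundedness event), hence converges almost surely by the martingale convergence theorem. In particular its tail oscillations vanish: $\sup_{k \ge n}\|\sum_{m=n}^{k-1} a(m)\bM_{m+1}\|_2 \to 0$ a.s.\ as $n \to \infty$.

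The second and crucial step is a Gronwall comparison over a fixed time window. Fix $T > 0$. Writing the recursion in integral form, $\bx_k = \bx_n + \int_{t_n}^{t_k}\bh(\bar{\bx}(s))\,ds + e_{n,k} + (\bzeta_k - \bzeta_n)$, where $e_{n,k}$ is the discretization error from replacing $\bh(\bx_m)$ by $\bh(\bar{\bx}(s))$ on each $[t_m, t_{m+1}]$ and satisfies $\|e_{n,k}\|_2 = O(\sup_{m \ge n} a(m))$ by Lipschitzness of $\bh$ and boundedness of the iterates; subtracting the identity $\Phi_{t_k - t_n}(\bx_n) = \bx_n + \int_{t_n}^{t_k}\bh(\Phi_{s - t_n}(\bx_n))\,ds$ and using the Lipschitz constant $L$ of $\bh$ gives, for every $k$ with $t_k \in [t_n, t_n + T]$, $\|\bar{\bx}(t_k) - \Phi_{t_k - t_n}(\bx_n)\|_2 \le L\int_{t_n}^{t_k}\|\bar{\bx}(s) - \Phi_{s - t_n}(\bx_n)\|_2\,ds + \varepsilon_n$, with $\varepsilon_n \to 0$ a.s.\ bounding $\|e_{n,k}\|_2$ plus the Step-1 noise tail uniformly over such $k$. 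Gronwall's inequality then yields $\sup_{t \in [t_n, t_n + T]}\|\bar{\bx}(t) - \Phi_{t - t_n}(\bx_n)\|_2 \le \varepsilon_n e^{LT} \to 0$, the passage from grid points to all $t$ being a routine interpolation bound. Hence $\bar{\bx}$ is an asymptotic pseudotrajectory of $\Phi$.

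Finally I would invoke Bena\"{\i}m's characterization: the $\omega$-limit set of a precompact asymptotic pseudotrajectory is a nonempty, compact, connected, internally chain transitive invariant set of $\Phi$, and the iterates $\bx_n = \bar{\bx}(t_n)$ accumulate precisely on this set --- which is the first conclusion. For the second, if $\dot{\bx} = \bh(\bx)$ has $\bx^*$ as its unique globally asymptotically stable equilibrium, then $\{\bx^*\}$ is the only internally chain transitive invariant set (any such set is invariant, and global attraction together with the shadowing property of $\varepsilon$-chains forces it into every neighborhood of $\bx^*$), so the limit set is $\{\bx^*\}$ and $\bx_n \to \bx^*$. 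The step I expect to be the main obstacle is making the Gronwall input uniform over the window: the discretization error $e_{n,k}$ and especially the martingale tail $\bzeta_k - \bzeta_n$ must be bounded uniformly in $k$ with $t_k \in [t_n, t_n + T]$ \emph{before} Gronwall is applied, and each such bound rests on the a.s.\ boundedness of $\{\bx_n\}$ (so that $\|\bh(\bx_m)\|_2$ and the conditional noise variance stay controlled) --- importing or verifying that stability is therefore the decisive input, while everything downstream of the pseudotrajectory property is soft dynamical-systems machinery.
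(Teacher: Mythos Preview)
The paper does not prove this statement: it is restated verbatim as Theorem 2 of \cite{borkar2008stochastic} in the technical-lemmas appendix and used as a black box (in the proof of Proposition \ref{theorem:convergence-of-SGD}). Your sketch is the standard ODE-method proof from that reference --- interpolation, martingale-convergence control of the noise, Gronwall over finite windows to get the asymptotic pseudotrajectory property, and Bena\"{\i}m's characterization of $\omega$-limit sets --- so it is correct and matches the cited source; you have also correctly flagged that almost-sure boundedness of $\{\bx_n\}$ is the external input on which everything hinges, which is exactly why the paper treats it as a separate hypothesis (verified in Proposition \ref{prop::SGD-boundedness}).
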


\section{Additional Details on Simulation Studies}
\label{apdx::simulation-details}

\subsection{Environment Settings}
\label{apdx::simulation-details::environment-settings}

We give the details of the simulation environment settings. 

The first type of environments is the noisy contextual linear bandit environment including \texttt{NC-Hard1}, \texttt{NC-Hard2}, \texttt{NC-Gaussian}. Each environment has a ground-truth parameter $\btheta_a^*$. At each time $t$, the following variables are generated:
\begin{equation}\label{eq::model-1}
    \begin{aligned}
        \text{True context: } & \bS_t \sim \cD_S,\\
        \text{Predicted context: } & f(\bX_t) = \bS_t + \bepsilon_t, \text{ where } \bepsilon_t \sim \cD_{\epsilon}(\cdot \mid \bS_t),\\
        \text{Reward: } & Y_t = \langle\btheta_{A_t}^*, \bS_t\rangle + \eta_t, \text{ where } \eta_t \sim \cD_{\eta},
    \end{aligned}.
\end{equation}
where $A_t$ is the algorithm-chosen action. Recall that $\Sigma_S = \EE[\bS_t \bS_t^{\top}]$, $\Sigma_e$ is the covariance matrix of $\bepsilon_t$ (assumed to be independent of $\bS_t$), and $\Sigma_{\eta}$ is the covariance matrix of $\eta_t$.

Both hard environments have one-dimensional context and true parameters ($d = 1$), two actions $|\cA| = 2$, and two contexts $\cS = \{0, -1\}$. The context distribution $\cD_S$ is uniform over $\cS$. \texttt{hard-1} and \texttt{hard-2} have the true parameters $\btheta_0^* = (3, 1)$ and $\btheta_1^* = (-3, -1)$ respectively. They further share the same prediction error distribution $\cD_{\epsilon}(\cdot \mid \bS_t)$, given by (\ref{eq::model-1-error-distribution}). We set the reward noise $\cD_{\eta} = \cN(0, \sigma_{\eta}^2)$.
\begin{align}
\begin{array}{ll}
            \PP(f(\bX_t) = 1 \mid \bS_t = 0) = 2/3 & \quad \PP(f(\bX_t) = -2 \mid \bS_t = 0) = 1/3 \\
            \PP(f(\bX_t) = -2 \mid \bS_t = -1) = 2/3 & \quad \PP(f(\bX_t) = 1 \mid \bS_t = -1) = 1/3.
\end{array}
\label{eq::model-1-error-distribution}
\end{align}

For the \texttt{NC-Gaussian} environment, we randomly sample the true parameters $\btheta_a^*$ from $\cN(0, \Sigma_{\btheta})$ for each $a\in\cA$ independently. We choose $\cD_S = \cN(0, \Sigma_S)$, $\cD_{\epsilon}(\cdot \mid \bS_t) = \cN(0, \Sigma_e)$, $\cD_{\eta} = \cN(0, \sigma_{\eta}^2)$, respectively.

The second type of environments is the misspecified contextual linear bandit environment including \texttt{MC-Polynomial}, \texttt{MC-Neural}. In these two environments, we have $|\cA| = 2$, $d = 1$. The context is sampled from $\cD_X = \cN(0, \Sigma_X)$. In the \texttt{MC-Polynomial} environment, the true reward function is given by 
$$
    y(\bx, a) = \langle\btheta_{a, 1}^*, \bx\rangle + \langle\btheta_{a, 2}^*, \bx^2\rangle + \dots + \langle\btheta_{a, d}^*, \bx^d\rangle,
$$
where $d$ is the degree of the polynomial. The true parameters $\btheta_{a, i}^*$ are randomly sampled from $\cN(0, \Sigma_{\btheta})$ for each $a\in\cA$ and $i\in\{1, 2, \ldots, d\}$ independently. In the \texttt{MC-Neural} environment, the true reward function is given by a two layer neural network with one hidden layer of size $d$.
$$
    y(\bx, a) = \operatorname{ReLU}(\langle\btheta_{a}^*, \bx\rangle),
$$
where $\operatorname{ReLU}(x) = \max(0, x)$ is the ReLU activation function.

The true parameters $\btheta_{a}^*$ are randomly sampled from $\cN(0, \Sigma_{\btheta})$ for each $a\in\cA$ independently. We choose $\cD_X = \cN(0, \Sigma_X)$, $\cD_{\eta} = \cN(0, \sigma_{\eta}^2)$, respectively.

\subsection{Additional Information on OPE}
\label{apdx::simulation-details::ope}

In the OPE setting, we compare the proposed inference method with the CADR (Contextual Adaptive Doubly Robust) method  \citep{bibaut2021post} under various choice of prediction model including linear model, tree-based model, and a dumpy model that always outputs 0. We run CADR on the same dataset collected by Boltzmann exploration w.r.t. Ridge regression in five environments introduced above. 

To implement the CADR method, we define the following functions:
\begin{align}
    \Psi\left(g, Q_Y\right):= \EE_{A_t \sim g(A_t \mid \bX_t)}[Q_Y(A_t, \bX_t)].
\end{align}
\begin{align}
    D^{\prime}(g, \bar{Q})(x, a, y):=\frac{g^*(a \mid x)}{g(a \mid x)}(y-\bar{Q}(a, x))+\int \bar{Q}\left(a^{\prime}, x\right) g^*\left(a^{\prime} \mid x\right) d \mu_{\mathcal{A}}\left(a^{\prime}\right).
\end{align}
\begin{align}
    D(g, \bar Q)(x, a, y) = D'(g, \bar Q)(x, a, y) - \Psi\left(g, \bar Q\right).
\end{align}

Let $g_1, \dots, g_T$ be the logging policy that collects the data, and $g^*$ be the target policy that we aim to evaluate. 

For each step $t = 1, \dots, T$, the CADR method computes the following quantities:
\begin{itemize}
    \item Train $\hat Q_{t-1}: \cX \times \cA \to \RR$ on the dataset $((\bX_s, A_s, Y_s))_{s = 1}^{t-1}$ using the outcome regression estimator.
    \item Set $D'_{t, s} = D(g_s, \hat Q_{t-1})(\bX_t, A_t, Y_t)$ for each $s = t, \dots, T$.
    \item Set 
    \begin{align}
        \hat \sigma_t^2 = \frac{1}{t-1} \sum_{s = 1}^{t-1} \frac{g_t(A_s \mid \bX_s)}{g_s(A_s \mid \bX_s)} (D'_{t, s})^2 - \left(\frac{1}{t-1} \sum_{s = 1}^{t-1} \frac{g_t(A_s \mid \bX_s)}{g_s(A_s \mid \bX_s)} D'_{t, s} \right)^2.
    \end{align}
\end{itemize}
In the end, the CADR method outputs the following estimate:
\begin{align}
    \hat \Psi_T = \frac{\Gamma_T}{T} \sum_{t=1}^T \hat \sigma_t^{-1} D'_{t, t}, \text{ where } \Gamma_T = \left(\frac{1}{T}\sum_{t=1}^T \hat \sigma_t^{-1}\right)^{-1}.
\end{align}
and confidence interval 
\begin{align}
    \text{CI}_{\alpha} = [\hat \Psi_T \pm \xi_{1-\alpha/2} \Gamma_T / \sqrt{T}].
\end{align}

    
    \end{appendix}
    
    
    

\bibliographystyle{unsrtnat} 
\bibliography{main}       
\end{document}